\definecolor{links}{rgb}{0,0.3,0}
\definecolor{mylinks}{rgb}{0.8,0.2,0}
\newcommand{\orcidicon}[1]{\href{https://orcid.org/#1}{\protect\includegraphics[height=1.5ex]{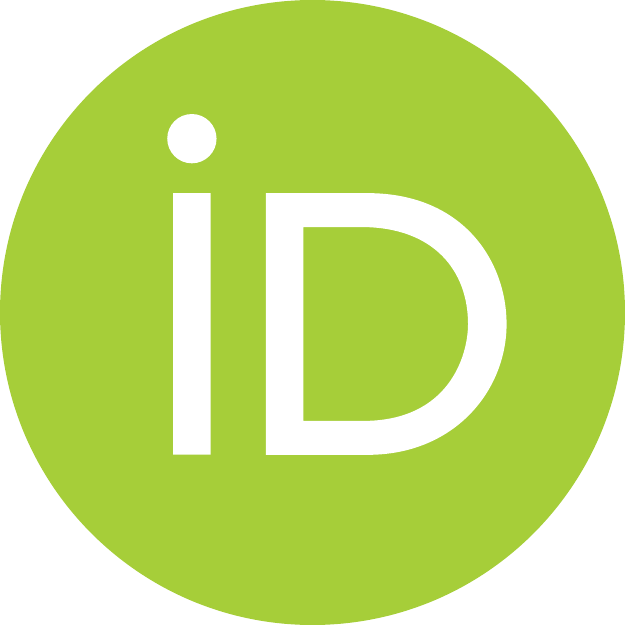}}}
\newcommand{\email}[1]{\href{mailto:#1}{#1}}
\numberwithin{equation}{section}
\newcommand{\Q}{\mathbbm{Q}}
\newcommand{\Z}{\mathbbm{Z}}
\newcommand{\F}{\mathbbm{F}}
\newcommand{\Transpose}{\intercal}
\newcommand{\td}[1][]{\mathrm{d}^{#1}}
\DeclareMathOperator{\Aut}{Aut}
\DeclareMathOperator{\supp}{supp}
\newcommand{\defas}{\mathrel{\mathop:}=}
\newcommand{\set}[1]{
\left\{ #1 \right\}
}
\newcommand{\abs}[1]{
\left\lvert #1 \right\rvert
}
\newcommand{\norm}[1]{
\left\lVert #1 \right\rVert
}
\DeclareMathOperator{\Perm}{Perm}
\DeclareMathOperator{\perm}{perm}
\DeclareMathOperator{\Or}{Or}
\newcommand{\dual}{\star}
\DeclareMathOperator{\Diag}{Diag}
\newcommand{\Period}{\mathcal{P}}
\newcommand{\Hepp}{\mathcal{H}}
\newcommand{\Martin}{\operatorname{\mathsf{M}}}
\newcommand{\MartinPol}{\mathfrak{m}}
\newcommand{\CPP}{J}
\newcommand{\CC}[1]{|#1|}
\newcommand{\Tutte}[1]{\mathsf{T}_{#1}}
\newcommand{\PsiPol}{\Psi}
\newcommand{\KirchPol}{\widetilde{\PsiPol}}
\newcommand{\trans}{\mathcal{T}}
\newcommand{\Graph}[2][1.0]{%
\vcenter{\hbox{\includegraphics[scale=#1]{graphs/#2}}}%
}
\newcommand{\takecoeff}[1]{\left[ #1 \right]}
\newcommand{\takecoefff}[2]{#1[ #2 #1]}
\newcommand{\PointCount}[2]{%
\left\llbracket #1 \right\rrbracket_{#2} %
}
\newcommand{\MapleNote}{\footnote{Maple is a trademark of Waterloo Maple Inc.}}
\newcommand{\MapleTM}{\href{http://www.maplesoft.com/products/Maple/}{\textsf{\textup{Maple}}\texttrademark}}
\newcommand{\nauty}{\href{http://pallini.di.uniroma1.it/}{\texttt{\textup{nauty}}}}
\newcommand{\FORM}{\href{https://www.nikhef.nl/~form/}{\textsc{Form}}}
\newcommand{\HoG}[1]{\href{https://houseofgraphs.org/graphs/#1}{#1}}
\theoremstyle{plain}
\newtheorem{theorem}{Theorem}[section]
\newtheorem{lemma}[theorem]{Lemma}
\newtheorem{corollary}[theorem]{Corollary}
\newtheorem{proposition}[theorem]{Proposition}
\newtheorem{conjecture}[theorem]{Conjecture}
\theoremstyle{definition}
\newtheorem{example}[theorem]{Example}
\newtheorem{definition}[theorem]{Definition}
\theoremstyle{remark}
\newtheorem{remark}[theorem]{Remark}
\newcommand{\Filename}[1]{{\upshape\ttfamily #1}}
\newcommand{\JaxoDraw}{\texttt{\textup{JaxoDraw}}}
\title{Feynman symmetries of the Martin and $c_2$ invariants of regular graphs}
\author{
\thanks{%
	University of Oxford, UK,
	\email{erik.panzer@maths.ox.ac.uk}}
	Erik Panzer
	\orcidicon{0000-0002-9897-5812}
	\and
\thanks{%
    University of Waterloo, ON, Canada,
    \email{kayeats@uwaterloo.ca}}
	Karen Yeats
	\orcidicon{0000-0003-0347-6525}
}
\date{\today}
\begin{document}
\maketitle
\begin{abstract}
    For every regular graph, we define a sequence of integers, using the recursion of the Martin polynomial. This sequence counts spanning tree partitions and constitutes the diagonal coefficients of powers of the Kirchhoff polynomial. We prove that this sequence respects all known symmetries of Feynman period integrals in quantum field theory. We show that other quantities with this property, the $c_2$ invariant and the extended graph permanent, are essentially determined by our new sequence. This proves the completion conjecture for the $c_2$ invariant at all primes, and also that it is fixed under twists. We conjecture that our invariant is perfect: Two Feynman periods are equal, if and only if, their Martin sequences are equal.
\end{abstract}

\tableofcontents

\section{Introduction}\label{sec intro}
In this paper, graphs are undirected and allowed to have self-loops (edges that connect a vertex to itself) and multiedges (several edges connecting the same pair of vertices).

Let $G$ be a $2k$-regular graph, that is, every vertex has the same, even degree $2k>0$. Following \cite{Jaeger:Transition4,FleischnerGenestJackson:CC4}, a \emph{transition} at a vertex $v$ is a partition $\tau(1)\sqcup\ldots\sqcup\tau(k)$ of the $2k$ half-edges $e_i=vw_i$ at $v$ into pairs $\tau(i)=\{e_{\tau'(i)},e_{\tau''(i)}\}$ (made precise in \cref{def transition}).
Given a transition, we can transform $G$ into a smaller graph, which is again $2k$-regular:
\begin{equation*}\label{eq:transition}
    G_{\tau} = (G\setminus v) 
    + w_{\tau'(1)} w_{\tau''(1)}+\cdots+w_{\tau'(k)}w_{\tau''(k)}
\end{equation*}
is obtained by removing $v$ together with its edges $e_i$, and then adding $k$ new edges to match the neighbours $w_i$ of $v$. Let $\trans(v)$ denote the set of all $(2k-1)!!$ transitions at $v$.

In this paper, we study invariants of graphs that solve the recursion
\begin{equation}\label{eq:Martin-recursion}
    \Martin(G) = \sum_{\tau\in\trans(v)} \Martin(G_{\tau}).
\end{equation}
Such invariants are interesting both as pure combinatorics and because they provide a unifying perspective on previously disparate invariants that are interesting in quantum field theory on account of them having the key symmetries of Feynman period integrals.

This recursion was introduced by Martin \cite{Martin:EnumerationsEuleriennes} to define a polynomial that counts circuit decompositions. He considered in particular the 3-term recurrence
\begin{equation}\label{eq:Martin-recursion-4}
    \Martin\Bigg(\Graph[0.5]{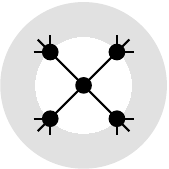}\Bigg)
    =\Martin\Bigg(\Graph[0.5]{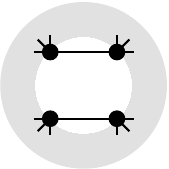}\Bigg)
    +\Martin\Bigg(\Graph[0.5]{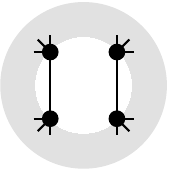}\Bigg)
    +\Martin\Bigg(\Graph[0.5]{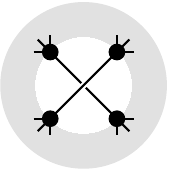}\Bigg)
\end{equation}
for 4-regular graphs, and Las Vergnas \cite{LasVergnas:Martin} extended Martin's considerations to arbitrary even degrees. For matroid theorists, these polynomials can be interpreted as a kind of chromatic polynomial for the transition matroid \cite{Traldi:TransitionMatroidIntroduction}, instead of the usual cycle matroid.
Our focus lies on a certain derivative of Martin's polynomial (see \cref{def:Martin-as-diff}) which, for graphs with 3 or more vertices, can also be defined as follows:
\begin{definition}\label{def:Martin-intro}
The \emph{Martin invariant} $\Martin(G)$ of a regular graph $G$ with even degree and at least 3 vertices, is the non-negative integer defined by the rules:
\begin{enumerate}
    \item If $G$ has a self-loop, then $\Martin(G)=0$.
    \item If $G$ has three vertices (and no self-loop), then $\Martin(G)=1$.
    \item Otherwise, pick any vertex $v$, and define $\Martin(G)$ recursively by \eqref{eq:Martin-recursion}.
\end{enumerate}
\end{definition}
It follows from \cite{LasVergnas:Martin} that all choices of $v$ in rule 3.\ produce the same result, so $\Martin(G)$ is well-defined.  The situation for graphs with fewer than 3 vertices and justification for our normalization is explained in \cref{def:Martin-as-diff} and subsequent discussion; it is not crucial at present.
To illustrate the definition, the Martin invariant of the complete graph $K_5$ is
\begin{equation*}
    \Martin(K_5)
    = 3 \Martin\left(\Graph[0.6]{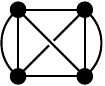}\right)
    = 3 \Martin\left(\Graph[0.6]{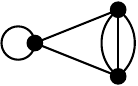}\right) + 6 \Martin\left(\Graph[0.6]{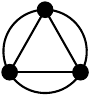}\right)
    = 3\cdot 0 + 6\cdot 1
    = 6.
\end{equation*}
In the first expansion, all 3 transitions produce isomorphic graphs: a perfect matching added to $K_4=K_5\setminus v$. The second expansion produces a self-loop when the transition pairs the parallel edges with each other; the other 2 transitions create $\Graph[0.25]{K3-222}$.

For 4-regular graphs, the Martin invariant was introduced in \cite{BouchetGhier:BetaIso4}. It was shown in particular that $\Martin(G)>0$ if and only if $G$ is 4-edge connected, and that $\Martin(G)$ simplifies into a product if $G$ has a 4-edge cut. We generalize these properties to higher degrees:
\begin{theorem}\label{prop:edge-cuts}
    Let $G$ be a $2k$-regular graph. Then:
    \begin{enumerate}
        \item If $G$ has an edge cut of size less than $2k$, then $\Martin(G)=0$.
        \item If $G$ has no edge cut of size less than $2k$, then $\Martin(G)>0$.
        \item If $G$ has an edge cut of size equal to $2k$, then $\Martin(G)=k!\cdot \Martin(G_1)\cdot\Martin(G_2)$ for the graphs $G_1$ and $G_2$ obtained by replacing one side of the cut with a single vertex:
    \begin{equation}\label{eq:Martin-edge-product}
        \Martin\Bigg(\Graph[0.5]{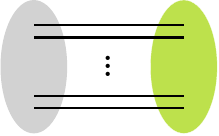}\Bigg)=k!\cdot \Martin\Bigg(\Graph[0.5]{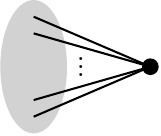}\Bigg)\cdot\Martin\Bigg(\Graph[0.5]{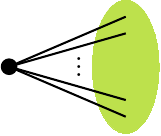}\Bigg).
    \end{equation}
    \end{enumerate}
\end{theorem}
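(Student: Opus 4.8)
The plan is to treat the three claims in the order (1), (3), (2): the product formula (3) will invoke the vanishing statement (1), while the positivity (2) is of a more global nature and requires an external tool. Throughout I will use the elementary parity observation that in a $2k$-regular graph every edge cut has even size, since the cut determined by a vertex set $X$ has size $2k\abs{X}-2e(X)$ with $e(X)$ the number of internal edges; hence ``size less than $2k$'' means ``size at most $2k-2$''. Since $\Martin$ is independent of the chosen vertex, in (1) and (3) I am free to expand \eqref{eq:Martin-recursion} at a convenient vertex. For (1) I would induct on the number of vertices. The base case of three vertices is immediate, since a $2k$-regular three-vertex graph either has a self-loop (so $\Martin=0$) or is $k$ parallel edges between each pair of vertices, all of whose cuts have size $2k$. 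For the step, let $G$ have a cut of size $2m<2k$ separating $A$ from $B$; as there are no self-loops, both sides have at least two vertices (a singleton side would force the cut to be exactly $2k$). Pick $v\in A$ and expand at $v$. The key computation tracks the cut under a transition: if $v$ has $c$ cut edges and the transition pairs $m'$ of them with one another and $j$ of them with internal edges (so $2m'+j=c$), then the new cut separating $A\setminus\set{v}$ from $B$ has size $2m-c+j=2m-2m'\le 2m<2k$. Thus every $G_\tau$ still has a cut below $2k$ on fewer vertices, so $\Martin(G_\tau)=0$ by induction and $\Martin(G)=0$.

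For (3) I would induct on $\abs{A}$, assuming $\abs{A},\abs{B}\ge 2$ (so $G_1,G_2$ have at least three vertices) and that $G$ has no self-loop (otherwise both sides vanish). In the base case $\abs{A}=2$, write $A=\set{a_1,a_2}$; then $G_1$ is a $2k$-regular three-vertex graph, so $\Martin(G_1)=1$ and the edge counts force $k$ edges between $a_1,a_2$ and $k$ cut edges at each of them. Expanding $\Martin(G)$ at $a_1$, any transition pairing two of the edges $a_1a_2$ creates a self-loop at $a_2$ and contributes $0$; the surviving transitions are exactly the $k!$ perfect matchings between the $k$ edges $a_1a_2$ and the $k$ cut edges of $a_1$, and each yields a graph isomorphic to $G_2$ (the vertex $a_2$ absorbs the $B$-endpoints of $a_1$'s cut edges). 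Hence $\Martin(G)=k!\,\Martin(G_2)=k!\,\Martin(G_1)\,\Martin(G_2)$. For the step with $\abs{A}\ge 3$, expand both $\Martin(G)$ and $\Martin(G_1)$ at the same $v\in A$, noting that the two graphs share the very same set of pairings at $v$, only the endpoints differing. A transition pairing two cut edges of $v$ produces in $G$ a cut of size $<2k$, hence $0$ by (1), and in $G_1$ a self-loop at the vertex $b^\ast$ that replaces $B$, hence also $0$. For the remaining transitions, contracting $B$ commutes with splitting off $v$ (as $v\notin B$), so $(G_\tau)_1\cong (G_1)_\tau$ and $(G_\tau)_2\cong G_2$; applying the inductive hypothesis to $G_\tau$ (which has $\abs{A\setminus\set{v}}=\abs{A}-1\ge 2$ and still a $2k$-cut) gives $\Martin(G_\tau)=k!\,\Martin((G_1)_\tau)\,\Martin(G_2)$. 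Summing over all transitions and using $\sum_\tau\Martin((G_1)_\tau)=\Martin(G_1)$ yields the claim.

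Finally, (2) is the genuinely harder part, and I would derive it from the splitting-off theorem for edge-connectivity. A transition at $v$ is exactly a complete splitting-off of all $2k$ edges at $v$, followed by deleting the now-isolated $v$, and it preserves $2k$-regularity. By Lov\'asz's complete splitting-off theorem (obtained by iterating Mader's lemma on admissible pairs), since $\deg v=2k$ is even and $G$ is $2k$-edge-connected, there is a transition $\tau$ at $v$ keeping all local edge-connectivities among the remaining vertices at least $2k$; equivalently $G_\tau$ again has no cut of size $<2k$, and in particular no self-loop (a self-loop at $u$ would drop $u$'s connectivity below $2k$). Then $G_\tau$ is $2k$-regular, $2k$-edge-connected, and has one vertex fewer, so $\Martin(G_\tau)>0$ by induction (base case: the three-vertex graph has $\Martin=1$), whence $\Martin(G)\ge\Martin(G_\tau)>0$ by non-negativity of the terms in \eqref{eq:Martin-recursion}. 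The main obstacle is precisely this positivity: unlike (1) and (3) it cannot be read off from following one naive transition, and it rests on the connectivity-preservation input; the real work is in checking that the hypotheses of the splitting-off theorem hold and that its conclusion simultaneously forbids both small cuts and self-loops in $G_\tau$.
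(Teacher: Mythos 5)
Your proposal is correct and follows essentially the same route as the paper: part (1) by induction on the vertex count, tracking how a small cut can only shrink under a transition; part (2) via Lov\'asz's splitting-off lemma to produce a transition preserving $2k$-edge-connectivity (hence also excluding self-loops); and part (3) by expanding at a vertex on one side of the cut and discarding the transitions that pair two cut edges, using part (1) on one graph and the self-loop rule on the other. The only cosmetic difference is in (3), where you stop the induction at a two-vertex side and count the $k!$ surviving matchings directly, whereas the paper pushes the induction down to a one-vertex side using the convention $\Martin(K_2^{[2k]})=1/k!$ — which also covers the trivial-cut case that your hypothesis $\abs{A},\abs{B}\ge 2$ excludes.
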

These results are proved in \cref{lem:not-2kcon-zero}, \cref{cor:2kcon-nonzero}, and \cref{lem:2k-cut-product}. In \cref{sec decomp and bound} we prove further results, including a lower bound on non-zero Martin invariants and a characterization of precisely when the lower bound is achieved in terms of total decomposability (\cref{prop:tot-decomp}) and we prove uniqueness of these decompositions (\cref{thm:decomp-unique}).

Note that if one side of the cut in case 3 is just a single vertex, then $G_1=G$.  This is consistent with the full definition of $\Martin(G)$, \cref{def:Martin-as-diff}, where we find that $\Martin(G_2)=1/k!$ for the graph $G_2$ consisting of two vertices with $2k$ edges between them.

The computation of $\Martin(G)$ therefore reduces to graphs $G$ that are \emph{cyclically $(2k+2)$-edge connected}.\footnote{In our context of $2k$-regular graphs, the notions of cyclically $(2k+2)$-edge connected, essentially $(2k+2)$-edge connected, and internally $(2k+2)$-edge connected are all equivalent since the properties of each side of the cut having at least one cycle, having at least one edge, and having at least two vertices are equivalent for $2k$-edge cuts of $2k$ regular graphs.} This means that $G$ is $2k$-edge connected, and requires in addition that the only edge cuts of size $2k$ are the trivial ones that separate a single vertex from the rest of the graph. It is easy to see that such graphs cannot have any 1- or 2-vertex cuts. Our following two results (proven in \cref{sec:product+twist}) give relations from 3- and 4-vertex cuts:
\begin{itemize}
    \item Let $G$ be $2k$-regular and $2k$-edge connected, with a 3-vertex cut. Then the two sides of the cut can be turned into $2k$-regular graphs $G_1$ and $G_2$, by adding edges (but no self-loops) between the cut vertices. Furthermore, we have 
    \begin{equation}\label{eq:martin vertex product}
    \Martin(G)=\Martin(G_1)\cdot\Martin(G_2).
    \end{equation}
    \item  If two regular graphs of even degree are obtained from each other by a double transposition on one side of a 4-vertex cut, then their Martin invariants agree:
    \begin{equation}\label{eq:twist}
        \Martin\Bigg(\ \Graph[0.3]{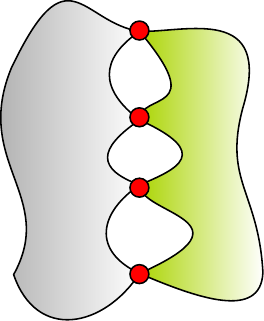}\ \Bigg)=\Martin\Bigg(\ \Graph[0.3]{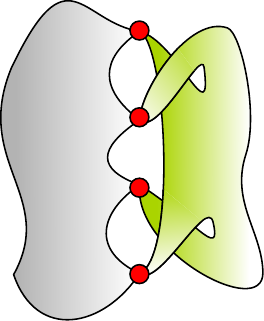}\ \Bigg).
    \end{equation}
\end{itemize}
These operations on graphs are called \emph{product} and \emph{twist} in \cite{Schnetz:Census}. For an example of the product identity, consider the 3-vertex cut highlighted in red (the 3 vertices stacked vertically) in the following graph:
\begin{equation*}
    \Martin\Bigg(\ \Graph[0.4]{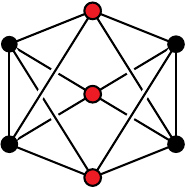}\ \Bigg)
    = \Martin\Bigg(\ \Graph[0.4]{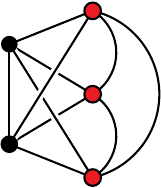}\ \Bigg) \cdot \Martin\Bigg(\ \Graph[0.4]{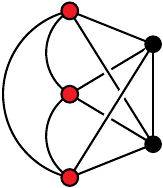}\ \Bigg)
    = \Martin(K_5)\cdot \Martin(K_5)
    = 36.
\end{equation*}

\subsection{Spanning trees and diagonals}
The Martin invariant has another combinatorial description, in terms of spanning trees. Given a graph $G$ with $n$ vertices, a \emph{spanning tree} $T$ of $G$ is a subset of the edges of $G$ such that $T$ has $n-1$ elements and forms a connected subgraph. We prove (see \cref{sec diags}):
\begin{theorem}\label{thm:Martin=ST-partitions}
    For every vertex $v$ in a $2k$-regular graph $G$ with at least $3$ vertices, the number of partitions of the edge set of $G\setminus v$ into $k$ spanning trees is equal to $\Martin(G)$.
    In particular, the number of such partitions is independent of $v$.
\end{theorem}
It is worth emphasizing that this is quite surprising: the Martin polynomial, by its nature, counts circuit partitions, but it is not at all apparent at the outset that it can also be used to count spanning tree partitions.\footnote{Thanks to a referee for drawing attention to the unexpected nature of this connection.}

As an example of the theorem, consider the $4$-regular complete graph $G=K_5$. Then the 6 edges of $K_5\setminus v\cong K_4$ allow precisely $\Martin(K_5)=6$ partitions into pairs of spanning trees:
\begin{equation}\label{eq:STP-K4}\begin{aligned}
    \Graph[0.55]{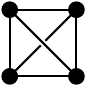}&
    \ =\ \Graph[0.55]{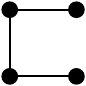}\,\cup\,\Graph[0.55]{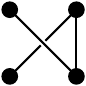}
    \ =\ \Graph[0.55]{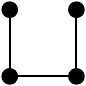}\,\cup\,\Graph[0.55]{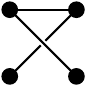}
    \ =\ \Graph[0.55]{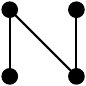}\,\cup\,\Graph[0.55]{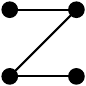}
    \\ &
    \ =\ \Graph[0.55]{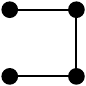}\,\cup\,\Graph[0.55]{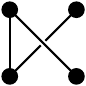}
    \ =\ \Graph[0.55]{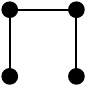}\,\cup\,\Graph[0.55]{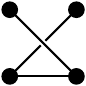}
    \ =\ \Graph[0.55]{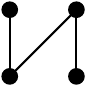}\,\cup\,\Graph[0.55]{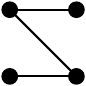}
\end{aligned}\end{equation}
The theorem implies that $\Martin(G)$ is determined by the cycle matroid of $G\setminus v$.\footnote{
    However, the Martin invariant is \emph{not} determined by the Tutte polynomial (see \cref{sec:phi4}).
} We can also view the theorem as a relation between $\Martin(G)$ and certain coefficients of a polynomial:
\begin{definition}\label{def coeff extr}
    Given a formal power series $F=\sum_{n} c_{n} x^n$ in several variables $x_i$, we write $\takecoeff{x^n}F=c_n$ for the coefficient of any monomial $x^n=\prod_i x_i^{n_i}$.
\end{definition}
The \emph{Symanzik} or \emph{dual Kirchhoff} polynomial\footnote{This is sometimes called \emph{graph polynomial}, e.g.\ in \cite{BrownSchnetz:K3phi4,BlochEsnaultKreimer:MotivesGraphPolynomials}.} $\PsiPol_G$ of any graph $G$ is the sum
\begin{equation}\label{eq:psipol}
    \PsiPol_G = \sum_{T}
    \prod_{e\notin T} x_e
    \in \Z[x_1,\ldots,x_m]
\end{equation}
over all spanning trees $T$ of $G$. The variables $x_e$ are labelled by the $m$ edges of $G$. Applied to the complement of a vertex in a $2k$-regular graph $G$, we can restate \cref{thm:Martin=ST-partitions} as
\begin{equation*}
    \takecoefff{\big}{x_1^{k-1}\cdots x_m^{k-1}} \PsiPol_{G\setminus v}^k
    = k!\cdot \Martin(G)
\end{equation*}
(see \cref{thm martin as coeff} for the proof).
We will use this identity to show the following symmetry under planar duality (see \cref{sec duality}):
\begin{theorem}\label{thm:Martin-duality}
    Suppose that $G_1$ and $G_2$ are 4-regular graphs that have vertices $v_1,v_2$ such that $G_1\setminus v_1$ is a planar dual of $G_2\setminus v_2$. Then $\Martin(G_1)=\Martin(G_2)$.
\end{theorem}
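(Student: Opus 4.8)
The plan is to reduce the claim, through the spanning-tree reformulation already established, to an elementary fact about planar duality. Since $G_1$ and $G_2$ are $4$-regular we are in the case $k=2$, so writing $H_i=G_i\setminus v_i$, \cref{thm:Martin=ST-partitions} identifies $\Martin(G_i)$ with the number of unordered partitions $E(H_i)=T\sqcup T'$ of the edge set into two spanning trees; equivalently, the restated coefficient identity gives $2\,\Martin(G_i)=\takecoeff{x_1\cdots x_N}\PsiPol_{H_i}^2$ with $N=\abs{E(H_i)}$. It therefore suffices to prove that a connected plane graph $H$ and its dual $H^\dual$ admit equally many partitions of their edge sets into two spanning trees.

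Next I would record the combinatorial input. Fix a plane embedding of $H$, let $H^\dual$ be its dual, and let $e\mapsto e^\dual$ be the canonical bijection $E(H)\to E(H^\dual)$ pairing each edge with the dual edge crossing it. Matroid duality (the cycle matroid of $H^\dual$ is the bond matroid of $H$) yields the standard statement that $T\subseteq E(H)$ is a spanning tree of $H$ if and only if $\set{e^\dual : e\in E(H)\setminus T}$ is a spanning tree of $H^\dual$. Euler's formula moreover shows that $H^\dual$ has the same number $n$ of vertices and the same number $2(n-1)$ of edges as $H$, so that two-spanning-tree partitions of $H^\dual$ exist with trees of the matching size $n-1$. (As an alternative mechanism one could instead invoke the polynomial relation $\PsiPol_H=\prod_e x_e\cdot\PsiPol_{H^\dual}(1/x)$ and extract the squarefree coefficient, but the spanning-tree bijection is more transparent.)

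The key step is then immediate. Suppose $E(H)=T\sqcup T'$ is a partition into two spanning trees, and write $A^\dual=\set{e^\dual : e\in A}$. Because $T'=E(H)\setminus T$, duality converts ``$T$ is a spanning tree of $H$'' into ``$T'^{\dual}$ is a spanning tree of $H^\dual$'', and symmetrically ``$T'$ is a spanning tree of $H$'' into ``$T^\dual$ is a spanning tree of $H^\dual$''. Hence, under the edge identification $e\leftrightarrow e^\dual$, the very same partition $\set{T,T'}$ is a two-spanning-tree partition of $H^\dual$, the two trees merely exchanging roles. This sets up a bijection between the two-spanning-tree partitions of $H$ and those of $H^\dual$, so the two counts coincide and $\Martin(G_1)=\Martin(G_2)$.

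This argument carries no substantial obstacle; the only delicate point is connectivity, which the hypothesis supplies automatically: the geometric dual of a plane graph is always connected, so if $H_1$ is a planar dual of $H_2$ then both are connected and each is the other's dual. Once connectivity is in hand, matroid duality applies verbatim even in the presence of multiple edges, self-loops, or bridges, so the bijection of the key step needs no case distinctions—for instance a self-loop of $H$ is a bridge of $H^\dual$, and both obstruct two-spanning-tree partitions, making both counts zero. I would finally note that the count is an isomorphism invariant of $H^\dual$, independent of the chosen embedding, which is exactly what lets the theorem be phrased abstractly in terms of one graph being ``a planar dual'' of the other.
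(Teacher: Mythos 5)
Your proposal is correct and follows essentially the same route as the paper: the paper proves this (as the $r=1$ case of its duality corollary in the duality section) by combining the diagonal-coefficient formula $\takecoeff{x_1\cdots x_m}\KirchPol_{G\setminus v}^{2}=2\,\Martin(G)$ with the identity $\PsiPol_{H^\dual}=\KirchPol_{H}$, which is exactly the spanning-tree/complement bijection you use in combinatorial form. Your explicit bijection $\set{T,T'}\mapsto\set{T^\dual,T'^\dual}$ (with the two trees swapping roles) is just the combinatorial restatement of that polynomial identity, so the two arguments coincide in substance.
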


\subsection{Residues}\label{sec residues intro}
By linearity, the residues $\Martin(G)\mod q$, for a fixed integer $q$, also solve the recursion \eqref{eq:Martin-recursion}. Two such residues coincide with invariants of graphs that have been studied previously: the \emph{graph permanent} and the \emph{$c_2$-invariant}.  These two invariants are quite different in character but both were introduced because they have key symmetries of Feynman period integrals and so should carry some quantum field theoretic information despite their purely combinatorial nature.

Let $G$ be a $2k$-regular graph and choose two vertices $v\neq w$ such that $v$ has no self-loop. Call the remaining vertices $1,\ldots,n$ and pick an orientation of the $kn$ edges of $G\setminus v$. Their $n\times kn$ incidence matrix $A$ has entries $A_{ie}=\pm 1$ if vertex $i\in\set{1,\ldots,n}$ is the head or tail of edge $e$, and $A_{ie}=0$ otherwise. Stacking $k$ copies of $A$, we get a square $kn\times kn$ matrix, denoted $A^{[k]}$. It was shown in \cite{CrumpDeVosYeats:Permanent} that the square of the permanent
\begin{equation*}
    \Perm(G)=\perm A^{[k]} \in \Z
\end{equation*}
has a well-defined residue modulo $k+1$: this residue does not depend on the edge orientations or the choices of $v$ and $w$. When $k+1$ is composite, then $\Perm(G)\equiv 0 \mod (k+1)$, so the permanent is interesting only when $k+1$ is prime. We prove (see \cref{thm:Martin-Perm2})
\begin{theorem}\label{intro:thm-Martin-Perm}
    Let $G$ be a $2k$-regular graph $G$ with $n$ vertices. If $k+1$ is prime, then
    \begin{equation}
        \Martin(G)\equiv (-1)^{n-1}\Perm(G)^2 \mod (k+1).
    \end{equation}
\end{theorem}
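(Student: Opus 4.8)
Write $p=k+1$ and assume $p$ is an odd prime; the degenerate case $p=2$ (that is, $k=1$) can be checked directly. Let $N=kn$ be the number of edges of $G\setminus v$. The plan is to reduce both $\Martin(G)$ and $\Perm(G)^2$ to point counts over $\F_p$ and then match them. For the left‑hand side, the coefficient identity above gives $k!\,\Martin(G)=\takecoeff{\prod_e x_e^{k-1}}\PsiPol_{G\setminus v}^k$, and Wilson's theorem $k!=(p-1)!\equiv-1\pmod p$ turns this into $\Martin(G)\equiv-\takecoeff{\prod_e x_e^{k-1}}\PsiPol_{G\setminus v}^k$. I then convert the coefficient into a sum over $(\F_p^\times)^N$ using the power‑sum identity $\sum_{t\in\F_p^\times}t^m=-1$ if $(p-1)\mid m$ and $0$ otherwise. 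Since $p-1=k$, the target exponent $k-1\equiv-1\pmod k$ is isolated with no contamination (each variable has degree at most $k$ in $\PsiPol_{G\setminus v}^k$, so only $k-1$ survives), and $x_e^{-(k-1)}=x_e$ on $\F_p^\times$ yields $\Martin(G)\equiv-\sum_{x\in(\F_p^\times)^N}\bigl(\prod_e x_e\bigr)\PsiPol_{G\setminus v}(x)^k\pmod p$.

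For the permanent, expressing $\perm A^{[k]}$ as the coefficient of $\prod_e x_e$ in $\prod_{j}(\sum_e A_{je}x_e)^k$ and grouping the $k$ identical copies of each incidence row yields $\Perm(G)=(k!)^n Q$, where $Q=\sum_f\prod_e A_{f(e),e}$ sums over assignments $f$ of each edge to an incident vertex with every vertex receiving exactly $k$ edges. Wilson's theorem gives $(k!)^n\equiv(-1)^n$, hence $\Perm(G)\equiv(-1)^n Q$ and $\Perm(G)^2\equiv Q^2$; moreover $\prod_e A_{f(e),e}A_{g(e),e}=(-1)^{\#\{e : f(e)\neq g(e)\}}$ exhibits $Q^2$ as independent of the orientation, matching the known well‑definedness. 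Now $Q$ is exactly the top‑degree coefficient $\takecoeff{\prod_j z_j^{k}}\prod_e (A^\Transpose z)_e$ of the homogeneous degree‑$N$ polynomial $\prod_e(A^\Transpose z)_e$ in the $n$ vertex variables $z_j$, and applying the power‑sum identity once more (all exponents are now forced to be $k$, so again nothing else contributes) collapses it to the clean point count $\Perm(G)\equiv\Sigma\pmod p$, where $\Sigma:=\sum_{z\in\F_p^n}\prod_e(A^\Transpose z)_e$, so that $\Perm(G)^2\equiv\Sigma^2$.

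It remains to prove the bridge $\sum_{x\in(\F_p^\times)^N}\bigl(\prod_e x_e\bigr)\PsiPol_{G\setminus v}(x)^k\equiv(-1)^n\Sigma^2\pmod p$; together with the two reductions this gives $\Martin(G)\equiv-(-1)^n\Sigma^2=(-1)^{n-1}\Perm(G)^2$, as desired. The difficulty is that the two sides live on different variable sets—the edge variables on the left and, through $\Sigma$, the vertex variables on the right—so the core of the argument is to pass between them. The tool is the Kirchhoff/Cauchy–Binet identity $\det\bigl(A\,\Diag(u)\,A^\Transpose\bigr)=\sum_{S}(\det A_S)^2\prod_{e\in S}u_e$, whose squared minors $(\det A_S)^2$ are the origin of the square on the right, together with its consequence $\PsiPol_{G\setminus v}(x)=\bigl(\prod_e x_e\bigr)\det\bigl(A\,\Diag(x)^{-1}A^\Transpose\bigr)$. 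Substituting this and using $(\prod_e x_e)^p=\prod_e x_e$ and $\det(\,\cdot\,)^{p-1}=\mathbf 1[\det\neq 0]$ on $\F_p^\times$ rewrites the left‑hand side as a weighted count of the weights $u=x^{-1}$ for which $A\,\Diag(u)\,A^\Transpose$ is nonsingular over $\F_p$. I expect the remaining step—evaluating this count and identifying it with $(-1)^n\Sigma^2$, i.e.\ showing that the $k$‑fold diagonal of the edge polynomial collapses, precisely because $p=k+1$, onto the two‑fold vertex sum coming from the permanent—to be the main obstacle. By contrast, the two reductions above are routine consequences of Wilson's theorem and the power‑sum identity.
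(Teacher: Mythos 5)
Your two preliminary reductions are essentially sound, modulo small bookkeeping: the number of edges of $G\setminus v$ is $k(n-2)$, not $kn$ (the sign in your first conversion survives only because $k=p-1$ is even, which is worth saying); and the claim that ``all exponents are forced to be $k$'' in $\Sigma$ needs the extra observation that exponents which are positive multiples of $p-1$, are bounded by the vertex degrees $\leq 2k$, and sum to the total degree $k\cdot(\#\text{vertices})$ must indeed all equal $k$. The real problem is that the proof is not finished: the ``bridge''
\begin{equation*}
  \sum_{x\in(\F_p^\times)^N}\Big(\prod_e x_e\Big)\PsiPol_{G\setminus v}(x)^k\equiv(-1)^n\Sigma^2 \pmod p
\end{equation*}
\emph{is} the content of the theorem, and you explicitly leave it open. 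Undoing your power-sum conversions, it is exactly the congruence $(-1)^n\Perm(G)^2\equiv\takecoeff{x_1\cdots x_{N}}\KirchPol_{G\setminus v}^{k}\bmod p$, i.e.\ \eqref{eq:perm2-kirchhoff}. The paper proves this in \cref{sec:perm2-from-det} by two nontrivial inputs: the MacMahon master theorem (to identify $(\perm A^{[k]})^2$ with the diagonal coefficient of $\perm(L^{[k\times k]})$ for $L=A\Diag(x)A^{\Transpose}$) and Glynn's congruence $\perm(L^{[k\times k]})\equiv(k!)^n(\det L)^k\bmod p$, which rests on the fact that simultaneous row operations on all $k$ copies of a row preserve the permanent modulo $k+1$. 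Your proposed tool --- replacing $(\det L)^{p-1}$ by the indicator $\mathbf{1}[\det L\neq 0]$ --- discards exactly the information Glynn's theorem exploits, and I do not see how to recover $\Sigma^2$ from that indicator-weighted count; so the ``main obstacle'' you flag is not a technicality but the missing theorem.

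Two ways to close the gap: either reproduce the MacMahon/Glynn argument (which also makes the point-count detour unnecessary, since one can compare diagonal coefficients directly), or abandon the diagonal route and verify instead that $(-1)^{n-1}\Perm(G)^2$ satisfies the Martin recursion \eqref{eq:Martin-recursion} modulo $p$ --- the paper's elementary first proof. There one expands $\Perm(G)^2$ and $\Perm(G_\tau)^2$ along the rows and columns attached to the chosen vertex and checks \eqref{eq:Perm2-recurrence} by counting, for each pair of $k$-subsets $a,b$ of the incident edges, the $s!\,(k-s)!$ transitions admitting both as orientations, using $s!\,(k-s)!\,(-1)^{s}\equiv(-1)^k k!\equiv-1\bmod p$ by Wilson's theorem. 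As it stands, your write-up is a correct reformulation of the statement plus an honest admission that the key identity is unproven.
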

This implies that the permanent inherits all identities of the Martin invariant mentioned above, explaining all properties of the permanent obtained in \cite{CrumpDeVosYeats:Permanent} using different methods.

The permanent of a graph was enriched in \cite{Crump:ExtendedPermanent} to an infinite sequence of residues. Let $G^{[r]}$ denote the $2kr$-regular graph obtained from $G$ by replacing each edge with a bundle of $r$ parallel edges. The \emph{extended graph permanent} \cite{Crump:ExtendedPermanent} consists of the permanents of $G^{[r]}$ where $kr+1$ is prime. For example, for a 4-regular graph $G$, it is the sequence
\begin{equation*}
    \left(\Perm(G),\Perm(G^{[2]}),\Perm(G^{[3]}),\Perm(G^{[5]}),\ldots\right) \in 
    \F_3\times \F_5\times \F_7\times \F_{11} \times \cdots
\end{equation*}
of residues in $\F_p=\Z/p\Z$ of the permanents of $G^{[(p-1)/2]}$ for all odd primes $p$.
\begin{definition}
    The \emph{Martin sequence} of a regular graph $G$ is the list of integers $\Martin(G^{[r]})$, indexed by all positive integers $r$. We denote this sequence as
\begin{equation}\label{eq:Martin-seq}
    \Martin(G^{\bullet}) = \Big(\Martin(G^{[1]}),\Martin(G^{[2]}),\Martin(G^{[3]}),\ldots\Big).
\end{equation}
\end{definition}
By \cref{thm:Martin-Perm2}, the Martin sequence determines the extended graph permanent.\footnote{Without squaring, the residues $\Perm(G^{[r]})\mod (rk+1)$ are only defined up to sign. These signs depend on the choices of $v$ and $w$ and the orientation of the edges.}

The $c_2$-invariant was introduced in \cite{Schnetz:Fq}. It is a sequence of residues $c_2^{(q)}(G)\in\Z/q\Z$ indexed by all prime powers $q$ and defined by
\begin{equation*}
    \PointCount{\PsiPol_G}{q} \equiv q^2\cdot c_2^{(q)}(G) \mod q^3
\end{equation*}
where $\PointCount{\PsiPol_G}{q}=\big|\big\{x\in\F_q^N\colon \PsiPol_G(x)=0\big\}\big|$ denotes the number of points on the hypersurface $\set{\PsiPol_G=0}$ over the finite field $\F_q$ with $q$ elements. In particular, the restriction of $c_2$ to primes ($q=p$) determines a sequence of residues in $\F_2\times \F_3\times \F_5\times\F_7\times\cdots$.  We will only consider $c_2$ restricted to primes as this is where our techniques are effective.

In many examples, the $c_2$-invariant at primes ($q=p$) appears to be congruent to coefficients of modular forms \cite{BrownSchnetz:ModularForms,Schnetz:GeometriesPQFT}, which has been established rigorously in a few cases \cite{BrownSchnetz:K3phi4,Logan:NewCYphi4}.  This connection hints at the deep geometric and number theoretic nature of the $c_2$-invariant.

We prove (see \cref{thm:c2-martin}):
\begin{theorem}\label{thm:c2-Martin-intro}
    For every $4$-regular graph $G$ with at least 6 vertices, and every prime $p$, and any vertex $v$,
    \begin{equation*}
        c_2^{(p)}(G\setminus v)\equiv \frac{\Martin(G^{[p-1]})}{3p} \mod p.
    \end{equation*}
\end{theorem}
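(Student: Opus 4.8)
The plan is to connect two explicit combinatorial/arithmetic expressions: the $c_2$-invariant as a point-count residue, and the Martin invariant as a coefficient of a power of the Kirchhoff polynomial. Let $H=G\setminus v$, so $H$ has $N$ edges. The key prior result to leverage is the coefficient formula
\begin{equation*}
    \takecoefff{\big}{x_1^{k-1}\cdots x_N^{k-1}} \PsiPol_{H}^k = k!\cdot \Martin(G),
\end{equation*}
which for $4$-regular $G$ ($k=2$) and after the edge-replacement $G\mapsto G^{[p-1]}$ reads as a diagonal coefficient of a high power of a Kirchhoff polynomial. The first step is therefore to work out what $\Martin(G^{[p-1]})$ is in terms of $\PsiPol_{H}$. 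Replacing each edge of $G$ by $p-1$ parallel edges turns $H$ into $H^{[p-1]}$, whose degree is $2k(p-1)=4(p-1)$, so the relevant power is $k'=2(p-1)$ and the target monomial has each of the $N(p-1)$ edge-variables raised to $k'-1=2p-3$. Since parallel edges carry the same variable, I would group variables and express $\Martin(G^{[p-1]})$ as
\begin{equation*}
    \takecoefff{\big}{x_1^{2(p-1)^2-1}\cdots x_N^{2(p-1)^2-1}} \PsiPol_{H}^{2(p-1)} \Big/ (2(p-1))! \, ,
\end{equation*}
reading off a single diagonal coefficient of $\PsiPol_H^{2(p-1)}$ after accounting for how a variable shared by $p-1$ parallel edges accumulates its exponent.

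Second, I would bring in the standard Chevalley--Warning / finite-field machinery that computes point counts modulo $p$. Over $\F_p$, one has $\sum_{t\in\F_p} t^m \equiv -1$ if $(p-1)\mid m$ and $m>0$, and $\equiv 0$ otherwise. Writing the point count of $\set{\PsiPol_H=0}$ via the indicator $1-\PsiPol_H^{p-1}$ and summing over all $x\in\F_p^N$ gives
\begin{equation*}
    \PointCount{\PsiPol_H}{p} \equiv -\sum_{x\in\F_p^N}\PsiPol_H(x)^{p-1} \mod p.
\end{equation*}
Expanding $\PsiPol_H^{p-1}$ and summing each monomial over $\F_p^N$, only those monomials survive in which every exponent is a positive multiple of $p-1$; each such monomial contributes $(-1)^N$ times its coefficient. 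This selects exactly the coefficients of $\PsiPol_H^{p-1}$ whose exponents are all divisible by $p-1$, and among these the leading contribution to the $c_2$-invariant (the $q^2$ part) comes from the extremal diagonal term where every exponent equals $2(p-1)$, matching the degree count $\deg \PsiPol_H^{p-1} = (p-1)(N-n+1)$ against the available variables. The technical heart is to show that the coefficient of the all-$(p-1)$-times-$2$ monomial in $\PsiPol_H^{p-1}$ is, modulo $p$, proportional to the coefficient of the all-$(2p-3)$ monomial in $\PsiPol_H^{2(p-1)}$ that computes the Martin invariant.

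Third, I would perform the bookkeeping that produces the exact constant $3p$ and the relation to $c_2^{(p)}$ at the level of $p^2$ rather than $p$. Since $c_2^{(p)}$ is defined by $\PointCount{\PsiPol_H}{q}\equiv q^2 c_2^{(q)} \bmod q^3$, and the crude Chevalley--Warning congruence above only controls things mod $p$, I expect to need a refinement that extracts the $p^2$-coefficient: this is where one uses that $\PsiPol_H$ has a denominator-reduction or Dodgson-identity structure forcing the $\F_p$-point count to vanish to order $p^2$ (a known feature of the $c_2$-invariant for graphs with at least $2\dim$ edges in excess, guaranteed here by the hypothesis of at least $6$ vertices). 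Concretely, I would use the identity relating $c_2^{(p)}$ to the coefficient of $\PsiPol_H^{p-1}$ established in the work of Brown--Schnetz on the $c_2$-invariant, which already expresses $c_2^{(p)}(H)$ modulo $p$ as (a constant times) a diagonal-type coefficient of a power of $\PsiPol_H$. Matching that expression against the Martin coefficient formula from the first step, and tracking the factorials $(2(p-1))!$ versus $(p-1)!$ and the sign $(-1)^N$ through Fermat/Wilson reductions, should deliver the stated factor $1/(3p)$; the $3$ arises as $k!=2!$ combined with a further factor from comparing the power $2(p-1)$ to $p-1$, and the division by $p$ reflects that the Martin coefficient is itself divisible by $p$ in this range.

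The main obstacle I anticipate is precisely this last matching of normalizations and the passage from a mod-$p$ statement to the correct mod-$p$ statement about $\Martin(G^{[p-1]})/(3p)$: one must show that $\Martin(G^{[p-1]})$ is divisible by $3p$ (so the right-hand side is a well-defined residue) and that the quotient agrees with $c_2^{(p)}$, not merely up to an uncontrolled unit. I would handle this by proving the divisibility combinatorially or via the factorial factors in the coefficient formula, and then pinning down the unit by testing the formula on a single known example (such as $G\setminus v = K_4$, where both sides are computable) to fix the constant unambiguously. The interplay between the two different powers of $\PsiPol_H$ — the power $p-1$ natural for $c_2$ and the power $2(p-1)$ natural for the Martin sequence — is the crux, and I would reconcile them using the multinomial relationship between coefficients of $\PsiPol_H^{a}$ and $\PsiPol_H^{2a}$ restricted to the relevant diagonal, together with Lucas' theorem to control binomial coefficients modulo $p$.
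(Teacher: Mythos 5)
Your high-level plan --- express $\Martin(G^{[p-1]})$ as a diagonal coefficient of $\PsiPol_{G\setminus v}^{2(p-1)}$ and compare with a coefficient formula for $c_2^{(p)}$ --- is indeed the route the paper takes for $p\neq 3$, but the proposal has a genuine gap at its technical heart. The Chevalley--Warning expansion $\PointCount{\PsiPol_H}{p}\equiv-\sum_x\PsiPol_H(x)^{p-1}$ computes the point count modulo $p$, i.e.\ the $c_0$ part, not $c_2$; and since $\PsiPol_H$ is multilinear, $\PsiPol_H^{p-1}$ has every exponent at most $p-1$, so your ``extremal diagonal term where every exponent equals $2(p-1)$'' does not exist in that power. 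To reach $c_2$ one must first reduce several variables via Dodgson/spanning-forest-polynomial identities (the ``third denominator''), which requires a $3$-valent vertex of $G\setminus v$; the paper encapsulates this in the identity $c_2^{(p)}(G\setminus v)\equiv-\takecoeff{x^{p-1}}\big(\Phi^{\{a,b\},\{c\}}_{G\setminus\{v,w\}}\PsiPol_{G\setminus\{v,w\}}\big)^{p-1}\bmod p$. The bridge from there to the diagonal of $\PsiPol^{2(p-1)}$ is the real content: writing $f_0\PsiPol_{G\setminus v}=y_1y_2+y_1y_3+y_2y_3$ with $y_i=f_0x_i+f_i$ at a $3$-valent vertex, expanding the trinomial to the power $2(p-1)$, and showing that modulo $p^3$ only the three index triples $(0,r,r),(r,0,r),(r,r,0)$ survive. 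That is where the factor $3$ comes from --- it is emphatically not ``$k!=2!$ combined with a further factor,'' and no amount of Wilson/Lucas bookkeeping on the factorials will produce it without this structural input. (Your exponent bookkeeping in step one is also off: after grouping parallel edges the correct diagonal is $\takecoeff{x_1^{p-1}\cdots x_m^{p-1}}\KirchPol_{G\setminus v}^{2(p-1)}$ with a prefactor $((p-1)!)^m/(2(p-1))!$, not the monomial $x_i^{2(p-1)^2-1}$ you wrote.)

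A second, independent gap is the prime $p=3$. The paper's diagonal identity reads $\takecoeff{x^r}\PsiPol_G^{2r}\equiv-3p^2c_2^{(p)}(G)\bmod p^3$, and for $p=3$ the explicit $3$ makes the right-hand side $0\bmod p^3$, so the congruence carries no information; one would need it modulo $p^4$, which this method does not give. The paper handles $p=3$ by an entirely separate induction: a combinatorial interpretation of $c_2$ as counting ordered partitions of the edges of $(p-1)$-duplicated graphs into spanning trees and marked spanning forests, a proof that this count satisfies the Martin recurrence exactly (via the Pr\"ufer-code bijection extended to coloured partitions), and explicit base cases on $6$ vertices. Your proposal, even once repaired, would prove the theorem only for $p\neq 3$, and it does not flag this. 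Finally, ``pinning down the unit by testing one example'' is only legitimate after one has proved that the two sides are proportional by a constant independent of the graph; that universality is exactly what the missing structural argument supplies.
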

This shows that the $c_2$-invariant at primes is also determined by the Martin sequence. 
Putting these two relations side by side we have that the extended graph permanent and the $c_2$-invariant capture different residues of the Martin invariants: If $G$ is 4-regular, then
\begin{align*}
    \Martin(G^{[r]})&\equiv (-1)^{n-1} \Perm(G^{[r]})^2 \mod (2r+1)
    \quad\text{if $2r+1$ is prime, and}\\
    \Martin(G^{[r]})&\equiv 3p\cdot c_2^{(p)}(G\setminus v)\mod p^2
    \quad\text{if $p=r+1$ is prime.}
\end{align*}

For the $c_2$ invariant this operation of removing a vertex from a 4-regular graph is quite important.  If $H$ is obtained from a 4-regular graph $G$ by removing a vertex we say that $H$ is a \emph{decompletion} of $G$ and $G$ is the \emph{completion} of $H$.  Note that the completion is unique but a graph may have many non-isomorphic decompletions.  The extended graph permanent also implicitly involves the decompletion since it works with the incidence matrix of $G\setminus v$ rather than with $G$ directly.

The $c_2$ invariant, like the extended graph permanent, inherits the symmetries of the Martin invariant from \cref{thm:c2-martin}, some of which were not previously known (e.g.\ the twist). Most interestingly, although the left-hand side of the theorem involves $G\setminus v$, note that the right-hand side depends only on $G$ but not $v$.  Thus \cref{thm:c2-Martin-intro} implies:
\begin{corollary}[Completion invariance at primes]\label{cor c2 completion}
For all primes $p$ and any two vertices $v,w$ of a 4-regular graph $G$, we have
$c_2^{(p)}(G\setminus v)\equiv c_2^{(p)}(G\setminus w) \mod p$.
\end{corollary}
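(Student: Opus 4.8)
The plan is to read the corollary off directly from \cref{thm:c2-Martin-intro}, whose right-hand side has already been arranged so as not to mention the deleted vertex. Applying that theorem once to $v$ gives
\begin{equation*}
    c_2^{(p)}(G\setminus v)\equiv \frac{\Martin(G^{[p-1]})}{3p} \mod p,
\end{equation*}
and applying it again with $w$ in place of $v$ gives
\begin{equation*}
    c_2^{(p)}(G\setminus w)\equiv \frac{\Martin(G^{[p-1]})}{3p} \mod p.
\end{equation*}
The two right-hand sides are the \emph{same} expression, since $\Martin(G^{[p-1]})$ and $p$ refer only to the completion $G$ and not to any choice of vertex. Transitivity of $\equiv$ through this common value then yields $c_2^{(p)}(G\setminus v)\equiv c_2^{(p)}(G\setminus w)\mod p$, which is exactly the assertion.

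Because \cref{thm:c2-Martin-intro} carries all of the arithmetic, there is no genuine difficulty in the deduction itself; the whole content of the corollary is the remark that the Martin sequence of $G$ is blind to which vertex one decompletes at, whereas the graphs $G\setminus v$ and $G\setminus w$ generally are not even isomorphic. The one point demanding separate attention is that \cref{thm:c2-Martin-intro} is stated only for $G$ with at least $6$ vertices, a hypothesis the corollary omits. I would close this gap by a direct inspection of the finitely many $4$-regular graphs on at most $5$ vertices: for the case of principal interest, $G=K_5$, vertex-transitivity gives $K_5\setminus v\cong K_5\setminus w$ outright, so the congruence holds because $c_2^{(p)}$ is an isomorphism invariant, and the remaining small multigraphs are handled the same way or by explicit computation. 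Thus the only ``obstacle'' is this bookkeeping of the small-vertex cases, and the corollary follows.
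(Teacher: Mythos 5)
Your argument is exactly the paper's: the corollary is read off from \cref{thm:c2-Martin-intro} by observing that its right-hand side depends only on the completion $G$ and not on which vertex is deleted, so the two congruences share a common value. Your extra remark about the ``at least $6$ vertices'' hypothesis of \cref{thm:c2-Martin-intro} is legitimate bookkeeping that the paper glosses over, and your proposed finite check of the small $4$-regular multigraphs closes it adequately.
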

This was conjectured by Brown and Schnetz in 2010, see \cite[Conjecture~4]{BrownSchnetz:K3phi4}. It is indicative of a relation between the highest weight parts of the cohomology groups of the two graph hypersurfaces $\{\PsiPol_{G\setminus v}=0\}$ and $\{\PsiPol_{G\setminus w}=0\}$, as studied in \cite{BlochEsnaultKreimer:MotivesGraphPolynomials,BrownDoryn:FramingsForGraphHypersurfaces}.

Over the last thirteen years, this conjecture was attacked by techniques from algebraic geometry, combinatorics and physics, but with very little to show for these efforts until the proof of the conjecture for $p=2$, first only for graphs $G$ with an odd number of vertices \cite{Yeats:SpecialCaseCompletion} and then for all graphs \cite{HuYeats:c2p2} by the second author with Hu.  These proofs used the combinatorial interpretation of diagonal coefficients that will figure prominently in \cref{sec diags} and \cref{sec c2} along with some intricate involutions.  The connection with the Martin invariant proved in the present paper provides the missing piece to finally settle this conjecture from over a decade ago, for all primes.
In full, \cite[Conjecture~4]{BrownSchnetz:K3phi4} also covers $c_2$ at prime powers $q=p^n$ with $n>1$, and these cases remain open.

\subsection{Feynman integrals}

The product and twist identities discussed above (see also \cref{prop:3vertex-cut,prop:twist} for precise statements) were introduced for 4-regular graphs by Schnetz \cite{Schnetz:Census}. However, these identities were discovered not for the Martin invariant, but for a completely different function of graphs:  The \emph{period} of $G$ is defined as the (Feynman) integral
\begin{equation}\label{eq:period}
    \Period(G) = \left( \prod_{e=2}^m \int_0^{\infty} \td x_e \right)
    \left. \frac{1}{\PsiPol_{G}^2}\right|_{x_1=1}
\end{equation}
whenever this converges \cite{BlochEsnaultKreimer:MotivesGraphPolynomials}. The motivation to study these numbers comes from quantum field theory, where they give contributions to the beta function that drives the running of the coupling constant.
Systematic calculations of these periods go back to \cite{BroadhurstKreimer:KnotsNumbers,Broadhurst:5loopsbeyond}.

Schnetz showed in \cite{Schnetz:Census} that if $G$ is 4-regular and cyclically 6-connected, then for any vertex of $v$, the period $\Period(G\setminus v)$ is well-defined (convergent) and independent of the choice of $v$. The period therefore defines a function
\begin{equation*}
    G \mapsto \Period(G\setminus v)
\end{equation*}
on the set of cyclically 6-connected 4-regular graphs. It satisfies the product, twist and duality identities \cite{Schnetz:Census}, which suggests a relation between the period and the Martin invariant. Indeed, we find two such connections.

\begin{figure}
    \centering
    \includegraphics[width=0.65\textwidth]{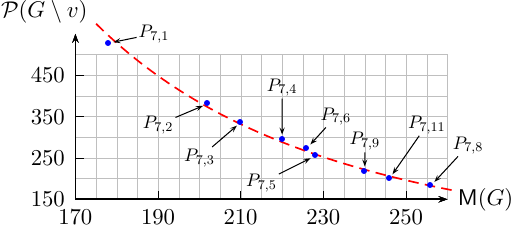}
    \caption{The periods and Martin invariants of primitive $\phi^4$ graphs \cite{Schnetz:Census} with $9$ vertices (7 loops decompleted). The dashed curve is a fit $\Period(G\setminus v)\approx 32150\cdot \Martin(G)^{-3.015}$.}%
    \label{fig:phi4-martin-period}%
\end{figure}
Firstly, the Martin invariant and the period are correlated numerically. Graphs with smaller $\Martin(G)$ tend to have larger periods. This approximate relation is illustrated in \cref{fig:phi4-martin-period} for the 4-regular graphs with $9$ vertices. As the plot shows, the period can be estimated surprisingly well from $\Martin(G)$ and a simple power law. It is remarkable that the period integral \eqref{eq:period} is so strongly correlated with the number of partitions into spanning trees (\cref{thm:Martin=ST-partitions}). Furthermore, the period, and hence the Martin invariant, are also correlated to the Hepp bound \cite{Panzer:HeppBound}, which is a tropical version of the period.

Secondly, even though the correlation just mentioned is only approximate, we find that the Martin \emph{sequence} can detect precisely when two graphs have the same period. We calculated Martin invariants for a large number of 3- and 4-regular graphs (\cref{sec:data}). Comparing with the known periods \cite{PanzerSchnetz:Phi4Coaction,BorinskySchnetz:RecursivePhi3} and the Hepp bound, supports the following conjecture (we state a 3-regular version in \cref{con:Martin-perfect-phi3}).
\begin{conjecture}\label{con:Martin-perfect}
    Cyclically 6-connected 4-regular graphs $G_1$ and $G_2$ have equal period $\Period(G_1\setminus v_1)=\Period(G_2\setminus v_2)$ if and only if they have equal Martin sequences $\Martin(G_1^{\bullet})=\Martin(G_2^{\bullet})$.
\end{conjecture}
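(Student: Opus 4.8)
Since \cref{con:Martin-perfect} equates a transcendental invariant (the period) with a combinatorial one (the Martin sequence), I would attack the two implications separately, routing both through the shared symmetries and the arithmetic shadows of the period. The organizing observation is that product, twist and duality preserve the period \cite{Schnetz:Census} \emph{and}, by \eqref{eq:martin vertex product}, \eqref{eq:twist} and \cref{thm:Martin-duality}, the Martin sequence. Write $\sim_{\mathrm{op}}$ for the equivalence these three moves generate, $\sim_{\Period}$ for equality of periods, and $\sim_{\Martin}$ for equality of Martin sequences; the content above is exactly the already-established inclusions $\sim_{\mathrm{op}}\subseteq\;\sim_{\Period}$ and $\sim_{\mathrm{op}}\subseteq\;\sim_{\Martin}$, while the conjecture is the assertion $\sim_{\Period}=\;\sim_{\Martin}$. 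The cleanest sufficient target would be to prove that both relations in fact coincide with $\sim_{\mathrm{op}}$; but since sporadic period coincidences not visibly generated by these moves are not ruled out, the robust plan is to prove each of the two implications directly.

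For $\sim_{\Period}\Rightarrow\;\sim_{\Martin}$, the plan is to reduce to a completeness statement for period relations: if every coincidence $\Period(G_1\setminus v_1)=\Period(G_2\setminus v_2)$ among cyclically $6$-connected $4$-regular graphs is generated by product, twist and duality, then, because the Martin sequence is invariant under precisely these moves, equal periods force equal Martin sequences. The honest difficulty is that no such classification of period coincidences is known; establishing it would already resolve a central open problem in the arithmetic of Feynman integrals, and if additional, as-yet-unidentified relations exist one would have the further burden of showing that they too are respected by the Martin sequence.

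For the converse $\sim_{\Martin}\Rightarrow\;\sim_{\Period}$, I would exploit the arithmetic already extracted here. By \cref{thm:c2-Martin-intro} the Martin sequence determines $c_2^{(p)}(G\setminus v)$ for every prime $p$, and by \cref{thm:Martin-Perm2} it determines the extended graph permanent, so $\sim_{\Martin}$ refines equality of the $c_2$-invariant at all primes. The strategy is then to upgrade this to equality of periods through the motivic picture of the hypersurface $\{\PsiPol_{G\setminus v}=0\}$ \cite{BlochEsnaultKreimer:MotivesGraphPolynomials,BrownSchnetz:ModularForms}: the period is a period of the relevant (relative) cohomology, whose reductions modulo $p$ are governed by the same diagonal coefficients that define the Martin sequence. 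If one could show that these point counts over all $\F_p$ pin down the graph motive, and that the period is a faithful motivic period of it, then equal Martin sequences would yield equal periods. The main obstacle is exactly this reconstruction step: $c_2$ captures only the leading coefficient of the point count, periods are not in general determined by point-counting data, and a Grothendieck-type faithfulness statement is far out of reach.

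In view of these obstacles I do \emph{not} expect a complete proof with present techniques; the realistic programme is to establish supporting containments that narrow the gap and explain the numerical evidence. Concretely, I would first try to prove that $\sim_{\Martin}$ refines equality of the Hepp bound \cite{Panzer:HeppBound}, the tropical period: the Hepp bound is governed by the tropicalization $\PsiTrop_{G\setminus v}$ and ought to be recoverable from the coarse asymptotics of the Martin sequence, which would both account for the observed correlation and isolate the remaining content as a purely motivic reconstruction problem. For now the conjecture rests on the extensive agreement of Martin sequences, periods and the Hepp bound across the computed $3$- and $4$-regular census \cite{PanzerSchnetz:Phi4Coaction,BorinskySchnetz:RecursivePhi3}, and I would treat that data as the principal evidence rather than as a proof.
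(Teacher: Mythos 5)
This statement is a conjecture, and the paper does not prove it: its support consists of the shared symmetry identities (product, twist, duality, completion) together with the computational census in the final section, where Martin sequences are compared against known periods and Hepp bounds for all cyclically 6-connected 4-regular graphs up to 11 loops. Your proposal correctly recognizes that no proof is available, correctly identifies the two established inclusions (the known period identities preserve both the period and the Martin sequence), and correctly locates the evidence in the data --- so in that sense you have faithfully reproduced the paper's actual stance. Where your programme diverges from the authors' is in the direction $\sim_{\Martin}\Rightarrow\sim_{\Period}$: you propose a motivic reconstruction of the period from the $c_2$ and permanent residues, which you rightly flag as hopeless with current technology since $c_2$ only sees the leading term of point counts. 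The authors instead hint at a much more direct analytic route: because the Martin sequence consists of diagonal coefficients of powers of the Kirchhoff polynomial, it is P-recursive (\cref{cor p recursive}), and they assert that the period itself can be recovered as an Ap\'ery limit of this recursive sequence, to appear in forthcoming work. That route, if it works, would give $\sim_{\Martin}\Rightarrow\sim_{\Period}$ unconditionally and without any motivic input, so it is worth knowing that the integer sequence plausibly determines the real number directly rather than only its arithmetic shadows. Your suggestion that the Martin sequence should refine the Hepp bound is consistent with the paper's data (where $\Martin(G^{[2]})$ and the Hepp bound induce the same partition up to 11 loops) but is likewise unproven there.
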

Via \cref{thm:c2-Martin-intro}, our conjecture implies that the $c_2$ invariants of graphs with equal periods agree at all primes. This is expected, also at prime powers, according to \cite[Remark~2.11]{Schnetz:Fq}. Our conjecture thus implies the case $q=p$ of \cite[Conjecture~5]{BrownSchnetz:K3phi4}.

\Cref{con:Martin-perfect} also suggests that the period (and Hepp bound) of a graph should be computable from the Martin sequence. Indeed, this is proved in forthcoming work with Francis Brown. One important observation towards this is that Martin sequences are P-recursive (equivalently, their generating series are D-finite), as we prove in \cref{cor p recursive}.

There is another, in fact classical, connection between the Martin \emph{polynomial} and quantum field theory: the Martin polynomial appears as the symmetry factor $S_N$ in $O(N)$ vector models. Only very recently, however, also the Martin \emph{invariant} was noted to have a physical meaning in this context, namely it emerges at $N=-2$ for loop-erased random walks \cite{WieseFedorenko:FieldErasedRW,ShapiraWiese:ExactLERW}.  We summarize this connection in \cref{sec:O(n)}.

\subsection{Origin}
In fact, it was in this physics context that we discovered the Martin invariant:

The need to compile a table of symmetry factors for the calculation \cite{KompanietsPanzer:phi4eps6} triggered an, at first experimental, study of structural properties of the polynomial $S_N$ and its correlation with the period $\Period(G)$. Trying different values for $N$ produced the remarkable discovery in \cref{fig:phi4-martin-period}, namely that for $N=-2$, not only is the correlation strong, but the pairs $\set{P_{7,4},P_{7,7}}$ (see \cref{fig:duality}) and $\set{P_{7,5},P_{7,10}}$ of graphs with the same periods also have exactly (and not just approximately) equal symmetry factors $S_N$. Even graphs with unknown periods supported this connection, via the Hepp bounds computed in \cite{Panzer:HeppBound} which are a very good proxy for periods. The paper \cite{BouchetGhier:BetaIso4} suggested focusing on the recursive \cref{def:Martin-intro} which we used to prove the relations between $\Martin(G)$, $\Perm(G) \mod 3$, and $c_2^{(2)} \mod 2$ (the argument after \cref{ex:c2(2)}). We then applied the idea of duplicating edges from \cite{Crump:ExtendedPermanent} to upgrade the Martin invariant to the Martin sequence, and found the proofs of \cref{intro:thm-Martin-Perm,thm:c2-Martin-intro} given in \cref{sec:Perm2-recurrence,sec 3 inv recurrence}.
Only in hindsight did we note that a simpler version of our proof technique relates the Martin invariant to spanning tree partitions (\cref{thm:Martin=ST-partitions}), triggered by Francis Brown's suggestion to relate $c_2$ to the diagonal coefficients of the Symanzik polynomial---rather than the third denominator.

\subsection{Conclusions}

We unified the theory of several invariants of graphs that were motivated by Feynman integrals in particle physics, by relating them to a single, combinatorial invariant counting spanning tree partitions:
\begin{itemize}
    \item All known invariants of graphs that respect the identities of the Feynman period (i.e.\ the $c_2$ invariant, the extended graph permanent, the Hepp bound, and the period integral itself) are determined by the Martin sequence.
\end{itemize}
In this paper, we prove this for the extended permanent and for the $c_2$-invariant at primes. The determination of $c_2$ at prime powers, the Hepp bound, and the period integral, all in terms of only the Martin sequence, are work in progress with Francis Brown.

Secondly, we have learned that this integer sequence has a rich theory and satisfies a number of interesting relations. Most importantly:
\begin{itemize}
    \item The number of spanning tree partitions fulfils Martin's recursion at a vertex.
\end{itemize}
Guided by applications in physics, we focused on regular graphs, where this observation can be interpreted as a relation between spanning tree partitions and circuit partitions (via the Martin polynomial). However, note that the sequence of spanning tree partitions, i.e.\ the diagonal of the Symanzik polynomial, is defined for arbitrary graphs---not just for decompletions of regular graphs. Our proof of \cref{thm martin as coeff} shows directly a recurrence relation for this diagonal, without any assumptions on regularity. Hence in hindsight, we could \emph{define} the Martin sequence as this diagonal, and much will generalize to arbitrary graphs, or even matroids (see \cref{rem:perm-non-regular,rem:duality-matroids}).

Apart from generalizing degrees, the Martin polynomial is also defined for \emph{directed} Eulerian graphs, where transitions are restricted to pair incoming half-edges with outgoing half-edges. For regular digraphs with $k$ incoming and $k$ outgoing edges at every vertex, this polynomial is divisible by $x(x+1)\cdots(x+k-2)$, see \cite[T\'eor\`eme~3.2]{LasVergnas:Martin}. By analogy, the derivative at $x=2-k$ will have similar structural properties as our unoriented case (derivative at $x=4-2k$). It is tempting to wonder if this invariant of digraphs is related to counting partitions into arborescences, i.e.\ the diagonal of an oriented version of the Symanzik polynomial, and hence oriented versions of permanents and period integrals.

Another natural question is to identify the minimal and maximal values that $\Martin(G)$ can take, for primitive regular graphs with a fixed degree and loop order; and to characterize the graphs that achieve those values. \Cref{thm:zigzag-min} for 4-regular graphs and \cref{conj:prism-min} for 3-regular graphs suggest that the minimizers are highly structured and could perhaps be identified for all degrees. There also appears to be a unique graph at each degree and loop order that \emph{maximizes} the Martin invariant, see \cref{tab:Martin3-max,tab:Martin4-max}. Those are harder to describe, but the same graphs appear to also maximize the number of connected sets \cite{CambieGoedgebeurJokken:MaxConReg}, and they are also the unique minimizers of the Hepp bound.

The unexplained identities $\Martin(P_{8,30}^{[r]})=\Martin(P_{8,36}^{[r]})$ and $\Martin(P_{8,31}^{[r]})=\Martin(P_{8,35}^{[r]})$ 
of the graphs in \cref{fig:830-836} remain an intriguing open problem. We verified these equalities for $r\leq 8$, but we cannot explain why these pairs of graphs share the same number of spanning tree partitions. The currently known set of Martin sequence identities (twist, product, duality, and Fourier-split) does not connect these graphs. This strongly suggests that there exist further mechanisms that relate Martin sequences (and thus Feynman periods), and it would be interesting to discover such new symmetries of Feynman integrals.

\medskip

The sections of this paper are largely independent of each other.

\subsection{Acknowledgments}
Erik Panzer dedicates this work to his father, Michael Erhard Panzer, 1955--2022. In admiration, sorrow, gratitude, and loving memory.

We thank Simone Hu for discussions throughout this project and for confirming at an early stage that the available data for the $c_2$-invariant at prime $p=2$ is consistent with the relation to the Martin invariant. We further thank Simone for her work in refining the approach to the $c_2$-invariant via counting partitions of edges into trees and forests \cite{Hu:Master}, which is also a key part of our work here. We also thank Francis Brown for suggesting the connection between $c_2$ and the diagonal coefficient, which allows for a simplification (\cref{lem:c2-from-diag}) of the proof of \cref{thm:c2-Martin-intro} for primes $p\neq 3$.  Finally, we thank the two referees for their very careful reading and thoughtful suggestions.

Erik Panzer is funded as a Royal Society University Research Fellow through grant {URF{\textbackslash}R1{\textbackslash}201473}.
He would like to thank the Isaac Newton Institute for Mathematical Sciences, Cambridge, for support and hospitality during the programme \href{https://www.newton.ac.uk/event/ncn2/}{\emph{New connections in number theory and physics}} where work on this paper was undertaken. This event was supported by EPSRC grant no.\ EP/R014604/1.
Karen Yeats is supported by an NSERC Discovery grant and the Canada Research Chairs program.
Both authors are grateful for the hospitality of Perimeter Institute where part of this work was carried out.
Research at Perimeter Institute is supported in part by the Government of Canada through the
Department of Innovation, Science and Economic Development Canada and by the province of
Ontario through the Ministry of Economic Development, Job Creation and Trade. This research
was also supported in part by the Simons Foundation through the Simons Foundation Emmy
Noether Fellows Program at Perimeter Institute.
Both authors are grateful to Max Planck Institute for Mathematics in Bonn for its hospitality and financial support when final minor edits were implemented.

Many figures in this paper were created using {\JaxoDraw} \cite{BinosiTheussl:JaxoDraw}.

\section{The Martin polynomial}\label{sec martin poly}

Let $G$ be an undirected graph where every vertex has even degree. Multiple edges between the same pair of vertices are allowed. At each vertex, we may also have any number of self-loops, that is, edges connecting the vertex to itself.

It will be helpful to think of graphs in terms of half-edges. The edges of the graph define a perfect matching of the half-edges of the graph, each half-edge is incident with exactly one vertex, and every vertex $v$ has a \emph{corolla} of incident half-edges. The set of the other halves of the edges incident to $v$ will be called the \emph{$h$-neighbourhood} of $v$. The size of this set is always equal to the degree of $v$; in particular, a self-loop at $v$ contributes both of its half-edges to the $h$-neighbourhood of $v$. The neighbourhood in the traditional graph theory sense, namely the set of vertices adjacent to $v$, can be recovered from the $h$-neighbourhood by replacing each half-edge in the $h$-neighbourhood with the vertex incident to that half-edge. When helpful for clarity we will call the graph theory notion of neighbourhood the \emph{$v$-neighbourhood}. Whenever there are self-loops or multiedges at $v$, then the size of the $v$-neighbourhood is less than the degree of $v$.

\begin{definition}\label{def transition}
    A \emph{transition} at a vertex $v$ is a perfect matching of the half-edges of the $h$-neighbourhood of $v$. We write $\trans_G(v)$ or simply $\trans(v)$ for the set of all transitions at $v$.
    A \emph{transition system} $P$ is a choice of one transition $P(v)\in\trans(v)$ for every vertex of $G$.
\end{definition}
Note that the notion of transition could equally well be defined as a perfect matching of the corolla at $v$.  The benefit of working with the $h$-neighbourhood is that for $v$ with no self-loops, and $\tau$ a transition at $v$, the graph $G_\tau$ (see the beginning of \cref{sec intro}) can be defined very naturally to be the graph obtained from $G$ by removing $v$ and its corolla and matching the half-edges of the $h$-neighbourhood of $v$ into new edges according to $\tau$. 

Since the degree of every vertex $v$ is even, the sets $\trans(v)$ are non-empty and transition systems exist. Concretely, a $v$ of degree $d$ has precisely
\begin{equation}\label{eq:trans(v)}
    \abs{\trans(v)}
    =
    (d-1)!!=\frac{d!}{(d/2)!\cdot 2^{d/2}}
\end{equation}
transitions, and the graph $G$ has
$\prod_v (d_v-1)!!$
transition systems. A transition $\tau\in\trans(v)$ can be interpreted as a traffic sign that forces anyone who enters $v$ from half-edge $e$, to leave by half-edge $e'$ where $e'$ is the partner of $e$ in the matching $\tau$. Traversed in this way, a transition system $P$ determines a partition of the edges of $G$ into circuits (closed trails). We write $\CC{P}$ for the number of these circuits. For an example see \cref{fig:transitions}. A detailed description of circuits in terms of half-edges can be found on \cite[page~181]{Traldi:TransitionMatroidIntroduction}.
\begin{figure}
    \centering
    $ P\in\set{
    \Graph[0.55]{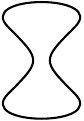},\quad
    \Graph[0.55]{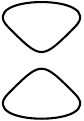},\quad
    \Graph[0.55]{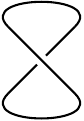}
    }
    $
    \qquad
    $ \MartinPol\left(\Graph[0.6]{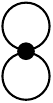},x\right) = 1 + (x-2) + 1 = x$
    \caption{The three circuit partitions of a 2-rose and its Martin polynomial.}%
    \label{fig:transitions}%
\end{figure}
\begin{definition}\label{def:Martin-pol}
    The \emph{Martin polynomial}\footnote{In the very degenerate case where $G$ has no edges, $\MartinPol(G,x)=1/(x-2)$ is not a polynomial.} is a sum over all transition systems, given by
    \begin{equation}\label{eq:martin-pol}
        \MartinPol(G,x) = \sum_{P} (x-2)^{\CC{P}-1}
        \in\Z[x].
    \end{equation}
\end{definition}
See \cref{tab:MartinPol} for some examples. This polynomial was introduced by Martin \cite{Martin:EnumerationsEuleriennes} and extended by Las Vergnas \cite{LasVergnas:Martin}.\footnote{Our notation follows \cite{LasVergnas:Martin}; Martin writes $P(G,x)$.} In more recent literature, results for the Martin polynomial are often stated in terms of the \emph{circuit partition polynomial}, defined as\footnote{In \cite{Bollobas:EvalCPP,ArratiaBollobasSorkin:InterlaceNew} this is denoted $r_G(x)$, we use $\CPP(x)$ following \cite{EllisMonaghan:NewMartin,EllisMonaghan:idCPP,EllisMonaghan:MartinMisc}.}
\begin{equation}\label{eq:circuit-pol}
    \CPP(G,x) = x \cdot \MartinPol(G,x+2)
    = \sum_{P} x^{\CC{P}}
    \in\Z[x].
\end{equation}
The coefficients of $\CPP(G,x)=\sum_k r_k(G) x^k$ count the edge partitions into $k$ circuits. For example, the linear coefficient $r_1(G)=\CPP'(G,0)=\MartinPol(G,2)$ counts the Eulerian circuits of $G$. The precise notion of circuit here is based on half-edges, see \cite[p.~262]{Bollobas:EvalCPP} or \cite{EllisMonaghan:MartinMisc} for details. Further evaluations of these polynomials can be found in \cite{Bollobas:EvalCPP,EllisMonaghan:NewMartin}.
There is also an integral representation \cite{MooreRussel:GraphIntegral} for the values of $J(G,x)$ at positive integers $x$.

It is clear from \cref{def:Martin-pol} that $\MartinPol(G,x)$ solves the recursion \eqref{eq:Martin-recursion}. Note however that if one applies a transition $\tau$ that pairs both ends of a self-loop, then $G_{\tau}$ will have a \emph{free loop}, that is, a loop not connected to any vertex, which must still be counted. This can be avoided with the following lemma.
\begin{lemma}\label{lem:martin-self-loop-rec}
    If $e$ is a self-loop at a vertex $v$ which has even degree $d\geq 2$, then
\begin{equation}\label{eq:martin-self-loop-rec}
    \MartinPol(G,x)=(x+d-4)\cdot\MartinPol(G\setminus e, x).
\end{equation}
\end{lemma}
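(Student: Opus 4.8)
The plan is to prove the identity by sorting the transition systems of $G$ according to the transition system they induce on $G\setminus e$, and tracking how the self-loop $e$ affects both the matching at $v$ and the circuit count. I will use the corolla description of transitions (equivalent, as noted above, and cleaner for self-loops): write $a$ and $b$ for the two half-edges of $e$ in the corolla at $v$, and let $c_1,\ldots,c_{d-2}$ be the remaining half-edges at $v$; since $d$ is even, there are an even number $d-2$ of these. Removing $e$ leaves $v$ with corolla $\{c_1,\ldots,c_{d-2}\}$ and changes nothing else, so a transition system of $G\setminus e$ is the same data as a transition system of $G$ except at the vertex $v$.

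First I would classify the transitions $\tau\in\trans_G(v)$ by the partner of $a$. If $\tau$ pairs $a$ with $b$, then the remaining pairs form a transition $\tau'$ of $G\setminus e$, and tracing circuits shows that $e$ becomes a free loop, a circuit on its own; hence the lifted system has exactly one more circuit than the induced one. If instead $\tau$ pairs $a$ with some $c_i$ and $b$ with some $c_j$ (necessarily $i\neq j$), then replacing these two pairs by the single pair $\{c_i,c_j\}$ yields a transition $\tau'$ of $G\setminus e$; here the loop is spliced into a circuit (entering through $c_i$, across $e$, out through $c_j$) without changing the number of circuits, only inserting the edge $e$ into an existing one.

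The key counting step is to show that each transition $\tau'$ of $G\setminus e$ arises from exactly one transition of the first kind and exactly $d-2$ of the second kind. The first-kind lift is unique (adjoin the pair $\{a,b\}$). For the second kind, a lift is obtained by choosing one of the $(d-2)/2$ pairs $\{c_i,c_j\}$ of $\tau'$ to be split by the loop and then one of the two ways to attach $a$ and $b$ to its endpoints, giving $2\cdot(d-2)/2=d-2$ lifts. Combining with the circuit bookkeeping, if $\tau'$ together with the fixed transitions at the other vertices gives a system $P'$, then its first-kind lift contributes $(x-2)^{\CC{P'}}$ to $\MartinPol(G,x)$ and its $d-2$ second-kind lifts contribute $(d-2)(x-2)^{\CC{P'}-1}$. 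Summing over all $P'$ and factoring out $(x-2)^{\CC{P'}-1}$ produces the scalar $(x-2)+(d-2)=x+d-4$, which is exactly the claimed factor times $\MartinPol(G\setminus e,x)$.

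I expect the only genuine obstacle to be this second counting step: verifying that $\tau\mapsto\tau'$ is precisely $(d-2)$-to-one on second-kind transitions and that splicing the loop truly leaves the circuit count unchanged. Both are purely local statements at $v$ and can be settled by directly following the half-edge matching, so the main work is careful bookkeeping rather than a conceptual difficulty. A useful sanity check is the boundary case $d=2$, where there are no $c_i$, the second kind is empty, and the factor correctly degenerates to $x-2$; here the degenerate convention $\MartinPol=1/(x-2)$ for an edgeless graph (noted above) keeps the identity consistent.
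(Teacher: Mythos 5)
Your proof is correct and follows essentially the same route as the paper: both group the transition systems of $G$ by the induced transition system of $G\setminus e$, observe that each $\tau'$ lifts to one transition pairing the two half-edges of $e$ (gaining a circuit) and to $2\cdot(d/2-1)=d-2$ interlaced transitions (preserving the circuit count), and factor out $(x-2)+(d-2)=x+d-4$. The only addition is your $d=2$ sanity check, which is a harmless bonus.
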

\begin{proof}
    Given any transition $\tau'\in\trans_{G'}(v)$ of $G'=G\setminus e$, we can pair the half-edges $e',e''$ of $e$ to obtain a transition $\tau=\tau'\cup\set{e',e''}\in\trans_G(v)$ in $G$. We can also pick one of the $d/2-1$ pairs $\set{i,j}\in\tau'$ and interlace $e$ in either direction, to obtain two further transitions $\tau_{ij},\tau_{ji}\in\trans_G(v)$ of the form $\tau_{ij}=\tau'\setminus \set{\set{i,j}}\cup\set{\set{i,e'},\set{j,e''}}$. These constructions produce every transition in $\trans_G(v)$ exactly once.

    We can thus group the sum \eqref{eq:martin-pol} by transition systems of $P'$ of $G'$. For each $P'$, we find one transition system $P$ of $G$ with $\CC{P}=\CC{P'}+1$, and $d-2$ with $\CC{P}=\CC{P'}$. Hence we conclude the claim, with the prefactor $(x-2)+(d-2)$.
\end{proof}
\begin{table}
    \centering
    \begin{tabular}{lccccccc}
    \toprule
        $G$ & $\Graph[0.6]{2rosev}$ & $\Graph[0.6]{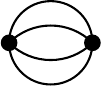}$ & $\Graph[0.6]{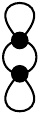}$ & $\Graph[0.6]{K3-222}$ & $\Graph[0.6]{K3-113}$ & $\Graph[0.6]{K4-112}$ & $\Graph[0.6]{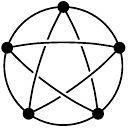}$ \\
    \midrule
        $\MartinPol(G,x)$ & $x$ & $3x$ & $x^2$ & $x^2+6x$ & $3x^2$ & $5x^2+12x$ & $15x^2+36x$ \\
    \bottomrule
    \end{tabular}
    \caption{Martin polynomials of some 4-regular graphs.}%
    \label{tab:MartinPol}%
\end{table}
We can iterate \cref{lem:martin-self-loop-rec} to remove all self-loops \cite[Proposition~4.1]{LasVergnas:Martin}. Hence we can compute $\MartinPol(G,x)$ without ever creating free loops.
As a special case, we find
\begin{equation}\label{eq:Martin-rose}
    \MartinPol(C_1^{[k]}, x)=x(x+2)\cdots(x+2k-4)
\end{equation}
for the graph $C_1^{[k]}$ consisting of a single vertex with $k>1$ self-loops. This graph is called the \emph{$k$-rose}. Induction via \eqref{eq:Martin-recursion} with a rose as the base case shows that $\MartinPol(G,x)$ is divisible by the right-hand side of \eqref{eq:Martin-rose} whenever $G$ has a vertex of degree $2k$ or more.
\begin{corollary}\label{cor:mpol-loop-zero}
    If $G$ is $2k$-regular, has at least two vertices, and at least one self-loop, then $\MartinPol(G,x)$ is divisible by $(x+2k-4)^2$.
\end{corollary}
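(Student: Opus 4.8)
The plan is to produce the two required factors of $(x+2k-4)$ from two different places: one by peeling off the self-loop with \cref{lem:martin-self-loop-rec}, and a second from the divisibility remark recorded just before the corollary (the one deduced by induction from \eqref{eq:Martin-rose}).

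First I would choose a vertex $v$ carrying a self-loop $e$, which exists by hypothesis. Since $G$ is $2k$-regular, $v$ has even degree $d=2k\geq 2$, so \cref{lem:martin-self-loop-rec} applies and gives
\begin{equation*}
    \MartinPol(G,x) = (x+2k-4)\cdot\MartinPol(G\setminus e,x).
\end{equation*}
This already supplies one factor, so it remains only to check that $(x+2k-4)$ also divides $\MartinPol(G\setminus e,x)$. The key observation is that, although $G\setminus e$ is no longer regular (the vertex $v$ has dropped to degree $2k-2$), every other vertex still has degree $2k$, because deleting a self-loop only affects the degree at $v$. Here the hypothesis that $G$ has at least two vertices is decisive: it guarantees a vertex $w\neq v$ of $G\setminus e$ still has degree $2k$. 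By the remark following \eqref{eq:Martin-rose}, the presence of a vertex of degree $\geq 2k$ forces $x(x+2)\cdots(x+2k-4)$ to divide $\MartinPol(G\setminus e,x)$; in particular $(x+2k-4)$ does. Combining this with the displayed identity yields $(x+2k-4)^2 \mid \MartinPol(G,x)$.

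The only genuine obstacle is making sure the second factor is really available, that is, that removing the self-loop does not destroy all full-degree vertices — and this is precisely what the two-vertex hypothesis secures. It is also what makes the statement sharp: for the $k$-rose, where $G$ has a single vertex, \eqref{eq:Martin-rose} exhibits $(x+2k-4)$ as a factor of $\MartinPol$ exactly once, so a second vertex is genuinely needed for the stronger conclusion. Beyond this point the argument reduces entirely to quoting \cref{lem:martin-self-loop-rec} and the induction already established before the corollary, so no further computation is required.
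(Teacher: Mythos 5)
Your proof is correct and follows essentially the same route as the paper: one factor of $(x+2k-4)$ from \cref{lem:martin-self-loop-rec} applied to the self-loop, and a second from the divisibility of $\MartinPol(G\setminus e,x)$ by $x(x+2)\cdots(x+2k-4)$, which holds because a vertex of degree $2k$ survives the deletion of the self-loop. Your additional remark explaining why the two-vertex hypothesis is sharp (the $k$-rose only yields one factor) is a nice touch but not needed.
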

\begin{proof}
    The self-loop $e$ gives one factor of $x+2k-4$ directly in \eqref{eq:martin-self-loop-rec}. Since $G\setminus e$ still has at least one vertex of degree $2k$, we get a second factor from $\MartinPol(G\setminus e,x)$.
\end{proof}
The Martin polynomial has numerous further interesting properties, however for our purposes we only use the above. Mainly, we conclude that the Martin invariant as defined in the introduction, \cref{def:Martin-intro}, makes sense: the number $\Martin(G)$ is uniquely determined, because we can obtain it from $\MartinPol(G,x)$.  Using this insight we give a new direct definition of the Martin invariant and then observe its equivalence with \cref{def:Martin-intro}. 
\begin{definition}\label{def:Martin-as-diff}
    The \emph{Martin invariant} of a $2k$-regular graph ($k\geq1$) is the derivative
\begin{equation}\label{eq:Martin-as-diff}
    \Martin(G)=\frac{4\cdot(-1)^{k}}{(k-2)!\cdot (2k)!}\MartinPol'(G, 4-2k)
    \in \Q
    .
\end{equation}
\end{definition}
Since taking a derivative is a linear map, any derivative of $\MartinPol$ inherits the recursion \eqref{eq:Martin-recursion}. Note also that $\MartinPol'(G,4-2k)$ vanishes for every $2k$-regular graph with a self-loop and more than one vertex, by \cref{cor:mpol-loop-zero}. Therefore, this derivative fulfils 1.\ and 3.\ of \cref{def:Martin-intro}. Our choice of normalization (part 2.\ of \cref{def:Martin-intro}) determines the prefactor in \eqref{eq:Martin-as-diff}. Indeed, \eqref{eq:Martin-as-diff} gives
\begin{equation}\label{eq:Martin-rose-dipole}
    \Martin(C_1^{[k]}) = \frac{2^{k}}{(2k)!}
    \quad\text{and}\quad
    \Martin(C_2^{[k]})
    = \frac{1}{k!}
\end{equation}
for the rose and dipole graphs. The dipole $C_2^{[k]}=K_2^{[2k]}$ is a $2k$-fold edge, and its Martin invariant reduces to $(2k-1)!!$ times that of the rose by \eqref{eq:Martin-recursion} and \eqref{eq:trans(v)}. For the rose itself, use \eqref{eq:Martin-rose}.
At three vertices, the only $2k$-regular graph without self-loops is the $k$-fold triangle, and expanding it at a vertex shows that, as claimed in part 2.\ of \cref{def:Martin-intro},
\begin{equation}\label{eq:Martin-triangle}
    \Martin(C_3^{[k]})
    = k!\cdot \Martin(C_2^{[k]})
    = 1.
\end{equation}
Note that only $k!$ transitions produce a dipole; all other transitions produce self-loops and can hence be discarded for the computation of the Martin invariant.
\begin{remark}
Our normalization \eqref{eq:Martin-triangle} simplifies the 3-vertex cut product of \cref{prop:3vertex-cut}. This maximizes the analogy between $\Martin$ and the Feynman period $\Period$, because the latter also fulfils this product without extra factors \cite[Theorem~2.1]{Schnetz:Census}. The downside is that we move outside of integers for the Martin invariants of $C_1^{[k]}$ and $C_2^{[k]}$ (though all graphs with at least 3 vertices have integer Martin invariant) and have an explicit factor $k!$ in the $2k$-edge cut product \eqref{eq:Martin-edge-product}, which however has no direct analogue for Feynman periods, because graphs with such a cut have divergent Feynman integrals.
\end{remark}
The Martin invariant was introduced for 4-regular graphs in \cite{BouchetGhier:BetaIso4}. In that paper, it is called $\beta_4$, and a different normalization is used. If $G$ has $n$ vertices, then
\begin{equation}
    \Martin(G)=\beta_4(G)\cdot 2^{n-3}.
\end{equation}
The normalization of $\beta_4$ ensures that the totally decomposable graphs (see \cref{prop:tot-decomp}) are precisely the graphs with $\beta_4(G)=1$, in analogy to how the series-parallel graphs are precisely the graphs with $\beta(G)=1$ for Crapo's $\beta$ invariant \cite[Proposition~8]{Crapo:HigherInvariant}. But in contrast to $\Martin(G)$, for most graphs the number $\beta_4(G)$ is not an integer.

Apart from the paper \cite{BouchetGhier:BetaIso4}, the only other result on the Martin invariant that we could find is \cite[Theorem~4.6]{EllisMonaghan:DiffMartin}. For 4-regular graphs, it states that
\begin{equation}\label{eq:Monaghan}
    6\Martin(G) = \MartinPol'(G,0) = \frac{\CPP'(G,-2)}{-2}
    = \sum_C r_1(G\setminus C) \cdot (-2)^{\kappa(C)-1}
\end{equation}
is a sum over all subsets $C$ of edges such that every vertex of $G$ has degree 0 or 2 in $C$. In other words, $C$ is a vertex-disjoint union of cycles. We write $\kappa(C)$ for the number of these cycles, and $r_1(G)=\MartinPol(G,2)=\CPP'(G,0)$ denotes the number of Eulerian circuits.

\subsection{Vector model}\label{sec:O(n)}
In theoretical physics, the Martin polynomial appears as the \emph{symmetry factor} $S_G$ in the $O(N)$ vector model. For details, see \cite[Chapter~6]{KleinertSchulteFrohlinde:CriticalPhi4}. In brief, this is the theory of $N$ real scalar fields $\phi=(\phi_1,\ldots,\phi_N)$ with Euclidean Lagrangian density
\begin{equation*}
    \mathscr{L} = \frac{1}{2} 
    \sum_{i=1}^N\left(
        \norm{\nabla\phi_i}^2
        +m^2 \phi_i^2 
    \right)
    +\frac{g}{4!} \left( 
        \norm{\phi}^2
    \right)^2.
\end{equation*}
The first term describes free scalars with equal mass $m$. The second term with the coupling constant $g$ encodes their interaction. This density is invariant under orthogonal transformations $\phi\mapsto T\phi$ with $T\in O(N)$.
In terms of the symmetric tensor
\begin{equation}\label{eq:O(n)-tensor}
    \lambda_{ijkl} = \frac{1}{3}\left(
        \delta_{ij}\delta_{kl} + \delta_{ik}\delta_{jl} + \delta_{il}\delta_{jk}
    \right),
\end{equation}
the quartic interaction can be written as
\begin{equation*}
    \left(\norm{\phi}^2\right)^2 
    = (\phi_1^2+\ldots+\phi_N^2)^2
    = \sum_{i,j=1}^N \phi_i^2\phi_j^2
    = \sum_{i,j,k,l=1}^N \lambda_{ijkl} \phi_i\phi_j\phi_k\phi_l
    .
\end{equation*}

The perturbation theory of the $O(N)$ model thus differs from $\phi^4$ theory ($N=1$) only through a prefactor $S_G$ for each Feynman graph $G$. This factor counts all possible assignments of the field labels $\set{1,\ldots,N}$ to each edge compatibly with the expression for the interaction. Let $V$ and $E$ denote the sets of vertices and edges of $G$, and write $e_v^1,\ldots,e_v^4$ for the four edges at a vertex. Then
\begin{equation}\label{eq:phi4-symmetryfac}
    S_G(N)=\sum_{c\colon E\rightarrow \set{1,\ldots,N}} 
    \prod_{v\in V}
    \lambda_{c(e_v^1),c(e_v^2),c(e_v^3),c(e_v^4)}
\end{equation}
for every 4-regular (``vacuum'') graph $G$. Expanding each tensor in this product into the three terms in \eqref{eq:O(n)-tensor} corresponds to a sum over transition systems. The number of parts of the corresponding circuit partition gives the number of freely choosable field labels.

For a 4-regular graph with $n$ vertices, we can therefore identify
\begin{equation}\label{eq:symmetry-from-CPP}
    S_G(N) = 3^{-n}\cdot \CPP(G,N).
\end{equation}
Corrections to the coupling constant are determined by graphs of the form $G\setminus v$, which have 4 free (``external'') half-edges. Their symmetry factor is \cite[eq.~(6.77)]{KleinertSchulteFrohlinde:CriticalPhi4}
\begin{equation*}
    S_{G\setminus v}(N) 
    = \frac{3 S_G(N)}{N(N+2)}
    = \frac{\MartinPol(G,N+2)}{3^{n-1}(N+2)}.
\end{equation*}
Since $\MartinPol(G,x)$ is a multiple of $x$ by \eqref{eq:Martin-rose}, it follows from \eqref{eq:circuit-pol} that $\CPP(G,N)$ is a multiple of $N(N+2)$. Therefore, $S_{G\setminus v}(N)$ is a polynomial in $N$. One can therefore formally consider arbitrary values for $N$, even though the $O(N)$ vector model was defined above only for positive integers $N$. Upon setting $N=-2$, we obtain the Martin invariant:
\begin{equation}\label{eq:symmetry-from-Martin}
    S_{G\setminus v}(-2) 
    = 3^{1-n} \cdot \MartinPol'(G,0) 
    = 2\cdot 3^{2-n} \cdot \Martin(G)
    .
\end{equation}

This was how we discovered the connection between the Martin invariant and Feynman symmetries: analyzing the symmetry factors for graphs with up to 13 vertices, we found that $N=-2$ is the only value for which identities like twist and duality hold.

Strikingly, the formal value $N=-2$ in the $O(N)$ vector model also describes a meaningful physical system. It was recently discovered \cite{WieseFedorenko:FieldErasedRW,ShapiraWiese:ExactLERW} that the formal ``$O(-2)$'' model describes loop erased random walks (LERW). The value $N=-2$ is therefore distinguished not only by the additional mathematical properties of the Martin invariant (the full Martin polynomial does not fulfil identities like twist or duality), but it also corresponds to an exceptional field theory.

\section{Connectivity}
We say that a graph $G$ is $d$-regular if every vertex of $G$ has degree $d$. Except for \cref{thm:decomp-unique}, we always assume that the degree is even, and hence denote it as $2k$.
\begin{figure}
    \centering
    $G=\quad \Graph[0.8]{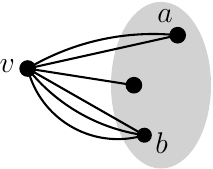}$
    \qquad$\mapsto$\qquad
    $G'=\quad\Graph[0.8]{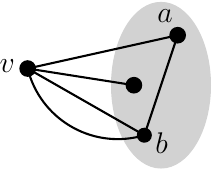}$
    \caption{Splitting off a pair $(va,vb)$ of edges.}%
    \label{fig:splitting-off}%
\end{figure}
\begin{lemma}\label{lem:not-2kcon-zero}
    If $G$ is $2k$-regular but not $2k$-edge connected, then $\Martin(G)=0$.
\end{lemma}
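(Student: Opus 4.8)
The plan is to induct on the number of vertices $n$ of $G$, using the defining recursion \eqref{eq:Martin-recursion}, and to show that the property ``has an edge cut of size less than $2k$'' is never destroyed when we expand at a suitably chosen vertex. First I would record a parity observation: in a $2k$-regular graph, any edge cut $(S,\overline{S})$ has size $2k\abs{S}-2e(S)$, where $e(S)$ counts the edges inside $S$; this is even, so ``not $2k$-edge connected'' means precisely that $G$ has an edge cut of size at most $2k-2$. If $G$ has a self-loop we are done immediately by rule 1 of \cref{def:Martin-intro}, so I may assume $G$ has none; then a single vertex already sends its full degree $2k$ across any cut, which forces both sides of a minimal cut of size at most $2k-2$ to contain at least two vertices.

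The heart of the argument is the claim that transitions do not increase cuts. Fix a cut $(S,\overline{S})$ and a vertex $v\in S$ with no self-loop, and work with half-edges. Of the $2k$ half-edges in the $h$-neighbourhood of $v$, say $j$ are incident to vertices of $\overline{S}$ (the crossing edges at $v$) and $2k-j$ to vertices of $S\setminus\set{v}$. A transition $\tau\in\trans(v)$ matches these $2k$ half-edges into $k$ pairs; writing $b$ for the number of pairs containing one half-edge of each side and $a$ for the number of $(\overline{S},\overline{S})$-pairs, counting the $\overline{S}$-endpoints gives $2a+b=j$, so $0\le b\le j$. Consequently the cut $(S\setminus\set{v},\overline{S})$ in $G_{\tau}$ has size $(c-j)+b\le c$, where $c$ is the size of $(S,\overline{S})$ in $G$. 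Thus every transition at $v$ produces a graph whose induced cut has size at most $c\le 2k-2$.

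With this in hand the induction is routine. For the base case $n=3$, the only $2k$-regular graph on three vertices without a self-loop is the $k$-fold triangle $C_3^{[k]}$, whose minimal cut has size $2k$; hence a $2k$-regular three-vertex graph that is not $2k$-edge connected must carry a self-loop and has $\Martin=0$ by rule 1. For the inductive step with $n\ge 4$, I pick $v\in S$ (both sides having at least two vertices), expand $\Martin(G)=\sum_{\tau\in\trans(v)}\Martin(G_{\tau})$, and observe that each $G_{\tau}$ is a $2k$-regular graph on $n-1\ge 3$ vertices carrying the cut $(S\setminus\set{v},\overline{S})$ of size at most $2k-2<2k$ with both sides nonempty; so each $G_{\tau}$ is either not $2k$-edge connected or has a self-loop, and in either case $\Martin(G_{\tau})=0$ by the induction hypothesis or rule 1. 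Summing yields $\Martin(G)=0$. I expect the only delicate point to be the half-edge bookkeeping of the second paragraph — in particular checking that newly created parallel edges or self-loops in $G_{\tau}$ do not spoil the cut-size estimate — together with tracking parity so that a cut of size $2k-1$ can never arise.
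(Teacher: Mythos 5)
Your proof is correct and follows essentially the same route as the paper's: induct on the number of vertices via the Martin recursion, expanding at a vertex on one side of a small cut and checking that the cut of size $<2k$ persists in every $G_\tau$ (the paper states this persistence more tersely, and handles the degenerate single-vertex side via forced self-loops inside the induction rather than excluding it up front). Your explicit half-edge count $2a+b=j$ giving new cut size $(c-j)+b\le c$ is exactly the bookkeeping the paper's one-line assertion relies on.
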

\begin{proof}
    Consider a minimal edge cut $C$ of $G$, with size $\abs{C}=2s<2k$. Let $G_1$ and $G_2$ denote the two sides of the cut, so that $G\setminus C=G_1\sqcup G_2$. If $G_1$ consists of only one vertex, then this vertex has $k-s>0$ self-loops, and therefore $\Martin(G)=0$. Otherwise, if $G_1$ has several vertices, pick one and call it $v$. For every transition $\tau$ at $v$, the edges of $C$ in $G_{\tau}$ define a (not necessarily minimal) edge-cut of $G_{\tau}$, hence $G_{\tau}$ is at most $2s$-edge connected. By induction over the total number of vertices, we may assume that $\Martin(G_{\tau})=0$, and therefore conclude the induction step that $\Martin(G)=0$ as well, by \eqref{eq:Martin-recursion}.
\end{proof}
\begin{theorem}\label{thm:splitting-off-transition}
    If $G$ is $2k$-regular and $2k$-edge connected, and $v$ denotes any vertex, then there exists a transition $\tau\in\trans(v)$ such that $G_{\tau}$ is $2k$-edge connected.
\end{theorem}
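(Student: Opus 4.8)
The plan is to realise the transition $\tau$ as a complete \emph{splitting off} at $v$: we pair up the $2k$ half-edges of the $h$-neighbourhood of $v$ one pair at a time, each step replacing two edges $va,vb$ by a single new edge $ab$ and deleting the two used half-edges, until $v$ has degree $0$ and is removed. The resulting graph is exactly $G_{\tau}$, and the $k$ pairs chosen along the way are precisely the blocks of the transition $\tau\in\trans(v)$. The quantity to control is the local edge-connectivity $\lambda(x,y)$ between the vertices \emph{other than} $v$ (the maximum number of edge-disjoint $x$-$y$ paths, equivalently by Menger the minimum $x$-$y$ edge cut). Since $2k$-edge connectivity of $G_{\tau}$ is the same as $\lambda_{G_{\tau}}(x,y)\ge 2k$ for all $x,y\ne v$, it suffices to perform the splittings while preserving the invariant $\lambda(x,y)\ge 2k$ for all $x,y\ne v$; this holds at the start because $G$ is $2k$-edge connected.

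The engine is Mader's splitting-off theorem: in a connected graph, a vertex $s$ of degree $\ne 3$ that is not incident to a cut edge admits an \emph{admissible} pair, that is, a pair of edges at $s$ whose splitting off preserves $\lambda(x,y)$ for every pair $x,y\ne s$. First I would check that the hypotheses persist throughout. The degree of $v$ stays even, so it is never $3$. And $v$ is never incident to a cut edge $vw$: writing $A\ni v$ for the side of such a cut not containing $w$, either $A\ne\{v\}$ and some $x\in A\setminus\{v\}$ gives $\lambda(x,w)\le 1<2k$ against the invariant, or $A=\{v\}$, forcing $v$ to have exactly one non-loop edge and hence odd degree, a contradiction. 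Thus at each stage with $d(v)\ge 2$ Mader supplies an admissible pair; splitting it off keeps $\lambda\ge 2k$ and keeps the graph connected (as $\lambda\ge 2k\ge 1$ on $V\setminus\{v\}$). Iterating $k$ times reduces $d(v)$ to $0$; deleting the now-isolated $v$ yields $G_{\tau}$ with $\lambda_{G_{\tau}}(x,y)\ge 2k$, as required. One could instead invoke Lov\'asz's complete splitting theorem to produce all $k$ pairs at once, which packages exactly this iteration.

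Two points need care, and both are settled by the invariant itself rather than by extra work. A splitting of a pair $va,vb$ with $a=b$ would create a self-loop at $a$, leaving only $2k-2$ edges at $a$ crossing the cut $\{a\}$, so $\lambda(a,z)\le 2k-2<2k$; hence such a splitting is never admissible and is automatically avoided. More generally, the configuration that could stall the induction, namely $v$ reduced to two parallel edges to a single vertex $a$, cannot arise, because it already forces $\lambda(a,z)=2k-2$ and so violates the invariant before we reach it. Finally, the $k$ pairs chosen form a perfect matching of the $2k$ half-edges of the $h$-neighbourhood of $v$, hence a genuine transition $\tau\in\trans(v)$, and the graph produced coincides with $G_{\tau}$ as defined at the beginning of \cref{sec intro}.

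The main obstacle is the existence of an admissible pair at each step, i.e.\ Mader's theorem itself; everything else is bookkeeping forced by the invariant. If a self-contained argument is preferred, the standard route exploits submodularity of the cut function to show that if \emph{every} pair at $v$ were inadmissible, then two tight (or ``dangerous'') cut-sets would cross and combine, by submodularity, into a cut witnessing $\lambda(x,y)<2k$ for some $x,y\ne v$, contradicting the invariant. I expect verifying the persistence of the ``no cut edge'' hypothesis and the self-loop bookkeeping to be the only genuinely delicate steps in transferring the splitting-off machinery to the transition/regular-graph setting.
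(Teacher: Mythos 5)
Your proof is correct and follows essentially the same route as the paper: iterated admissible splitting off at $v$, preserving the local edge-connectivities $\lambda(x,y)\ge 2k$ for $x,y\ne v$ via the Lov\'asz/Mader splitting-off machinery, until the $k$ chosen pairs assemble into a transition $\tau$ with $G_\tau$ still $2k$-edge connected. The paper invokes Lov\'asz's Eulerian version directly (all intermediate graphs remain Eulerian, so the degree and cut-edge checks you perform for Mader's version are not needed there), but this is only a difference in bookkeeping, not in the argument.
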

\begin{proof}
    Let $\lambda_G(x,y)$ denote the maximal number of edge-disjoint walks in $G$, with endpoints at vertices $x\neq y$. By Menger's theorem, the $2k$-edge connectedness of $G$ implies that $\lambda_G(x,y)\geq 2k$ for all pairs $x\neq y$. In fact, since $G$ is $2k$-regular, equality holds. Lov\'asz' \emph{vertex splitting-off lemma} \cite[Theorem~1]{Lovasz:ConnEulerian} shows that at any vertex $v$ in any Eulerian\footnote{While not needed here, we note that vertex splitting-off generalizes to non-Eulerian graphs \cite{Mader:ReductionEC,Frank:Mader}.} graph $G$, one can find two different edges $va$ and $vb$, such that the graph $G'=G-va-vb+ab$ (see \cref{fig:splitting-off}) has the following property: for every pair $x\neq y$ of vertices different from $v$, one has
    \begin{equation*}
        \lambda_G(x,y)=\lambda_{G'}(x,y).
    \end{equation*}
    Iterating this lemma $k$ times produces a transition $\tau$ with $\lambda_{G_{\tau}}(x,y)=\lambda_G(x,y)=2k$, therefore $G_{\tau}$ is $2k$-edge connected.
\end{proof}
\begin{corollary}\label{cor:2kcon-nonzero}
    If $G$ is $2k$-regular and $2k$-edge connected, then $\Martin(G)>0$.
\end{corollary}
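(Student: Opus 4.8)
The plan is to prove \cref{cor:2kcon-nonzero} by induction on the number of vertices $n$ of $G$, using \cref{thm:splitting-off-transition} to guarantee one strictly positive term in the recursion \eqref{eq:Martin-recursion}, while the non-negativity built into \cref{def:Martin-intro} keeps all the remaining terms from cancelling it.

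First I would record a preliminary observation: a $2k$-edge connected graph with at least two vertices has no self-loop. Indeed, if a vertex $v$ carried a self-loop, then at most $2k-2$ of its $2k$ half-edges could reach the rest of the graph, so the cut separating $v$ from the other vertices would have size strictly less than $2k$, contradicting $2k$-edge connectedness. Hence the graphs appearing in the induction always fall under rules 2 and 3 of \cref{def:Martin-intro}, never the self-loop rule 1. This also disposes of the base case $n=3$: such a $G$ is self-loop-free by the observation, so rule 2 gives $\Martin(G)=1>0$.

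For the inductive step I would assume $n\geq 4$ and that the claim holds for all $2k$-regular, $2k$-edge connected graphs with fewer than $n$ vertices. Fixing any vertex $v$, I expand via \eqref{eq:Martin-recursion}, writing $\Martin(G)=\sum_{\tau'\in\trans(v)}\Martin(G_{\tau'})$. Each $G_{\tau'}$ is $2k$-regular with $n-1\geq 3$ vertices, so by \cref{def:Martin-intro} every summand $\Martin(G_{\tau'})$ is a non-negative integer. Now \cref{thm:splitting-off-transition} supplies at least one transition $\tau$ for which $G_{\tau}$ is again $2k$-edge connected; since $G_{\tau}$ also has $n-1$ vertices, the inductive hypothesis gives $\Martin(G_{\tau})>0$. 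The sum of non-negative terms containing at least one strictly positive term is strictly positive, so $\Martin(G)>0$, completing the induction.

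I do not expect a genuine obstacle once \cref{thm:splitting-off-transition} is available; the argument is essentially a one-line combination of the splitting-off result with the recursion. The only points that require care are confirming that every summand is truly non-negative, so that the single positive contribution cannot be wiped out by the others, and checking that $G_{\tau}$ remains in the range $n-1\geq 3$ where the combinatorial definition of $\Martin$ applies. Both hold automatically for $n\geq 4$, and the self-loop observation ensures the base case lands on rule 2 rather than rule 1.
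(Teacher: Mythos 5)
Your proof is correct and follows essentially the same route as the paper: expand at a vertex, use the non-negativity of all summands together with \cref{thm:splitting-off-transition} to retain one $2k$-edge connected term, and induct down to the three-vertex base case, where the absence of self-loops forces $\Martin(G)=1$. The paper compresses this to the observation $\Martin(G)\geq\Martin(G_\tau)$ for the splitting-off transition $\tau$, but the underlying argument is identical.
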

\begin{proof}
By \eqref{eq:Martin-recursion}, we have $\Martin(G)\geq \Martin(G_{\tau})$ for any transition of $G$. Choosing $\tau$ as in \cref{thm:splitting-off-transition}, we can ensure that $G_{\tau}$ is $2k$-edge connected. Induction over the number of vertices thus reduces the proof to the case where $G$ is a rose; then $\Martin(G)>0$ by \eqref{eq:Martin-rose-dipole}.
\end{proof}
Together with \cref{lem:not-2kcon-zero}, \cref{cor:2kcon-nonzero} proves that for a graph $G$ that is $2k$-regular, $\Martin(G)>0$ if and only if $G$ is $2k$-edge connected. This settles parts 1 and 2 of \cref{prop:edge-cuts}. We now show part 3:
\begin{lemma}\label{lem:2k-cut-product}
    Let $G$ be $2k$-regular and $C$ a $2k$-edge cut of $G$. Let $G_1'\sqcup G_2'=G\setminus C$ denote the two sides of the cut, and write $G_i$ for the graph obtained from $G_i'$ by adding a vertex connected to the ends of $C$ in $G_i'$, see \eqref{eq:Martin-edge-product}. Then $\Martin(G)=k!\cdot \Martin(G_1)\cdot\Martin(G_2)$.
\end{lemma}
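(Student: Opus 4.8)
The plan is to prove the identity by induction on the number of vertices of $G_2'$, keeping the $G_1$-side fixed, and using the recursion \eqref{eq:Martin-recursion} together with the vanishing result \cref{lem:not-2kcon-zero}. I work throughout with the $\Q$-valued Martin invariant of \cref{def:Martin-as-diff}, for which the recursion still holds since it is a fixed derivative of $\MartinPol$. First I dispose of self-loops: a self-loop inside $G_1'$ or $G_2'$ forces $\Martin(G)=0$ as well as $\Martin(G_1)=0$ or $\Martin(G_2)=0$ (via \cref{cor:mpol-loop-zero} and \cref{def:Martin-as-diff}), so both sides of the claimed identity vanish. Hence I may assume $G$ has no self-loop.

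For the base case $\abs{V(G_2')}=1$, the unique vertex $u$ of $G_2'$ is joined to $G_1'$ by all $2k$ cut edges, so $G_1=G$ (relabel $v_1$ as $u$), while $G_2$ is the dipole $C_2^{[k]}$ with $\Martin(C_2^{[k]})=1/k!$ by \eqref{eq:Martin-rose-dipole}. Thus $k!\cdot\Martin(G_1)\cdot\Martin(G_2)=\Martin(G)$. For the inductive step, assume $\abs{V(G_2')}\geq 2$ and pick any $w\in V(G_2')$. Since $w\neq v_2$, the half-edges at $w$ — and hence the transition set $\trans(w)$ — are canonically identified in $G$ and in $G_2$; the only difference is that each cut half-edge at $w$ is the $w$-end of an edge to $G_1'$ in $G$, but of an edge to $v_2$ in $G_2$. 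I then apply \eqref{eq:Martin-recursion} at $w$ simultaneously to $G$ and to $G_2$, calling a transition $\tau\in\trans(w)$ \emph{bad} if it pairs two cut half-edges with each other, and \emph{good} otherwise.

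The two key observations are: (i) a bad $\tau$ short-circuits two cut edges into a single edge inside $G_1'$ (in $G$) or into a self-loop at $v_2$ (in $G_2$), so $G_\tau$ acquires an edge cut of size $2k-2$ between $V(G_1')$ and $V((G_2')_\tau)$ while $(G_2)_\tau$ acquires a self-loop; hence $\Martin(G_\tau)=0$ by \cref{lem:not-2kcon-zero} and $\Martin((G_2)_\tau)=0$ by \cref{cor:mpol-loop-zero}, so bad transitions drop out of \emph{both} recursions. (ii) A good $\tau$ reroutes every cut half-edge at $w$ to an interior half-edge of $G_2'$, so the $2k$-edge cut $C_\tau$ between $G_1'$ and $(G_2')_\tau$ survives in $G_\tau$, and $(G_2)_\tau$ is exactly the graph obtained from $(G_2')_\tau$ by adjoining $v_2$ and the rerouted cut edges — that is, the lemma's adding-a-vertex construction applied to the reduced cut. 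By (ii) the pair $(G_\tau,C_\tau)$ again satisfies the hypotheses, with the $G_1$-side unchanged and the $G_2$-side reduced by one vertex, so the induction hypothesis gives $\Martin(G_\tau)=k!\cdot\Martin(G_1)\cdot\Martin((G_2)_\tau)$ for every good $\tau$. Summing over $\trans(w)$ then yields
\begin{equation*}
    \Martin(G)=\sum_{\tau\ \text{good}} k!\cdot\Martin(G_1)\cdot\Martin((G_2)_\tau)
    =k!\cdot\Martin(G_1)\cdot\sum_{\tau\in\trans(w)}\Martin((G_2)_\tau)
    =k!\cdot\Martin(G_1)\cdot\Martin(G_2),
\end{equation*}
completing the induction.

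The main obstacle is the bookkeeping behind observation (ii): one must verify carefully that, for a good transition, reducing $G$ at $w$ commutes with the adding-a-vertex construction, so that the $G_2$-side of $G_\tau$ completes to precisely $(G_2)_\tau$, and that this identification is compatible with the unchanged $G_1$-side and the surviving cut $C_\tau$. The self-loop/short-circuit dichotomy in (i) is what makes the two recursions match term by term; checking that a halved cut genuinely certifies failure of $2k$-edge-connectedness (so that \cref{lem:not-2kcon-zero} applies, including the degenerate case where the cut vanishes and $G_\tau$ disconnects) is the other point that requires care.
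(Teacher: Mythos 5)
Your proposal is correct and follows essentially the same route as the paper: induction on the number of vertices of $G_2'$, with the dipole base case $\Martin(C_2^{[k]})=1/k!$, and the observation that transitions pairing two cut half-edges drop out of both recursions (via \cref{lem:not-2kcon-zero} on the $G$ side and the self-loop at the new vertex on the $G_2$ side), while the remaining transitions commute with the cut. The extra care you take with self-loops and with the degenerate halved-cut case is fine but does not change the argument.
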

\begin{proof}
    We proceed by induction over the number of vertices of $G_2'$. If $G_2'$ has only one vertex, then $G_2$ is a $2k$-fold edge and $G_1=G$. The claim is thus trivial, $\Martin(G)=\Martin(G)$, because $\Martin(G_2)=\Martin(K_2^{[2k]})=1/(k!)$.
    If $G_2'$ has more than one vertex, pick any vertex $v$ in $G_2'$. Using the induction hypothesis, expanding at $v$ yields
    \begin{equation*}
        \Martin(G)=\sum_{\tau\in\trans(v)} \Martin(G_{\tau})
        = k! \sum_{\tau\in\trans(v)} \Martin((G_{\tau})_1) \cdot \Martin((G_{\tau})_2)
        ,
    \end{equation*}
    because the edges of $C$ define a $2k$-edge cut in every $G_{\tau}$. If $\tau$ pairs any edges in $C$ with each other, then $C$ induces in $G_{\tau}$ an edge-cut of size less than $2k$, hence such transitions can be dropped from the sum by \cref{lem:not-2kcon-zero}. The remaining transitions commute with the cut: the side $(G_{\tau})_1=G_1$ is the same for all $\tau$, and $(G_{\tau})_2=(G_2)_{\tau}$. Hence the right-hand side sums to the claim. Note that the dropped $\tau$ are precisely those that produce a self-loop in $(G_2)_{\tau}$ at the extra vertex, hence their absence causes no error to $\Martin(G_2)$.
\end{proof}

\subsection{Decompositions and lower bounds}\label{sec decomp and bound}
We saw above that $\Martin(G)>0$ if $G$ is $2k$-regular and $2k$-edge connected. In this subsection, we give a combinatorial description of all such graphs which, at a given number of vertices, minimize the Martin invariant. This is closely related to factorizations with respect to the $2k$-edge cut product from \cref{lem:2k-cut-product}, for which we prove uniqueness (\cref{thm:decomp-unique}).

By repeated replacements $G\mapsto G_1,G_2$ along $2k$-edge cuts, we can decompose every $2k$-edge connected $2k$-regular graph with $\geq 3$ vertices into a sequence of graphs $G_1,\ldots,G_p$ until each graph in the sequence is cyclically $(2k+2)$-connected and has at least 3 vertices. Following \cite{BouchetGhier:BetaIso4,FouquetThuillier:decom3cubic}, we call such a sequence a \emph{decomposition} of $G$. Iterating \eqref{eq:Martin-edge-product}, note that
\begin{equation}\label{eq:Martin-2k-decomposition}
    \Martin(G) = (k!)^{p-1} \cdot \Martin(G_1)\cdots\Martin(G_p).
\end{equation}
We call $G$ \emph{totally decomposable} \cite{BouchetGhier:BetaIso4} if it has a decomposition such that all $G_i$ have only 3 vertices, see \cref{fig:total-decomposition}. By \eqref{eq:Martin-2k-decomposition}, such a graph, with $n$ vertices, has $\Martin(G)=(k!)^{n-3}$. By induction one sees easily that any totally decomposable graph has at least two edges with multiplicity $k$. Decomposing by cutting off such an edge at each step we end with the $k$-fold triangle. Therefore, reversing this we get that totally decomposable graphs can be constructed from the $k$-fold triangle by repeatedly replacing a vertex with a $k$-fold edge.
\begin{figure}
    \centering
    $\Graph[0.5]{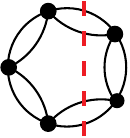} \quad
    \mapsto
    \quad\Graph[0.5]{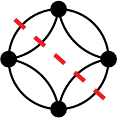}, \Graph[0.4]{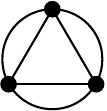} \quad
    \mapsto 
    \quad\Graph[0.4]{C32}, \Graph[0.4]{C32}, \Graph[0.4]{C32}\quad
    \reflectbox{$\mapsto$} 
    \quad\Graph[0.4]{C32}, \Graph[0.5]{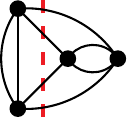}\quad
    \reflectbox{$\mapsto$} 
    \quad \Graph[0.5]{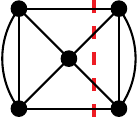}
    $
    \caption{Decomposition of two totally decomposable graphs into 3 copies of $C_3^{[2]}$.}%
    \label{fig:total-decomposition}%
\end{figure}

For example, cutting off $i\geq 2$ consecutive vertices from a $k$-fold cycle,
\begin{equation*}
    C_n^{[k]} \mapsto C_{i+1}^{[k]}, C_{n-i+1}^{[k]}
\end{equation*}
terminates in a decomposition of $C_n^{[k]}$ into $n-2$ copies of the $k$-fold triangle $C_3^{[k]}=K_3^{[k]}$.

The totally decomposable graphs minimize the Martin invariant:
\begin{theorem}\label{prop:tot-decomp}
    A $2k$-regular, $2k$-edge connected graph $G$ with $n\geq 2$ vertices has $M(G) \geq (k!)^{n-3}$, and equality holds if and only if $G$ is totally decomposable.
\end{theorem}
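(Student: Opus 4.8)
The plan is to prove the lower bound and the equality characterization simultaneously, by strong induction on the number $n$ of vertices, using the recursion \eqref{eq:Martin-recursion} together with the $2k$-edge cut product of \cref{lem:2k-cut-product}. The cases $n=2,3$ form the base: the only such $2k$-edge connected graphs are $C_2^{[k]}$ and $C_3^{[k]}$, and \eqref{eq:Martin-rose-dipole}, \eqref{eq:Martin-triangle} give $\Martin=(k!)^{n-3}$ with equality, consistent with their being the degenerate and minimal totally decomposable graphs. For the lower bound when $n\geq 4$, I would fix any vertex $v$ and expand through \eqref{eq:Martin-recursion}. By \cref{lem:not-2kcon-zero}, only those transitions $\tau$ for which $G_\tau$ stays $2k$-edge connected contribute, and each such $G_\tau$ has $n-1$ vertices, so the inductive hypothesis gives $\Martin(G_\tau)\geq(k!)^{n-4}$. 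Writing $T$ for the set of these connectivity-preserving transitions, we obtain $\Martin(G)\geq\abs{T}\cdot(k!)^{n-4}$, so the whole lower bound follows once one knows $\abs{T}\geq k!$ at every vertex.

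The claim $\abs{T}\geq k!$ is the combinatorial core. I would prove it by turning \cref{thm:splitting-off-transition} into a quantitative statement: instead of extracting a single admissible complete splitting, one tracks at each of the $k$ successive splitting-off steps a lower bound on the number of admissible pairs of half-edges, so that the resulting count of distinct $2k$-edge connected graphs $G_\tau$ is at least $k!$. The number $k!$ is natural here, being exactly the number of perfect matchings between two bundles of $k$ half-edges, which is the tight count realised at a vertex whose $2k$ edges run to only two neighbours (as at every vertex of $C_4^{[k]}$). Establishing this count uniformly in $k$, that is, controlling the number of admissible splittings at each stage of the iteration, is where I expect the main difficulty to lie.

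For the equality characterisation, the forward implication is a direct computation: a totally decomposable $G$ decomposes along $2k$-cuts into $p$ copies of $C_3^{[k]}$, and since each split raises the total vertex count by $2$ one finds $p=n-2$, so \eqref{eq:Martin-2k-decomposition} gives $\Martin(G)=(k!)^{p-1}=(k!)^{n-3}$. For the converse I would argue via a dichotomy for $n\geq 4$. If $G$ has a nontrivial $2k$-edge cut, then \cref{lem:2k-cut-product} yields $\Martin(G)=k!\cdot\Martin(G_1)\Martin(G_2)$ with $\abs{G_1}+\abs{G_2}=n+2$ and both $G_i$ having at least $3$ vertices; writing $\Martin(G_i)=(k!)^{\abs{G_i}-3}e_i$ with $e_i\geq 1$ by the induction hypothesis, the assumed equality forces $e_1e_2=1$, hence $e_1=e_2=1$, so by induction $G_1$ and $G_2$ are totally decomposable and therefore so is $G$.

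It remains to exclude equality when $G$ is cyclically $(2k+2)$-edge connected with $n\geq 4$, and this is the step I expect to be hardest. I would reduce it to a strict form of the counting lemma, namely $\abs{T}>k!$ in this regime, which combined with $\Martin(G)\geq\abs{T}(k!)^{n-4}$ gives $\Martin(G)\geq(k!+1)(k!)^{n-4}>(k!)^{n-3}$. The structural fact making this plausible is that cyclic $(2k+2)$-connectivity forbids any edge of multiplicity $\geq k$: a pair of vertices joined by $k$ parallel edges would be cut off by exactly $2k$ edges (a nontrivial $2k$-cut), and by more than $k$ parallel edges by fewer than $2k$ edges (violating $2k$-edge connectivity). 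Hence every vertex in this regime has at least three distinct neighbours, which is precisely the spread of the $h$-neighbourhood that pushes the number of connectivity-preserving transitions strictly above the two-neighbour value $k!$. Converting this spread into the uniform bound $\abs{T}>k!$ on admissible transitions is the delicate point that would demand the most care.
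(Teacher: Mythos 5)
Your overall architecture is the same as the paper's: induction on $n$, reduction of the lower bound to a count of connectivity-preserving transitions at a single vertex, the forward equality direction via \eqref{eq:Martin-2k-decomposition}, and the converse via a dichotomy on nontrivial $2k$-edge cuts with a strict inequality in the cyclically $(2k+2)$-connected case. However, the two quantitative claims you isolate --- $\abs{T}\geq k!$ for $2k$-edge connected $G$, and $\abs{T}>k!$ when $G$ is cyclically $(2k+2)$-connected with $n\geq 4$ --- are exactly where the content of the theorem lives, and you explicitly leave both unproven (``where I expect the main difficulty to lie,'' ``the delicate point that would demand the most care''). As written, the proposal is a correct plan but not a proof.

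For the record, here is how the paper closes these gaps. For $\abs{T}\geq k!$ it invokes a quantitative strengthening of Lov\'asz splitting-off due to Bang-Jensen, Gabow, Jord\'an and Szigeti: if $v$ has even degree $d$ and all local edge-connectivities away from $v$ are $\geq 2k$, then \emph{every} edge $va$ admits at least $d/2$ partners $vb$ making $(va,vb)$ admissible. Iterating through the $k$ splitting steps gives $k\cdot(k-1)\cdots 1=k!$ admissible transitions. For the strict bound, your heuristic (``at least three distinct neighbours forces more transitions'') does not by itself show that the extra transitions preserve connectivity. The paper instead proves directly, by a case analysis on how a hypothetical small cut of $G'=G-va-vb+ab$ would lift to a forbidden cut of $G$, that under cyclic $(2k+2)$-connectivity \emph{every} pair $(va,vb)$ with distinct endpoints $a\neq b$ is admissible; combined with the multiplicity bound $s\leq k-1$ (which you do observe), this gives at least $k+1$ admissible partners for the first split and hence at least $(k+1)(k-1)!>k!$ good transitions. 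Both of these steps require genuine arguments (one imported, one a careful cut analysis) that your proposal gestures at but does not supply.
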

The 4-regular case of this result was known previously \cite[Corollary~5.6]{BouchetGhier:BetaIso4}. To prove \cref{prop:tot-decomp}, we first establish the lower bound, which strengthens \cref{cor:2kcon-nonzero}:
\begin{proposition}\label{thm:martin-min}
    Suppose that the graph $G$ is $2k$-regular, $2k$-edge connected, and that it has $n\geq 2$ vertices. Then $\Martin(G)\geq (k!)^{n-3}$.
\end{proposition}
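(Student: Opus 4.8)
The plan is to prove the bound by strong induction on the number $n$ of vertices, using the recursion \eqref{eq:Martin-recursion} together with the splitting-off result \cref{thm:splitting-off-transition}. The base cases are $n=2$ and $n=3$: the dipole $C_2^{[k]}$ has $\Martin=1/k!=(k!)^{2-3}$ and the $k$-fold triangle $C_3^{[k]}$ has $\Martin=1=(k!)^{3-3}$, so both hold with equality. For the inductive step I would split into two cases according to whether $G$ has a \emph{nontrivial} $2k$-edge cut.

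If $G$ has a $2k$-edge cut separating two sides each with at least two vertices, I would apply \cref{lem:2k-cut-product} to write $\Martin(G)=k!\cdot\Martin(G_1)\cdot\Martin(G_2)$, where $G_1,G_2$ are $2k$-regular and $2k$-edge connected with $n_1+n_2=n+2$ and $3\le n_i<n$. The induction hypothesis gives $\Martin(G_i)\ge (k!)^{n_i-3}$, hence $\Martin(G)\ge k!\,(k!)^{n_1-3}(k!)^{n_2-3}=(k!)^{n_1+n_2-5}=(k!)^{n-3}$. This reduces everything to the case where the only $2k$-edge cuts of $G$ are trivial, i.e.\ $G$ is cyclically $(2k+2)$-edge connected with $n\ge 4$.

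In this remaining case I would fix a vertex $v$ and expand via \eqref{eq:Martin-recursion}. Each transition $\tau$ with $G_\tau$ $2k$-edge connected contributes a term $\Martin(G_\tau)\ge (k!)^{n-4}$ by the induction hypothesis (note $G_\tau$ has $n-1\ge 3$ vertices), while all other terms are non-negative, indeed $0$ by \cref{lem:not-2kcon-zero}. Thus it suffices to show that at least $k!$ transitions $\tau\in\trans(v)$ leave $G_\tau$ $2k$-edge connected; then $\Martin(G)\ge k!\cdot(k!)^{n-4}=(k!)^{n-3}$, as desired. \cref{thm:splitting-off-transition} already produces \emph{one} such transition, so the whole difficulty is concentrated in upgrading ``one'' to ``at least $k!$''.

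To count connectivity-preserving transitions I would iterate the Lov\'asz splitting-off lemma and keep track of the number of \emph{admissible} pairs at each of the $k$ stages. Because $G$ is cyclically $(2k+2)$-edge connected, the only tight sets (proper $Y\not\ni v$ with $d_G(Y)=2k$) are single vertices, so at the first stage the sole obstruction is pairing two parallel half-edges that run to a common neighbour; more generally a transition is good precisely when, for every proper $Y\not\ni v$, the number of its pairs internal to the half-edges of $v$ pointing into $Y$ is at most $(d_G(Y)-2k)/2$. Counting ordered admissible splitting sequences and dividing by the $k!$ orderings of the resulting pairs would give the claimed bound, provided the product of the per-stage admissible-pair counts stays at least $(k!)^2$. \textbf{The main obstacle is exactly this uniform lower bound on admissible pairs throughout the iteration:} splitting off pairs can create new tight sets (from sets with $d_G(Y)=2k+2$) and raise edge multiplicities at $v$, which shrinks the admissible set and can even create dead ends, so one must choose the splittings so as to avoid such bottlenecks and certify that enough admissible choices survive at every stage. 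A transparent source of the exponent is visible through \cref{thm:Martin=ST-partitions}: deleting a degree-$k$ vertex of $G\setminus v$ from a partition into $k$ spanning trees multiplies the ordered count by exactly $k!$; the technical heart is to extend this peeling to higher-degree vertices, where a tree splits into a forest upon vertex removal, and this is the book-keeping that the splitting-off count must reproduce.
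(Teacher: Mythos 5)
Your overall strategy---induct on $n$ and show that at least $k!$ transitions at a vertex $v$ leave $G_\tau$ $2k$-edge connected, so that \eqref{eq:Martin-recursion} gives $\Martin(G)\geq k!\cdot(k!)^{n-4}$---is exactly the paper's strategy. But the step you yourself flag as ``the main obstacle'' is precisely the step that constitutes the proof, and you do not supply it. The paper closes this gap by invoking a quantitative strengthening of Lov\'asz splitting-off due to Bang-Jensen, Gabow, Jord\'an and Szigeti (Theorem~2.12 of their paper on edge partitions): if $v$ has even degree $d$ and $\lambda_G(x,y)\geq 2k$ for all $x\neq y$ distinct from $v$, then \emph{for any fixed edge $va$} there are at least $d/2$ edges $vb$ such that the pair $(va,vb)$ is admissible. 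Iterating this (the degree of $v$ drops by $2$ each time) yields at least $\frac{2k}{2}\cdot\frac{2k-2}{2}\cdots\frac{2}{2}=k!$ good transitions. Without this counting theorem, your worry about new tight sets and dead ends appearing during the iteration is exactly the difficulty that remains unresolved, so the proposal as written does not prove the proposition.

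Two further remarks on your set-up. First, the case split on whether $G$ has a nontrivial $2k$-edge cut (and the reduction via \cref{lem:2k-cut-product} to the cyclically $(2k+2)$-connected case) is harmless but unnecessary: the counting theorem above needs only $2k$-edge connectivity, so the paper runs a single uniform induction $c_n\geq k!\cdot c_{n-1}$ with $c_3=1$. Second, your concern that one must count ordered splitting sequences and divide by $k!$ (hence needing a product of $(k!)^2$) is avoidable: at each stage fix a \emph{canonical} leftover edge $va$ and only choose its partner among the $\geq d/2$ admissible ones. Then each resulting transition is produced by exactly one choice sequence, so the per-stage counts multiply directly to $k!$ distinct transitions, each with $G_\tau$ $2k$-edge connected by \cref{lem:not-2kcon-zero} not applying.
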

\begin{proof}
    As in the proof of \cref{thm:splitting-off-transition}, let $\lambda_G(x,y)$ denote the maximum number of edge-disjoint paths between vertices $x$ and $y\neq x$. Given $G$ and fixing a vertex $v$, we call a pair $(va,vb)$ of edges at $v$ \emph{admissible} if $\lambda_{G'}(x,y)\geq 2k$ for all $x\neq y$ with $x,y\neq v$, in the graph $G'=G-va-vb+ab$.

    It was shown in \cite[Theorem~2.12]{BangJensenGabowJordanSzigeti:EdgePartition} that if $v$ has even degree $d$ and $\lambda_G(x,y)\geq 2k$ for all $x\neq y$ different from $v$, then for any edge $va$, there are at least $d/2$ other edges $vb$ such that the pair $(va,vb)$ is admissible. Applying this argument to $G'$, where $v$ has degree $2k-2$, and iterating, we conclude that there are at least
    \begin{equation*}
        \frac{2k}{2}\cdot \frac{2k-2}{2} \cdots \frac{2}{2} = k!
    \end{equation*}
    transitions $\tau\in\trans(G)$ such that $G_{\tau}$ is $2k$-edge connected. So if $c_n$ denotes the minimum value of $\Martin(G)$ over all $2k$-regular, $2k$-edge connected graphs $G$ with $n$ vertices, then taking only those transitions into account, the sum \eqref{eq:Martin-recursion} shows that
    \begin{equation*}
        c_n \geq k! \cdot c_{n-1}.
    \end{equation*}
    We conclude that $c_n\geq (k!)^{n-3}$ by induction over $n$ and $c_3=1$.
\end{proof}

To finish the proof of \cref{prop:tot-decomp}, recall that $\Martin(G)=(k!)^{n-3}$ if $G$ is totally decomposable. If $G$ is not totally decomposable, then any decomposition $G_1,\ldots,G_p$ has at least one factor with at least $4$ vertices. Let $G_1$ be such a factor, and let $n_i$ denote the number of vertices in $G_i$. Then by \eqref{eq:Martin-2k-decomposition} and \cref{thm:martin-min},
\begin{equation*}
    \Martin(G) \geq (k!)^{p-1} \cdot \Martin(G_1) \cdot (k!)^{n_2-3}\cdots (k!)^{n_p-3}
    = (k!)^{n-3} \cdot \frac{\Martin(G_1)}{(k!)^{n_1-3}}
\end{equation*}
because $n_1+\ldots+n_p=n+2(p-1)$. Therefore, $\Martin(G)>(k!)^{n-3}$ follows from our next lemma.
\begin{lemma}\label{lem:cyclic-bigger}
    If $G$ is $2k$-regular and cyclically $2k+2$ connected, with $n\geq 4$ vertices, then $\Martin(G)>(k!)^{n-3}$.
\end{lemma}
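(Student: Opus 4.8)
The plan is to sharpen the transition count from the proof of \cref{thm:martin-min} by exactly one factor, exploiting the extra rigidity of cyclic connectivity at a single splitting step. Recall that there, after fixing a vertex $v$, one builds transitions greedily: processing the half-edges at $v$ in a fixed order, one repeatedly matches the least unmatched half-edge to an \emph{admissible} partner, where admissibility of a pair $(va,vb)$ means the split graph $G'=G-va-vb+ab$ still satisfies $\lambda_{G'}(x,y)\geq 2k$ for all $x\neq y$ distinct from $v$. By \cite[Theorem~2.12]{BangJensenGabowJordanSzigeti:EdgePartition}, at each step the current half-edge has at least half the current degree of $v$ many admissible partners, so the greedy tree has at least $k!$ leaves, each a distinct transition $\tau$ with $G_\tau$ being $2k$-edge connected, whence $\Martin(G_\tau)\geq (k!)^{n-4}$ by \cref{thm:martin-min}. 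My goal is to show that for cyclically $(2k+2)$-connected $G$ the \emph{first} step already offers at least $k+1$ admissible partners rather than merely $k$, so the tree has at least $(k+1)(k-1)!>k!$ leaves.

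First I would record that in a cyclically $(2k+2)$-connected $2k$-regular graph with $n\geq 4$ vertices every edge has multiplicity at most $k-1$: for an edge of multiplicity $m$ between vertices $v,w$, the set $\{v,w\}$ is one side of a nontrivial cut (its complement has $n-2\geq 2$ vertices) with $2(2k-m)$ edges across it, so cyclic connectivity forbids this value from being $\leq 2k$, forcing $m\leq k-1$. Next I would show that at the first split any pair $(va,vb)$ going to \emph{distinct} neighbours is admissible. Writing $d(X)$ for the number of edges between $X$ and its complement, splitting $(va,vb)$ decreases $d(X)$ — by exactly $2$ — only for sets $X$ containing both $a,b$ but not $v$; for such an $X$ with $X\neq V\setminus v$ the complement contains $v$ and at least one further vertex, so $X$ is one side of a nontrivial cut, whence $d(X)\geq 2k+2$ by cyclic connectivity and $d_{G'}(X)\geq 2k$ survives. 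A single vertex cannot contain two distinct neighbours $a\neq b$, and $X=V\setminus v$ separates no pair inside $V\setminus v$, so no obstructing set blocks the split. Thus the least half-edge, say an edge to neighbour $a$, has at least $2k-m_a\geq k+1$ admissible partners, one for each half-edge landing on a neighbour other than $a$.

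Combining, the greedy tree has at least $(k+1)\cdot(k-1)(k-2)\cdots 1=(k+1)(k-1)!$ leaves, each a distinct transition with $G_\tau$ being $2k$-edge connected, so
\begin{equation*}
    \Martin(G)=\sum_{\tau\,:\,G_\tau\text{ is }2k\text{-edge connected}}\Martin(G_\tau)\geq (k+1)(k-1)!\cdot (k!)^{n-4}=(k!)^{n-3}+(k-1)!\,(k!)^{n-4}>(k!)^{n-3},
\end{equation*}
using \cref{thm:martin-min} for each $(n-1)$-vertex factor. The main obstacle is the admissibility analysis of the first step: one must argue correctly that cyclic $(2k+2)$-connectivity makes the only obstructing sets the single vertices, and then check that using the improved bound at the root while retaining the generic bounds of \cite[Theorem~2.12]{BangJensenGabowJordanSzigeti:EdgePartition} at the remaining steps really multiplies into a valid product lower bound. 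It does, because admissibility is a local condition verified in the successive intermediate graphs, so every node of the greedy decision tree branches at least as often as claimed; only the root enjoys the stronger count, which is exactly the one extra factor needed to convert the weak inequality of \cref{thm:martin-min} into the strict one.
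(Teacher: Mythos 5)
Your proof is correct and follows essentially the same route as the paper: establish that pairs $(va,vb)$ with distinct endpoints are admissible via cyclic $(2k+2)$-connectivity, bound edge multiplicities by $k-1$ to get at least $k+1$ admissible partners at the first splitting step, and then iterate \cite[Theorem~2.12]{BangJensenGabowJordanSzigeti:EdgePartition} to obtain $(k+1)(k-1)!$ transitions preserving $2k$-edge connectivity, each contributing at least $(k!)^{n-4}$ by \cref{thm:martin-min}. The only difference is cosmetic: where the paper runs a case analysis on the location of $a,b$ relative to an obstructing cut, you track directly how cut sizes change under splitting off, which is an equivalent (and arguably cleaner) packaging of the same admissibility argument.
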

\begin{proof}
    Pick any vertex $v$ in $G$. Consider two edges $va$, $vb$ at $v$ with different endpoints $a\neq b$. We first show that such a pair $(va,vb)$ is admissible:

    Suppose that $(va,vb)$ is not admissible. Then $G'=G-va-vb+ab$ has an edge cut $C'$ of size $\abs{C'}<2k$, with at least one vertex $x\neq v$ and at least one vertex $y\neq v$ on each side of $C'$. Let $V(G)=X\sqcup Y$ denote the vertex bipartition corresponding to $C'$, with $v,x\in X$ and $y\in Y$. We show that the cut $C$ of $G$ with this bipartition contradicts the cyclic $2k+2$ connectivity of $G$:
    \begin{itemize}
        \item 
    If $a,b \in X$, then $C=C'$ has size $\abs{C}=\abs{C'}<2k$.
    \item 
    If $a\in X$ and $b\in Y$, then $ab\in C'$ and $C=C'-ab+vb$ has size $\abs{C}=\abs{C'}<2k$.
    \item 
    If $a,b\in Y$ (see \cref{fig:XYab}), then $C=C'+va+vb$ has size $\abs{C}=\abs{C'}+2\leq 2k+1$. Since all vertices have even degree, $\abs{C}$ must be even, thus $\abs{C}\leq 2k$. Note that neither side of $C$ is a single vertex: $\abs{X}\geq\abs{\set{v,x}}=2$ and $\abs{Y}\geq\abs{\set{a,b}}=2$.
    \end{itemize}
    The third point directly contradicts cyclic $2k+2$ connectivity while the first two either contradict $2k$ regularity if one side of the cut is a single vertex and cyclic $2k+2$ connectivity otherwise.
\begin{figure}
    \centering
    $G=\Graph[0.9]{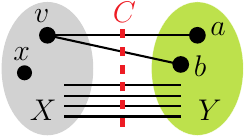}$
    \qquad
    $G'=\Graph[0.9]{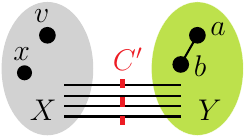}$
    \caption{Illustration for the proof of \cref{lem:cyclic-bigger}: A non-admissible pair $(va,vb)$ implies the existence of a forbidden cut $C$ in $G$.}%
    \label{fig:XYab}%
\end{figure}

    Now pick any neighbour $a$ of $v$. The number $s$ of edges between $a$ and $v$ is at most $k-1$, as otherwise, the subgraph induced by $a$ and $v$ would constitute a cut of size $2\cdot 2k-2s\leq 2k$ with at least 2 vertices on each side since $G$ has $n\geq 4$ vertices. Therefore, at least $2k-(k-1)=k+1$ edges at $v$ have endpoints $b\neq a$. Since such pairs $(va,vb)$ are admissible as we showed above, we find at least $k+1$ admissible pairs containing $va$. After splitting off such a pair, we invoke \cite[Theorem~2.12]{BangJensenGabowJordanSzigeti:EdgePartition} repeatedly, as in the proof of \cref{thm:martin-min}, to conclude that there are at least
    \begin{equation*}
        (k+1)\cdot (k-1)!
    \end{equation*}
    transitions $\tau\in\trans(v)$ such that $G_{\tau}$ is $2k$-edge connected. Considering only those summands and using \eqref{eq:Martin-recursion}, we learn that $\Martin(G)\geq (k+1)\cdot (k-1)! \cdot (k!)^{n-4} > (k!)^{n-3}$.
\end{proof}
According to our definition earlier, a \emph{decomposition} of a $d=2k$-regular graph is obtained after a maximal sequence of $d$-edge cuts. Such a sequence of cuts is rarely unique. For example, to decompose a k-fold cycle on 6 vertices, we can choose to cut off either 2 or 3 vertices in the first step, which leads to two different sequences of cuts:
\begin{itemize}
    \item[a)] $C_6^{[k]} \mapsto C_5^{[k]} C_3^{[k]} \mapsto C_4^{[k]}C_3^{[k]}C_3^{[k]} \mapsto C_3^{[k]}C_3^{[k]}C_3^{[k]}C_3^{[k]}$
    \item[b)] $C_6^{[k]} \mapsto C_4^{[k]} C_4^{[k]} \mapsto C_4^{[k]}C_3^{[k]}C_3^{[k]} \mapsto C_3^{[k]}C_3^{[k]}C_3^{[k]}C_3^{[k]}$
\end{itemize}
However, note that the resulting decomposition is the same: we obtain four times the $k$-fold triangle, even though we constructed those triangles by different sequences of cuts.

We close this section by showing that decompositions $G\mapsto G_1\cdots G_p$ are always unique. Here, \emph{unique} is meant in the sense of multisets (lists up to reordering) of isomorphism classes of graphs.\footnote{
    In the cubic case, Wormald's \cite[Theorem~3.1]{Wormald:Cyclically4cubic} shows uniqueness in a stronger sense, accounting also for how the $G_i$ are arranged in $G$.%
}
Our proof works for arbitrary degrees. For the cubic case $d=3$, a proof of uniqueness was given in \cite[Theorem~3.5]{FouquetThuillier:decom3cubic}.
\begin{proposition}\label{thm:decomp-unique}
    Decompositions of $d$-edge connected graphs are unique.
\end{proposition}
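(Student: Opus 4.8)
The plan is to recast the decomposition procedure as an abstract rewriting system on finite multisets of $d$-regular graphs and to invoke Newman's lemma. For a vertex set $S$, write $G/S$ for the $d$-regular graph obtained by contracting $S$ to a single vertex and deleting the edges internal to $S$; splitting $G$ along a nontrivial $d$-edge cut with sides $V=X\sqcup Y$ replaces $G$ by the two graphs $G/Y$ and $G/X$ (the $G_i$ of \eqref{eq:Martin-edge-product}, compare \cref{lem:2k-cut-product}). A single \emph{reduction step} on a multiset picks one member that admits a nontrivial $d$-edge cut and replaces it by these two contractions; the normal forms are precisely the multisets all of whose members are cyclically $(d+2)$-edge connected, i.e.\ the decompositions. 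Since both sides of a nontrivial cut contain at least two vertices, $G/X$ and $G/Y$ each have strictly fewer vertices than $G$, so every step replaces a member by strictly smaller members. The Dershowitz--Manna multiset order on vertex counts is therefore strictly decreasing, the system terminates, and by Newman's lemma it suffices to prove \emph{local confluence}.

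\textbf{Reduction to two cuts.} Two reduction steps acting on different members of a multiset commute trivially, so local confluence reduces to a single graph $G$ carrying two distinct nontrivial $d$-edge cuts with bipartitions $V=X\sqcup Y$ and $V=X'\sqcup Y'$. I will use that the boundary function $\partial(S)=\abs{\,\text{edges leaving }S\,}$ is submodular and posimodular, and that $d$ is the minimum cut value, so every nonempty proper $S$ satisfies $\partial(S)\geq d$. The two cuts are either \emph{laminar} (one of $X\cap X',\,X\cap Y',\,Y\cap X',\,Y\cap Y'$ is empty) or they \emph{cross} (all four are nonempty).

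\textbf{Laminar case.} Say $X'\subseteq X$, the other subcases being symmetric. A boundary count shows that $X'$ induces a $d$-cut of $G/Y$ and $Y$ induces a $d$-cut of $G/X'$, and the contraction identities $(G/Y)/(V\setminus X')=G/Y'$, $(G/X')/(V\setminus Y)=G/X$ and $(G/Y)/X'=(G/X')/Y$ show that splitting $\{G/X,G/Y\}$ further at $X'$ and splitting $\{G/X',G/Y'\}$ further at $Y$ both yield the common refinement $\{\,G/X,\ G/Y',\ (G/Y)/X'\,\}$. If an induced cut happens to be trivial, the corresponding member is already terminal and is simply not split; this deletes the same member from both refinements, so confluence still holds.

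\textbf{Crossing case (the crux).} Write $A=X\cap X'$, $B=X\cap Y'$, $C=Y\cap X'$, $D=Y\cap Y'$ for the four nonempty corners. Submodularity applied to $X,X'$ and to $Y,Y'$, and posimodularity applied to $X,X'$, force each corner to be a $d$-cut; comparing $\partial(X)+\partial(X')$ with $\partial(A)+\partial(D)$ and with $\partial(B)+\partial(C)$ then forces the two diagonal counts to vanish, $c(A,D)=c(B,C)=0$, and a short computation with the corner cut values forces $d$ to be even and all four cyclic counts to equal $k=d/2$. Thus the corners form a $k$-fold $4$-cycle $A-B-D-C-A$, each contraction $G/(V\setminus A),\dots,G/(V\setminus D)$ has a degree-$d$ apex, and splitting $G/Y$ fully into its corners extracts $G/(V\setminus A)$ and $G/(V\setminus B)$ and leaves a central $C_3^{[k]}$, while $G/X$ yields $G/(V\setminus C)$, $G/(V\setminus D)$ and a second $C_3^{[k]}$; splitting $G/X'$ and $G/Y'$ along the same corner cuts produces the identical four corner pieces and two triangles. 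Hence both one-step reducts refine to the common multiset $\{\,G/(V\setminus A),\,G/(V\setminus B),\,G/(V\setminus C),\,G/(V\setminus D),\,C_3^{[k]},\,C_3^{[k]}\,\}$. The main obstacle is the bookkeeping in the degenerate subcases where some corners are singletons: then the corresponding $G/(V\setminus\cdot)$ is a two-vertex dipole and is not split off, and one must verify that these omissions occur symmetrically on both sides; the extreme instance $G=C_4^{[k]}$, with all four corners singletons, reduces to $\{C_3^{[k]},C_3^{[k]}\}$ either way. (Note that, since crossing forces $d$ even, only the laminar case can arise for odd-regular graphs.) Once local confluence is established in all cases, Newman's lemma delivers a unique normal form, which is the asserted uniqueness of the decomposition.
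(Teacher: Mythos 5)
Your proof is correct in substance, and it packages the argument differently from the paper. The paper argues by induction on the size of $G$: it takes the first cuts $C$ and $D$ of two putative decompositions, uses the induction hypothesis to assert that the decompositions of the resulting smaller pieces are unique (and hence computable via any convenient further cut), and then exhibits a common refinement in each of the three cases (equal cuts, nested cuts, crossing cuts). You instead set up an abstract rewriting system on multisets of graphs, prove termination via the multiset order on vertex counts, and reduce everything to local confluence plus Newman's lemma. The combinatorial heart is identical: your laminar case is the paper's ``one corner empty'' case with the same common refinement $\set{G/X,\,G/Y',\,(G/Y)/X'}$, and your crossing-case computation via submodularity and posimodularity of $\partial$ reaches exactly the paper's conclusion that the two diagonal edge counts vanish, the four cyclic counts all equal $d/2$, $d$ is even, and both reducts refine to the four corner pieces plus two copies of $C_3^{[d/2]}$ (the paper derives this from the four lower bounds $\partial(G_i)\geq d$ combined with $\abs{C}=\abs{D}=d$, which is the same inequality bookkeeping in different clothing). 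What your framing buys is a cleaner global structure: Newman's lemma replaces the paper's somewhat delicate manipulation of the auxiliary decompositions $A',A'',B',B'',O,O_i$ and the repeated appeals to the induction hypothesis to ``recompute using any cut we like.'' What it costs is that local confluence must be verified at every reachable multiset and in every degeneracy, and this is the one place where you defer work: the claim that the omissions of singleton-corner pieces ``occur symmetrically on both sides'' is asserted rather than proved. It is true, and the verification is short --- whether a corner piece $G/(V\setminus A)$ is split off depends only on whether $\abs{A}\geq 2$, which is a property of $G$ and the two cuts and not of the order in which they are applied, and the absorbed singleton corner simply becomes a vertex of the adjacent triangle in either route --- but you should write it out, since it is precisely the kind of boundary case where confluence arguments silently fail. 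With that paragraph added, your proof is complete and, in my view, slightly more transparent than the paper's.
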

\begin{proof}
    Let two decompositions $A=A_1\cdots A_p$ and $B=B_1\cdots B_{q}$ of $G$ be given. We call an edge cut \emph{trivial} if one side of it consists of a single vertex. If $G$ does not have any non-trivial $d$-edge cut, then we must have $p=q=1$ and $A_1=B_1=G$ is indeed unique.
    
    If $G$ does have a non-trivial $d$-edge cut, then $G$ is not cyclically $d+1$ connected, hence we must have $p,q\geq 2$.
    Let $C$ and $D$ denote the $d$-edge cuts used in the first step of a construction of the decompositions $A$ and $B$, respectively, such that
    \begin{equation*}
        A=A'A'' \qquad\text{and}\qquad B=B'B''
    \end{equation*}
    where $A'$ and $A''$ are decompositions of the two sides of $C$, and similarly $B',B''$ for $D$. By induction over the size of $G$, we may use that $A',A'',B',B''$ are unique.
    
    The two cuts determine a partition of $G\setminus(C\cup D)=G_1\sqcup G_2\sqcup G_3\sqcup G_4$ into four subgraphs, such that $C=[G_1\cup G_4, G_2\cup G_3]$ and $D=[G_1\cup G_2,G_3 \cup G_4]$, where $[X,Y]$ denotes all edges of $G$ with one end in $X$ and the other end in $Y$.
    Let $m_{ij}=\abs{[G_i,G_j]}$ denote the number of edges in $G$ between $G_i$ and $G_j$. Note that
    \begin{equation*}\label{eq:uniqueness-cut-sizes}\tag{$\sharp$}
        m_{12}+m_{13}+m_{24}+m_{34}=\abs{C}=d
        \quad\text{and}\quad
        m_{13}+m_{14}+m_{23}+m_{24}=\abs{D}=d.
    \end{equation*}

    If two of the subgraphs $G_i$ are empty, we have $C=D$ and therefore $A'=B'$ and $A''=B''$ by induction, hence $A=B$.
    \begin{figure}
        \centering
        \begin{tabular}{cccccc}
        $G$ & $G_{12}'$ & $G_{3}'$ & $G_{23}'$ & $G_1'$ & $G_2'$ \\
        \midrule
        $\Graph[0.6]{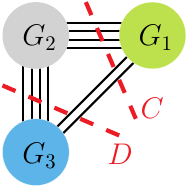}$ &
        $\Graph[0.6]{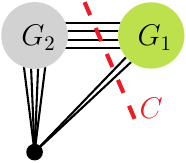}$ &
        $\Graph[0.6]{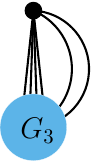}$ &
        $\Graph[0.6]{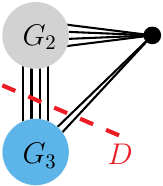}$ &
        $\Graph[0.6]{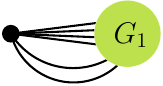}$ &
        $\Graph[0.6]{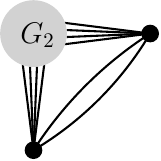}$ \\
        \end{tabular}%
        \caption{Graphs that arise in the decomposition for two cuts $C$ and $D$ with $G_4=\emptyset$.}%
        \label{fig:uniqueness-sequential}%
    \end{figure}

    If only one of the subgraphs is empty, say $G_4=\emptyset$, then $A'$ and $A''$ are by definition decompositions of the graphs $G_{1}'$ and $G_{23}'$ obtained from $G_1$ and $G_2\cup G_3 \cup [G_2,G_3]$, respectively, by reconnecting the edges $C$ to an extra vertex. Similarly, $B'$ and $B''$ are decompositions of the graphs $G_3',G_{12}'$ obtained from $G_3$ and $G_1\cup G_2\cup[G_1,G_2]$, see \cref{fig:uniqueness-sequential}. Note that $C$ defines a cut $G_{12}'\mapsto G_1'G_2'$ and $D$ defines a cut $G_{23}'\mapsto G_3'G_2'$, such that
    \begin{equation*}
        A''=OB'
        \qquad\text{and}\qquad
        B''=OA'
    \end{equation*}
    in terms of a decomposition $O$ of the graph $G_2'$ obtained from $G$ by replacing $G_1$ and $G_3$ by single vertices. Here we exploited the uniqueness of $A''$ and $B''$ (induction hypothesis), which allows us to compute them using any cut we like. We conclude that $A'A''=A'OB'=B'B''$, up to order.

    Finally, consider the case when all four subgraphs $G_i$ are non-empty. Since $G$ is $d$-edge connected, we have the lower bounds
    \begin{align*}
        m_{12}+m_{13}+m_{14} &\geq d, &
        m_{12}+m_{23}+m_{24} &\geq d, \\
        m_{13}+m_{23}+m_{34} &\geq d, &
        m_{14}+m_{24}+m_{34} &\geq d.
    \end{align*}
    Combined with \eqref{eq:uniqueness-cut-sizes}, this system has a unique solution, given by $m_{13}=m_{24}=0$ and $m_{12}=m_{14}=m_{23}=m_{34}=d/2$. Therefore, $d$ must be even, and the edges $[G_1,G_{2}\cup G_{4}]$ constitute a cut of size $m_{12}+m_{14}=d$ that separates $G_1$ from the rest of $G$. Analogously, the other $G_i$ each can be cut out along $d$ edges. Let $G\mapsto G_{14}'G_{23}'$ denote the graphs created by the cut $C$, so that $A'$ and $A''$ are decompositions of $G_{14}'$ and $G_{23}'$, respectively. Here, $G_{ij}$ is obtained from $G_i\cup G_j\cup[G_i,G_j]$ by joining the cut edges to a new vertex (see \cref{fig:uniqueness-crossing}). Using the cuts $[G_2,G_1\cup G_3]$ and $[G_3,G_1\cup G_4]$ of $G_{23}'$, we can decompose
\begin{figure}
    \centering
    \begin{tabular}{ccccccc}
    $G$ & $G_{23}'$ & $G_{14}'$ & $G_1'$ & $G_2'$ & $G_3'$ & $G_4'$
    \\
    \midrule
    $\Graph[0.65]{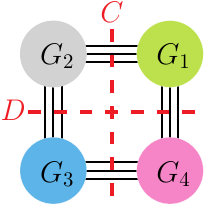}$ &
    $\Graph[0.65]{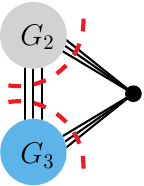}$ &
    $\Graph[0.65]{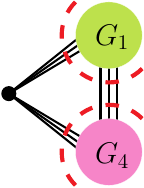}$ &
    $\Graph[0.65]{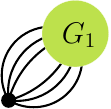}$ &
    $\Graph[0.65]{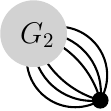}$ &
    $\Graph[0.65]{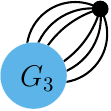}$ &
    $\Graph[0.65]{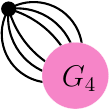}$
    \\
    \end{tabular}%
    \caption{Graphs that arise in the decomposition of two cuts $C,D$ with all $G_i\neq\emptyset$.}%
    \label{fig:uniqueness-crossing}%
\end{figure}
    \begin{equation*}
        G_{23}'\mapsto G_2' G_3' C_3^{[d/2]}
        \quad\text{and thus find}\quad
        A'=O_2 O_3 C_3^{[d/2]}
    \end{equation*}
    where $O_i$ denotes a decomposition of the graph $G_i'$ obtained from $G_i$ (see \cref{fig:uniqueness-crossing}). Again, we use here inductively that the decomposition of $G_{23}'$ is unique. In the same manner, we find $A''=O_1 O_4 C_3^{[d/2]}$ and therefore
    \begin{equation*}
        A'A''=O_1 O_2 O_3 O_4 C_3^{[d/2]} C_3^{[d/2]}.
    \end{equation*}
    Clearly, applying the same procedure to $D$, i.e.\ decomposing $G\mapsto G_{12}' G_{34}'$ and then $G_{12}'\mapsto G_1' G_2' C_3^{[d/2]}$ and $G_{34}'\mapsto G_3' G_4' C_3^{[d/2]}$ we obtain the same result for $B'B''$.
\end{proof}

\subsection{Product and twist}\label{sec:product+twist}

For the \emph{product} identity \eqref{eq:martin vertex product}, consider a $3$-vertex cut $S=\set{v_1,v_2,v_3}$ of a $2k$-regular graph $G$. More precisely, fix a bipartition $E=E_1\sqcup E_2$ of the edges of $G$ such that the only vertices shared by $E_1$ and $E_2$ are in $S$.\footnote{A part $E_i$ is not required to touch any vertices other than $S$.} Let $d_i$ denote the number of edges at $v_i$ that belong to $E_1$, then the other $2k-d_i$ edges at $v_i$ belong to $E_2$. Set
\begin{equation}\label{eq:3cut-edgecounts}
    m_{12} = \frac{d_1+d_2-d_3}{2},\quad
    m_{13} = \frac{d_1+d_3-d_2}{2},\quad
    m_{23} = \frac{d_2+d_3-d_1}{2}.
\end{equation}
These are integers, because $d_1+d_2+d_3$ is the size of an edge cut and hence even. Note that in any graph where every vertex has even degree, all edge cuts are even.
\begin{lemma}\label{lem:positive ns}
    If $G$ is $2k$-edge connected, then $m_{12},m_{13},m_{23} \geq 0$.
\end{lemma}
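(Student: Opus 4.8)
The plan is to recognize that the three quantities $n_{12},n_{13},n_{23}$ are nonnegative if and only if the triple $(d_1,d_2,d_3)$ satisfies the three triangle inequalities $d_i\le d_j+d_k$: indeed $n_{23}\ge 0$ is literally the inequality $d_1\le d_2+d_3$, and the other two are the cyclic analogues. (Conceptually, these $n_{ij}$ are exactly the numbers of edges one inserts between the cut vertices to complete the $E_2$-side to a $2k$-regular graph, since raising $v_i$ back to degree $2k$ there requires adding $d_i=n_{ij}+n_{ik}$ edges at $v_i$.) By the symmetry of the three indices it therefore suffices to prove a single inequality, say $d_1\le d_2+d_3$, and obtain the rest by relabelling.

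To prove $d_1\le d_2+d_3$ I would exhibit an explicit edge cut of $G$ of size at most $(2k-d_1)+(d_2+d_3)$ and then invoke $2k$-edge connectivity. Since the only vertices shared by $E_1$ and $E_2$ lie in $S$, every vertex outside $S$ is incident to edges from only one of the two parts; write $V=P_1\sqcup P_2\sqcup S$, where $P_1$ (resp.\ $P_2$) is the set of vertices meeting only $E_1$ (resp.\ only $E_2$). Consider the bipartition with left side $P_1\cup\set{v_1}$ and right side $P_2\cup\set{v_2,v_3}$. This is a genuine partition of $V$, and both sides are non-empty, containing $v_1$ and $v_2$ respectively, so $2k$-edge connectivity forces the cut to contain at least $2k$ edges.

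It then remains to bound this cut from above by counting crossing edges separately in $E_1$ and $E_2$. An $E_2$-edge can cross only if it meets $v_1$, the unique left-side vertex incident to $E_2$, so at most $2k-d_1$ of the $E_2$-edges cross. An $E_1$-edge can cross only if it meets $v_2$ or $v_3$, the only right-side vertices incident to $E_1$, with its other end on the left, so at most $d_2+d_3$ of the $E_1$-edges cross. Adding the two contributions bounds the cut by $(2k-d_1)+(d_2+d_3)$, and comparing with the lower bound $2k$ gives $d_1\le d_2+d_3$, hence $n_{23}\ge 0$; the inequalities $n_{12},n_{13}\ge 0$ follow by isolating $v_3$ and $v_2$ in the same way.

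The argument is short, and the only points requiring care are the bookkeeping ones. I would verify that the two sides genuinely partition $V$ (which uses that every vertex of a $2k$-regular graph has positive degree and is thus in $P_1$, $P_2$, or $S$) and that both are non-empty, and I would check that the crossing-edge counts remain valid \emph{upper} bounds in the presence of multi-edges, of $E_1$-edges joining $v_2$ and $v_3$ (which do not cross and only slacken the bound), and of any self-loops at the cut vertices (which never cross). Since all of these only make the cut smaller, none of them reverses the inequality, so I expect no substantive obstacle beyond choosing the cut correctly.
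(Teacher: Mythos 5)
Your proof is correct and is essentially the paper's argument: the paper likewise derives each inequality by exhibiting an edge cut of size $d_i+d_j+(2k-d_k)$ (the $E_1$-edges at two of the cut vertices together with the $E_2$-edges at the third) and comparing with the $2k$-edge connectivity bound. Your version merely makes the underlying vertex bipartition and the multi-edge/self-loop bookkeeping explicit.
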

\begin{proof}
    Suppose that $m_{12}<0$. Then $d_1+d_2<d_3$, hence $d_1+d_2+(2k-d_3)<2k$. So the edges at $v_1,v_2$ in $E_1$ together with the edges at $v_3$ in $E_2$ form a cut of $G$ with size less than $2k$, contradicting the assumption on $G$.
\end{proof}

Now we state and prove the product identity for the Martin invariant introduced in \eqref{eq:martin vertex product}.
\begin{proposition}\label{prop:3vertex-cut}
    Let $G$ be $2k$-regular and $2k$-edge connected, with a 3-vertex cut. Then the two sides of the cut can be turned uniquely into $2k$-regular graphs $G_1$ and $G_2$, by adding edges (but no self-loops) between the cut vertices. Furthermore, we have 
    \begin{equation*}
    \Martin(G)=\Martin(G_1)\cdot\Martin(G_2).
    \end{equation*}
\end{proposition}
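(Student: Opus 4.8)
The plan is to first produce $G_1$ and $G_2$ explicitly, and then establish the product identity by induction on the number of vertices on one side of the cut, using the recursion \eqref{eq:Martin-recursion}. For the construction, recall that a cut vertex $v_i$ has $d_i$ edges in $E_1$ and $2k-d_i$ edges in $E_2$. To build $G_2$ I would add, for each pair $i\neq j$, exactly $n_{ij}$ new edges between $v_i$ and $v_j$, with $n_{ij}$ as in \eqref{eq:3cut-edgecounts}; a one-line computation gives $n_{12}+n_{13}=d_1$, so that $v_1$ reaches degree $(2k-d_1)+d_1=2k$, and similarly for $v_2,v_3$. Symmetrically, $G_1$ is obtained by adding $k-n_{ij}$ edges between $v_i$ and $v_j$, bringing each $v_i$ up to degree $d_i+(2k-d_i)=2k$. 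Both constructions add edges only between distinct cut vertices, so no self-loops arise; non-negativity $n_{ij}\geq 0$ is \cref{lem:positive ns}, while applying that lemma with $E_1$ and $E_2$ interchanged (replacing $d_i$ by $2k-d_i$, hence $n_{ij}$ by $k-n_{ij}$) yields $k-n_{ij}\geq 0$. This settles the first assertion.

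Before the induction I would verify that $G_1$ and $G_2$ are themselves $2k$-edge connected, so their Martin invariants are positive by \cref{cor:2kcon-nonzero}; this is needed because the left-hand side $\Martin(G)$ is positive. This follows from a cut-transfer argument of the same flavour as below. The product identity I would then prove by induction on the number of non-cut vertices on the $E_2$-side. In the base case there are none, so every $E_2$-edge joins two cut vertices; a direct count shows there are $k-n_{ij}$ such edges between $v_i$ and $v_j$, which is exactly the completion $G_1$ uses, whence $G=G_1$, while $G_2\cong C_3^{[k]}$ has $\Martin(G_2)=1$ by \eqref{eq:Martin-triangle}, so the identity is immediate.

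For the inductive step I would fix a non-cut vertex $v$ on the $E_2$-side and expand both $\Martin(G)$ and $\Martin(G_2)$ at $v$ via \eqref{eq:Martin-recursion}. Since $v\notin S$ and all edges at $v$ lie in $E_2$, every transition $\tau\in\trans(v)$ leaves the $E_1$-side and the numbers $d_i$ untouched; hence $G_\tau$ still carries the $3$-vertex cut $S$ with the same completion data, and its two sides are $(G_\tau)_1=G_1$ and $(G_\tau)_2=(G_2)_\tau$. For every $\tau$ with $G_\tau$ still $2k$-edge connected, the induction hypothesis gives $\Martin(G_\tau)=\Martin(G_1)\cdot\Martin((G_2)_\tau)$. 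Summing over $\tau$, pulling out the constant factor $\Martin(G_1)$, and recognising the recursion for $G_2$ then yields $\Martin(G)=\Martin(G_1)\cdot\Martin(G_2)$.

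The hard part, which I expect to be the main obstacle, is controlling those transitions $\tau$ for which $G_\tau$ is not $2k$-edge connected, where the induction hypothesis does not apply. For these I would show that $(G_2)_\tau$ fails to be $2k$-edge connected as well, so both terms vanish by \cref{lem:not-2kcon-zero} and still match. The key estimate is the following: let $C$ be a cut of $G_\tau$ with $\abs{C}<2k$ and bipartition $P\sqcup Q$, and write $S_P=S\cap P$, $S_Q=S\cap Q$. Because there are no edges between the two sides except through $S$, the crossing count of $C$ in $G_1$ plus that in $(G_2)_\tau$ equals $\abs{C}$ plus the total number of completion edges crossing the split; and whenever $S_P,S_Q$ are both nonempty the latter is exactly $2k$. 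If moreover $(G_2)_\tau$ were $2k$-edge connected, both induced bipartitions would be nontrivial, forcing each crossing count to be $\geq 2k$ (using that $G_1$ is $2k$-edge connected), hence $\abs{C}\geq 2k$, a contradiction. Thus $S$ lies entirely on one side of $C$, and then $2k$-edge connectivity of $G_1$ forces the opposite side to consist only of $E_2$-side vertices, so $C$ is a cut of $(G_2)_\tau$ of the same size $<2k$. This connectivity bookkeeping is the only delicate point; everything else reduces to the recursion.
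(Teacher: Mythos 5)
Your proof is correct, and the construction of $G_1$ and $G_2$ is exactly the paper's (non-negativity of $n_{ij}$ from \cref{lem:positive ns}, and of $k-n_{ij}$ by swapping the roles of $E_1$ and $E_2$). The overall strategy — induct by expanding \eqref{eq:Martin-recursion} at a non-cut vertex and use that transitions commute with the cut — is also the paper's. Where you genuinely diverge is in the inductive step: you keep $2k$-edge connectivity as a standing hypothesis, which forces you to (a) verify separately that $G_1$ and $G_2$ are $2k$-edge connected and (b) prove a cut-transfer lemma showing that whenever $G_\tau$ acquires a cut of size $<2k$, so does $(G_2)_\tau$, so that both sides vanish by \cref{lem:not-2kcon-zero}. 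Your bookkeeping for (b) is sound — the key identity is that the $G_1$-crossing plus the $(G_2)_\tau$-crossing equals $\abs{C}$ plus $k$ for each separated pair of cut vertices, since the two completions contribute $(k-n_{ij})+n_{ij}=k$ edges per pair — but none of it is needed. The paper instead strengthens the induction hypothesis to \emph{all} $2k$-regular graphs carrying an edge bipartition over $S$ with non-negative completion numbers (a class preserved by transitions at non-cut vertices, since the $d_i$ and hence the $n_{ij}$ do not change), and adds the trivial base case that a self-loop in $G$ is a self-loop in $G_1$ or $G_2$, making both sides zero. That reformulation absorbs all the degenerate transitions at once and eliminates your "hard part" entirely; your route buys nothing extra here, though the cut-transfer computation is a correct and reusable observation. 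One small point worth making explicit in your write-up: in the base case you should note that $2k$-edge connectivity (maintained along your induction) rules out self-loops at the cut vertices, so that the multiplicities in $E_2$ are indeed forced to be $k-n_{ij}$.
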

\begin{proof}
With notation as above, \cref{lem:positive ns} implies that we can construct a $2k$-regular graph $G_2$ from $E_2$ by adding $m_{ij}$ edges connecting $v_i$ and $v_j$, for each of the three pairs $ij=12,13,23$. By a symmetric construction we get $G_1$ from $E_1$. This proves the first claim of \cref{prop:3vertex-cut}. In fact, \eqref{eq:3cut-edgecounts} is the unique solution to the constraints $m_{ij}+m_{ik}+d_i=2k$, so this is the only way to add non-self loop edges to $E_2$ that leaves a $2k$-regular graph.

To complete the proof of \cref{prop:3vertex-cut}, we exploit that the identity \eqref{eq:martin vertex product}
is linear in each factor. More precisely, for any vertex $v\notin S$ that belongs to $G_1$, we can apply transitions $\tau\in\trans(v)$ at $v$ on both sides: in $G$ and in $G_1$. Since $\tau$ only glues edges from $E_1$, it leaves $G_2$ unchanged and the cut $S$ remains a cut of $G_{\tau}$, such that $(G_1)_{\tau}=(G_{\tau})_1$ and $(G_{\tau})_2=G_2$.
The expansion \eqref{eq:Martin-recursion} at $v$ therefore reduces the claim to the statement that $\Martin(G_{\tau})=\Martin((G_{\tau})_1)\Martin((G_{\tau})_2)$. An induction over the number of vertices therefore reduces the claim to trivial base cases:
\begin{itemize}
    \item If $G$ has a self-loop, then this self-loop is also a self-loop in $G_1$ or $G_2$, so the identity holds trivially because both sides are zero.
    \item When only 3 vertices are left, namely the vertices $S$, and $G$ has no self-loops, then all 3 graphs $G$, $G_1$, and $G_2$, are necessarily identical to the $k$-fold triangle. The identity then holds due to our normalization $\Martin(C_3^{[k]})=1$.
\end{itemize}
\end{proof}

With the same induction principle, we now prove the twist identity \eqref{eq:twist}. 
Suppose we are given a graph $G$ with a $4$-vertex cut $S=\set{v_1,v_2,v_3,v_4}$ and a bipartition $E=E_1\sqcup E_2$ of the edges, such that all vertices shared between $E_1$ and $E_2$ are contained in $S$. Let $d_i$ denote the number of edges at $v_i$ that belong to $E_1$. Schnetz introduced the \emph{twist} operation \cite[\S 2.6]{Schnetz:Census}, which is illustrated in \eqref{eq:twist}.
\begin{definition}
    If $d_1=d_2$ and $d_3=d_4$, then the \emph{twist} of $G$ is the graph $G'$ obtained from $G$ by replacing every edge $e\in E_1$ with an endpoint in $S$, $e=uv_i$, by $uv_{\sigma(i)}$, where $\sigma=(1\ 2)(3\ 4)$ is the double transposition that swaps $v_1\leftrightarrow v_2$ and $v_3\leftrightarrow v_4$.
\end{definition}
At a 4-vertex cut with $d_1=d_2=d_3=d_4$, one can construct 3 twists, by relabeling $v_i$ or, equivalently, using the other double transpositions $\sigma=(1\ 3)(2\ 4)$ and $\sigma=(1\ 4)(2\ 3)$. If we fix $S$ and $\sigma$, then the twist is an involution: $G$ is the twist of $G'$.
Now we state and prove the twist identity for the Martin invariant introduced in \eqref{eq:twist}.
\begin{proposition}\label{prop:twist}
    If two regular graphs of even degree are obtained from each other by a double transposition on one side of a 4-vertex cut, then their Martin invariants agree:
    \begin{equation*}
        \Martin\Bigg(\ \Graph[0.3]{4cut}\ \Bigg)=\Martin\Bigg(\ \Graph[0.3]{twist}\ \Bigg).
    \end{equation*}
\end{proposition}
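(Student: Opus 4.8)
The plan is to mimic the induction principle already used for the product identity in \cref{prop:3vertex-cut}: I would expand the Martin recursion \eqref{eq:Martin-recursion} at vertices lying strictly inside the twisted side of the cut and show that the twist commutes with each such expansion. Write $S=\set{v_1,v_2,v_3,v_4}$ for the cut and $E=E_1\sqcup E_2$ for the edge bipartition, with $d_i$ the number of $E_1$-edges at $v_i$, subject to $d_1=d_2$ and $d_3=d_4$. First I would record the sanity check that these two equalities are exactly what keeps $G'$ a $2k$-regular graph: twisting moves the $d_i$ edges of $E_1$ at $v_i$ over to $v_{\sigma(i)}$, so the new $E_1$-degree at $v_i$ becomes $d_{\sigma(i)}$, and regularity forces $d_{\sigma(i)}=d_i$.

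I would then induct on the number of vertices in $E_1\setminus S$. For the inductive step, pick any $v\in E_1\setminus S$; as $v\notin S$, every edge at $v$ lies in $E_1$. Expanding \eqref{eq:Martin-recursion} at $v$ produces graphs $G_\tau$ whose glued edges are again in $E_1$, so $E_2$ is untouched, $S$ remains a cut, and each $d_i$ is preserved (deleting the edge $vv_i$ and creating a new $E_1$-edge at $v_i$ cancel out). The crux is the commutation claim $(G_\tau)'=(G')_\tau$: applying $\tau$ at $v$ and then twisting gives the same graph as twisting first and then applying $\tau$. I would check this by following half-edges. A new edge of $G_\tau$ joins the partners, under $\tau$, of two edges at $v$; its endpoints lying in $S$ get relabeled by $\sigma$ when $G_\tau$ is twisted, and this is exactly the edge produced by $\tau$ in $G'$, where $v$'s edges to $S$ have already been redirected by $\sigma$ before gluing. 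One checks the three cases according to whether zero, one, or two of the partners lie in $S$ (with self-loops and edges $v_iv_j$ among the cut vertices handled along the way). Granting the commutation, the recursion yields
\begin{equation*}
    \Martin(G)=\sum_{\tau}\Martin(G_\tau)=\sum_{\tau}\Martin\big((G_\tau)'\big)=\sum_{\tau}\Martin\big((G')_\tau\big)=\Martin(G'),
\end{equation*}
where the second equality is the induction hypothesis (each $G_\tau$ has one fewer vertex in $E_1\setminus S$) and the last is \eqref{eq:Martin-recursion} applied to $G'$ at the same vertex $v$.

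For the base case $E_1\setminus S=\emptyset$, all of $E_1$ consists of edges among the four cut vertices. If any is a self-loop, then $G$ and $G'$ each carry a self-loop and both invariants vanish. Otherwise let $p,q,r,s$ be the multiplicities of the edge classes $v_1v_3,v_1v_4,v_2v_3,v_2v_4$ that $\sigma=(1\ 2)(3\ 4)$ moves nontrivially; the conditions $d_1=d_2$ and $d_3=d_4$ become $p+q=r+s$ and $p+r=q+s$, which force $p=s$ and $q=r$. Hence $\sigma$ merely permutes equal edge multiplicities, $G'=G$, and the identity is immediate.

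The main obstacle I anticipate is the honest bookkeeping in the commutation claim $(G_\tau)'=(G')_\tau$: one must confirm that relabeling by $\sigma$ the $S$-endpoints of the freshly glued edges agrees with first redirecting $v$'s edges to $S$ by $\sigma$ and only then gluing, including the degenerate cases where a new edge has one or both endpoints in $S$. The clean point that makes the whole argument collapse is the base case, where the twist conditions $d_1=d_2$ and $d_3=d_4$ force exactly the equalities $p=s$ and $q=r$ that render $\sigma$ trivial once no interior vertices of $E_1$ remain.
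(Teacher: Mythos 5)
Your proposal is correct and follows essentially the same route as the paper: reduce by the Martin recursion at vertices off the cut (the paper, like you, borrows the induction principle from the 3-vertex-cut product identity), then observe that in the base case the degree constraints $d_1=d_2$, $d_3=d_4$ force $n_{13}=n_{24}$ and $n_{14}=n_{23}$ (your $p=s$, $q=r$), so the twist acts trivially. You are somewhat more explicit than the paper about the commutation $(G_\tau)'=(G')_\tau$ and you stop the induction once $E_1\setminus S$ is empty rather than reducing all of $G$ to the four cut vertices, but these are presentational differences, not a different argument.
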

\begin{proof}
Let $G, S, E, d_i$ be as above.
As in the proof of \cref{prop:3vertex-cut} we can use the expansion \eqref{eq:Martin-recursion} into transitions at any vertex $v\notin S$, to reduce the twist identity
\begin{equation*}
    \Martin(G)=\Martin(G')
\end{equation*}
to the case where $G$ has no vertices other than $S$. If $G$ contains a self-loop, so does $G'$, and the identity becomes trivial since both sides are zero. Hence we may assume that there are no self-loops in $G$. In this situation, $E_1$ is completely determined by the numbers $m_{ij}$ of edges in $E_1$ that connect $v_i$ with $v_j$. Due to the constraints
\begin{align*}
    m_{12}+m_{13}+m_{14}&=d_1=d_2=m_{12}+m_{23}+m_{24}
    \quad\text{and}\\
    m_{13}+m_{23}+m_{34}&=d_3=d_4=m_{14}+m_{24}+m_{34},
\end{align*}
we must have $m_{13}=m_{24}$ and $m_{14}=m_{23}$. This shows that $G=G'$, because the twist changes $E_1$ into $E_1'$ with $m_{ij}'=m_{\sigma(i)\sigma(j)}$. The base case of our induction for the proof of the twist identity is therefore trivial.
\end{proof}

\section{Permanent}
As our first case of a previously studied graph invariant with the same symmetries as the period and which can now be explained in terms of the Martin invariant, we will study a permanent-based invariant due to Iain Crump \cite{CrumpDeVosYeats:Permanent, Crump:ExtendedPermanent}.
Through \cref{thm:Martin-Perm2} below, it inherits all properties of the Martin invariant. We therefore obtain new proofs of the permanent's invariance under completion, twist and duality \cite[Theorem~17--Proposition~20]{CrumpDeVosYeats:Permanent}; its multiplicativity for $3$-vertex cuts and $2k=4$-edge cuts \cite[Corollary~23 and Theorem~24]{CrumpDeVosYeats:Permanent}; and the vanishing in presence of a triple edge \cite[Proposition~69]{Crump:PhD} (a special case of $\Martin(G)=0$ for a $2$-edge cut).
The same applies to the permanents of the graphs $G^{[r]}$ when $kr+1$ is prime, whose invariance was first proved in \cite{Crump:ExtendedPermanent} but now also follows from the properties of the Martin invariant. The Fourier split was not yet discovered at the time of \cite{CrumpDeVosYeats:Permanent} and \cite{Crump:ExtendedPermanent} but also holds for the permanent and is discussed in \cref{sec fourier split}.

Do define the graph permanent, let $G$ be a $2k$-regular graph with $n+2$ vertices. Then $G$ has $kn+2k$ edges. Pick two different vertices, called $\infty$ and $0$, and orient each edge. We assume that there are no self-loops at $\infty$. Then the directed graph $G\setminus \infty$ has $n+1$ vertices and $kn$ edges. Its \emph{reduced incidence matrix} is the $n\times kn$ matrix
\begin{equation}\label{eq:incidence-matrix}
    A_{ve} = \begin{cases}
        +1  & \text{if $e$ points to $v$},  \\
        -1 & \text{if $e$ comes from $v$},\\
        \phantom{-}0  & \text{if $e$ is not incident to $v$,}
    \end{cases}
\end{equation}
whose rows are indexed by the vertices $v\neq 0,\infty$ and whose columns are indexed by the edges $e$ of $G$ that are not attached to $\infty$. Stacking $k$ copies of $A$ produces a $kn\times kn$ square matrix, denoted $A^{[k]}$. Its permanent defines an integer
\begin{equation}\label{eq:perm-def}
    \Perm(G) \defas \perm A^{[k]}
    = \perm \begin{pmatrix} A \\ \vdots \\ A \\ \end{pmatrix}
    .
\end{equation}
This number $\Perm(G)$ depends on the choice of $0$, $\infty$, and the orientation of the edges used to write down $A$. However, it was shown in \cite{CrumpDeVosYeats:Permanent} that the residue
\begin{equation*}
    \Perm(G) \mod (k+1)
\end{equation*}
depends on all these choices only up to a sign. Equivalently, the residue of $\Perm(G)^2$ is a well-defined invariant of the graph $G$. This invariant respects the known identities of Feynman periods \cite{CrumpDeVosYeats:Permanent}.
Moreover, the sequence $\Perm(G^{[r]})^2 \mod (kr+1)$ of permanents of all duplicated graphs is a very strong invariant of $G$ in that it distinguishes almost all Feynman periods \cite{Crump:PhD,Crump:ExtendedPermanent}.

In this section, we show that the permanent is determined by the Martin invariant.
Note that the repetition of rows makes $\Perm(G)$ a multiple of $(k!)^n$. It is therefore congruent to zero modulo $k+1$ unless $k+1$ is prime \cite[Proposition~13]{CrumpDeVosYeats:Permanent}.
\begin{theorem}\label{thm:Martin-Perm2}
    For every $2k$-regular graph $G$ with $n\geq 3$ vertices such that $p=k+1\geq 3$ is prime, we have
    \begin{equation}\label{eq:Martin-Perm2}
        \Martin(G)\equiv (-1)^{n-1}\Perm(G)^2 \mod p.
    \end{equation}
\end{theorem}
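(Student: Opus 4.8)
The plan is to reduce both sides modulo $p$ to coefficient extractions which, because $k=p-1$, collapse into sums over $\F_p$, and then to bridge these two sums through the incidence matrix $A$. I would first reduce the permanent. Grouping, for each vertex $v$, the $k$ identical copies of its row in $A^{[k]}$ shows that a nonzero term of the permanent assigns to each $v$ a $k$-subset $S_v$ of incident edges, the $S_v$ partitioning all $kn$ edges, and that the $k!$ internal orderings at each vertex all contribute the same sign $\prod_{e\in S_v}A_{ve}$. Hence $\Perm(G)=(k!)^n\,\Sigma$ with $\Sigma\defas\takecoefff{\Big}{\prod_{v}y_v^{k}}\prod_{e}(y_{h(e)}-y_{t(e)})$, where $h(e),t(e)$ are the head and tail of $e$ in the chosen reference orientation and $y_0=0$ for the grounded vertex. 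Wilson's theorem gives $k!=(p-1)!\equiv-1\pmod p$, so $\Perm(G)\equiv(-1)^n\Sigma$ and $\Perm(G)^2\equiv\Sigma^2\pmod p$. As $\Sigma^2$ visibly does not depend on the orientation or on the choice of $0,\infty$, this reproves the well-definedness of \cite{CrumpDeVosYeats:Permanent} and reduces the theorem to $\Sigma^2\equiv(-1)^{n-1}\Martin(G)\pmod p$.

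Next I would pass to $\F_p$. After setting $y_0=0$, the polynomial $\prod_e(y_{h(e)}-y_{t(e)})$ is homogeneous of degree $kn=n(p-1)$ in $y_1,\dots,y_n$, so the elementary congruence $\sum_{y\in\F_p}y^{j}\equiv-1$ when $(p-1)\mid j>0$ (and $\equiv0$ otherwise) isolates the single monomial $\prod_v y_v^{k}$ and yields $\Sigma\equiv(-1)^n\sum_{y\in\F_p^{\,n}}\prod_e(y_{h(e)}-y_{t(e)})\pmod p$. Since $y_{h(e)}-y_{t(e)}=(A^{\Transpose}y)_e$ and $A^{\Transpose}$ is injective (we may assume $G\setminus\infty$ connected, the remaining cases being covered by \cref{lem:not-2kcon-zero}), the vector $A^{\Transpose}y$ runs once over the cut space $W=\operatorname{im}A^{\Transpose}\subseteq\F_p^{\,kn}$, so $\Sigma\equiv(-1)^n\sum_{w\in W}\prod_e w_e\pmod p$.

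For the Martin side I would start from the diagonal coefficient $k!\,\Martin(G)=\takecoefff{\big}{x_1^{k-1}\cdots x_N^{k-1}}\PsiPol_{G\setminus\infty}^{\,k}$ of \cref{thm martin as coeff}, with $N=kn$. Because $\PsiPol_{G\setminus\infty}^{\,k}=\PsiPol_{G\setminus\infty}^{\,p-1}$ and $(\prod_e x_e)\PsiPol_{G\setminus\infty}^{\,p-1}$ is homogeneous of degree $n(p-1)^2$ in the $kn$ edge variables, the same finite-field lemma rewrites this coefficient as the point-count $\sum_{x\in\F_p^{\,kn}}(\prod_e x_e)\PsiPol_{G\setminus\infty}(x)^{p-1}\pmod p$. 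This ties $\Martin(G)$ to the hypersurface $\{\PsiPol=0\}$ and identifies the cycle space $Z=W^{\perp}$ as the dual object that the bridge must interface with.

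The crux, and the step I expect to be the main obstacle, is the bridge $\big(\sum_{w\in W}\prod_e w_e\big)^2\equiv(-1)^{n-1}\Martin(G)\pmod p$: matching the squared cut-space sum with the count of partitions of $E(G\setminus v)$ into $k$ spanning trees (\cref{thm:Martin=ST-partitions}). The square is genuinely needed---over $\Z$ the two quantities differ, for instance $\Martin(K_5)=6$ is not a perfect square---and it should arise from pairing two copies of $W$. I would therefore study $\sum_{w,w'\in W}\prod_e w_ew'_e$ and exploit the matroid duality between $W$ and $Z$, using that an $n$-edge set is a spanning tree precisely when it indexes a basis of $W$; a Cauchy--Binet expansion, or equivalently a sign-reversing involution on pairs of in-degree-$k$ orientations that cancels all configurations except ordered partitions into $k$ spanning trees, should convert this double sum into $k!\,\Martin(G)$. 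Pinning down the global sign $(-1)^{n-1}$ and verifying that the degenerate summands (self-loops and edges meeting the vertex $0$) drop out modulo $p$ is where the real work, and the main risk, lies.
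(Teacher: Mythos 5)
Your reductions of both sides to coefficient extractions over $\F_p$ are sound: the grouping of the $k$ copies of each row does give $\Perm(G)=(k!)^{n}\Sigma$ with $\Sigma$ the coefficient of $\prod_v y_v^{k}$ in $\prod_e(y_{h(e)}-y_{t(e)})$, Wilson's theorem kills the $(k!)^{n}$, and the standard power-sum lemma converts $\Sigma$ into a sum over the cut space. Likewise, \cref{thm martin as coeff} legitimately puts $k!\,\Martin(G)$ into the form of a diagonal coefficient of $\KirchPol_{G\setminus\infty}^{k}$. But the proof stops exactly where the theorem lives: the bridge $\Sigma^{2}\equiv(-1)^{n-1}\Martin(G)\bmod p$, i.e.\ the identification of $\bigl(\perm A^{[k]}\bigr)^{2}$ with $(-1)^{n}\takecoeff{x_1\cdots x_{kn}}(\det L)^{k}$, is only asserted (``a Cauchy--Binet expansion \ldots should convert this double sum''), and you yourself flag it as the main obstacle. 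Cauchy--Binet computes $\det(AXA^{\Transpose})$, a \emph{determinant}, whereas your double sum $\sum_{w,w'\in W}\prod_e w_ew'_e$ is governed by a \emph{permanental} identity; the cancellation that turns the signed count of pairs of in-degree-$k$ orientations into the unsigned count of ordered spanning-tree partitions is precisely the nontrivial content, and no involution is exhibited. So as written this is a genuine gap, not a routine verification.

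For comparison, the paper closes this gap in two ways. The elementary proof avoids the bridge entirely: it shows directly that $(-1)^{n-1}\Perm(G)^{2}\bmod p$ satisfies \cref{def:Martin-intro}, the key computation being that for fixed $k$-subsets $a,b$ of the edges at the expanded vertex, exactly $s!\,(k-s)!$ transitions admit both as orientations (with $s=\abs{a\setminus b}$), and $s!\,(k-s)!\,(-1)^{s}\equiv(-1)^{k}k!\equiv-1\bmod p$ --- a Wilson-type congruence that is invisible from your setup. The second proof is the one your sketch is closest to, but it imports two substantial external results to effect the bridge: a permanental MacMahon master theorem giving $\bigl(\perm A^{[k]}\bigr)^{2}=\takecoeff{x_1\cdots x_{kn}}\perm\bigl(L^{[k\times k]}\bigr)$, and Glynn's congruence $\perm\bigl(L^{[k\times k]}\bigr)\equiv(k!)^{n}(\det L)^{k}\bmod(k+1)$. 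If you want to complete your route, you should either prove an analogue of these two facts or switch to the recursion-based argument; the degenerate cases (self-loops, disconnected $G\setminus\infty$, the well-definedness under changing $0$ and $\infty$) are comparatively minor but also need more than the word ``visibly.''
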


Note that each term of the sequence $\Perm(G^{[r]})^2 \mod (kr+1)$ for which $kr+1$ is prime immediately also falls under this theorem and so is determined by the Martin invariant on the $G^{[r]}$. 

\begin{figure}
    \centering
    $G=\Graph[0.5]{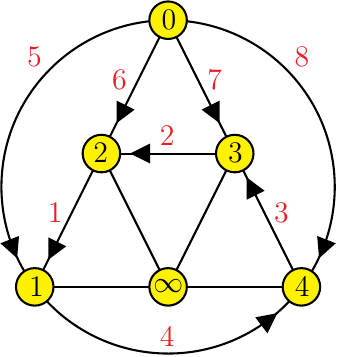}$ 
    \qquad
    $A=
    \begin{pmatrix}
            \phantom{-}1 &  \phantom{-}0 &  \phantom{-}0 & -1 &\  1 &\  0 &\  0 &\  0 \\
           -1 &  \phantom{-}1 &  \phantom{-}0 &  \phantom{-}0 &\  0 &\  1 &\  0 &\  0 \\
            \phantom{-}0 & -1 &  \phantom{-}1 &  \phantom{-}0 &\  0 &\  0 &\  1 &\  0 \\
            \phantom{-}0 &  \phantom{-}0 & -1 &  \phantom{-}1 &\  0 &\  0 &\  0 &\  1 \\
    \end{pmatrix}
    $
    \caption{A labelled and oriented octahedron graph. Labels and orientations of edges at vertex $\infty$ are not shown, since they do not affect the incidence matrix $A$.}%
    \label{fig:perm-octahedron}%
\end{figure}
\begin{example}
    The octahedron graph has $6$ vertices and it is 4-regular, so $k=2$. With labels and orientations as in \cref{fig:perm-octahedron}, the incidence matrix
    gives $\perm\big(\begin{smallmatrix}A\\A\end{smallmatrix}\big)=32$.
    Therefore, $(-1)^{n-1}\Perm(G)^2=-32^2\equiv -1\mod 3$. This agrees indeed with $\Martin(G)=14\equiv -1 \mod 3$, because the Martin invariant is
    \begin{equation*}
        \Martin\left(\Graph[0.5]{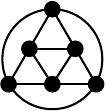} \right)
        =\Martin\left(K_5\right) + 2 \Martin\left(\Graph[0.55]{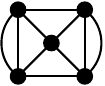}\right)
        =6 + 4 \Martin\left(\Graph[0.4]{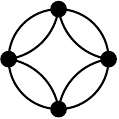}\right) + 2 \Martin\left(\Graph[0.55]{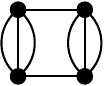}\right)
        =6+8+0.
    \end{equation*}
\end{example}

We will give two proofs of \cref{thm:Martin-Perm2}.  The first is elementary; the main work goes into considering how a transition, viewed as a rule for replacing a vertex with new edges joining its neighbours according to a matching, affects the permanent of $A^{[k]}$.  A similar but more intricate consideration of the effect of moving between a vertex and a matching of its neighbourhood will also be key to the results of \cref{sec diags}.

For the elementary proof of \cref{thm:Martin-Perm2}, we show that the right-hand side of \eqref{eq:Martin-Perm2} satisfies \cref{def:Martin-intro} of $\Martin(G)$, modulo $p$.
Firstly, any self-loop in $G$ corresponds to a zero column in $A$, hence $\perm A^{[k]}=0$ and \eqref{eq:Martin-Perm2} is trivial. Secondly, if $G$ has 3 vertices and no self-loop, then $G$ is the power of a triangle. Thus $G\setminus\infty$ is a $k$-fold edge and $A=(1,\ldots,1)$ is a single row of ones, if we orient the edges away from vertex $0$. Hence
\begin{equation*}
    \Perm\left(C_3^{[k]}\right)^2 = (k!)^2 \equiv (-1)^2 
    = 1
    = \Martin\left(C_3^{[k]}\right)
    \mod p
\end{equation*}
as required, by Wilson's theorem.\footnote{Wilson's theorem states that $(p-1)!\equiv-1\mod p$ if and only if $p$ is prime.} By induction, we can therefore conclude \cref{thm:Martin-Perm2} by proving a third identity: If $G$ has no self-loops and $v$ is a vertex $v\notin\set{0,\infty}$, then
\begin{equation}\label{eq:Perm2-recurrence}
    -\Perm(G)^2 \equiv \sum_{\tau\in\trans'(v)} \Perm(G_\tau)^2
    \mod p,
\end{equation}
where $\trans'(v)$ denotes the subset of transitions that do not produce any self-loops at $\infty$.
To establish this identity in \cref{sec:Perm2-recurrence}, and thereby finish the proof of \cref{thm:Martin-Perm2}, we first work out expansions of the permanents of $G$ and $G_{\tau}$.

For the second proof of \cref{thm:Martin-Perm2}, in \cref{sec:perm2-from-det} we show that the permanent can be read off from the diagonal coefficients of the Kirchhoff and Symanzik polynomials. Together with \cref{thm martin as coeff}, this provides an independent proof of \cref{thm:Martin-Perm2}.

\subsection{Expansion}
Like determinants, permanents can be expanded in rows and columns. For the graph permanent, this strategy leads to explicit formulas \cite[Section~4]{Crump:ExtendedPermanent}. Here, we apply this strategy to prove the recurrence relation \eqref{eq:Perm2-recurrence}.

Label the vertices of $G$ as $0,1,\ldots,n,\infty$ and let $N=\set{e_1,\ldots,e_{2k}}$ denote the $2k$ edges at vertex $n$. Let $v_i\neq n$ denote the other end of $e_i$. Note that several of these edges may share the same other end $v_i=v_j$.

The non-zero entries in row $n$ of $A$ are $-1$, located in the columns $N$. The expansion of $\perm(A^{[k]})$ in the $k$ copies of row $n$ therefore eliminates $k$ of the columns $N$. Let $a\subset N$ denote the leftover edges of $N$. Edges $e_i\in N$ incident to $v_i=\infty$ are not present as columns in $A$ and can thus not be among the columns $N\setminus a$ eliminated during the row expansion.   The expansion then reads
\begin{equation}\label{eq:permG-expansion}
    \Perm(G) = k! \cdot (-1)^k \sum_{a\subset N,\abs{a}=k} P_a
\end{equation}
where we define $P_a=0$ if there is any $e_i$ incident to $\infty$ which is not in $a$, and otherwise define $P_a$ to be the permanent of the $k(n-1)\times k(n-1)$ matrix that remains from $A^{[k]}$ after deleting the $k$ copies of row $n$ and the $k$ columns $N\setminus a$. The factor $k!$ accounts for the different ways of assigning the columns $N\setminus a$ to the $k$ copies of the row $n$.

Now expand $P_a$ in the leftover columns, $e_i\in a$ where $v_i\neq\infty$: the non-zero entries are $+1$, located in the rows corresponding to the $k$ copies of $v_i$. Let $a_i$ denote the number of edges in $a$ whose other vertex is $i$, and write $P'_a$ for the permanent of any matrix obtained from $A^{[k]}$ by deleting all columns $e_i\in N$ with $v_i\neq\infty$, the $k$ copies of row $n$, and $a_i$ copies of row $i$ for each $1\leq i<n$.  This matrix is square with $kn-2k+a_{\infty}$ rows and columns. The different ways to delete $a_i$ copies of row $i$ only affect the matrix up to permutation of rows which is invisible to the permanent, and so $P_a'$, and hence also $P_a$, depends only on the numbers $a_0,\ldots,a_{\infty}$ (the multiset of vertices $a$).  Then the expansion of $P_a$ in columns $a$ reads
\begin{equation*}
    P_a = \left[\prod_{i=1}^{n-1} \frac{k!}{(k-a_i)!}\right] P_a'.
\end{equation*}
The prefactor takes into account the different ways to assign $a_i$ parallel edges (identical columns) to $a_i$ out of the $k$ copies of vertex $i$. We note any column $e_i\in a$ with $v_i=0$ becomes a zero column after deleting the copies of row $n$, since vertex $0$ is absent in $A$. Therefore, $P_a=0$ if $a_0>0$.
Along with our convention that $P_a=0$ if $a_\infty < N_\infty$ (where $N_\infty$ is the number of $e_i$ in $N$ with $v_i=\infty$), the sum in \eqref{eq:permG-expansion} therefore is effectively only a sum over subsets $a$ which contain none of the edges from $n$ to $0$, and all of the edges from $n$ to $\infty$. Squaring, we find
\begin{equation}\label{eq:Perm2-expansion}
    \Perm(G)^2 = (k!)^2 \sum_{\substack{a,b\subset N,\\\abs{a}=\abs{b}=k}} P_a P_b.
\end{equation}

\subsection{Expansion of a transition}
Pick any transition $\tau$ at vertex $n$ that does not produce self-loops at $\infty$ in $G_{\tau}$. Then the incidence matrix $A_\tau$ of the graph $G_\tau$ has dimensions $(n-1)\times k(n-1)$. It is obtained from $A$ by removing row $n$ and the $2k-N_\infty$ columns $N$ that do not connect to $\infty$, and adding $k-N_\infty$ columns for the edges $e_1^\tau, \ldots, e_k^\tau$ given by the perfect matching $\tau$ of $N$, but excluding those edges $e_j^\tau$ that connect to $\infty$.

Let $\Or(\tau)$ denote the set of orientations of the matching, that is, the set of size $2^k$ whose elements $\alpha$ choose for each edge $e_i^\tau$, one of its incident vertices $\alpha(i) = v_{\alpha(i)}$, which we view as the vertex that edge $e^{\tau}_i$ points to under this orientation. Let $\alpha_j$ denote the number of edges $e^{\tau}_i$ which point to $v_{\alpha(i)}=v_j$. 

To define the signs $\pm 1$ in the new columns of $A_{\tau}$, fix any orientation $o\in\Or(\tau)$ of the new edges in $G_{\tau}$. The expansion of $\perm(A_{\tau}^{[k]})$ in one of the columns $e^\tau_i$ either picks one of the copies of row $v_{\alpha(i)}$ or one of the copies of the row for the other end of $e_i^\tau$. The expansion in all these columns can therefore be written as
\begin{equation*}
    \Perm(G_\tau) = \sum_{\alpha\in \Or(\tau)} (-1)^{o \Delta \alpha} P_\alpha,
\end{equation*}
where $o\Delta\alpha$ denotes the number of indices $i$ where $o(i)\neq \alpha(i)$, that is, how often an entry $-1$ is picked from the matrix. Whenever an orientation picks vertex $v_{\alpha(i)}=0$ as the vertex pointed to, the corresponding contribution is zero, because this vertex is not a row in $A_{\tau}$ and hence does not contribute to the expansion in column $e^{\tau}_i$. This is consistent with our convention $P_{\alpha}=0$ whenever $\alpha_0>0$, as discussed in the previous section.

Furthermore, our convention that $P_{\alpha}=0$ whenever $\alpha_{\infty}<N_{\infty}$, forces that all new edges $e^{\tau}_i$ that have one end at $\infty$ must be directed towards $\infty$. This reduces the sum effectively to a sum over the orientations of only those new edges $e^{\tau}_i$ that are not connected to $\infty$. This is indeed correct, because only those edges actually appear as columns in $A_{\tau}$ and thus participate in the column expansion.

In conclusion, we can write the expansion of the permanent squared of $G_{\tau}$ as
\begin{equation}\label{eq:Perm2tau-expansion}
    \Perm(G_\tau)^2 = \sum_{\alpha,\beta\in \Or(\tau)} (-1)^{\alpha \Delta \beta} P_{\alpha} P_{\beta}.
\end{equation}

\subsection{Recurrence}\label{sec:Perm2-recurrence}
The vertices chosen by an orientation $\alpha\in\Or(\tau)$ define a subset $a=\set{\alpha(1),\ldots,\alpha(k)}\subset N$ of size $k$. Conversely, by definition, an orientation $\alpha\in\Or(\tau)$ is determined by the subset $a$. However, $a$ defines an orientation of $\tau$ only if it picks precisely one element in each pair of $\tau$.

Consider any pair $a,b\subset N$ with $\abs{a}=\abs{b}=k$, and compare the coefficient of the product $P_a P_b$ on both sides of \eqref{eq:Perm2-recurrence}. On the left, it appears with coefficient
\begin{equation*}
    - (k!)^2 \equiv -1 \mod p,
\end{equation*}
according to \eqref{eq:Perm2-expansion} and Wilson's theorem
. On the right of \eqref{eq:Perm2-recurrence}, $P_a P_b$ appears as $P_{\alpha}P_{\beta}$ in \eqref{eq:Perm2tau-expansion} for each transition $\tau$ such that $\alpha=a$ and $\beta=b$ are orientations of $\tau$. The collected coefficient of $P_a P_b$ on the right-hand side of \eqref{eq:Perm2-recurrence} is therefore
\begin{equation}\label{eq:perm-rec-congruence}
    (-1)^{a\Delta b} \cdot \abs{\set{\tau\in\trans'(n)\colon a,b\in\Or(\tau)}}.
\end{equation}
We get a $\tau\in \trans'(n)$ with $a,b\in \Or(\tau)$ by pairing the elements in $a\cap b$ with elements of $N\setminus (a\cup b)$ giving those pairs with the same orientation in $a$ and $b$, and pairing the elements in $a\setminus b$ with the elements in $b\setminus a$ giving the pairs with opposite orientation in $a$ and $b$. Every such choice of pairings gives a distinct $\tau\in \trans'(n)$ with $a,b\in \Or(\tau)$.

The number $s=a\Delta b$ of pairs $\tau(i)$ that get opposite orientations in $a$ and $b$ is $s=\abs{a\setminus b}=\abs{b\setminus a}$. The other $k-s$ pairs have equal orientations and go from $N\setminus(a\cup b)$ to $a\cap b$. We conclude that precisely $s!\cdot(k-s)!$ transitions contribute to \eqref{eq:perm-rec-congruence}, namely those that match $a\cap b$ with $N\setminus(a\cup b)$ and $a\setminus b$ with $b\setminus a$. Therefore, \eqref{eq:perm-rec-congruence} equals
\begin{align*}
    & s!\cdot (k-s)!\cdot(-1)^{s} \\
    &\equiv s!\cdot [k+1-(k-s)]\cdot[k+1-(k-s-1)]\cdots[k+1-1]\cdot (-1)^k \mod (k+1)\\
    &= s!\cdot (s+1)(s+2)\cdots k \cdot (-1)^k \\
    & = (-1)^k\cdot k!.
\end{align*}
This is equal to the left-hand side $-1\mod p$ of \eqref{eq:Perm2-recurrence}, by Wilson's theorem and because $k$ is even (so that $p=k+1$ is a prime $\geq 3$). Finally, note that we only need to consider $a$ and $b$ with $a_{\infty}=b_{\infty}=N_{\infty}$. Any transition $\tau$ that creates a self-loop at $\infty$ in $G_{\tau}$, has at most $N_{\infty}-1$ pairs attached to $\infty$, and hence $a$ and $b$ cannot be orientations of such $\tau$. Therefore, in our counting above, indeed only the transitions $\tau\in\trans'(n)$ appear.

This completes the proof of \cref{thm:Martin-Perm2}.

\subsection{From the Kirchhoff polynomial}\label{sec:perm2-from-det}
In this section we express the permanent, which is defined via the incidence matrix $A$ from \eqref{eq:incidence-matrix}, in terms of the graph Laplacian. Together with \cref{thm martin as coeff}, this yields an alternative proof of \cref{thm:Martin-Perm2}. Recall that $G$ is a $2k$-regular graph with $n+2$ vertices, labelled $0,1,\ldots,n,\infty$. In this section we assume that $G$ has no self-loops.
\begin{definition}
   Let $X$ denote the $kn\times kn$ matrix with variables $x_i=X_{ii}$ on the diagonal and zeroes elsewhere. The \emph{reduced graph Laplacian} of the graph $G\setminus\infty$ is the $n\times n$ matrix
    \begin{equation}\label{eq:Laplacian}
        L=A X A^{\Transpose}.
    \end{equation}
\end{definition}
The variables $x_i$ are associated to the edges of $G\setminus \infty$. The rows and columns of $L$ are indexed by vertices $v \neq 0,\infty$. This matrix does not depend on orientations: diagonal entries $L_{v,v}=\sum_e x_e$ are positive sums over all edges $e$ at $v$, whereas off-diagonal entries $L_{v,w}=-\sum_e x_e$ are negative sums over all edges between $v$ and $w\neq v$.
\begin{lemma}
    Denote the $kn\times kn$ matrix obtained by repeating $k\times k$ blocks of $L$ as
    \begin{equation}\label{eq LLLL}
        L^{[k\times k]}
        =
        \begin{pmatrix}
            L & \cdots & L \\
            \vdots & \ddots & \vdots \\
            L & \cdots & L \\
        \end{pmatrix}
        .
    \end{equation}
    Recall the notation $\takecoeff{x^m}$ for coefficient extraction of a monomial $x^m=\prod_i x_i^{m_i}$, see \cref{def coeff extr}. Then
    \begin{equation*}\label{eq:perm2-diag-laplace}\tag{$\ast$}
        \left(\perm A^{[k]}\right)^2
        =\takecoeff{x_1\cdots x_{kn}} 
        \perm \left( L^{[k\times k]} \right)
        .
    \end{equation*}
\end{lemma}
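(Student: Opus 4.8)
The plan is to reduce the statement to a single clean identity about permanents of matrices of the form $B X B^{\Transpose}$, and then prove that identity by expanding the permanent and tracking the squarefree monomial.

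First I would record the algebraic observation that makes the block-repeated Laplacian transparent. Since $A^{[k]}$ is the $kn\times kn$ matrix whose row indexed by (copy $p$, vertex $v$) equals row $v$ of $A$, one computes directly that
\[
    \left(A^{[k]} X (A^{[k]})^{\Transpose}\right)_{(p,v),(q,w)}
    = \sum_e A_{ve}\,x_e\,A_{we}
    = L_{vw},
\]
independently of the copies $p,q$. Hence $L^{[k\times k]} = A^{[k]} X (A^{[k]})^{\Transpose}$. Writing $B = A^{[k]}$ and $m = kn$, the claim \eqref{eq:perm2-diag-laplace} becomes the special case of the following general lemma: for every $m\times m$ matrix $B$ and $X = \Diag(x_1,\ldots,x_m)$ one has $\takecoeff{x_1\cdots x_m}\perm\!\left(B X B^{\Transpose}\right) = (\perm B)^2$.

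To prove the lemma I would expand from the definition of the permanent. Writing $C = BXB^{\Transpose}$, so that $C_{ij} = \sum_e B_{ie}B_{je}\,x_e$, the Leibniz-type expansion gives
\[
    \perm C = \sum_{\sigma\in\perms{m}} \prod_{i=1}^m \Big(\sum_{e_i} B_{i e_i} B_{\sigma(i) e_i} x_{e_i}\Big)
    = \sum_{\sigma\in\perms{m}}\ \sum_{e\colon [m]\to[m]}\ \prod_i B_{i e_i} B_{\sigma(i) e_i} x_{e_i}.
\]
The monomial contributed by a tuple $e=(e_1,\dots,e_m)$ is $\prod_i x_{e_i}$, which equals the squarefree $x_1\cdots x_m$ precisely when each column is hit exactly once, i.e.\ when $e$ is a permutation $\pi\in\perms{m}$. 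Extracting this coefficient therefore leaves
\[
    \takecoeff{x_1\cdots x_m}\perm C
    = \sum_{\sigma,\pi\in\perms{m}} \Big(\prod_i B_{i\pi(i)}\Big)\Big(\prod_i B_{\sigma(i)\pi(i)}\Big).
\]

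The final step, and the only place requiring a little care, is the reindexing. Substituting $j=\sigma(i)$ turns $\prod_i B_{\sigma(i)\pi(i)}$ into $\prod_j B_{j,(\pi\sigma^{-1})(j)}$. Setting $\rho = \pi\sigma^{-1}$ and noting that, for each fixed $\pi$, the map $\sigma\mapsto\rho$ is a bijection of $\perms{m}$, the double sum factors as
\[
    \Big(\sum_{\pi}\prod_i B_{i\pi(i)}\Big)\Big(\sum_{\rho}\prod_j B_{j\rho(j)}\Big) = (\perm B)^2,
\]
which is the asserted identity. The main obstacle is conceptual rather than computational: it is the substitution $\rho=\pi\sigma^{-1}$ that decouples the two copies of $B$ and plays the role that the Cauchy--Binet formula would play for determinants (permanents admit no such formula in general, so the squarefree restriction is essential). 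Everything else—the block identity $L^{[k\times k]} = A^{[k]}X(A^{[k]})^{\Transpose}$ and the coefficient extraction forcing $e$ to be a bijection—is routine bookkeeping.
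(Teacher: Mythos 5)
Your proof is correct, and it takes a genuinely different route from the paper. The paper proves \eqref{eq:perm2-diag-laplace} by first applying the MacMahon master theorem to rewrite $\perm(L^{[k\times k]})$ as a double diagonal coefficient $\takecoeff{x_1\cdots x_{kn}}\takecoeff{y_1\cdots y_{kn}}\det(I_{kn}-YA^{[k]}X(A^{[k]})^{\Transpose})^{-1}$, and then invokes a generalization of that theorem (a ``quantum permanent'' identity from the cited reference) to evaluate this as $\perm(A^{[k]})\cdot\perm((A^{[k]})^{\Transpose})$. You instead isolate the clean general identity $\takecoeff{x_1\cdots x_m}\perm(BXB^{\Transpose})=(\perm B)^2$ and prove it by bare hands: expand $\perm(BXB^{\Transpose})$ via the Leibniz formula, observe that the squarefree monomial forces the column-choice function $e$ to be a permutation $\pi$, and then decouple the double sum over $(\sigma,\pi)$ by the substitution $\rho=\pi\sigma^{-1}$. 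I checked the reindexing ($\prod_i B_{\sigma(i)\pi(i)}=\prod_j B_{j,(\pi\sigma^{-1})(j)}$ and the bijectivity of $\sigma\mapsto\rho$ for fixed $\pi$) and the block identity $L^{[k\times k]}=A^{[k]}X(A^{[k]})^{\Transpose}$, which the paper also uses; both are fine. Your argument is more elementary and self-contained, at the cost of being a special-purpose computation; the paper's route buys generality by plugging into existing master-theorem machinery, and your remark that the squarefree restriction substitutes for the absent Cauchy--Binet formula for permanents is exactly the right way to see why the identity holds at all.
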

\begin{proof}
    The block matrix on the right-hand side of \eqref{eq LLLL} is equal to $L^{[k\times k]}=A^{[k]} X {(A^{[k]})}^{\Transpose}$. By the MacMahon master theorem, the right-hand side of \eqref{eq:perm2-diag-laplace} is therefore equal to
    \begin{equation*}
        \takecoeff{x_1\cdots x_{kn}} \takecoeff{y_1\cdots y_{kn}} \frac{1}{\det(I_{kn}-Y A^{[k]} X {(A^{[k]})}^{\Transpose})},
    \end{equation*}
    where $Y$ denotes a $kn\times kn$ matrix with variables $y_i=Y_{ii}$ on the diagonal and zeroes everywhere else, and $I_{kn}$ denotes the $kn\times kn$ identity matrix.
    The generalization of the MacMahon master theorem from \cite[Theorem~1]{ChabaudDeshpandeMehraban:QuantPer} shows that the above expression is equal to the product of the permanents $\perm (A^{[k]})$ and $\perm( {(A^{[k]})}^{\Transpose})=\perm(A^{[k]})$.
\end{proof}
\begin{corollary}\label{lem:perm2-detL}
    For $p=k+1$ prime, $(\perm A^{[k]})^2\equiv (-1)^n \takecoeff{x_1\cdots x_{kn}} (\det L)^k \mod p$.
\end{corollary}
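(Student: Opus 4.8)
The plan is to promote the scalar identity \eqref{eq:perm2-diag-laplace} to a congruence of \emph{polynomials}
\[
    \perm\big(L^{[k\times k]}\big)\equiv (-1)^n (\det L)^k \mod p
\]
in $\F_p[L_{vw}]$, regarding the $n^2$ entries $L_{vw}$ as independent indeterminates. Granting this, I specialize $L=AXA^{\Transpose}$ (so each $L_{vw}$ becomes a linear form in the edge variables $x_e$) and extract $\takecoeff{x_1\cdots x_{kn}}$ from both sides; the left-hand side becomes $\big(\perm A^{[k]}\big)^2$ by \eqref{eq:perm2-diag-laplace}, which is exactly the corollary.

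To prove the polynomial congruence I first record an exact combinatorial identity. Grouping the bijections defining $\perm(L^{[k\times k]})$ by their induced transport plan $m=(m_{vw})$ --- a nonnegative integer matrix with all row and column sums equal to $k$ --- each plan is realized by $(k!)^{2n}/\prod_{v,w} m_{vw}!$ bijections and carries weight $\prod_{v,w}L_{vw}^{m_{vw}}$, whence
\[
    \perm\big(L^{[k\times k]}\big)=(k!)^n\, Q(L),\qquad Q(L)\defas\takecoeff{z_1^{k}\cdots z_n^{k}}\prod_{v=1}^n\Big(\sum_{w=1}^n L_{vw}z_w\Big)^{k},
\]
the factor $(k!)^n$ being the discrepancy between this count and the multinomial coefficients $(k!)^n/\prod_{v,w} m_{vw}!$ arising in the expansion of $Q$. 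Since $p=k+1$, Wilson's theorem gives $(k!)^n\equiv(-1)^n$, so it remains to show $Q(L)\equiv(\det L)^k \mod p$.

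Here is the crux. Both $Q$ and $(\det L)^k$ are \emph{reduced} polynomials over $\F_p$: every variable $L_{vw}$ occurs to degree at most $k=p-1$ in each, so their difference is again reduced and is therefore determined by the function it induces on $M_n(\F_p)\cong\F_p^{n^2}$. Thus it suffices to check $Q(L)=(\det L)^{p-1}$ for field matrices $L$. To evaluate $Q$ I turn the coefficient extraction into a sum over $\F_p^n$: writing $\ell_v(z)=\sum_w L_{vw}z_w$, one has $\sum_{z\in\F_p^n} z^{\vect{e}}=(-1)^n$ when $\vect e=(p-1,\dots,p-1)$ and $0$ for every other exponent $\vect e$ occurring in the degree-$n(p-1)$ form $\prod_v \ell_v(z)^{p-1}$, so that $\sum_{z\in\F_p^n}\prod_{v}\ell_v(z)^{p-1}=(-1)^n Q(L)$. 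Now substitute $u=Lz$: if $L$ is invertible this is a bijection of $\F_p^n$ and $\sum_{u}\prod_v u_v^{p-1}=\prod_v\big(\sum_{u_v\in\F_p}u_v^{p-1}\big)=(-1)^n=(-1)^n(\det L)^{p-1}$; if $L$ is singular the fibre size $\abs{\ker L}$ is a positive power of $p$, hence $\equiv 0$, matching $(\det L)^{p-1}=0$. In both cases $Q(L)=(\det L)^{p-1}$ as functions, hence as polynomials, completing the argument.

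The main obstacle is this last step: extracting the determinant, with its signs, from the manifestly sign-free permanent. The mechanism is entirely characteristic-$p$ --- the appearance of $(-1)^n=\prod_v\sum_{u_v}u_v^{p-1}$ and the vanishing $\abs{\ker L}\equiv 0$ for singular $L$ --- and it yields a \emph{polynomial} identity only because of the degree bound, which lets me pass from values on $\F_p^{n^2}$ back to polynomials. The combinatorial bookkeeping of the first step and the final specialization $L=AXA^{\Transpose}$ are routine by comparison.
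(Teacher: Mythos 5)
Your proof is correct, but it takes a genuinely different route from the paper's. The paper deduces the key congruence $\perm\big(L^{[k\times k]}\big)\equiv (k!)^n(\det L)^k \bmod p$ by citing Glynn's Theorem~7.5, or equivalently by invoking the fact (from Crump--De~Vos--Yeats, Corollary~6) that performing a row operation simultaneously in all $k$ copies of a row of $L^{[k\times k]}$ does not change the permanent modulo $k+1$; this lets one row-reduce $L$ inside the block matrix, factor out $(\det L)^k$, and evaluate the leftover block permanents of all-ones $k\times k$ blocks as $(k!)^n$. You instead reprove that congruence from scratch: the exact identity $\perm\big(L^{[k\times k]}\big)=(k!)^n\,Q(L)$ via transport plans is a correct count (the $(k!)^{2n}/\prod m_{vw}!$ bookkeeping checks out against the multinomial coefficients of $Q$), and your identification $Q(L)\equiv(\det L)^{p-1}$ combines the reduced-polynomial/function correspondence over $\F_p$ with the character-sum evaluation $\sum_{z\in\F_p^n}\prod_v \ell_v(z)^{p-1}=(-1)^n Q(L)$ and the substitution $u=Lz$ — all steps are sound, including the observation that only the exponent vector $(p-1,\dots,p-1)$ survives the summation and that singular $L$ contributes $0$ through the $\abs{\ker L}$ factor. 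The final specialization $L=AXA^{\Transpose}$ and coefficient extraction is also handled correctly. What your approach buys is self-containedness (no appeal to Glynn or to the row-operation lemma) and a cleaner logical status for the congruence, since you work honestly with polynomial entries via the reduced-polynomial argument rather than implicitly row-reducing a matrix of polynomials; what the paper's approach buys is brevity and a reusable structural fact (invariance of block permanents under simultaneous row operations mod $p$) that recurs elsewhere in the permanent literature.
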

\begin{proof}
    This follows by Wilson's theorem and the congruence $\perm(L^{[k\times k]})\equiv (k!)^n (\det L)^k \mod (k+1)$ obtained in \cite[Theorem~7.5]{Glynn:MatsPointsPrime}.\footnote{With the notation in that paper, $X_{k+1}(B)=(\perm B^{[k]})/(k!)^n$.}
    The latter holds because performing row operations (adding a multiple of one row to another) simultaneously in all $k$ copies of any row of $L^{[k\times k]}$ does not change its permanent modulo $k+1$ \cite[Corollary~6]{CrumpDeVosYeats:Permanent}. For $R$ a sequence of such row operations such that $RL=D$ is a diagonal matrix (thus $\det L=\det D$), applying $R$ simultaneously to the $k$ copies of each row shows that
    \begin{equation*}
        \perm \left( L^{[k\times k]} \right)
        = \perm \left( D^{[k\times k]} \right)
        \equiv
        (\det L)^k
        \perm \left( I_{n}^{[k\times k]} \right)
        =
        (\det L)^k (k!)^n
        \mod p
    \end{equation*}
    factorizes into determinants of $L$ and the permanents of the $n$ blocks of $I_n^{[k\times k]}$. Each of the latter blocks is a $k\times k$ matrix with each entry equal to $1$, hence permanent $k!$.
\end{proof}
By the matrix-tree theorem, the determinant of $L$ is the \emph{Kirchhoff polynomial}
\begin{equation}\label{eq:Kirchhoff}
    \det L = \KirchPol_{G\setminus\infty}
    = \sum_T \prod_{e\in T} x_e
\end{equation}
where the sum is over all spanning trees $T$ of $G\setminus \infty$. It has degree $n$, and it is related to the Symanzik polynomial \eqref{eq:psipol} with degree $(k-1)n$ by an inversion of variables,
\begin{equation}\label{eq:Kirchhoff-Symanzik}
    \KirchPol_{G\setminus \infty} (x_1,\ldots,x_{kn})=
    \bigg(\prod_{e=1}^{kn} x_e\bigg) \cdot
    \PsiPol_{G\setminus\infty}\big(x_1^{-1},\ldots,x_{kn}^{-1}\big).
\end{equation}

In conclusion, \cref{lem:perm2-detL} expresses the square of the permanent as a diagonal coefficient of a power of the Kirchhoff or Symanzik polynomial:
\begin{equation}\label{eq:perm2-kirchhoff}
    (-1)^n\Perm(G\setminus\infty)^2
    \equiv \takecoeff{x_1\cdots x_{kn}} \KirchPol_{G\setminus\infty}^k
    = \takecoefff{\big}{x_1^{k-1}\cdots x_{kn}^{k-1}} \PsiPol_{G\setminus\infty}^k
    \mod p
    .
\end{equation}
\begin{remark}\label{rem:perm-non-regular}
    The results in this subsection do not require that $G$ is $2k$-regular. We only used that the number of edges of $G\setminus\infty$ is equal to $kn$, with integer $k$. Therefore, \cref{lem:perm2-detL} proves that the graph permanent \cite{CrumpDeVosYeats:Permanent} and its extension \cite{Crump:ExtendedPermanent} are determined, for any graph, by the cycle matroid of that graph. This proves \cite[Conjecture~3]{Crump:PhD}.
\end{remark}

\section{Diagonals of graph polynomials}\label{sec diags}

A unifying perspective on the Martin invariant, the graph permanent, and the $c_2$ invariant, is that they are related to diagonal coefficients of certain polynomials defined from the graph. This point of view was crucial for recent progress on the $c_2$ invariant \cite{HuSchnetzShawYeats:Further,HuYeats:c2p2,EsipovaYeats:c2powers} and will be used again in \cref{sec c2}. For the permanent, a diagonal interpretation was given in \cite[\S~5.1]{Crump:ExtendedPermanent} in terms of a new polynomial, whereas in \eqref{eq:perm2-kirchhoff} we identified the square of the permanent with a residue of the diagonal of the Kirchhoff polynomial. In the present section, we show that not just the residue, but the diagonal coefficient itself satisfies the Martin recurrence \eqref{eq:Martin-recursion}. Consequently, the Martin invariant of a regular graph can be identified, up to a normalization, with this coefficient (\cref{thm martin as coeff}).

The diagonal coefficients we are interested in have combinatorial interpretations as partitions of the edges into spanning trees (this section), or partitions into spanning trees and certain spanning forests (\cref{sec c2}).   We exploit this combinatorial interpretation to prove the Martin recurrence \eqref{eq:Martin-recursion} for these diagonal coefficients. The key tool is a generalization of the Pr\"ufer encoding enabling us to move from spanning trees on a graph to spanning trees on the graph with a vertex resolved by a transition viewed as a matching; this is the content of \cref{sec:pruefer}.

After these introductory remarks, consider now concretely the Kirchhoff polynomial $\KirchPol_G$ (appeared in \eqref{eq:Kirchhoff} for a decompletion) and the Symanzik polynomial $\PsiPol_G$ from \eqref{eq:psipol}:
\begin{equation*}
    \KirchPol_{G}
    = \sum_T \prod_{e\in T} x_e
    \qquad\text{and}\qquad
    \PsiPol_G = \sum_{T}
    \prod_{e\notin T} x_e
    .
\end{equation*}
For any graph $G$, both $\KirchPol_G$ and $\PsiPol_G$ are homogeneous, are linear in each variable, and have all monomials appearing with coefficient $1$.  For $\KirchPol_G$ each monomial consists of the edges of a spanning tree while for $\PsiPol_G$ each monomial consists of the edges that when deleted leave a spanning tree of $G$. For a disconnected graph, $\PsiPol_G=\KirchPol_G=0$. If $G$ is connected, then the polynomials are non-zero (spanning trees exist), and their total degrees are
\begin{equation*}
    \deg \KirchPol_G = n-1
    \qquad\text{and}\qquad
    \deg \PsiPol_G = m-n+1
\end{equation*}
if $G$ has $n$ vertices and $m$ edges. The degree $m-n+1$ equals the dimension of the cycle space of $G$ (the first Betti number of $G$ viewed as a topological space); this number is known as \emph{nullity} or \emph{corank} and in the quantum field theory community as the \emph{loop number}. All these observations are immediate from the definitions \eqref{eq:psipol} and \eqref{eq:Kirchhoff}.

\begin{example}\label{eg dunce kirch and psi}
    Let $G$ be the triangle with one doubled edge. Let indices $1$ and $2$ be the doubled edge and $3$ and $4$ be the other edges, as in \cref{fig:duncecap}. Then $n=3$, $m=4$, $\KirchPol_G = x_1x_3+x_1x_4+x_2x_3+x_2x_4+x_3x_4$ and $\PsiPol_G = x_2x_4+x_2x_3+x_1x_4+x_1x_3+x_1x_2$.
\end{example}

Let $G$ be a graph with $m$ edges.  We are particularly interested in the \emph{diagonal coefficient} of $\KirchPol_G^k$, namely $\takecoeff{x_1\cdots x_{m}}\KirchPol_{G}^{k}$ where $\takecoeff{\cdot}$ denotes the coefficient extraction from \cref{def coeff extr}.  Since $\KirchPol_G^k$ has degree $k(n-1)$, where $n$ is the number of vertices of $G$, note that the diagonal coefficient is necessarily zero whenever $m\neq k(n-1)$. We only consider cases with $m=k(n-1)$. Then $\PsiPol_G$ has degree $(k-1)(n-1)$, and so we also have a diagonal coefficient $\takecoefff{}{x_1^{k-1}\cdots x_{m}^{k-1}}\PsiPol_{G}^{k}$. Due to \eqref{eq:Kirchhoff-Symanzik}, they agree:
\begin{equation}\label{eq coeffs agree}
\takecoeff{x_1\cdots x_{m}}\KirchPol_{G}^{k} = \takecoefff{\big}{x_1^{k-1}\cdots x_{m}^{k-1}}\PsiPol_{G}^{k}.
\end{equation}

Consider now a $2k$-regular graph $G$ with $n$ vertices and thus $kn$ edges, and let $v$ denote a vertex without any self-loops. Then the graph $G\setminus v$ has $n-1$ vertices and $m=k(n-2)$ edges, so that the above diagonal coefficients have the potential to be non-zero.
In \cref{thm martin as coeff} we will show that
\begin{equation*}
    \takecoeff{x_1\cdots x_{m}}\KirchPol_{G\setminus v}^{k}=k!\cdot \Martin(G).
\end{equation*}
The main technical step is to prove that the diagonal coefficient on the left-hand side satisfies the Martin recurrence.  We will do this using a combinatorial interpretation of the diagonal coefficient. In \cref{sec 3 inv recurrence} we will apply similar considerations to the diagonal of a different but related polynomial.

\begin{definition}
A \emph{partition} of a set $S$ is a set $P=\set{p_1,\ldots,p_k}$ of non-empty subsets $p_i\subseteq S$, called \emph{the parts of $P$}, such that each element of $S$ is in exactly one part of $P$.

An \emph{ordered partition} of $S$ is a tuple $P = (p_1, \ldots, p_k)$ of distinct non-empty subsets of $S$ that form a partition of $S$.
\end{definition}

\begin{definition}
    For a graph $G$ and integer $k$, define $N_k(G)$ to be the number of ordered partitions of the set of edges of $G$ into $k$ parts, so that each part is a spanning tree of $G$.
\end{definition}

We will use the following combinatorial interpretation of the diagonal coefficient.
\begin{lemma}\label{lem comb interp of diag}
    For a graph $G$ with $m$ edges, and any positive integer $k$,
    \begin{equation*}
    N_k(G) = \takecoeff{x_1\cdots x_{m}}\KirchPol_{G}^{k}.
    \end{equation*}
\end{lemma}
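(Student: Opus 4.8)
The plan is to expand the $k$-th power of the Kirchhoff polynomial directly from its spanning-tree definition \eqref{eq:Kirchhoff} and then read off the coefficient of the fully squarefree monomial $x_1\cdots x_m$. Writing $\KirchPol_G=\sum_T \prod_{e\in T}x_e$ as a sum over spanning trees $T$, the $k$-fold product expands as
\begin{equation*}
    \KirchPol_G^k = \sum_{(T_1,\ldots,T_k)} \prod_{j=1}^k \prod_{e\in T_j} x_e,
\end{equation*}
where the outer sum runs over all ordered $k$-tuples of spanning trees of $G$. The monomial contributed by a tuple $(T_1,\ldots,T_k)$ is $\prod_{e}x_e^{n_e}$, where $n_e=\abs{\set{j\colon e\in T_j}}$ records how many of the trees use edge $e$. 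Since every monomial of $\KirchPol_G$ has coefficient $1$, the coefficient $\takecoeff{x_1\cdots x_m}\KirchPol_G^k$ is simply the number of tuples whose associated monomial is $x_1\cdots x_m$, i.e.\ those with $n_e=1$ for every edge $e$.

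Next I would translate the condition $n_e=1$ for all $e$ into the language of partitions. Requiring each edge to lie in exactly one of the $T_j$ means precisely that $T_1,\ldots,T_k$ are pairwise disjoint and that their union is all of the edge set; equivalently, $(T_1,\ldots,T_k)$ is an ordered partition of the edges into $k$ spanning trees. Conversely, any such ordered partition is one of the tuples appearing in the sum. This yields a bijection between the tuples picked out by the coefficient and the objects counted by $N_k(G)$, which is all that is needed.

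The only point requiring a moment's care — and the closest thing to an obstacle — is reconciling this bookkeeping with the definition of an ordered partition, which demands that the parts be \emph{distinct} and \emph{non-empty}. Both conditions are automatic here: each spanning tree is non-empty (in any case where partitions can exist at all), and any two parts of a partition are disjoint, hence distinct once non-empty. Thus no tuple is over- or under-counted, and the coefficient equals $N_k(G)$ exactly. No estimates or case analysis are needed; the identity is an immediate consequence of squarefree-monomial extraction once the expansion is in place.
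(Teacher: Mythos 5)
Your proof is correct and is essentially the paper's own argument: expand $\KirchPol_G^k$ over ordered $k$-tuples of spanning trees, use the fact that every monomial of $\KirchPol_G$ has coefficient $1$, and read off the squarefree coefficient as the count of tuples in which each edge occurs exactly once. Your closing remark on distinctness and non-emptiness is the right point of care; note only that the paper itself later admits the all-empty tuple for the one-vertex graph, so the operative reading of $N_k(G)$ is ``ordered $k$-tuples of pairwise edge-disjoint spanning trees covering the edge set,'' under which your bijection is exactly what is needed.
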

\begin{proof}
Every monomial which appears in $\KirchPol_{G}$ has coefficient $1$, so $N_k(G)$ counts the ordered partitions of the variables (edges of $G$) into a $k$-tuple of parts (one part from each factor of $\KirchPol_G^k$), where each part is a monomial (spanning tree) appearing in $\KirchPol_{G}$.
\end{proof}
Here the order of the trees $T_i$ matters. Without the order, we get a partition $P=\set{T_1,\ldots,T_k}$ of the edge set of $G$ and require an extra combinatorial factor.
\begin{corollary}\label{diag=STP}
    The diagonal coefficient $N_k(G)$ of $\KirchPol_G^k$ is equal to $k!$ times the number of partitions of the edge set of $G$ such that each part is a spanning tree.
\end{corollary}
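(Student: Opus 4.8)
The plan is to read off the statement directly from \cref{lem comb interp of diag}, which already identifies $N_k(G)$ with the number of \emph{ordered} partitions of the edge set of $G$ into $k$ spanning trees, i.e.\ $k$-tuples $(T_1,\ldots,T_k)$ of spanning trees that together partition the edges. The only thing left to do is to pass from ordered to unordered partitions and to pin down the overcounting factor. First I would introduce the forgetful map that sends an ordered partition $(T_1,\ldots,T_k)$ to the underlying unordered partition $\set{T_1,\ldots,T_k}$, and argue that each fibre of this map has exactly $k!$ elements.

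The key observation making the fibre size equal to $k!$ (rather than something smaller, involving stabilizers of repeated parts) is that the parts of such a partition are automatically \emph{distinct}. Indeed, the $T_i$ form a partition of the edge set, so they are pairwise disjoint and each non-empty; two disjoint non-empty sets cannot be equal. Hence the symmetric group $\perms{k}$ acts freely on the set of orderings of a fixed unordered partition, so the $k$ distinct trees can be arranged in exactly $k!$ ordered tuples, each of which is a valid ordered partition counted by $N_k(G)$. This matches the definition of \emph{ordered partition} given just above, where the parts are required to be distinct.

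Combining these two points, the forgetful map exhibits the set counted by $N_k(G)$ as a disjoint union of fibres of size $k!$, one for each unordered partition of the edges into $k$ spanning trees. Therefore $N_k(G)$ equals $k!$ times the number of such unordered partitions, which is precisely the claim. I do not expect any genuine obstacle here; the single point that warrants an explicit sentence is the distinctness of the parts, since it is what guarantees the clean factor $k!$ with no correction from coinciding trees.
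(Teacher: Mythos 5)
Your proposal is correct and is essentially the paper's own argument: the paper likewise observes that, since the trees are edge-disjoint (hence distinct), the forgetful map $(T_1,\ldots,T_k)\mapsto \set{T_1,\ldots,T_k}$ is $k!$-to-one. Your extra sentence making the freeness of the $\perms{k}$-action explicit is a harmless elaboration of the same point.
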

\begin{proof}
    Since the trees are edge-disjoint, the map $(T_1,\ldots,T_k)\mapsto \set{T_1,\ldots,T_k}$ that forgets the order is $k!$-to-1.
\end{proof}

\begin{example}
    Continuing with the graph $G$ in \cref{eg dunce kirch and psi}, we see that there are two partitions of the edges of $G$ into spanning trees $\{\{1,3\}, \{2,4\}\}$ and $\{\{1,4\}, \{2,3\}\}$ and hence four ordered partitions as there are two orderings of each partition, giving $N_2(G)=4$.  This is consistent with $[x_1x_2x_3x_4]\KirchPol_G^2 = [x_1x_2x_3x_4](x_1x_3+x_1x_4+x_2x_3+x_2x_4+x_3x_4)^2 = 4$.
\end{example}

\begin{remark}\label{rem Symanzik version of combi argument}
It is also the case that $\PsiPol_G$ has every monomial with coefficient $1$, so the same argument as in the proof of \cref{lem comb interp of diag} interprets the diagonal coefficient of $\PsiPol_G^k$ as a $k$-tuple of complements of spanning trees, of which there are also $N_k(G)$. This again explains \eqref{eq coeffs agree}, where here the act of taking the complement of a set of variables is interpreted combinatorially---instead of the equivalent algebraic formulation in \eqref{eq:Kirchhoff-Symanzik}.
\end{remark}

\begin{lemma}\label{duplication-diagonal}
    For a graph $G$ with $m$ edges and for all positive integers $r$ and $k$, we have
    \begin{equation}\label{eq:duplication-diagonal}
        N_{kr}\big(G^{[r]}\big)
        =\takecoeff{x_1\cdots x_{rm}} \KirchPol_{G^{[r]}}^{kr}
        =(r!)^{m}\takecoeff{x_1^{r}\cdots x_{m}^r} \KirchPol_{G}^{kr}.
    \end{equation}
\end{lemma}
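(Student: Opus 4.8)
The first equality is immediate and requires no work: $G^{[r]}$ is a graph with $rm$ edges, so applying \cref{lem comb interp of diag} with the integer $kr$ in place of $k$ gives $N_{kr}(G^{[r]}) = \takecoeff{x_1\cdots x_{rm}}\KirchPol_{G^{[r]}}^{kr}$. All the content lies in the second equality, and the plan is to prove it algebraically via a substitution identity for the Kirchhoff polynomial.

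First I would set up the variable bookkeeping. Label the $r$ parallel copies of an edge $e$ of $G$ in $G^{[r]}$ by the variables $x_{e,1},\ldots,x_{e,r}$, and write $y_e = x_{e,1}+\cdots+x_{e,r}$. The key identity to establish is the parallel-edge reduction rule
\[
    \KirchPol_{G^{[r]}}(x) = \KirchPol_{G}(y_1,\ldots,y_m),
\]
i.e.\ $\KirchPol_{G^{[r]}}$ is the specialization of $\KirchPol_G$ under $x_e \mapsto y_e$. To see this, note that a spanning tree is acyclic and hence can contain at most one edge from each parallel class (a second would close a cycle), so a spanning tree of $G^{[r]}$ is precisely a spanning tree $T$ of $G$ together with a choice, for each $e\in T$, of one of its $r$ copies. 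Summing over these choices factorizes the contribution of each tree edge, giving $\KirchPol_{G^{[r]}} = \sum_T \prod_{e\in T}(x_{e,1}+\cdots+x_{e,r}) = \KirchPol_G(y)$. Raising to the $kr$-th power preserves the specialization, so $\KirchPol_{G^{[r]}}^{kr} = \KirchPol_G(y_1,\ldots,y_m)^{kr}$.

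With this identity in hand, I would extract the diagonal coefficient by a multinomial argument. Expand $\KirchPol_G(y)^{kr} = \sum_{\mathbf a} c_{\mathbf a}\prod_e y_e^{a_e}$, where $c_{\mathbf a} = \takecoeff{y_1^{a_1}\cdots y_m^{a_m}}\KirchPol_G^{kr}$. To read off the coefficient of $x_1\cdots x_{rm}$, that is, every one of the $rm$ variables to the first power, observe that from each bundle $y_e^{a_e} = (x_{e,1}+\cdots+x_{e,r})^{a_e}$ we are forced to extract exactly the squarefree monomial $x_{e,1}\cdots x_{e,r}$; this is nonzero only when $a_e = r$, in which case the relevant multinomial coefficient equals $r!$. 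Hence only the single term $\mathbf a = (r,\ldots,r)$ survives, each of the $m$ bundles contributes one factor of $r!$, and the surviving coefficient is $c_{(r,\ldots,r)} = \takecoeff{y_1^r\cdots y_m^r}\KirchPol_G^{kr} = \takecoeff{x_1^r\cdots x_m^r}\KirchPol_G^{kr}$ (a mere renaming of variables). Collecting the $m$ factors of $r!$ yields exactly $(r!)^m\takecoeff{x_1^r\cdots x_m^r}\KirchPol_G^{kr}$, as claimed.

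The main obstacle is the substitution identity $\KirchPol_{G^{[r]}} = \KirchPol_G(y_1,\ldots,y_m)$; once it is in place, the remaining diagonal extraction is pure bookkeeping. The only point needing care is the bijection between spanning trees of $G^{[r]}$ and pairs (spanning tree of $G$, per-edge choice of copy), but since acyclicity forbids two edges from the same parallel class, this correspondence is clean and exact. I would therefore spend the bulk of the writeup on justifying that correspondence and then let the multinomial computation follow mechanically.
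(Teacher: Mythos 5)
Your proposal is correct and follows essentially the same route as the paper: establish $\KirchPol_{G^{[r]}}=\KirchPol_G(y_1,\ldots,y_m)$ via the bijection between spanning trees of $G^{[r]}$ and spanning trees of $G$ with a chosen copy per edge, then extract the diagonal by noting that the linear coefficient of each bundle forces $a_e=r$ and contributes $r!$. The paper additionally remarks that the substitution identity also follows from the matrix-tree theorem, but that is only an aside.
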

\begin{proof}
This is \cite[Proposition~40]{Crump:ExtendedPermanent}. Set $e_i=(e-1)r+i$ so that $e_1,\ldots,e_r$ label the $r$ copies in $G^{[r]}$ of any edge $e\in\set{1,\ldots,m}$ of $G$. For a spanning tree $T'$ of $G^{[r]}$, replacing each $e_i\in T'$ by $e$ defines a spanning tree $T$ of $G$. In particular, $T'$ can contain at most one copy of any edge $e$ of $G$. Thus $T'$ is determined by $T$ together with choices $\iota(e)\in\set{1,\ldots,r}$ of one copy for each $e\in T$. Therefore,
\begin{equation*}
    \KirchPol_{G^{[r]}}(x_1,\ldots,x_{rm})
    =\sum_T \sum_{\iota} \prod_{e\in T} x_{e_{\iota(e)}}
    =\sum_T \prod_{e\in T} \sum_{i=1}^r x_{e_i}
    =\KirchPol_G(y_1,\ldots,y_m)
\end{equation*}
where $y_e=x_{r(e-1)+1}+\ldots+x_{er}=x_{e_1}+\ldots+x_{e_r}$ is the sum of the variables associated to the copies of edge $e$. This identity also follows from the matrix tree theorem \eqref{eq:Kirchhoff}, since the reduced graph Laplacian $L$ from \cref{sec:perm2-from-det} for $G^{[r]}$ is the same as that of $G$, except for the replacement $x_e\mapsto y_e$. Now note that
\begin{equation*}
    \takecoeff{x_{e_1}\cdots x_{e_r}} y_e^q
    = \takecoeff{x_{e_1}\cdots x_{e_r}} (x_{e_1}+\ldots+x_{e_r})^q
    = \begin{cases}
        r! & \text{if $q=r$ and} \\
        0  & \text{otherwise,} \\
    \end{cases}
\end{equation*}
so taking the linear coefficient in the copies of edge $e$ is the same---up to a factor $r!$---as taking the coefficient of $y_e^r$. The claim follows.
\end{proof}
Similarly to \cref{lem comb interp of diag} and \cref{rem Symanzik version of combi argument}, we can interpret the coefficient $\takecoeff{x_1^{r}\cdots x_{m}^r} \KirchPol_{G}^{kr}\in\Z$ as the number of lists $(T_1,\ldots,T_{kr})$ of spanning trees of $G$ such that each edge of $G$ appears in precisely $r$ of these trees. In contrast to \cref{diag=STP}, this does not imply that this coefficient is a multiple of $(kr)!$, because the $T_i$ are not necessarily distinct. Instead, we can show
\begin{lemma}\label{diag-multinomial-multiple}
    Let $G$ be a graph with $m$ edges and $n\geq 2$ vertices. Let $k$ and $r$ be positive integers. Then $\takecoeff{x_1^{r}\cdots x_{m}^r} \KirchPol_{G}^{kr}$ is divisible by the integer $(kr)!/(r!)^k$.
\end{lemma}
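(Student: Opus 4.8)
The plan is to use the combinatorial interpretation of the coefficient $C\defas\takecoeff{x_1^{r}\cdots x_{m}^r}\KirchPol_{G}^{kr}$ noted just before the statement. Expanding $\KirchPol_G^{kr}$ and extracting the monomial $x_1^r\cdots x_m^r$, the integer $C$ counts ordered tuples $(T_1,\ldots,T_{kr})$ of spanning trees of $G$ in which every edge lies in exactly $r$ of the trees. (We may assume $G$ is connected and $m=k(n-1)$, since otherwise the degrees $rm$ and $kr(n-1)$ disagree, $C=0$, and the claim is trivial.) The symmetric group $\perms{kr}$ acts on this finite set by permuting the $kr$ coordinates, and the action preserves the defining property because it leaves the underlying multiset of trees, hence each edge's multiplicity $r$, unchanged. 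Writing $C$ as a sum of orbit sizes, it therefore suffices to show that $(kr)!/(r!)^k$ divides every orbit size. If a tuple takes the distinct values $S_1,\ldots,S_\ell$ with multiplicities $m_1,\ldots,m_\ell$ (so $\sum_j m_j=kr$), its stabilizer is $\perms{m_1}\times\cdots\times\perms{m_\ell}$ of order $\prod_j m_j!$, and the orbit has size $(kr)!/\prod_j m_j!$. Since $(kr)!/(r!)^k=\binom{kr}{r,\ldots,r}$ is an integer, if $\prod_j m_j!\divides(r!)^k$ then $(kr)!/\prod_j m_j!$ is a multiple of $(kr)!/(r!)^k$. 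Thus the whole lemma reduces to the single divisibility claim
\[
    \prod_{j=1}^{\ell} m_j! \ \divides\ (r!)^k .
\]

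To prove this I would compare $p$-adic valuations for each prime $p$. By Legendre's formula it is enough to check, for every prime power $q=p^i$, the inequality $\sum_j\floor{m_j/q}\le k\floor{r/q}$. Using $\sum_j m_j=kr$, rewriting both sides in terms of remainders shows that this is equivalent to
\[
    \sum_{j=1}^{\ell}\,(m_j\bmod q)\ \ge\ k\,(r\bmod q).
\]
So everything comes down to this remainder inequality, which is where the spanning-tree structure must finally enter.

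The key step is to extract $\sum_j(m_j\bmod q)$ from the edge-covering constraints. For each edge $e$ the tuple condition gives $\sum_{j\colon e\in S_j} m_j = r$ exactly; reducing modulo $q$ shows that $\sum_{j\colon e\in S_j}(m_j\bmod q)$ is a non-negative integer congruent to $r\bmod q$, hence is at least $r\bmod q$. Summing over all $m$ edges and interchanging the order of summation gives $(n-1)\sum_j(m_j\bmod q)\ge m\,(r\bmod q)$, because each $S_j$ contributes exactly its $n-1$ edges. Since $m=k(n-1)$ and $n-1\ge 1$, dividing by $n-1$ yields precisely $\sum_j(m_j\bmod q)\ge k\,(r\bmod q)$, as required, completing the valuation comparison and hence the lemma.

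I expect the main obstacle to be conceptual rather than computational: the naive orbit argument only yields divisibility by $(kr)!/\prod_j m_j!$, and one is tempted to bound the multiplicities crudely (each $m_j\le r$, which does hold), but this is insufficient — for instance $(m_j)=(2,2,2)$ with $k=2$, $r=3$ satisfies $\sum_j m_j=kr$ and $m_j\le r$ yet violates $\prod_j m_j!\divides(r!)^k$. The covering constraints exclude such configurations (here because the odd number $r=3$ cannot be a sum of $2$'s), and the clean way to harness them is exactly the remainder inequality above. Recognizing that reformulation and the averaging-over-edges step is the crux; everything else is bookkeeping.
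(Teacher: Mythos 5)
Your proof is correct, and it shares the paper's overall skeleton---interpret the coefficient as counting ordered tuples of spanning trees with each edge in exactly $r$ of them, decompose by the underlying multiset (equivalently, by $\perms{kr}$-orbits), and reduce everything to the single divisibility $\prod_j m_j!\divides (r!)^k$ for the multiplicities of the distinct trees---but you establish that key divisibility by a genuinely different argument. The paper notes that for each edge $e$ the multiplicities of the trees containing $e$ form a composition of $r$, so $r!/\prod_{j\colon e\in S_j}m_j!$ is a multinomial coefficient and hence an integer; multiplying these integralities over all $k(n-1)$ edges and using that each spanning tree has exactly $n-1$ edges yields $\bigl((r!)^k/\prod_j m_j!\bigr)^{n-1}\in\Z$, from which the $(n-1)$-th root is extracted (a rational number whose positive integer power is an integer is itself an integer). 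You instead compare $p$-adic valuations via Legendre's formula, reduce to the remainder inequality $\sum_j (m_j\bmod q)\ge k\,(r\bmod q)$ for every prime power $q$, and derive it by reducing the edge constraints $\sum_{j\colon e\in S_j}m_j=r$ modulo $q$ and summing over the $m=k(n-1)$ edges. Both arguments exploit exactly the same two structural inputs (the edge constraints and the fact that every spanning tree has $n-1$ edges), and in a sense your valuation inequality is the additive shadow of the paper's multiplicative identity; yours is more elementary in that it avoids the root-extraction step, at the cost of some valuation bookkeeping, while the paper's is shorter once one accepts the footnoted fact about rational roots of integers. Your closing observation that the crude bound $m_j\le r$ is insufficient (with the $(2,2,2)$, $k=2$, $r=3$ counterexample) correctly identifies why the edge constraints must genuinely be used.
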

\begin{proof}
    Every spanning tree has $n-1$ edges, so we may assume that $m=k(n-1)$: otherwise, no edge decompositions into $kr$ spanning trees can exist and the claim becomes trivial since the coefficient is zero.

    Any tuple $(T_1,\ldots,T_{kr})$ of spanning trees determines a multiset $t_1^{\mu_1}\ldots t_p^{\mu_p}$ that records the set $\set{t_1,\ldots,t_p}=\set{T_1,\ldots,T_{kr}}$ of \emph{distinct} trees and their multiplicities, where $\mu_i\geq 1$ denotes the number of indices $s$ such that $t_i=T_s$. Note that $\mu_1+\ldots+\mu_p=kr$, so $\mu$ is necessarily a composition of $kr$, and therefore
    \begin{equation*}
        \frac{(kr)!}{\mu_1!\cdots\mu_p!} = \binom{kr}{\mu_1,\ldots,\mu_p}
    \end{equation*}
    is a multinomial coefficient. It counts the number of distinct ordered tuples that correspond to the multiset. We can therefore write the diagonal coefficient as a sum
    \begin{equation*}\label{eq:diag-multiset}\tag{$\dagger$}
        \takecoeff{x_1^{r}\cdots x_{m}^r} \KirchPol_{G}^{kr}
        = \sum_{p,t,\mu} \frac{(kr)!}{\mu_1!\cdots\mu_p!},
    \end{equation*}
    over all multisets of spanning trees with the constraint that each edge $e$ of $G$ appears in precisely $r$ of the trees, counted with multiplicity. This condition can be phrased as
    \begin{equation*}
        r = \mu_1\chi_1(e)+\ldots+\mu_p \chi_p(e)
    \end{equation*}
    where we set $\chi_i(e)=1$ if $e\in t_i$ and $\chi(e)=0$ otherwise. It follows that
    \begin{equation*}
        \frac{r!}{(\mu_i!)^{\chi_1(e)}\cdots(\mu_p!)^{\chi_p(e)}} = \binom{r}{\mu_1 \chi_1(e),\ldots,\mu_p \chi_p(e)}
        \in \Z
    \end{equation*}
    is an integer. Multiplying these identities for all $k(n-1)$ edges of $G$, we find that
    \begin{equation*}
        \left(\frac{(r!)^k}{\mu_1!\cdots\mu_p!}\right)^{n-1}
        \in \Z
    \end{equation*}
    is an integer. Here we used that $\sum_e \chi_i(e)=\abs{t_i}=n-1$ as $t_i$ is a spanning tree. We conclude that the rational number $(r!)^k/(\mu_1!\cdots\mu_p!)$ is an integer.\footnote{An elementary argument shows that $q^n\in\Z$, for $n\in\Z_{>0}$ and rational $q$, implies that $q\in\Z$.} Hence, for every multiset that contributes to \eqref{eq:diag-multiset}, the summand is divisible by $(kr)!/(r!)^k$.
\end{proof}

\begin{example}\label{ex:diag-dipole-power}
For the $k$-fold edge $G=K_2^{[k]}$ with $m=k$ edges and $n=2$ vertices, we have
$\KirchPol_{G} = x_1+\cdots +x_k$ and $\PsiPol_{G} = \sum_{i=1}^k \prod_{j\neq i} x_j$. The diagonal coefficients are
\begin{equation*}
    \takecoefff{\big}{x_1^{r(k-1)}\cdots x_k^{r(k-1)}}\PsiPol_{G}^{kr} = \takecoeff{x_1^{r}\cdots x_k^{r}}\KirchPol_{G}^{kr} = \binom{kr}{r,\ldots,r} = \frac{(kr)!}{(r!)^k}.
\end{equation*}
\end{example}

\subsection{Coloured matchings and distinct representatives}\label{sec:pruefer}

The key technical step towards the proof of \Cref{thm martin as coeff,thm:c2-martin} is a bijection based on Pr\"ufer codes between two sets called $A$ and $B$, which we will now define. Throughout, we fix an integer $k>0$ and a non-empty set $S$ with at most $2k$ elements. We set $d=2k-\abs{S}\geq0$.

Given a partition $P=\set{p_1,\ldots,p_t}$ of $S$, we write $\abs{P}=t$ for the number of parts.
A subset $X\subseteq S$ is called a \emph{system of distinct representatives} (SDR) of $P$ if it contains exactly $\abs{X \cap p}=1$ element in each part $p\in P$.

Given any sequence $P_1,\ldots,P_k$ of partitions of $S$, we want to count families $(X_1,\ldots,X_k)$ of pairwise disjoint systems of distinct representatives $X_i$ of $P_i$ that cover $S=X_1\cup\ldots\cup X_k$. We can encode such as a map $f\colon S\rightarrow\set{1,\ldots,k}$ with $X_i=f^{-1}(i)$, and hence define
\begin{equation}\label{eq A}
    A(P_1,\ldots,P_k) = \set{f\colon S\rightarrow \set{1,\ldots,k}\colon f^{-1}(i)\ \text{is a SDR of}\ P_i\ \text{for each}\ i}.
\end{equation}
In our application, the set $S$ will consist of the $h$-neighbourhood of a vertex $u$ after deleting a vertex $v$.  Since $u$ began with degree $2k$, $d$ is the number of edges from $u$ to the deleted vertex $v$.  In our application, the set $A(P_1, \ldots, P_k)$ encodes all ways to extend forests of the graph without $u$ inducing the partitions $P_1, \ldots, P_k$ into trees using each edge around $u$ exactly once.

\begin{example}\label{eg split up 5.1 eg A part}
    We will run an example through this section. Let $S=\{1,2,\ldots, 12\}$ and $k=8$. In the context where we will ultimately use the bijection that we'll define in \cref{prop:pruefer-bij}, $S$ will be the $h$-neighbourhood of a vertex $u$. For instance, the particular values of $S$ and $k$ could occur as in \cref{fig:prueffersetup}.

    \begin{figure}
        \centering
        \includegraphics{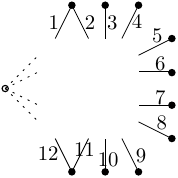}
        \caption{An illustration of $S$ as an $h$-neighbourhood of a central vertex $u$ which has been removed.  The dotted half edges and empty vertex on the left represent the connections to the deleted $v$, and are not included in $S$.}%
        \label{fig:prueffersetup}%
    \end{figure}
    
    Take the partitions $P_1=P_2=P_3=P_4=P_5=\set{S}$ to be trivial (one part) and let
    \begin{align*}
        P_6 & = \set{\{1,2,3,4,5,8\}, \{6,7\}, \{9,10,11,12\}}, \\
        P_7 & = \set{\{1,2,3,4,7,11,12\}, \{5,6,8,9,10\}}, \\
        P_8 & = \set{\{1,2,3,4,5,6\}, \{7,8,9,10,11,12\}}.
    \end{align*}
    For our context involving the $h$-neighbourhood of $u$, each partition of $S$ is induced from a partition of the vertices adjacent to $u$, and so with the vertices as illustrated in \cref{fig:prueffersetup}, the partitions must all have $1$ and $2$ in the same part and likewise for $11$ and $12$.

    Then one example of an element of $A(P_1, \ldots, P_8)$ in this case is given by $f(1)=f(12)=8, f(2)=1, f(3)=2, f(4)=3, f(5)=4, f(6)=5, f(7)=f(8)=f(9)=6, f(10)=f(11)=7$. \Cref{fig:pruefferwithu} illustrates the partitions as coloured blobs and $f$ with bold edges.  We shall return to this example after the next definitions.

    \begin{figure}
        \centering
        \includegraphics{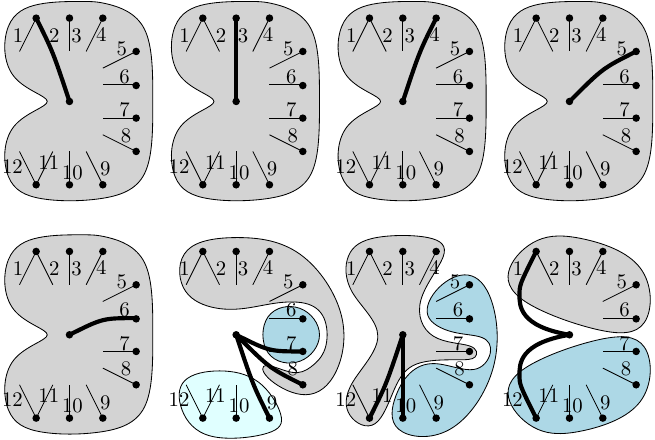}
        \caption{The systems of distinct representatives given by $f$ illustrated in the context of $S$ as the $h$-neighbourhood of the central vertex $u$.}%
        \label{fig:pruefferwithu}%
    \end{figure}
\end{example}


A \emph{matching} of $S$ is a set  of disjoint pairs of elements of $S$. Its \emph{support} is the set $\supp m \subseteq S$ of all elements of $S$ that appear in some pair $\set{a,b}\in m$.

Given a partition $P=\set{p_1,\ldots,p_t}$ of $S$, we can view a set $m$ of pairs of elements of $S$ as a graph whose vertices are the parts of $P$, by interpreting each element $\set{a,b}\in m$ as an edge between the vertices $p_i$ and $p_j$ corresponding to the parts that contain $a\in p_i$ and $b\in p_j$. If this graph happens to be a tree, then we say that \emph{$m$ defines a tree on $P$}.
\begin{definition}\label{def B}
    For any sequence $P_1,\ldots,P_k$ of partitions of $S$, let $B(P_1,\ldots,P_k)$ denote the set of tuples $(m,z)$ that consist of $k$ matchings $m = (m_1,\ldots,m_k)$ of $S$ and $d$ elements $z=(z_1,\ldots,z_d)\in S^d$ such that:
    \begin{enumerate}
        \item the matchings have $\abs{m_1}+\ldots+\abs{m_k}=k-d$ pairs in total,
        \item $m_i$ defines a tree on $P_i$ for each $1\leq i\leq k$, and
        \item $S=\supp m_1\cup\ldots\cup \supp m_k \cup \set{z_1,\ldots,z_d}$.
    \end{enumerate}
\end{definition}
Since $\abs{\supp m_i}=2\abs{m_i}$ and $\abs{S}=2k-d$, the first condition implies that the union in 3.\ is disjoint. Therefore, all $z_i$ are necessarily distinct, and $m_1\sqcup\ldots\sqcup m_k$ is a $k$-coloured perfect matching of $S\setminus\set{z_1,\ldots,z_d}$.

In our application the set $B(P_1, \ldots, P_k)$ will encode the ways to extend forests of the graph without vertex $u$ (and without $v$), inducing the partitions $P_1, \ldots, P_k$, into trees in such a way that the new edges are formed from a matching of the half edges at $u$, i.e.\ a transition. Each $z_i$ labels a half-edge that is paired with the deleted vertex $v$.

\begin{example}\label{eg split up 5.1 eg B part}
    Continuing \cref{eg split up 5.1 eg A part}, if we set $z = (3,4,5,6)$ and define the matchings $m_1=m_2=m_3=m_4=m_5 = \emptyset$, 
    \begin{equation*}
    m_6 = \set{\{2,7\}, \{8,9\}},\qquad
    m_7 = \set{\{10,11\}}, \qquad 
    m_8 = \set{\{1,12\}} \qquad
    \end{equation*}
    then this gives an element of $B(P_1, \ldots, P_8)$ and is is 
    illustrated in \cref{fig:prueffermatchings}.
    \begin{figure}
        \centering
        \includegraphics{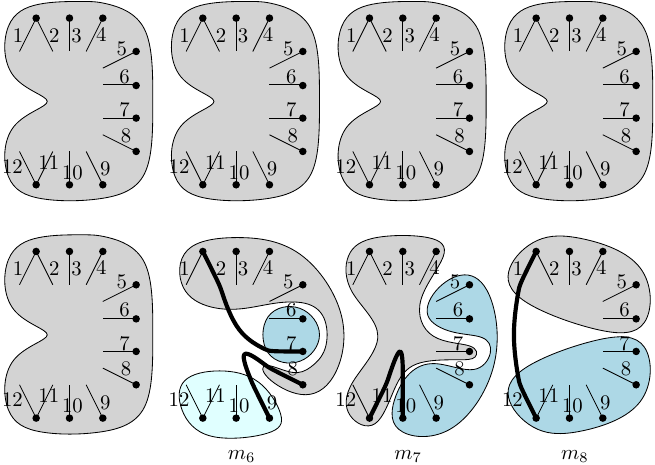}
        \caption{The matchings giving a $B$ set. The unmatched half-edges $3,4,5,6$ then form the vector $z$ and in this graph context would connect to the deleted vertex $v$.}%
        \label{fig:prueffermatchings}%
    \end{figure}
\end{example}

\begin{lemma}\label{lem when 0}
$A(P_1,\ldots, P_k)$ and $B(P_1,\ldots, P_k)$ are empty whenever $\sum_{i=1}^k\abs{P_i} \neq \abs{S}$.
\end{lemma}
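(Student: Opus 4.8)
The plan is to prove emptiness by a cardinality argument in each case: I will show that non-emptiness of either set forces $\sum_{i=1}^k \abs{P_i} = \abs{S}$, and the contrapositive then gives the claim. The two arguments are independent but parallel, each reading off the size of the relevant objects directly from the defining conditions.

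First I would treat $A(P_1,\ldots,P_k)$. Suppose $f$ lies in this set. By \eqref{eq A}, each preimage $f^{-1}(i)$ is an SDR of $P_i$, hence contains exactly one element from each of the $\abs{P_i}$ parts, so $\abs{f^{-1}(i)} = \abs{P_i}$. Since the fibres of a function partition its domain, $\sum_{i=1}^k \abs{f^{-1}(i)} = \abs{S}$, and therefore $\sum_{i=1}^k \abs{P_i} = \abs{S}$. Consequently, whenever $\sum_{i=1}^k \abs{P_i} \neq \abs{S}$ there can be no such $f$, and $A(P_1,\ldots,P_k)=\emptyset$.

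For $B(P_1,\ldots,P_k)$ the key input is the edge count of a tree. Suppose $(m,z)$ lies in this set. The condition that each $m_i$ defines a tree on $P_i$ means the graph whose vertices are the $\abs{P_i}$ parts of $P_i$ and whose edges are the pairs of $m_i$ is a tree; such a tree has exactly $\abs{P_i}-1$ edges, so $\abs{m_i} = \abs{P_i}-1$ (this also covers the degenerate case $\abs{P_i}=1$, where $m_i$ is empty). Summing over $i$ and invoking the condition $\abs{m_1}+\cdots+\abs{m_k}=k-d$ from \cref{def B}, I obtain $k-d = \sum_{i=1}^k \abs{m_i} = \big(\sum_{i=1}^k \abs{P_i}\big) - k$, whence $\sum_{i=1}^k \abs{P_i} = 2k - d = \abs{S}$ using the bookkeeping $d = 2k - \abs{S}$. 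Again the contrapositive yields $B(P_1,\ldots,P_k)=\emptyset$ when $\sum_{i=1}^k \abs{P_i} \neq \abs{S}$.

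There is no serious obstacle here; the argument is short and essentially a dimension count. The only point one must get right is the identification $\abs{m_i}=\abs{P_i}-1$ together with the substitution $d=2k-\abs{S}$. It is worth noting that the element list $z$ and the covering condition that $S=\supp m_1\cup\cdots\cup\supp m_k\cup\set{z_1,\ldots,z_d}$ play no role for the present statement: the constraint on $\sum_i\abs{P_i}$ is already forced by the tree condition and the count $\abs{m_1}+\cdots+\abs{m_k}=k-d$ alone. These remaining data (and the disjointness noted after \cref{def B}) matter only for comparing $A$ and $B$ in the intended bijection, not for the emptiness criterion proved here.
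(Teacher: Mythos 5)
Your proof is correct and follows essentially the same route as the paper: for $A$ you count the fibres as SDRs partitioning $S$, and for $B$ you use that a tree on $\abs{P_i}$ vertices has $\abs{P_i}-1$ edges together with the total $\abs{m_1}+\cdots+\abs{m_k}=k-d$ and $\abs{S}=2k-d$. The extra remark that $z$ and the covering condition are not needed is accurate but inessential.
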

\begin{proof}
For $A$, we must choose exactly one element from each part and use each element of $S$ exactly once, so the sum of the number of parts in each $P_i$ must be equal to $\abs{S}$. For $B$, since a tree has one fewer edges than vertices, we have $\sum_{i=1}^k (\abs{P_i}-1) = k-d$. Since $\abs{S}=2k-d$ by definition, this is the same as the condition for $A$.
\end{proof}

Let us recall the classic Pr\"ufer code \cite{Pruefer:Permutationen}. Fix a set $P$ of size $n\geq 2$ and equip it with a total order. Given a tree $t$ with vertex set $P$, start with $i=1$ and repeat:
\begin{enumerate}
    \item pick the smallest leaf $a_i\in P$ of $t$ and let $b_i\in P$ denote its neighbour,
    \item remove $a_i$ from $t$ and increment $i$.
\end{enumerate}
After $n-2$ steps, this produces the Pr\"ufer sequence $b=(b_1,\ldots,b_{n-2})$. The $b_i$ need not be distinct, in fact every tuple $b\in P^{n-2}$ arises exactly once this way. This is one way to prove Cayley's formula that there are $n^{n-2}$ labelled trees on $n$ vertices.

\begin{proposition}\label{prop:pruefer-bij}
    For any choice of $k>0$, any non-empty set $S$ of size $\abs{S}\leq 2k$, and any sequence $P_1,\ldots,P_k$ of partitions of $S$, we have 
    \begin{equation*}
    \abs{A(P_1,\ldots,P_k)} = \abs{ B(P_1,\ldots,P_k) }.
    \end{equation*}
\end{proposition}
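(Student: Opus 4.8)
The plan is to prove $\abs{A}=\abs{B}$ by an explicit bijection built from Pr\"ufer codes. First I would dispose of the degenerate case: by \cref{lem when 0} both sides vanish unless $\sum_i \abs{P_i}=\abs{S}$, so I assume this from now on. Writing $n_i=\abs{P_i}$, this means the colour classes of any $f\in A$ have sizes $n_1,\dots,n_k$ summing to $\abs{S}$, an SDR of $P_i$ picks one element from each of the $n_i$ parts, and a matching $m_i$ that defines a tree on $P_i$ has exactly $n_i-1$ pairs and hence $2(n_i-1)$ elements in its support. The bookkeeping $\sum_i 2(n_i-1)+d=2(\abs{S}-k)+d=\abs{S}$ shows that in an element of $B$ every element of $S$ is either a (distinct) endpoint of exactly one tree edge of exactly one colour, or one of the $d$ ordered leftover elements $z_j$. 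This is the combinatorial counterpart of the fact that, in $A$, every element of $S$ is the representative of its part for exactly one colour.

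Next I would set up the bijection itself. Given $(m,z)\in B$, each $m_i$ is a matching whose pairs, read as edges between the parts of $P_i$, form a tree $t_i$. I would encode each $t_i$ by a Pr\"ufer code, run as a canonical leaf-removal process, using a fixed total order on $S$ to break ties across all colours simultaneously. At each removal of a leaf part $p$ of $t_i$ along its edge $\set{a,b}$ with $a\in p$, the element $a$ is \emph{consumed} as the representative of $p$ for colour $i$, which is exactly the data needed to build the map $f\in A$; the partner $b$, lying in the surviving part, is freed to serve as a representative (or as a port consumed at a later removal) for another part or colour. The $d$ ordered elements $z_j$ seed this process: they are precisely the elements carrying no tree edge, and they supply the representatives with no incident tree edge. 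Conversely, starting from $f\in A$, I would reconstruct the trees and the matchings by the inverse Pr\"ufer decoding: the representatives recorded by $f$, together with the non-representative elements sitting inside each part (which must be exported as the second endpoints of tree edges, or assigned to the $z_j$-slots), determine each $t_i$ and each $m_i$ uniquely.

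The main obstacle is proving that these two maps are well defined and mutually inverse, and the difficulty is entirely \emph{global} rather than colour-by-colour. Three constraints must be coordinated at once: the matching (distinct-port) condition, which is what distinguishes a genuine transition from the easy Pr\"ufer count that allows repeated ports, and which forces the appearance of systems of \emph{distinct} representatives; the requirement that the $k$ supports together with the $z_j$ tile all of $S$, even though each $P_i$ is a partition of the \emph{whole} set $S$; and the correct interchange between the $d$ ordered slots $z$ (the half-edges paired with the deleted vertex) and the representatives that carry no tree edge. I would verify well-definedness and invertibility by induction on $\abs{S}$ (equivalently on the total number $k-d$ of tree edges), peeling the canonical leaf selected by the fixed order and checking that this single elementary step is reversible; the base case is the one in which every $t_i$ is a single part, where both $A$ and $B$ reduce to orderings of $S$ and the counts agree on the nose. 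A reassuring low-dimensional check is $\abs{S}=4$, $k=2$, $d=0$: for instance with $P_1=\set{\set{1},\set{2},\set{3,4}}$ and $P_2=\set{\set{1,2,3,4}}$ one finds $\abs{A}=\abs{B}=2$, with the non-representative element of the Pr\"ufer-active part $\set{3,4}$ matched to the representative exported by the trivial colour.
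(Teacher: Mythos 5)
Your overall strategy is the same as the paper's: after disposing of the case $\sum_i\abs{P_i}\neq\abs{S}$ via \cref{lem when 0}, build an explicit bijection between $A$ and $B$ out of Pr\"ufer codes, with the leaf element consumed as a representative and its partner recorded in the Pr\"ufer sequence. Your counting and your worked example with $\abs{S}=4$, $k=2$ are both correct. However, the proposal stops short of the one device that actually makes the map well defined, and this is a genuine gap. By the disjointness forced by condition 1 of \cref{def B}, every element of $S$ lies in exactly one pair of exactly one $m_i$ or is one of the $z_j$; in particular a freed partner $b$ can \emph{never} be ``a port consumed at a later removal'' (its unique pair has already been used), so the only role left for it is to be the representative of one of the partitions with a single part. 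There are exactly $r=d+\sum_{i:\abs{P_i}\geq 2}(\abs{P_i}-2)$ such trivial partitions, and exactly $r$ elements to distribute among them (the Pr\"ufer-sequence outputs together with the $z_j$). You must fix \emph{which} freed element goes to \emph{which} trivial colour; otherwise the map $B\to A$ has $r!$ equally plausible values and is neither well defined nor injective. The paper resolves this by placing the trivial partitions first, concatenating the per-colour Pr\"ufer outputs with $z$ into a single sequence $y\in S^r$, and assigning the $i$th entry of $y$ as the representative of the $i$th trivial partition; your write-up never specifies such a rule.

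Your proposed repair --- induction on $\abs{S}$ by peeling the canonical leaf --- does not obviously close this gap, because the peeling step does not return an instance of the same problem: removing the consumed element $a$ from $S$ leaves the remaining elements of its (former) leaf part of $P_i$ uncovered by $P_i$ unless you also merge that part into its tree-neighbour, and on the $B$ side the freed partner $b$ must be demoted to a $z$-slot, which reintroduces exactly the ordering ambiguity you are trying to induct away. None of this is fatal --- the concatenation trick is a one-line fix --- but as written the bijection is not pinned down, so the proof is incomplete at its central step.
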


\begin{proof}
Let $r$ denote the number of partitions $P_i$ with only one part. We may place these trivial partitions first, so that $P_1=\cdots=P_r=\set{S}$ and then $\abs{P_i}\geq 2$ for $i>r$. By \cref{lem when 0}, we may also assume that $\sum_{i=1}^k\abs{P_i} = \abs{S}=2k-d$, and therefore
\begin{equation*}
    r = d+\sum_{i=r+1}^k \left(\abs{P_i} - 2\right).
\end{equation*}

For each partition $P_i$, fix some order of its parts. We will now define a bijection $\alpha\colon B\rightarrow A$ between the sets $B=B(P_1,\ldots, P_k)$ and $A=A(P_1, \ldots, P_k)$.
Let $(m,z) \in B$.

For each $i>r$, consider the tree $t_i$ defined by $m_i$ on $P_i$. We run the Pr\"ufer algorithm for $t_i$ to build a sequence $y_i\in S^{\abs{P_i}-2}$ and a set $X_i\subseteq S$ of size $\abs{P_i}$ as follows: 
\begin{itemize}
    \item Loop for $\abs{P_i}-2$ steps:
    \begin{itemize}
        \item at the $j$th step take the remaining leaf which is first in the order on $P_i$ and let $\{a,b\}$ be the element of $m_i$ corresponding to the unique edge adjacent to this leaf where $a$ is in the leaf part and $b$ is in the other part, then
        \item put $b$ in the $j$th slot of $y_i$,
        \item add $a$ to $X_i$, and
        \item remove this leaf from the tree.
    \end{itemize}
    \item Let $\{a_1,a_2\}$ be the element of $m_i$ corresponding to the last remaining edge of the tree, then add $a_1$ and $a_2$ to $X_i$.
\end{itemize}
Since elements are added to $X_i$ when the part they are contained in is a leaf of the tree and then the leaf is removed or the algorithm terminates, there will be exactly one element of each part in $X_i$. Therefore, $X_i$ is a SDR for $P_i$. We also note that the tree $t_i$ can be reconstructed from $y_i$, because the classical Pr\"ufer code of $t_i$ is obtained by replacing each entry of $y_i$ with the part of $P_i$ that contains it.

Let $y\in S^r$ denote the sequence of length $r$ obtained by concatenation of the sequences $y_{r+1}, \ldots, y_k,z$. We define a function $f\colon S\rightarrow\set{1,\ldots,k}$ as follows:
\begin{itemize}
    \item for each $i>r$ set $f(a)=i$ for each $a\in X_i$,
    \item for each $i\leq r$ set $f(b)=i$ for $b$ the $i$th entry of $y$.
\end{itemize}
By the observations above, this construction defines an element $f\in A$. We have thus defined a map $\alpha(m,z)=f$ from $A$ to $B$.
We will now define a map $\beta\colon A\rightarrow B$ and then show that this is a two-sided inverse for $\alpha$.

Given $f\in A$, set $X_i=f^{-1}(i)$. Since $X_i$ is a SDR for $P_i$, we have $\abs{X_i}=\abs{P_i}$. Thus the first $r$ sets $X_i=\set{x_i}$ are singletons and define a sequence $y=(x_1,\ldots,x_r)\in S^r$. Cut $y$ into pieces of lengths $\abs{P_{r+1}}-2,\ldots,\abs{P_{k}}-2,d$ to define sequences $y_{r+1},\ldots,y_k,z$. Set $m_1=\ldots=m_r=\emptyset$ and define matchings $m_i$ of $S$ for $i>r$ as follows:
\begin{itemize}
    \item Let $t_i$ denote the tree on $P_i$ corresponding to the Pr\"ufer code obtained by replacing the elements of $y_i$ by the parts of $P_i$ that contain them.
    \item Run the Pr\"ufer algorithm for $t_i$. In the $j$th step, for $p\in P_i$ the smallest leftover leaf, add $\set{a,b}$ to $m_i$, where $\set{a}=X_i\cap p$ and $b$ is the $j$th entry of $y_i$.
    \item For the last edge $\set{p_1,p_2}$ of $t_i$, add $\set{a_1,a_2}$ to $m_i$, where $\set{a_j}=p_j\cap X_i$.
\end{itemize}
We define $\beta(f)=(m_1, \ldots, m_k,z)$. From this construction it is clear that applying first $\alpha$ and then $\beta$ is the identity on $A$.
Applying first $\beta$ then $\alpha$ observe that in specifying the last edge for each $m_i$, the two remaining elements will be one in each part not corresponding to a part already removed as a leaf, so this does give a well defined element of $B$ and all other steps are direct inverses.
\end{proof}

\begin{example}
    Let us work through an example of $\alpha$ and $\beta$ described above. We will see in hindsight that
    \cref{eg split up 5.1 eg A part} and \cref{eg split up 5.1 eg B part} were chosen with this in mind.
    
    As noted above, with $S$ interpreted as the specific $h$-neighbourhood of a vertex $u$ as illustrated in \cref{fig:prueffersetup}, the partitions must all have $1$ and $2$ in the same part and likewise for $11$ and $12$. The fact that half-edges $1$ and $2$ have the same incident vertex in the figure, and likewise for $11$ and $12$ has no effect on the maps $\alpha$ and $\beta$ but can occur in the context where we will use \cref{prop:pruefer-bij} and so is useful to illustrate in the example. 
    
    The defect $d=2k-\abs{S}=4$ indicates that in the 16-regular completion of the graph, $u$ has 4 edges connecting it to the deleted vertex $v$ as illustrated in \cref{fig:prueffersetup}.
    
    Take the partitions to be as in \cref{eg split up 5.1 eg A part}, and as illustrated by the coloured blobs in \cref{fig:pruefferwithu} and \cref{fig:prueffermatchings}. Set $z = (3,4,5,6)$ along with the matchings $m_1=m_2=m_3=m_4=m_5 = \emptyset$, 
    \begin{equation*}
    m_6 = \set{\{2,7\}, \{8,9\}},\qquad
    m_7 = \set{\{10,11\}}, \qquad 
    m_8 = \set{\{1,12\}} \qquad
    \end{equation*}
    as in \cref{eg split up 5.1 eg B part}.

    Applying $\alpha$ to this $(m,z)$ we obtain $f$ with $f(1)=f(12)=8, f(2)=1, f(3)=2, f(4)=3, f(5)=4, f(6)=5, f(7)=f(8)=f(9)=6, f(10)=f(11)=7$ which is the $f$ we began with in \cref{eg split up 5.1 eg A part}.
\end{example}

\subsection{Diagonals and the Martin invariant}\label{sec diag and martin}

We apply the results of \cref{sec:pruefer} to show that the diagonal coefficient satisfies the Martin recurrence and hence with suitable normalization agrees with the Martin invariant.

\begin{theorem}\label{thm martin as coeff}
    Let $G$ be a $2k$-regular graph with $n$ vertices. Then for any vertex $v$ without self-loops,
    \begin{equation}\label{eq:diagonal-Martin}
        \takecoeff{x_1\cdots x_{k(n-2)}} \KirchPol_{G\setminus v}^{k}
        = k!\cdot \Martin\left(G\right).
    \end{equation}
\end{theorem}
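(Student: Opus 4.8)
The plan is to set $N_k(G\setminus v)=\takecoeff{x_1\cdots x_{k(n-2)}}\KirchPol_{G\setminus v}^{k}$, which by \cref{lem comb interp of diag} counts ordered partitions of the edges of $G\setminus v$ into $k$ spanning trees, and to show that $N_k(G\setminus v)$ obeys the three rules of \cref{def:Martin-intro} for $k!\cdot\Martin(G)$, concluding by induction on $n$. For the base case $n=3$ (where $v$ is self-loop-free): either $G=C_3^{[k]}$, so $G\setminus v=K_2^{[k]}$ and $N_k(K_2^{[k]})=k!$ by \cref{ex:diag-dipole-power} with $r=1$, matching $k!\cdot\Martin(C_3^{[k]})=k!$ from \eqref{eq:Martin-triangle}; or $G$ has a self-loop at another vertex, whence $G\setminus v$ has a self-loop, that variable is absent from every monomial of $\KirchPol_{G\setminus v}$, and $N_k(G\setminus v)=0=k!\cdot\Martin(G)$. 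The same argument gives $N_k(G\setminus v)=0$ whenever $G$ has any self-loop, so for $n\geq 4$ and $G$ self-loop-free it remains only to verify the Martin recurrence $N_k(G\setminus v)=\sum_{\tau\in\trans(u)}N_k(G_\tau\setminus v)$ at an auxiliary vertex $u\neq v$. Granting this, induction closes: each $G_\tau$ has $n-1$ vertices, and for $\tau$ not pairing two half-edges toward $v$ the vertex $v$ stays self-loop-free so $N_k(G_\tau\setminus v)=k!\cdot\Martin(G_\tau)$ by hypothesis, while $\tau$ that do pair two half-edges toward $v$ create a self-loop at $v$ and a wrong edge count, forcing both $N_k(G_\tau\setminus v)=0$ and $\Martin(G_\tau)=0$; summing and applying \eqref{eq:Martin-recursion} at $u$ yields $k!\cdot\Martin(G)$.

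To establish the recurrence I would classify the spanning-tree partitions on both sides by their restriction to $H'=G\setminus\{u,v\}$. Write $S$ for the $h$-neighbourhood of $u$ in $G\setminus v$ and $d$ for the number of $u$--$v$ edges, so $\abs{S}=2k-d$ exactly as in \cref{sec:pruefer}. On the left, a tuple $(T_1,\ldots,T_k)$ counted by $N_k(G\setminus v)$ restricts to a tuple of spanning forests $(F_1,\ldots,F_k)$ partitioning the edges of $H'$; each $F_i$ induces a partition $P_i$ of $S$, two half-edges lying in the same part iff their endpoints lie in the same component of $F_i$. For $T_i$ to be a spanning tree the edges at $u$ assigned to $T_i$ must hit each component of $F_i$ exactly once, i.e.\ form a system of distinct representatives of $P_i$. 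Hence the completions of a fixed forest tuple are counted by $\abs{A(P_1,\ldots,P_k)}$ from \eqref{eq A}, provided every component of every $F_i$ meets a neighbour of $u$ (``good'' tuples); forest tuples with a component missing all neighbours of $u$ admit no completion and contribute zero. Thus $N_k(G\setminus v)=\sum_{(F_i)\ \mathrm{good}}\abs{A(P_1,\ldots,P_k)}$.

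On the right I would expand each $N_k(G_\tau\setminus v)$ the same way: deleting $v$ after resolving $u$ leaves $H'$ together with the new edges coming from those pairs of $\tau$ with both half-edges in $S$, and a spanning-tree tuple of $G_\tau\setminus v$ again restricts to a good forest tuple $(F_i)$ on $H'$, with the new edges in $T_i$ forming a matching $m_i$ that connects the parts of $P_i$ into a tree in the sense of \cref{def B}. Summing over $\tau$, a fixed good $(F_i)$ together with fixed admissible matchings $(m_1,\ldots,m_k)$ is produced by exactly $d!$ transitions: the $\abs{S}-2\sum_i\abs{m_i}=d$ half-edges of $S$ left unmatched must be paired bijectively with the $d$ half-edges running to $v$ (any $v$--$v$ pairing would introduce an extra new edge and the wrong edge count, contributing zero). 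This $d!$ is precisely the number of orderings of the vector $z\in S^d$ in \cref{def B}, so the contribution of a good tuple is $\abs{B(P_1,\ldots,P_k)}$, giving $\sum_{\tau}N_k(G_\tau\setminus v)=\sum_{(F_i)\ \mathrm{good}}\abs{B(P_1,\ldots,P_k)}$. The recurrence then follows termwise from the Pr\"ufer bijection $\abs{A(P_1,\ldots,P_k)}=\abs{B(P_1,\ldots,P_k)}$ of \cref{prop:pruefer-bij}.

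I expect the main obstacle to be the bookkeeping that makes both sides decompose over the \emph{same} index set of good forest tuples with \emph{matching} partitions $P_i$: verifying that tuples with an orphaned component contribute zero on both sides (so may be silently discarded), that $\sum_i\abs{P_i}=\abs{S}$ holds automatically for good tuples, and — most delicately — that the passage from transitions $\tau$ to the pair $(m,z)\in B$ is exactly $d!$-to-one in the way that reproduces the ordered tuple $z$, with the vanishing of the $v$--$v$ transitions reconciling the self-loop convention of $\Martin$ with the edge-count convention of the diagonal coefficient. Once this correspondence is in place, \cref{prop:pruefer-bij} supplies the substance and the recurrence, hence the theorem, drops out.
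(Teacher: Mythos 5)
Your proposal is correct and takes essentially the same approach as the paper: interpret the diagonal coefficient as $N_k(G\setminus v)$ via \cref{lem comb interp of diag}, establish the Martin recurrence at an auxiliary vertex by decomposing each spanning-tree tuple into forests on $G\setminus\set{u,v}$ together with SDRs (on the left) or matchings plus the tuple $z$ (on the right) of the induced partitions of the $h$-neighbourhood, and conclude termwise from \cref{prop:pruefer-bij}. The only cosmetic differences are your base case at $n=3$ rather than the paper's $n=2$, and your $d!$-to-$d!$ counting of transitions against ordered tuples $z$ where the paper records the partner of each $v$-half-edge directly as a bijection.
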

\begin{proof}
    By \cref{lem comb interp of diag} the left-hand side, $N_k(G\setminus v)$, is the number of ordered partitions $E=T_1\sqcup\ldots\sqcup T_{k}$ of the edge set $E$ of $G\setminus v$ into $k$ edge-disjoint spanning trees $T_i$. 

    If $G$ has a self-loop, then $\Martin(G)=0$ and by hypothesis on $v$, $G\setminus v$ also has a self-loop and so $N_k(G\setminus v)=0$ since no spanning tree can include any self-loop. Therefore, \eqref{eq:diagonal-Martin} holds whenever $G$ has one or more self-loops.

    From now on, assume that $G$ has no self-loops. Then $G$ must have at least two vertices. If $G$ has exactly 2 vertices, then $G \cong K_2^{[k]}$ is a $k$-fold edge with $k!\cdot\Martin(G)=1$, see \eqref{eq:Martin-rose-dipole}, and $G\setminus v$ is a single vertex without edges and thus $N_{k}(G\setminus v)=1$ from the partition $T_1=\ldots=T_k=\emptyset$. So \eqref{eq:diagonal-Martin} holds for $n=2$.\footnote{A direct proof for $n=3$ is also straightforward (though not necessary), since then $G\cong K_3^{[k]}$ with $\Martin(G)=1$ and the claim follows from the $r=1$ case of \cref{ex:diag-dipole-power}.}

    To complete the proof, we will show that the diagonal coefficient $N_{k}(G\setminus v)$ satisfies the Martin recurrence \eqref{eq:Martin-recursion}: Let $w\neq v$ be another vertex of $G$, then we will prove that
    \begin{equation}\label{eq:trees-induct}
        N_{k}(G\setminus v)= \sum_{\tau\in\trans'(w)} N_{k}(G_{\tau}\setminus v)
    \end{equation}
    where $\trans'(w)$ is the subset of transitions at $w$ that does not create a self-loop at $v$. This induction step reduces \eqref{eq:diagonal-Martin} from $n$ to $n-1$ vertices: if $N_k(G_\tau\setminus v)=k!\cdot\Martin(G_\tau)$ is already known, then the right-hand side of \eqref{eq:trees-induct} is $k!\cdot\Martin(G)$. The transitions $\tau\in\trans(w)\setminus\trans'(w)$ missing from \eqref{eq:Martin-recursion} only produce graphs with $\Martin(G_\tau)=0$ due to self-loops at $v$.
    
    We now prove \eqref{eq:trees-induct}. Let $S$ be the $h$-neighbourhood (see \cref{sec martin poly}) of $w$ in $G\setminus v$.

    Let $T_1, \ldots T_{k}$ be a partition of $E$ into spanning trees of $G\setminus v$.
    For each $T_i$, let $X_i=T_i\cap S$ and $X_i'$ be the set of edges incident to $w$ which contain a half-edge of $X_i$.  The remaining edges $F_i = T_i \setminus X_i'$ determine a spanning forest of $G\setminus \{v,w\}$. Such a forest $F_i$ induces a partition $P_i$ of $S$ as follows: two half-edges from the $h$-neighbourhood of $w$ are in the same part of $P_i$ if the corresponding neighbours of $w$ are in the same component of $F_i$. Since $T_i = X_i' \sqcup F_i$ and $X_i$ is in bijection with $X_i'$, $X_i$ includes precisely one half-edge in each part of $P_i$, so it is an SDR of $P_i$. Since the $T_i$ partition $E$, the SDRs $X_1,\ldots,X_k$ partition $S$. Conversely, given any list of forests $F_i$ that induce $P_i$, and any partition of $S$ into SDRs $X_i$ of $P_i$, then each $T_i$ formed by taking $F_i=X_i'\sqcup F_i$ (where $X_i'$ is again the set of edges which contain a half-edge of $X_i$) is a spanning tree of $G\setminus v$ and these trees partition $E$. Therefore, we can write
    \begin{equation*}\label{eq trees and A}\tag{$\sharp$}
    N_{k}(G\setminus v) = \sum_{P} \abs{A(P_1, \ldots, P_k)} \cdot \abs{C(P_1,\ldots,P_k)}
    \end{equation*}
    as a sum over all ordered sequences $P=(P_1, \ldots, P_k)$ of partitions of $S$. Here $A$ is defined in \eqref{eq A} and counts the choices of $X_i$. To count the choices of $F_i$, we define $C(P)$ as the set of lists $(F_1,\ldots,F_k)$ of spanning forests $F_i\subseteq E'$ of $G\setminus\set{v,w}$ such that each $F_i$ induces $P_i$, and such that all $F_i$ together form a partition of the edge set $E'$ of $G\setminus\set{v,w}$.

    Now consider a transition $\tau$ of $G$ at $w$.  This is a matching of the $h$-neighbourhood of $w$ in $G$.  This neighbourhood differs from $S$ by the $d = 2k-|S|$ half-edges $e_1,\ldots,e_d$ incident to $v$. Suppose additionally that $\tau$ does not create any self-loops at $v$. Then let $z=(z_1,\ldots,z_d)$ be the elements of $S$ which are paired in $\tau$ with $e_1,\ldots,e_d$.  Let $m$ be the perfect matching of $S\setminus \set{z_1,\ldots,z_d}$ given by the rest of the transition. We can thus identify the set $\trans'(w)$ with pairs $(m,z)$ of a $d$-tuple $z$ and a matching $m$ of the above form. Note that the elements of $m$ are canonically identified with edges in $G_\tau\setminus v$, such that the edge set of $G_\tau\setminus v$ is the disjoint union of $E'$ and $m$.
    
    Let $T'_1, \ldots T'_{k}$ be a partition of the edges of $G_{\tau}\setminus v$ into spanning trees. Similarly to before, we can decompose each $T'_i=m_i\sqcup F_i$ into the edges $m_i=T'_i\cap m$ that belong to $m$ and a spanning forest $F_i=T_i'\setminus m_i$ of $G\setminus \set{v,w}$. Let $P_i$ be the partition of $S$ induced by $F_i$. Then $m_i$ defines a tree on $P_i$ in the sense explained before \cref{def B}. Conversely, given any forest $F_i$ that induces $P_i$, together with a matching $m_i$ defining a tree on $P_i$, we obtain a spanning tree $T_i'=F_i\sqcup m_i$ of $G_\tau\setminus v$.
    Therefore, we can write
    \begin{equation*}\label{eq trees and B}\tag{$\flat$}
    \sum_{\tau \in \trans'(w)}N_{k}(G_\tau \setminus v) = \sum_{P}\abs{B(P_1, \ldots, P_k)}\cdot \abs{C(P_1,\ldots,P_k)},
    \end{equation*}
    as a sum over sequences $P=(P_1,\ldots,P_k)$ of partitions of $S$. Here, $B(P)$ from \cref{def B} counts the choices of $\tau$ (that is, $m$ and $z$) and also the choice of partition $m=m_1\sqcup\ldots\sqcup m_k$; whereas $C(P)$ counts the spanning forest decompositions $E'=F_1\sqcup\ldots\sqcup F_k$ as before.

    By \cref{prop:pruefer-bij}, \eqref{eq trees and A} and \eqref{eq trees and B} agree and hence we proved \eqref{eq:trees-induct}.
\end{proof}
\begin{corollary}
    For a $2k$-regular graph $G$ and a vertex $v$ without self-loops, $\Martin(G)$ is equal to the number of partitions of the edge set of $G\setminus v$ into $k$ spanning trees. Hence, the number of these partitions is independent of $v$.
\end{corollary}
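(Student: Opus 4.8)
The plan is to combine the algebraic identity of \cref{thm martin as coeff} with the combinatorial reading of the diagonal coefficient already established in \cref{lem comb interp of diag} and \cref{diag=STP}. First I would invoke \cref{thm martin as coeff}, which — using precisely the hypothesis that $v$ has no self-loops, so that $G\setminus v$ has $k(n-2)$ edges and the displayed monomial is the full diagonal — gives
\[
\takecoeff{x_1\cdots x_{k(n-2)}} \KirchPol_{G\setminus v}^{k} = k!\cdot \Martin(G).
\]

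Next I would rewrite the left-hand side in combinatorial terms. By \cref{lem comb interp of diag} this coefficient equals $N_k(G\setminus v)$, the number of \emph{ordered} partitions of the edge set of $G\setminus v$ into $k$ spanning trees. Since the parts of such a partition are edge-disjoint, forgetting their order is a $k!$-to-one map onto the set of \emph{unordered} partitions into spanning trees; this is exactly the content of \cref{diag=STP}, giving $N_k(G\setminus v) = k!$ times the number of unordered partitions of the edges of $G\setminus v$ into spanning trees. Equating the two expressions for the same diagonal coefficient and cancelling the common factor $k!$ yields the claim that $\Martin(G)$ equals the number of partitions of the edge set of $G\setminus v$ into $k$ spanning trees.

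There is no genuine obstacle at this stage: all the substantive work — the Pr\"ufer-code bijection of \cref{prop:pruefer-bij} and the verification that the diagonal coefficient obeys the Martin recurrence — has already been performed in the proof of \cref{thm martin as coeff}, so the corollary is a one-line consequence. The only point worth noting is that the factor $k!$ enters on both sides for the same structural reason, namely that the $k$ trees are distinguishable (ordered) in the diagonal coefficient but unordered in the statement, so the cancellation is clean and leaves a genuine integer count of tree partitions.
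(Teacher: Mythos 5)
Your proof is correct and follows exactly the paper's route: the paper also derives this corollary by combining \cref{thm martin as coeff} with the combinatorial reading of the diagonal coefficient in \cref{lem comb interp of diag} and \cref{diag=STP}, cancelling the common factor of $k!$. No gaps; this matches the intended one-line argument.
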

This combinatorial interpretation, stated as \cref{thm:Martin=ST-partitions} in the introduction, follows from \cref{diag=STP}. It is illustrated for the example $\Martin(K_4)=6$ in \eqref{eq:STP-K4}. The claim remains true even if there are self-loops at $v$, for then $\Martin(G)=0$ but also $N_k(G\setminus v)=0$, because $G\setminus v$ must then have more than $kn$ edges (where $G$ has $n+2$ vertices) and hence does not admit partitions into $k$ spanning trees (each of which contains $n$ edges).
\begin{remark}
From this point of view, the condition $\Martin(G)>0$ is equivalent to the existence of an edge partition of $G\setminus v$ into spanning trees. Necessary and sufficient conditions for the existence of such a partition were given by Nash-Williams \cite{NashWilliams:EdgeDisjointTrees} and Tutte \cite{Tutte:IntoFactors}. In fact, for our case of $2k$-regular graphs $G$, it follows easily from \cite[Lemma~5]{NashWilliams:EdgeDisjointTrees} that $G\setminus v$ has $k$ edge-disjoint spanning trees if and only if $G$ is $2k$-edge connected. This gives an alternative proof of \cref{cor:2kcon-nonzero}, which says that $\Martin(G)>0$ in this case. Also for more general graphs, the maximal number of edge-disjoint spanning trees is related to the edge-connectivity via upper and lower bounds \cite{Gusfield:connEdgeST,Kundu:BoundsNumST}.
\end{remark}
\begin{corollary}\label{martin-power-from-diagonal}
    Let $G$ be a $2k$-regular graph with $n$ vertices, and let $v$ denote a vertex without self-loops. Then for every positive integer $r$,
    \begin{equation}\label{eq:martin-power-diagonal}
        \Martin(G^{[r]}) = \frac{(r!)^{k(n-2)}}{(kr)!} \takecoefff{\big}{x_1^r\cdots x_{k(n-2)}^r} \KirchPol_{G\setminus v}^{kr}.
    \end{equation}
\end{corollary}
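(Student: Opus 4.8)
The plan is to obtain \eqref{eq:martin-power-diagonal} by combining the two main results already in hand: \cref{thm martin as coeff}, which identifies a plain diagonal coefficient with a Martin invariant, and \cref{duplication-diagonal}, which converts a diagonal coefficient of a duplicated graph into a higher diagonal coefficient of the original graph. The only additional ingredient is the elementary observation that deleting a self-loop-free vertex commutes with edge duplication.

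First I would apply \cref{thm martin as coeff} not to $G$ itself but to the duplicated graph $G^{[r]}$. Since $G$ is $2k$-regular, the graph $G^{[r]}$ is $2(kr)$-regular and still has $n$ vertices, and the vertex $v$ acquires no self-loops under duplication because it had none in $G$. Hence the hypotheses of \cref{thm martin as coeff} are met with $k$ replaced by $kr$, giving
\begin{equation*}
    (kr)!\cdot \Martin(G^{[r]}) = \takecoeff{x_1\cdots x_{kr(n-2)}}\KirchPol_{G^{[r]}\setminus v}^{kr}.
\end{equation*}

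Next I would rewrite the right-hand side. Because $v$ has no self-loops, $G^{[r]}\setminus v = (G\setminus v)^{[r]}$, and $G\setminus v$ has exactly $m=k(n-2)$ edges. The diagonal coefficient above is therefore $N_{kr}\big((G\setminus v)^{[r]}\big)$ in the notation of \cref{lem comb interp of diag}. Applying \cref{duplication-diagonal} with $G\setminus v$ in the role of the base graph then yields
\begin{equation*}
    N_{kr}\big((G\setminus v)^{[r]}\big) = (r!)^{k(n-2)}\takecoeff{x_1^r\cdots x_{k(n-2)}^r}\KirchPol_{G\setminus v}^{kr}.
\end{equation*}
Chaining these two displays and dividing by $(kr)!$ produces exactly \eqref{eq:martin-power-diagonal}.

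Since both inputs are already proved, there is no genuine obstacle here; the statement is a formal corollary of the preceding results. The only points that require care are bookkeeping: checking that the regularity and self-loop hypotheses transfer to $G^{[r]}$, confirming the edge count $m=k(n-2)$ of $G\setminus v$ so that the exponents and the prefactor $(r!)^{k(n-2)}$ line up, and verifying the identity $G^{[r]}\setminus v = (G\setminus v)^{[r]}$ which underlies the whole reduction.
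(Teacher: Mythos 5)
Your proof is correct and follows exactly the paper's route: the paper's own proof is the one-line instruction ``combine \cref{thm martin as coeff} with \cref{duplication-diagonal}'', and you have carried out precisely that combination, with the right bookkeeping (applying \cref{thm martin as coeff} to the $2kr$-regular graph $G^{[r]}$, using $G^{[r]}\setminus v=(G\setminus v)^{[r]}$, and invoking \cref{duplication-diagonal} with $m=k(n-2)$). Nothing is missing.
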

\begin{proof}
    Combine \cref{thm martin as coeff} with \cref{duplication-diagonal}.
\end{proof}
\begin{corollary}\label{martin-power-divisibility}
    For a $2k$-regular graph $G$ with $n\geq 3$ vertices and any positive integer $r$, the Martin invariant $\Martin(G^{[r]})$ is an integer that is divisible by $(r!)^{k(n-3)}$.
\end{corollary}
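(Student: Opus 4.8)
The plan is to read the result straight off the two immediately preceding statements, so that essentially no new combinatorial work is needed. \cref{martin-power-from-diagonal} already records
\[
\Martin(G^{[r]}) = \frac{(r!)^{k(n-2)}}{(kr)!}\,\takecoefff{\big}{x_1^r\cdots x_{k(n-2)}^r}\KirchPol_{G\setminus v}^{kr},
\]
so the entire statement reduces to controlling the arithmetic of this one diagonal coefficient, which is exactly the content of \cref{diag-multinomial-multiple}.

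First I would dispose of self-loops, since \cref{martin-power-from-diagonal} requires a vertex $v$ without self-loops. If $G$ has a self-loop, then $G^{[r]}$ has one as well, hence $\Martin(G^{[r]})=0$ by \cref{def:Martin-intro} and the divisibility is automatic. Otherwise every vertex of $G$ is free of self-loops, and I may pick any vertex $v$ and invoke the displayed formula.

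The key step is then to apply \cref{diag-multinomial-multiple} to the graph $G\setminus v$, which has $k(n-2)$ edges and $n-1$ vertices. That lemma requires the graph to have at least two vertices, so I need $n-1\geq 2$; this is precisely where the hypothesis $n\geq 3$ enters, and it is the only point in the argument that genuinely demands attention. The lemma then gives that the coefficient is divisible by $(kr)!/(r!)^k$, say it equals $\tfrac{(kr)!}{(r!)^k}\,N$ with $N\in\Z$. Substituting and cancelling the two factorials yields
\[
\Martin(G^{[r]}) = \frac{(r!)^{k(n-2)}}{(kr)!}\cdot\frac{(kr)!}{(r!)^k}\,N = (r!)^{k(n-3)}\,N,
\]
an integer divisible by $(r!)^{k(n-3)}$, as claimed. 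Since all the substantive counting already resides in \cref{thm martin as coeff} and \cref{diag-multinomial-multiple}, I expect no real obstacle beyond matching the vertex-count hypothesis and tracking the factorial cancellation.
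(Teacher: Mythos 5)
Your proof is correct and is exactly the paper's argument: the paper's entire proof reads ``Combine \cref{diag-multinomial-multiple} with \cref{martin-power-from-diagonal}.'' Your write-up just makes explicit the self-loop case, the role of $n\geq 3$ in meeting the two-vertex hypothesis of \cref{diag-multinomial-multiple}, and the factorial cancellation $(r!)^{k(n-2)}/(r!)^{k}=(r!)^{k(n-3)}$, all of which check out.
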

\begin{proof}
    Combine \cref{diag-multinomial-multiple} with \cref{martin-power-from-diagonal}.
\end{proof}
The identity \eqref{eq:martin-power-diagonal} shows that the sequence $\Martin(G^{\bullet})$ of Martin invariants of all duplications $G^{[r]}$ of a fixed graph encodes the diagonal coefficients of all powers of the Kirchhoff polynomial. These coefficients are related to each other by a difference equation:
\begin{corollary}\label{cor p recursive}
    For any $2k$-regular graph $G$, there exists an integer $d>0$ and univariate polynomials $P_0,\ldots,P_d\in\Z[t]$ with $P_d\neq 0$ such that, for all $r\geq 1$,
    \begin{equation}\label{eq:Martin-P-recursive}
        P_0(r)\cdot\Martin\left(G^{[r]}\right)+P_1(r)\cdot\Martin\left(G^{[r+1]}\right)+\ldots+P_d(r)\cdot\Martin\left(G^{[r+d]}\right) = 0.
    \end{equation}
\end{corollary}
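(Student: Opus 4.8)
The plan is to combine \cref{martin-power-from-diagonal} with the classical theory of D-finite power series and P-recursive sequences. Writing $m=k(n-2)$ for the number of edges of $G\setminus v$ and $H=\KirchPol_{G\setminus v}$, \cref{martin-power-from-diagonal} gives
\[
\Martin(G^{[r]}) = \frac{(r!)^{m}}{(kr)!}\, a_r,
\qquad
a_r \defas \takecoeff{x_1^{r}\cdots x_{m}^{r}} H^{kr}.
\]
So it suffices to show that the integer sequence $a_r$ is P-recursive and that multiplying by the prefactor $h_r\defas (r!)^{m}/(kr)!$ preserves this property.

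First I would realize $a_r$ as a \emph{complete diagonal} of a rational function. Introduce an auxiliary variable $t$ and set
\[
\Phi(t,x_1,\ldots,x_m) = \frac{1}{1 - t\,H(x)^{k}} = \sum_{r\geq 0} t^{r} H(x)^{kr}.
\]
This is a rational function, and by construction $a_r = \takecoeff{t^{r} x_1^{r}\cdots x_m^{r}}\Phi$, that is, the coefficient of $\Phi$ along the main diagonal of all $m+1$ variables. Rational power series are D-finite, and the complete diagonal of a D-finite power series is again D-finite (Lipshitz, Stanley). Hence the univariate series $\sum_{r\geq 0} a_r z^{r}$ is D-finite, which is exactly the statement that the sequence $a_r$ is P-recursive.

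It remains to handle the hypergeometric prefactor. A direct computation gives
\[
\frac{h_{r+1}}{h_r} = \frac{(r+1)^{m}}{(kr+1)(kr+2)\cdots(kr+k)},
\]
a rational function of $r$, so $h_r$ satisfies a first-order recurrence with polynomial coefficients and is in particular P-recursive. Since P-recursive sequences are closed under termwise product (equivalently, the Hadamard product of D-finite series is D-finite), the product $\Martin(G^{[r]})=h_r a_r$ is P-recursive. Clearing denominators in the resulting recurrence yields coefficients in $\Z[t]$ with $P_d\neq 0$. The one small point to verify is that the corollary demands validity for all $r\geq 1$ rather than merely for large $r$: if the recurrence only holds for $r\geq r_0$, multiplying every $P_i$ by $\prod_{j=1}^{r_0-1}(t-j)$ forces both sides to vanish identically for $1\leq r\leq r_0-1$ while leaving $P_d\neq 0$, so the relation extends to all $r\geq 1$. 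The only genuinely nontrivial ingredient is the diagonal theorem for rational functions; the remaining steps are bookkeeping with standard closure properties of P-recursive sequences.
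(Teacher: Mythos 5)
Your argument is correct and follows essentially the same route as the paper's proof: \cref{martin-power-from-diagonal}, the Lipshitz--Stanley diagonal theorem applied to a rational generating function, and a transfer of the recurrence through the hypergeometric prefactor $(r!)^{m}/(kr)!$. The paper avoids your auxiliary variable $t$ by taking the diagonal of $1/(1-\KirchPol_{G\setminus v})$ directly (homogeneity of $\KirchPol_{G\setminus v}$ selects the power $kr$) and writes out the prefactor multiplier explicitly instead of invoking Hadamard-product closure, but these are cosmetic differences, and your remark on extending the recurrence to all $r\geq 1$ is a valid point the paper passes over.
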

\begin{proof}
If a rational function $f(x_1,\ldots,x_m)$ of several variables is non-singular at $x=0$, then it has a Taylor expansion at $x=0$. The diagonal coefficients of this expansion define a univariate formal power series $\Diag(f)=\sum_{r\geq 0} z^r \takecoeff{x_1^r\cdots x_m^r} f$ called \emph{the diagonal} of $f$. Such diagonals are $D$-finite functions \cite{Lipshitz:diagDfin}, hence the sequence of diagonal coefficients is $P$-recursive \cite{Stanley:DiffFinPow} (namely it fulfils a polynomial recurrence).

Therefore, the diagonal coefficients $\Martin(G^{[r]})\cdot(kr)!/(r!)^{k(n-2)}$ of the geometric series $f=1/(1-\KirchPol_{G\setminus v})=\sum_{i\geq 0}\KirchPol^i_{G\setminus v}$ fulfil a recurrence of the form \eqref{eq:Martin-P-recursive}. Multiplying this recurrence with $((r+d)!)^{k(n-2)}/(kr)!$ produces the claimed recurrence for $\Martin(G^{[r]})$ alone, by the transformation $P_i(r)\mapsto P_i(r) \cdot [(r+i+1)\cdots(r+d)]^{k(n-2)} \cdot (kr+1)\cdots(kr+ki)$.
\end{proof}

\subsection{Duality}\label{sec duality}
As an application of the connection between the Martin invariant and diagonal coefficients, we can relate the Martin sequences of graphs that are planar duals of each other.

Suppose that $G$ is a graph with a plane embedding and $G^\dual$ denotes the corresponding dual graph. Then the edges of $G$ are in canonical bijection with the edges of $G^\dual$. Under this correspondence, a set of edges defines a spanning tree in $G$ precisely when its complement defines a spanning tree in $G^\dual$. Therefore, we have
\begin{equation}\label{eq:kirchpol-dual}
    \PsiPol_{G^\dual}
    =\KirchPol_{G}.
\end{equation}
Recall that the Martin invariant of a regular graph $G$ can be computed by the diagonal coefficient of the Kirchhoff polynomial of a decompletion $G\setminus v$. For two regular graphs $G_1$ and $G_2$ with vertex degrees $d_1$ and $d_2$, respectively, a duality $(G_1\setminus v)^\dual \cong G_2\setminus w$ between decompletions implies that $(d_1,d_2)\in\set{(3,6),(4,4),(6,3)}$. This follows from the constraints on the edge- and vertex numbers imposed by regularity and duality. In such cases, \eqref{eq:kirchpol-dual} translates into a relation between the Martin sequences of $G_1$ and $G_2$. An example is shown in \cref{fig:duality}.
\begin{figure}
    \centering
    \begin{tabular}{ccccc}
    $\Graph[0.55]{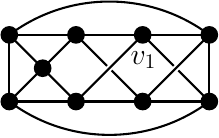}$ & $\Graph[0.45]{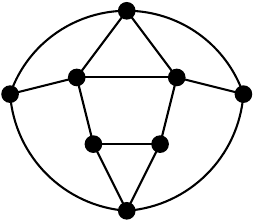}$ &  $\Graph[0.45]{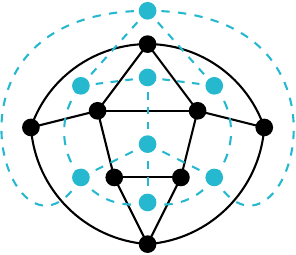}$ & $\Graph[0.45]{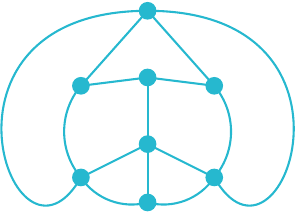}$ & $\Graph[0.55]{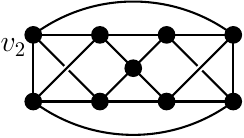}$ \\
    $G_1$ & $G_1\setminus v_1$ & $(G_1\setminus v_1)^\dual\cong (G_2\setminus v_2)$ & $G_2\setminus v_2$ & $G_2$ \\
    \end{tabular}
    \caption{Planar embeddings of decompletions $G_i\setminus v_i$ that are planar duals of each other. The completed graphs are labelled $G_1=P_{7,4}$ and $G_2=P_{7,7}$ in \cite{Schnetz:Census}.}%
    \label{fig:duality}%
\end{figure}
\begin{corollary}\label{prop:phi4-duality-sequence}
    If $G_1$ and $G_2$ are 4-regular graphs such that there exist vertices $v_1$ and $v_2$ without self-loops such that $G_1\setminus v_1$ is a planar dual of $G_2\setminus v_2$, then the Martin sequences $\Martin(G_1^{\bullet})=\Martin(G_2^{\bullet})$ agree.
\end{corollary}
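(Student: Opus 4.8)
The plan is to reduce the claim, termwise in $r$, to the combinatorial interpretation of diagonal coefficients together with the complementation bijection that planar duality provides. Write $H_1 = G_1\setminus v_1$ and $H_2 = G_2\setminus v_2$. The canonical edge bijection between a plane graph and its dual guarantees that $H_1$ and $H_2$ have the same number $m$ of edges. Since both $G_i$ are $4$-regular we have $k=2$ and $m = 2(n_i-2)$, so in fact $n_1=n_2=n$. By \cref{martin-power-from-diagonal}, for every $r$,
\begin{equation*}
    \Martin\big(G_i^{[r]}\big) = \frac{(r!)^{m}}{(2r)!}\,\takecoefff{\big}{x_1^r\cdots x_m^r}\KirchPol_{H_i}^{2r},
\end{equation*}
with an identical prefactor on the two sides. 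It therefore suffices to prove that the two diagonal coefficients $\takecoeff{x_1^r\cdots x_m^r}\KirchPol_{H_1}^{2r}$ and $\takecoeff{x_1^r\cdots x_m^r}\KirchPol_{H_2}^{2r}$ coincide.

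First I would invoke the combinatorial reading of these coefficients recorded after \cref{diag-multinomial-multiple}: $\takecoeff{x_1^r\cdots x_m^r}\KirchPol_{H}^{2r}$ equals the number of ordered lists $(T_1,\ldots,T_{2r})$ of spanning trees of $H$ in which every edge occurs in exactly $r$ of the trees (the $T_i$ need not be distinct). The key structural input is the duality statement behind \eqref{eq:kirchpol-dual}: under the edge bijection identifying $E(H_1)$ with $E(H_2)$, a set of edges is a spanning tree of $H_1$ if and only if its complement is a spanning tree of $H_2$. The main step is then to exhibit a bijection by sending each list $(T_1,\ldots,T_{2r})$ of spanning trees of $H_1$ to the list $(E\setminus T_1,\ldots,E\setminus T_{2r})$ of complements, which are spanning trees of $H_2$; this map is a bijection because complementation is an involution. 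What remains is to check that it preserves the covering condition, and this is exactly where $k=2$ enters: an edge lies in $T_i$ for precisely $r$ indices $i$ if and only if it lies in $E\setminus T_i$ for precisely $2r-r=r$ indices. Since $r = 2r-r$ only for $k=2$, this is the $(d_1,d_2)=(4,4)$ case singled out before the statement. Hence the two diagonal coefficients agree, and with the matching prefactors this gives $\Martin(G_1^{[r]})=\Martin(G_2^{[r]})$ for all $r$, i.e.\ $\Martin(G_1^{\bullet})=\Martin(G_2^{\bullet})$.

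I expect the only genuine subtlety to be bookkeeping rather than ideas: confirming that the edge identification used in \eqref{eq:kirchpol-dual} is literally the one under which complementation of spanning trees is taken, and that the edge counts (and hence the prefactors) truly match on both sides. An equivalent purely algebraic route sidesteps the bijection: combining \eqref{eq:kirchpol-dual} with the variable-inversion relation \eqref{eq:Kirchhoff-Symanzik} yields $\KirchPol_{H_2}(x) = \big(\prod_e x_e\big)\,\KirchPol_{H_1}(x^{-1})$, whence $\KirchPol_{H_2}^{2r}(x)=\big(\prod_e x_e\big)^{2r}\KirchPol_{H_1}^{2r}(x^{-1})$; extracting the coefficient of $x_1^r\cdots x_m^r$, clearing $\big(\prod_e x_e\big)^{2r}$, and substituting $x_e\mapsto x_e^{-1}$ converts it into the coefficient of $x_1^r\cdots x_m^r$ in $\KirchPol_{H_1}^{2r}$. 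Either argument closes the proof.
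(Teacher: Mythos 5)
Your proposal is correct and follows essentially the same route as the paper: reduce via \cref{martin-power-from-diagonal} to the equality of the diagonal coefficients of $\KirchPol_{H_1}^{2r}$ and $\KirchPol_{H_2}^{2r}$, and then use the duality relation \eqref{eq:kirchpol-dual} together with \eqref{eq:Kirchhoff-Symanzik} --- your closing ``algebraic route'' is literally the paper's proof, and your complementation bijection is the same fact in combinatorial language, an equivalence the paper itself records in \cref{rem Symanzik version of combi argument}. Your observation that the covering condition is preserved only because $kr-r=r$ when $k=2$ correctly pinpoints why the 4-regular case needs no extra factor, in contrast to the $(3,6)$ duality.
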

\begin{proof}
    Let $n$ denote the number of vertices of $G_1$. Then $G_1\setminus v_1$ and $G_2\setminus v_2$ each have $n-1$ vertices, $m=2n-4$ edges, and $n-2$ loops. From \eqref{eq:Kirchhoff-Symanzik} and \eqref{eq:kirchpol-dual} we see that
\begin{equation*}
    \takecoeff{x_1^r\cdots x_m^r}\KirchPol_{G_1\setminus v_1}^{2r}
    = \takecoeff{x_1^r\cdots x_m^r}\PsiPol_{G_2\setminus v_2}^{2r}
    = \takecoeff{x_1^r\cdots x_m^r}\KirchPol_{G_2\setminus v_2}^{2r}
\end{equation*}
    and thus the claim follows from \cref{martin-power-from-diagonal}.
\end{proof}
\begin{corollary}
    Let $G_3$ be a 3-regular graph and $G_6$ a 6-regular graph, such that there exist vertices $v$ and $w$, without self-loops, such that $G_3\setminus v$ is a planar dual of $G_6\setminus w$. Let $n$ denote the number of vertices of $G_6$. Then for every positive integer $r$, we have
    \begin{equation}
        \Martin\left(G_3^{[2r]}\right) = \left(\frac{(2r)!}{r!}\right)^{3(n-2)}\Martin\left(G_6^{[r]}\right).
    \end{equation}
\end{corollary}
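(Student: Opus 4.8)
The plan is to mirror the proof of \cref{prop:phi4-duality-sequence}, writing both sides as diagonal coefficients of polynomials attached to the single graph $K\defas G_3\setminus v$ and then matching them. Since $G_6$ is $6$-regular with $n$ vertices, $G_6\setminus w$ has $m\defas 3(n-2)$ edges, and $K$ has the same number of edges as its planar dual, so $K$ also has $m$ edges. The duality \eqref{eq:kirchpol-dual} gives the identity $\PsiPol_{K}=\KirchPol_{G_6\setminus w}$ of polynomials in the common edge variables, which will be the bridge between the two graphs.

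First I would treat $G_6$ directly. Applying \cref{martin-power-from-diagonal} to $G_6$ (which is $6$-regular, so $k=3$) at the vertex $w$ and substituting $\KirchPol_{G_6\setminus w}=\PsiPol_K$ gives
\[
  \Martin(G_6^{[r]}) = \frac{(r!)^{m}}{(3r)!}\,\takecoefff{\big}{x_1^r\cdots x_m^r}\PsiPol_K^{3r}.
\]
The key structural move on the other side is to absorb the odd degree of $G_3$: the invariant $\Martin(G_3)$ is undefined, but $G_3^{[2r]}=(G_3^{[2]})^{[r]}$ and $H\defas G_3^{[2]}$ is $6$-regular. Applying \cref{martin-power-from-diagonal} to $H$ at $v$, and using that deletion commutes with edge-doubling so that $H\setminus v=K^{[2]}$ has $2m$ edges, produces a power-$r$ diagonal of $\KirchPol_{K^{[2]}}^{3r}$. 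To reduce the $2m$ variables back to the $m$ variables of $K$, I would repeat the computation from the proof of \cref{duplication-diagonal}: writing $y_e=x_{2e-1}+x_{2e}$ for the two copies of each edge $e$ one has $\KirchPol_{K^{[2]}}(x)=\KirchPol_K(y)$, and $\takecoeff{x_{2e-1}^r x_{2e}^r}(x_{2e-1}+x_{2e})^{2r}=\binom{2r}{r}$ per edge, so
\[
  \takecoefff{\big}{x_1^r\cdots x_{2m}^r}\KirchPol_{K^{[2]}}^{3r}
  =\binom{2r}{r}^{m}\,\takecoeff{y_1^{2r}\cdots y_m^{2r}}\KirchPol_K^{3r}.
\]

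The remaining step is to reconcile the two coefficients, which a priori live at different exponents ($2r$ for $\KirchPol_K$ versus $r$ for $\PsiPol_K$); this is precisely what the variable inversion \eqref{eq:Kirchhoff-Symanzik} does. Raising $\KirchPol_K(x)=\big(\prod_e x_e\big)\PsiPol_K(x^{-1})$ to the power $3r$ shows that the coefficient of $\prod_e x_e^{2r}$ in $\KirchPol_K^{3r}$ equals the coefficient of $\prod_e x_e^{3r-2r}=\prod_e x_e^{r}$ in $\PsiPol_K^{3r}$, i.e.\ $\takecoeff{y_1^{2r}\cdots y_m^{2r}}\KirchPol_K^{3r}=\takecoefff{\big}{x_1^r\cdots x_m^r}\PsiPol_K^{3r}$, the very coefficient in $\Martin(G_6^{[r]})$. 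Collecting prefactors via $(r!)^{2m}\binom{2r}{r}^{m}=((2r)!)^{m}$ then yields $\Martin(G_3^{[2r]})=\frac{((2r)!)^m}{(3r)!}\takecoefff{\big}{x_1^r\cdots x_m^r}\PsiPol_K^{3r}$, and dividing by the expression for $\Martin(G_6^{[r]})$ produces the factor $\big((2r)!/r!\big)^{m}=\big((2r)!/r!\big)^{3(n-2)}$, as claimed. I expect the main obstacle to be the exponent bookkeeping rather than any conceptual difficulty: the doubling of $K$ naturally produces the exponent $2r$, whereas the $G_6$ side naturally produces $r$, and these must be made to coincide through the arithmetic identity $3r-2r=r$ inside \eqref{eq:Kirchhoff-Symanzik}, while simultaneously tracking the $\binom{2r}{r}$ and factorial prefactors so that they telescope into the stated combinatorial factor.
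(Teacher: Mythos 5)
Your proof is correct and follows essentially the same route as the paper: express both Martin invariants as diagonal coefficients via \cref{thm martin as coeff} and \cref{duplication-diagonal}, then connect them through the duality \eqref{eq:kirchpol-dual} and the inversion \eqref{eq:Kirchhoff-Symanzik}, with the prefactors telescoping to $\left((2r)!/r!\right)^{3(n-2)}$. The only (cosmetic) difference is that the paper formally substitutes $(r,k)=(2r,3/2)$ into \eqref{eq:duplication-diagonal}, whereas you avoid the half-integer by passing through $G_3^{[2]}$ and an explicit $\binom{2r}{r}^m$ computation — an equivalent, if slightly more careful, bookkeeping of the same identity.
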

\begin{proof}
    The graphs $G_3\setminus v$ and $G_6\setminus w$ each have $m=3(n-2)$ edges. Setting $(r,k)$ to $(2r,3/2)$ in \eqref{eq:duplication-diagonal}, we find that
    $\Martin(G_3^{[2r]})\cdot(3r)!=[(2r)!]^m \takecoeff{x_1^{2r}\cdots x^{2r}_m} \KirchPol^{3r}_{G_3\setminus v}$
    from \cref{thm martin as coeff}.\footnote{We may set $k$ to rational values in \cref{duplication-diagonal}, as long as the exponent $rk$ stays positive integer.} From \eqref{eq:Kirchhoff-Symanzik} and \eqref{eq:kirchpol-dual} we see that
    \begin{equation*}
    \takecoeff{x_1^{2r}\cdots x_m^{2r}}\KirchPol_{G_3\setminus v}^{3r}
    = \takecoeff{x_1^{2r}\cdots x_m^{2r}}\PsiPol_{G_6\setminus w}^{3r}
    = \takecoeff{x_1^r\cdots x_m^r}\KirchPol_{G_6\setminus w}^{3r},
    \end{equation*}
    and thus the claim follows from \cref{martin-power-from-diagonal}.
\end{proof}

Through \cref{thm:Martin-Perm2}, the above dualities imply relations between the graph permanents. Those have been noted before, see \cite[Proposition~28]{Crump:ExtendedPermanent}.

\begin{remark}\label{rem:duality-matroids}
    The identity \eqref{eq:kirchpol-dual} extends beyond the realm of planar graphs $G$ to all matroids $M$ and their duals $M^\dual$, if we replace ``spanning tree of $G$'' in the definition of $\KirchPol_G$ and $\PsiPol_G$ by ``basis of $M$''. The \emph{matroid polynomial} $\KirchPol_M$ thus defined is still linear in each variable, and its degree is the rank of $M$. The degree of $\PsiPol_M$ is the corank (formerly loop number) of $M$. The combinatorial interpretation from \cref{lem comb interp of diag} identifies the diagonal coefficient of $\KirchPol_M^k$ with decompositions into bases. The existence of such decompositions has been characterized in \cite[Theorem~1]{Edmonds:PartitionMatroid}.
    For regular matroids, $\KirchPol_M$ is the determinant of a matrix, similar to the graph Laplacian \eqref{eq:Kirchhoff}.

    Consequently, any function of the diagonal coefficient of $\KirchPol_M$ is invariant under matroid duality, for all matroids in the domain of the function---not just cycle matroids of planar graphs. For example, the permanent of regular matroids is invariant under duality. This follows from \cref{sec:perm2-from-det} or the matrix manipulations in \cite{CrumpDeVosYeats:Permanent}, since those apply without change to regular matroids.\footnote{An example of a permanent of a neither graphic nor co-graphic matroid $R_{10}$ is computed in \cite[\S~7.5]{Crump:PhD}.}
    For the $c_2$ invariant, the situation is less clear: its connection to the diagonal coefficient is known only in the presence of a 3-valent vertex, see \cref{rem 3 valent needed}. Moreover, invariance of $c_2$ under duality is not even known for all graphic matroids \cite{Doryn:4face}.
\end{remark}

\begin{remark}\label{sec fourier split}
An identity of Feynman periods called \emph{Fourier split} was introduced in \cite[\S 2]{HuSchnetzShawYeats:Further}. It operates by taking a planar dual on one side of a 3-vertex cut in a decompletion $G\setminus v$. This identity also holds for the Martin invariant. The same proof strategy as in \cref{sec:product+twist} applies: the Martin recursion \eqref{eq:Martin-recursion} at a vertex on the other side of the 3-vertex cut reduces the statement by induction to smaller graphs. The remaining base case is the ordinary duality relation (Fourier identity).

We hence conclude via \cref{thm:Martin-Perm2,thm:c2-martin} that also the extended graph permanent and the $c_2$ invariant at primes respect the Fourier split identity.
\end{remark}

\section{\texorpdfstring{$c_2$}{c2} invariant}\label{sec c2}

For any power $q=p^s$ of a prime $p$, let $\F_q$ denote the finite field with $q$ elements.
Given any polynomial $P\in\Z[x_1,\ldots,x_m]$ in $m$ variables $x_i$, we denote by
\begin{equation*}
    \PointCount{P}{q}=\big|\big\{x\in\F_q^m\colon P(x)=0\big\}\big|
    \in\Z_{\geq 0}
\end{equation*}
the number of points on the hypersurface $\set{P=0}$ over $\F_q$. In the special case $P=\PsiPol_G$ of Symanzik polynomials \eqref{eq:psipol} of graphs, the study of point-counts goes back to \cite{Stanley:SpanningTrees,Stembridge:CountingPoints}. For all graphs with less than 14 edges \cite{Schnetz:Fq}, the point-counting function is a polynomial,
\begin{equation*}\label{eq:pointcount-polynomial}\tag{$\ast$}
    q\mapsto\PointCount{\PsiPol_G}{q}
    = \sum_{i=0}^m c_i(G) q^i.
\end{equation*}
Starting at 14 edges, there exist graphs whose point-count is not polynomial \cite{Doryn:CounterKontsevich,Schnetz:Fq}. In fact, point-counting functions of graph hypersurfaces $\set{\PsiPol_G=0}$ can be as complicated as the point-counting functions of any variety, as made precise and proved in \cite{BelkaleBrosnan:MatroidsMotives}.
\begin{lemma}\label{lem:q2-divides}
    Let $G$ be a graph with at least 3 vertices and at least 2 edges. Then for every prime power $q$, the point count $\PointCount{\PsiPol_G}{q}$ is divisible by $q^2$.
\end{lemma}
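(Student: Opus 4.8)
The plan is to derive the divisibility from the two structural features of the Symanzik polynomial $\PsiPol_G$: it is homogeneous of positive degree (when $G$ is connected and not a tree) and, crucially, it is \emph{linear in each variable}. First I would clear away the degenerate cases. If $G$ is disconnected then no spanning tree exists, so $\PsiPol_G=0$ and $\PointCount{\PsiPol_G}{q}=q^{m}$, which is divisible by $q^2$ because $m\geq 2$. If $G$ is a connected tree then $\PsiPol_G=1$ and the count is $0$. Hence we may assume $G$ is connected with loop number $d=m-n+1\geq 1$, and since a connected non-tree on $n\geq 3$ vertices has $m\geq n\geq 3$, we automatically have $m\geq 3$.

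The main step is to single out two edges $e_1,e_2$ and expand by multilinearity, writing $\PsiPol_G = P_{00}+x_1P_{10}+x_2P_{01}+x_1x_2P_{11}$, where $P_{00}=\PsiPol_{G/e_1/e_2}$, $P_{01}=\PsiPol_{G/e_1\setminus e_2}$, $P_{10}=\PsiPol_{G\setminus e_1/e_2}$, $P_{11}=\PsiPol_{G\setminus e_1\setminus e_2}$ are the deletion/contraction polynomials in the remaining $m-2$ variables. I would fibre the point count over those variables: evaluating the four coefficients at a base point reduces the fibre to a single bilinear equation in $(x_1,x_2)\in\F_q^2$, whose solution count I determine by an elementary case analysis. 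The result is that a fibre contributes $2q-1$ or $q-1$ solutions when $P_{11}\neq 0$, and $q$, $q^2$, or $0$ solutions when $P_{11}=0$; in particular the fibre count is $\equiv -1\pmod q$ exactly off the locus $\{P_{11}=0\}$ and $\equiv 0\pmod q$ on it. Summing, and using $m-2\geq 1$ so that $q^{m-2}\equiv 0$, this already gives $\PointCount{\PsiPol_G}{q}\equiv \PointCount{P_{11}}{q}\pmod q$, reducing the first factor of $q$ to the smaller graph $G\setminus e_1\setminus e_2$.

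To reach the second factor I would keep the exact fibre counts rather than their residues. Partitioning the base $\F_q^{m-2}$ into the stratum $\{P_{11}\neq 0\}$ (split further by whether $P_{00}P_{11}-P_{01}P_{10}$ vanishes) and the stratum $\{P_{11}=0\}$, the total assembles to $\PointCount{\PsiPol_G}{q}\equiv -n_I+q\cdot(\text{stratum corrections})\pmod{q^2}$, where $n_I=|\{P_{11}\neq 0\}|=q^{m-2}-\PointCount{P_{11}}{q}$. Thus it suffices to control $\PointCount{P_{11}}{q}$ modulo $q$ and the correction strata modulo $q$. I would obtain these by induction on the number of edges, proving alongside the companion statement that $q\mid\PointCount{\PsiPol_H}{q}$ for every connected non-tree $H$ with at least two vertices (for such $H$ the loop number is strictly smaller than the edge count, so even the Ax--Katz estimate already supplies one factor of $q$), and feeding this back into the corrections.

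The hard part will be the assembly modulo $q^2$: the fibering handles everything modulo $q$ effortlessly, but extracting the second factor genuinely requires the multilinear structure beyond one variable. Here the Dodgson identity $P_{00}P_{11}-P_{01}P_{10}=\pm(\PsiPol^{e_1,e_2})^2$ for the first Symanzik polynomial is the essential extra input, since it ties the split of the $\{P_{11}\neq 0\}$ stratum to the point count of a further graph polynomial, which the induction hypothesis again controls. As this divisibility is a known foundational fact underlying the very definition of $c_2$, an acceptable alternative is simply to cite \cite{Schnetz:Fq}, whose argument the present plan essentially reproduces.
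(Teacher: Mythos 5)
Your proposal is correct and, in substance, identical to the paper's: the paper proves the connected case by citing exactly the references you name (\cite{Schnetz:Fq} and \cite{BrownSchnetz:K3phi4}), and handles the disconnected case the same way you do, via $\PsiPol_G=0$ and $\PointCount{\PsiPol_G}{q}=q^m$ with $m\geq 2$. Your sketched direct argument (two-variable multilinear expansion, fibrewise counting, the Dodgson identity $P_{00}P_{11}-P_{01}P_{10}=\pm(\PsiPol^{e_1,e_2})^2$) is a faithful reconstruction of the cited proof's ingredients, but since you leave the decisive mod-$q^2$ assembly unexecuted, it is the citation — which is all the paper itself supplies — that actually closes the argument.
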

\begin{proof}
This is proved in \cite[Corollary~2.8]{Schnetz:Fq} and \cite[Proposition-Definition~18]{BrownSchnetz:K3phi4} when $G$ is connected. If $G$ is disconnected, then $\PsiPol_G=0$ and hence $\PointCount{\PsiPol_G}{q}=q^m$ where $m$ is the number of edges of $G$.
\end{proof}
We will only consider point-counts of graphs with $n\geq 3$ vertices and $m=2(n-1)$ edges, hence \cref{lem:q2-divides} applies and we can write
\begin{equation}\label{eq:c2-congruence}
    \PointCount{\PsiPol_G}{q}\equiv q^2\cdot c_2^{(q)}(G) \mod q^3
\end{equation}
for well-defined residues $c_2^{(q)}(G)\in\Z/q\Z$. The \emph{$c_2$-invariant} \cite{Schnetz:Fq} is the sequence of these residues, one for each prime power, given by\footnote{
When the point-count happens to be a polynomial, then $c_0(G)=c_1(G)=0$ and $c_2^{(q)}(G)\equiv c_2(G)\mod q$ is determined by the coefficient of $q^2$ in \eqref{eq:pointcount-polynomial}.
}
\begin{equation}
    c_2^{(q)}(G) \equiv \frac{\PointCount{\PsiPol_G}{q}}{q^2} \mod q.
\end{equation}
Our main result in this section extracts $c_2$ at primes from the Martin invariant:
\begin{theorem}\label{thm:c2-martin}
    Let $G$ be a $4$-regular graph with at least 6 vertices. Then for every vertex $v$ of $G$, and every prime $p$, we have
    \begin{equation}\label{eq:c2-martin}
        c_2^{(p)}(G\setminus v)\equiv \frac{\Martin(G^{[p-1]})}{3p} \mod p.
    \end{equation}
\end{theorem}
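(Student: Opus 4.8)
The plan is to turn both sides into the \emph{same} diagonal coefficient of a power of the graph polynomial and then reconcile the arithmetic prefactors. Write $N=2(n-2)$ for the number of edges of $G\setminus v$ and $h=n-2$ for its loop number, so that we are in the regime $N=2h$ in which $c_2$ is defined (\cref{lem:q2-divides}). Applying \cref{martin-power-from-diagonal} with $k=2$ and $r=p-1$ gives
\[
\Martin\!\left(G^{[p-1]}\right)=\frac{((p-1)!)^{N}}{(2p-2)!}\,D_p,\qquad D_p\defas\takecoeff{x_1^{p-1}\cdots x_N^{p-1}}\KirchPol_{G\setminus v}^{\,2(p-1)}.
\]
The Kirchhoff--Symanzik inversion \eqref{eq:Kirchhoff-Symanzik} upgrades \eqref{eq coeffs agree} from $r=1$ to $r=p-1$, so the same integer also equals $D_p=\takecoeff{x_1^{p-1}\cdots x_N^{p-1}}\PsiPol_{G\setminus v}^{\,2(p-1)}$; this is the form involving the Symanzik polynomial that governs $c_2$.

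Next I would isolate the $p$-part of the prefactor. Since $p$ is the unique multiple of $p$ in $\{1,\dots,2p-2\}$, Wilson's theorem gives $(2p-2)!=p\cdot(p-1)!\cdot\prod_{j=p+1}^{2p-2}j$ with $(p-1)!\equiv-1$ and $\prod_{j=p+1}^{2p-2}j\equiv(p-2)!\equiv 1\mod p$; together with $((p-1)!)^{N}\equiv 1\mod p$ (as $N$ is even) this shows $\frac{((p-1)!)^{N}}{(2p-2)!}=\frac{w}{p}$ for a $p$-adic unit $w\equiv-1\mod p$. Consequently, for $p\neq 3$ the assertion $\frac{\Martin(G^{[p-1]})}{3p}\equiv c_2^{(p)}(G\setminus v)\mod p$ is \emph{equivalent} to the coefficient congruence
\[
D_p\equiv-3\,p^{2}\,c_2^{(p)}(G\setminus v)\mod p^{3},
\]
which in particular forces $p^{2}\mid D_p$ (compatibly with \cref{diag-multinomial-multiple}). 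This congruence is the heart of the theorem.

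To prove it I would establish the dictionary between the top diagonal of $\PsiPol_{G\setminus v}^{\,2(p-1)}$ and the point count $\PointCount{\PsiPol_{G\setminus v}}{p}$ to second $p$-adic order. The first-order shadow is the torus identity: for $P=\PsiPol_{G\setminus v}$ one has $\sum_{x\in(\F_p^{\times})^{N}}x^{b}\equiv(-1)^{N}$ when $(p-1)\mid b_i$ for all $i$ and $0$ otherwise, and the condition $N=2h$ is exactly what makes the diagonal exponent vector the relevant one. Refining this counting through the higher powers of $p$ that see the coefficient of $p^{2}$ in $\PointCount{\PsiPol_{G\setminus v}}{p}$ — i.e. $c_2^{(p)}$ — pins $c_2^{(p)}$ to $D_p/p^{2}$ up to a universal constant. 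This is the $c_2$-from-diagonal bridge (the simplification suggested by Brown), available because $G$ is $4$-regular with $n\geq 6$, so $G\setminus v$ carries a $3$-valent vertex; carrying the constant through the bridge yields both the divisibility $p^{2}\mid D_p$ and the factor $-3$, settling all $p\neq 3$.

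The prime $p=3$ must be handled separately, since the division by $3=p$ in the reduction above is illegitimate; here $D_3=\takecoeff{x_1^{2}\cdots x_N^{2}}\PsiPol_{G\setminus v}^{4}$ and one needs $\Martin(G^{[2]})/9\equiv c_2^{(3)}(G\setminus v)\mod 3$ directly. I would use the combinatorial reading of $D_3$ as the number of ordered quadruples of spanning trees of $G\setminus v$ covering each edge exactly twice (equivalently, partitions of the edges into trees and complementary forests) and match it against $\PointCount{\PsiPol_{G\setminus v}}{3}\bmod 27$ by a sign-reversing involution that kills the non-diagonal contributions. I expect the \textbf{main obstacle} to be precisely this arithmetic core: establishing $p^{2}\mid D_p$ together with the exact constant $-3$ in the diagonal-to-$c_2$ congruence modulo $p^{3}$, and then the degenerate $p=3$ case, where the clean torus argument collapses and the more delicate trees-and-forests involution is required.
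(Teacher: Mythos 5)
Your reduction is set up correctly: combining \cref{martin-power-from-diagonal} with the Wilson-theorem analysis of the prefactor $((p-1)!)^{N}/(2p-2)!$ does show that for $p\neq 3$ the theorem is equivalent to the congruence $D_p\equiv-3p^{2}c_2^{(p)}(G\setminus v)\bmod p^{3}$, which is exactly the paper's \cref{lem:c2-from-diag}. The gap is that you do not prove this congruence. The torus identity you invoke only recovers $\PointCount{\PsiPol_{G\setminus v}}{p}$ \emph{modulo $p$} from coefficients of $\PsiPol^{p-1}$; extracting the coefficient of $p^{2}$ in the point count from a single diagonal coefficient of $\PsiPol_{G\setminus v}^{2(p-1)}$ modulo $p^{3}$ is the entire difficulty, and ``refining this counting through the higher powers of $p$ \dots\ up to a universal constant'' asserts the result rather than establishes it. The paper needs two substantial inputs here: first, \cref{lem c2 as 3 inv coeff}, a chain of denominator-reduction results expressing $c_2^{(p)}(G\setminus v)$ as $-1$ times the diagonal coefficient, modulo $p$ only, of $\big(\Phi^{\{a,b\},\{c\}}_{G\setminus\{v,w\}}\PsiPol_{G\setminus\{v,w\}}\big)^{p-1}$ for a $3$-valent vertex $w$; second, the identity $f_0\PsiPol_{G\setminus v}=y_1y_2+y_1y_3+y_2y_3$ at that vertex, whose trinomial expansion has all coefficients divisible by $p^{3}$ except for three terms, each contributing $\binom{2r}{r}^{2}(f_0f_i)^{r}\equiv p^{2}(f_0f_i)^{r}$. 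That computation is where both the divisibility $p^{2}\mid D_p$ and the constant $-3$ actually come from; neither appears in your sketch.

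For $p=3$ the proposal is not merely incomplete but aims at a statement the authors explicitly could not establish directly: proving $\Martin(G^{[2]})/9\equiv c_2^{(3)}\bmod 3$ via $D_3$ amounts to the congruence of \cref{lem:c2-from-diag} holding modulo $p^{4}$, and the paper states it lacks a direct method for this because the explicit factor $3$ collides with $p$. Instead the paper abandons $D_p$ entirely for this case and proves, for all primes, that the tree-and-forest count $N_{r,r}$ of \cref{lem c2 as forests and trees} satisfies the Martin recurrence exactly (via the coloured Pr\"ufer bijection of \cref{prop recurrence 4p-4 no other restriction}), then matches base cases at $6$ vertices — which is also where the hypothesis of at least $6$ vertices is genuinely used. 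Your proposed ``sign-reversing involution'' for $p=3$ is a restatement of the hard step, not a route around it.
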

In \cref{sec p not 3} we give a proof for primes $p\neq3$, by relating $c_2$ to the diagonal coefficients of powers of $\PsiPol_G$ as in \cref{sec diags}. In \cref{sec structure,sec 3 inv recurrence,sec base case} we present another proof, which is more technical, but which works for all primes---including $p=3$.
\begin{remark}
The relation \cref{eq:c2-martin} does not extend to prime powers. Only a few relations for $c_2$ are known at prime powers, see \cite{EsipovaYeats:c2powers,SchnetzYeats:hourglass,Doryn:InvariantInvariant}.
\end{remark}
\begin{corollary}
    Let $G$ be a $4$-regular graph $G$ with at least 6 vertices, and let $v,w$ denote two vertices. Then $c_2^{(p)}(G\setminus v)\equiv c_2^{(p)}(G\setminus w)\mod p$ for all primes $p$.
\end{corollary}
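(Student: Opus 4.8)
The plan is to obtain this corollary as an immediate consequence of \cref{thm:c2-martin}, exploiting the crucial feature that the right-hand side of \eqref{eq:c2-martin} depends only on the $4$-regular graph $G$ and the prime $p$, but \emph{not} on which vertex is deleted. Fix such a $G$ with $n\geq 6$ vertices and fix a prime $p$. I would first check that the corollary is even well-posed: both decompletions $G\setminus v$ and $G\setminus w$ arise from the same $4$-regular graph, so each has $n-1\geq 5$ vertices and $m=2(n-1)$ edges. This places them squarely in the range of \cref{lem:q2-divides}, which guarantees that $\PointCount{\PsiPol_{G\setminus v}}{p}$ and $\PointCount{\PsiPol_{G\setminus w}}{p}$ are each divisible by $p^2$, so that the residues $c_2^{(p)}(G\setminus v)$ and $c_2^{(p)}(G\setminus w)$ are both defined via \eqref{eq:c2-congruence}.

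With the hypotheses verified, I would apply \cref{thm:c2-martin} once with the vertex $v$ and once with the vertex $w$. Both applications are legitimate because $G$ is $4$-regular with at least $6$ vertices, which is exactly the hypothesis of the theorem. This yields
\begin{equation*}
    c_2^{(p)}(G\setminus v)\equiv \frac{\Martin(G^{[p-1]})}{3p}\equiv c_2^{(p)}(G\setminus w)\mod p,
\end{equation*}
where the middle expression is a single number determined by $G$ and $p$ alone. Chaining the two congruences through this common value gives the asserted equality $c_2^{(p)}(G\setminus v)\equiv c_2^{(p)}(G\setminus w)\pmod p$.

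Honestly, there is no residual obstacle at this stage: all the difficulty has been front-loaded into \cref{thm:c2-martin}, whose two proofs (for $p\neq 3$ via diagonal coefficients, and the more delicate argument covering $p=3$) constitute the real content. The present corollary is a formal extraction, and it is precisely the completion-invariance statement announced as \cref{cor c2 completion} in the introduction, resolving the $q=p$ case of the Brown--Schnetz conjecture. The one point I would flag for care is that the theorem's hypothesis ``at least $6$ vertices'' on $G$ must be respected for both deletions simultaneously; since $v$ and $w$ are deleted from the \emph{same} completion $G$, this is automatic and no extra connectivity or genericity assumption on $v$ or $w$ is needed.
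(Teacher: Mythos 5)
Your proof is correct and is exactly the argument the paper intends: the corollary follows immediately from \cref{thm:c2-martin} because the right-hand side of \eqref{eq:c2-martin} depends only on $G$ and $p$, not on the deleted vertex, so applying the theorem to $v$ and to $w$ and chaining through the common value $\Martin(G^{[p-1]})/(3p)$ gives the claim. Your additional checks (well-definedness via \cref{lem:q2-divides} and that both decompletions satisfy the theorem's hypotheses) are sound and consistent with the paper's treatment.
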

This proves the \emph{completion conjecture}, \cite[Conjecture~4]{BrownSchnetz:K3phi4}, for all prime fields. This conjecture is a special case of the stronger \cite[Conjecture~5]{BrownSchnetz:K3phi4} from \cite[Remark~2.11]{Schnetz:Fq}:
\begin{conjecture}\label{conj period c2}
    Whenever $\Period(G_1)=\Period(G_2)$ for two primitive $\phi^4$ graphs, then $c_2^{(q)}(G_1)\equiv c_2^{(q)}(G_2)\mod q$ for all prime powers $q$.
\end{conjecture}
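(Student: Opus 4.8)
The plan is to be honest about the nature of this statement: it is an open conjecture (it is \cite[Conjecture~5]{BrownSchnetz:K3phi4}), so I would not aim for a complete unconditional proof valid at all prime powers $q$, but rather reduce the accessible case $q=p$ prime to the machinery already built. Write each primitive $\phi^4$ graph as a decompletion $G_i=\hat G_i\setminus v_i$ of a cyclically $6$-connected $4$-regular graph $\hat G_i$. Then \cref{thm:c2-martin} gives, for every prime $p$,
\begin{equation*}
    c_2^{(p)}(G_i)\equiv \frac{\Martin(\hat G_i^{[p-1]})}{3p}\mod p,
\end{equation*}
so the target congruence $c_2^{(p)}(G_1)\equiv c_2^{(p)}(G_2)$ for all $p$ holds as soon as the integers $\Martin(\hat G_1^{[p-1]})$ and $\Martin(\hat G_2^{[p-1]})$ agree (or at least agree after dividing by $3p$ and reducing mod $p$) for every prime $p$.

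First I would reduce this to equality of the full Martin sequences. If $\Martin(\hat G_1^{\bullet})=\Martin(\hat G_2^{\bullet})$ as sequences of integers, then in particular their terms at index $r=p-1$ coincide for every prime $p$, and the displayed formula forces the two $c_2$ values to agree at all primes. Thus the prime case of the conjecture follows from the implication of the perfectness statement \cref{con:Martin-perfect} that equal periods $\Period(\hat G_1\setminus v_1)=\Period(\hat G_2\setminus v_2)$ force equal Martin sequences. Assuming \cref{con:Martin-perfect}, the chain ``equal periods $\Rightarrow$ equal Martin sequences $\Rightarrow$ equal $c_2$ at all primes'' closes the argument for $q=p$, consistent with the observation in the introduction that \cref{con:Martin-perfect} implies the $q=p$ case of \cite[Conjecture~5]{BrownSchnetz:K3phi4}.

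The main obstacle is exactly why this remains a conjecture, and it is twofold. First, even the prime case currently rests on the unproven \cref{con:Martin-perfect}: an unconditional argument would require showing directly that the period determines the Martin sequence (the implication equal period $\Rightarrow$ equal sequence), which at present is only supported numerically and would presumably need a structural handle relating the period to the $P$-recursive data of \cref{cor p recursive}. Second, and more seriously, the prime-power case $q=p^s$ with $s\ge 2$ lies entirely outside the reach of these methods: as noted in the remark following \cref{thm:c2-martin}, the identity \eqref{eq:c2-martin} does not extend to prime powers, so there is no bridge from $c_2^{(q)}$ to the Martin invariant when $q$ is composite. I would expect the prime-power case to be the genuinely hard part, requiring either a refinement of the point-count and diagonal analysis of \cref{sec c2} that survives modulo $p^3$ rather than only modulo $p^2$, or an altogether different geometric input comparing the two graph hypersurfaces.
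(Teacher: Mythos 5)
Your reading of the situation matches the paper exactly: the statement is an open conjecture that the paper does not prove, and the paper's own position (stated in the introduction) is precisely your chain ``equal periods $\Rightarrow$ equal Martin sequences (\cref{con:Martin-perfect}) $\Rightarrow$ equal $c_2$ at all primes via \cref{thm:c2-Martin-intro}'', with the prime-power case left entirely open since \eqref{eq:c2-martin} does not extend to prime powers. The only thing you might add is that the paper does establish unconditional special cases at primes --- the completion, twist, and duality instances of the conjecture --- because those particular period identities are proved for the Martin invariant itself.
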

Here a graph $G$ is called \emph{primitive $\phi^4$} if it can be obtained from a cyclically 6-connected 4-regular graph $G'$ by deleting a vertex: $G\cong G'\setminus v$. We then call $G'$ \emph{the completion} of $G$, and we also say that $G$ is \emph{a decompletion} of $G'$, as originally defined in \cref{sec residues intro}.  Every $\phi^4$ graph has a unique completion, but a 4-regular graph can have several non-isomorphic decompletions.

The \emph{completion identity} $\Period(G\setminus v)=\Period(G\setminus w)$ for cyclically 6-connected 4-regular graphs $G$ is only one of several known period identities proved in \cite{Schnetz:Census}. The twist and duality identities lead to further special cases of \cref{conj period c2}. Combining our results on the Martin invariant (\cref{prop:twist} and \cref{prop:phi4-duality-sequence}) with \cref{thm:c2-martin}, we also obtain proofs of the corresponding identities for $c_2$. Specifically, for all primes $p$:
\begin{itemize}
    \item If $G_1$ is a twist of $G_2$, then $c_2^{(p)}(G_1\setminus v)\equiv c_2^{(p)}(G_2\setminus w) \mod p$.
    \item If $G$ is planar, and $G^\dual$ a planar dual, then $c_2^{(p)}(G)\equiv c_2^{(p)}(G^\dual)\mod p$.
\end{itemize}
The duality relation was known previously, even for prime powers \cite{Doryn:InvariantInvariant}. Our twist identity for $c_2$ is a new result, however our proof of the completion identity is arguably more significant. The completion conjecture has been repeatedly stated explicitly \cite{BrownSchnetz:K3phi4, BrownSchnetz:ModularForms} and has had many unsuccessful or only partially successful attempts at a proof.

To apply the main theorem, we use some divisibility properties, which we record here and which are proved in \cref{sec base case}. In \cref{lem:4vert-divp,lem:Martin(G2)div9} we show:
\begin{lemma}\label{lem:Martin-pow-divisible}
    Let $G$ denote a 4-regular graph with at least 4 vertices. Then for every prime $p$, $\Martin(G^{[p-1]})$ is divisible by $p$. For $p=3$ and at least 5 vertices, we also have that $\Martin(G^{[2]})$ is divisible by $3^2$. Hence, the right-hand side of \cref{eq:c2-martin} is well-defined.
\end{lemma}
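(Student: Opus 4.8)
I would first reduce both statements to a $p$-adic estimate for a single diagonal coefficient. Write $H=G\setminus v$, where $v$ is chosen without a self-loop (if no such $v$ exists then $G$ has a self-loop, so $\Martin(G^{[p-1]})=0$ and the claim is trivial). Then $H$ has $n-1$ vertices and $m=2(n-2)$ edges, and its spanning trees have $n-2$ edges. Applying \cref{martin-power-from-diagonal} with $k=2$ and $r=p-1$ gives
\begin{equation*}
    \Martin\bigl(G^{[p-1]}\bigr)=\frac{\bigl((p-1)!\bigr)^{2(n-2)}}{(2p-2)!}\cdot D,
    \qquad
    D\defas\takecoeff{x_1^{p-1}\cdots x_m^{p-1}}\KirchPol_{H}^{2(p-1)}.
\end{equation*}
Because $(p-1)!$ is prime to $p$ while $(2p-2)!$ has $p$-adic valuation exactly $1$ (its only factor divisible by $p$ is $p$ itself, as $p\le 2p-2<2p$), we get $v_p\bigl(\Martin(G^{[p-1]})\bigr)=v_p(D)-1$. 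Hence ``$p\mid\Martin(G^{[p-1]})$'' is equivalent to $p^2\mid D$, and --- since also $v_3(4!)=1$ --- ``$9\mid\Martin(G^{[2]})$'' is equivalent to $27\mid D_2$ with $D_2=\takecoeff{x_1^2\cdots x_m^2}\KirchPol_H^{4}$. By \eqref{eq coeffs agree} I may use $\PsiPol_H$ in place of $\KirchPol_H$ throughout.

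The first power of $p$ comes for free. By \cref{lem comb interp of diag}, $D$ counts tuples $(T_1,\dots,T_{2p-2})$ of spanning trees of $H$ covering every edge exactly $p-1$ times; grouping by multiset type exactly as in the proof of \cref{diag-multinomial-multiple} yields $D=\sum_M (2p-2)!/(\mu_1!\cdots\mu_j!)$, summed over multisets $M=\{t_1^{\mu_1},\dots,t_j^{\mu_j}\}$ of spanning trees with $\sum_l\mu_l=2p-2$ and each edge covered $p-1$ times. The covering constraint forces $\mu_l\le p-1<p$ for all $l$ (a tree occurring $\mu_l$ times already covers each of its edges $\mu_l$ times), so $\mu_1!\cdots\mu_j!$ is prime to $p$ and every summand has valuation exactly $1$. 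Thus $p\mid D$; and, using $(2p-2)!/p\equiv-1\pmod p$,
\begin{equation*}
    \frac{D}{p}\equiv-\sum_{M}\frac{1}{\mu_1!\cdots\mu_j!}\pmod p .
\end{equation*}
So $p^2\mid D$ is equivalent to the congruence $\sum_M(\mu_1!\cdots\mu_j!)^{-1}\equiv0\pmod p$. As a consistency check and a hint of where the second factor lives: since $\PsiPol_H(x)^{p-1}\in\{0,1\}$ for $x\in\F_p^m$, one has $\sum_{x}\PsiPol_H(x)^{2(p-1)}=\sum_x\PsiPol_H(x)^{p-1}$, and a degree count isolates the diagonal monomial, giving $D\equiv\pm\PointCount{\PsiPol_H}{p}\pmod p$. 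This already vanishes mod $p$ by \cref{lem:q2-divides}, whose \emph{second} factor of $p$ is morally responsible for the extra divisibility I still need.

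The main obstacle is this remaining factor, i.e.\ proving $\sum_M(\mu_1!\cdots\mu_j!)^{-1}\equiv0\pmod p$. My plan is to exhibit this sum as a fixed-point count of a free $\Z/p$-action on the set of multiset configurations $M$ (equivalently, a multiplicity-preserving symmetry of order $p$): because the weight $(\mu_1!\cdots\mu_j!)^{-1}$ depends only on the multiplicity profile, a free action preserving that profile splits the sum into orbits of size $p$ and forces it to vanish mod $p$. Constructing the action is where the hypotheses enter: having $n\ge4$ vertices ensures $H$ carries enough cycle space to perform tree-exchange moves, and the (up to four) degree-three vertices created by deleting $v$ supply the combinatorial data to pair configurations. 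This is exactly the flavour of the intricate involutions used to settle $c_2$ mod $2$ in \cite{Yeats:SpecialCaseCompletion,HuYeats:c2p2}, and I expect it to be the hardest step; it is the content of \cref{lem:4vert-divp}.

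For the strengthening at $p=3$ I would run the same analysis one $p$-adic order deeper, controlling $D_2$ modulo $27$ rather than $9$ --- that is, studying $D_2/3\bmod 9$ in place of $D/p\bmod p$. The extra vertex ($n\ge5$) provides the additional room in the cycle space needed to define the refined pairing, which is why this case is separated out as \cref{lem:Martin(G2)div9}. Granting these two inputs, $p\mid\Martin(G^{[p-1]})$ for every prime $p$ and $9\mid\Martin(G^{[2]})$ for $n\ge5$; as $3$ is invertible mod $p$ when $p\ne3$, this is precisely what is needed for $\Martin(G^{[p-1]})/(3p)\bmod p$ to be well defined for all primes $p$.
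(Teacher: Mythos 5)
There is a genuine gap. Your reduction is sound as far as it goes: using \cref{martin-power-from-diagonal} with $k=2$, $r=p-1$, the valuation computation $v_p\bigl(\Martin(G^{[p-1]})\bigr)=v_p(D)-1$ is correct, and the multiset decomposition of $D$ with all $\mu_l\le p-1$ does give the first factor of $p$ for free. But that first factor only certifies that $\Martin(G^{[p-1]})$ is an integer; the entire content of the lemma is the \emph{second} factor, i.e.\ $\sum_M(\mu_1!\cdots\mu_j!)^{-1}\equiv0\pmod p$, and you never prove it. You only announce a \emph{plan} to realize this sum as a fixed-point count of a free $\Z/p$-action on multiset configurations, with no construction of the action, no verification that it is free, and only a vague gesture at where the hypotheses $n\ge4$ (resp.\ $n\ge5$) would enter. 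The character-sum ``consistency check'' you offer relates $D$ to $\PointCount{\PsiPol_H}{p}$ only modulo $p$, so the second factor of $p$ in \cref{lem:q2-divides} cannot be transported to $D$ this way. As written, the proposal establishes nothing beyond integrality.

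For comparison, the paper's proof is entirely elementary and avoids the diagonal-coefficient reformulation. It groups the Martin vertex expansion by multiplicity matrices $D$ as in \eqref{eq:Martin-multi-expansion}, observes (\cref{lem:pedge-div}) that every coefficient $d!/D!$ is divisible by $p$ as soon as some edge at the expanded vertex has multiplicity $\ge p$, and then reduces inductively to the base case of $4$-vertex graphs $K_4^{[a,b,c]}$ with $a+b+c=4r$, where some multiplicity necessarily exceeds $r=p-1$; this yields \cref{lem:4vert-divp}. The $p=3$ strengthening is obtained by pushing one level further to $5$-vertex graphs: \cref{lem:Martin(5vert)p2} gives divisibility by $p^2$ for every $5$-vertex graph other than $K_5^{[r]}$ (one factor of $p$ from $d!/D!$, another from \cref{lem:4vert-divp} applied to $G_D$), and the exceptional case is settled by the explicit evaluation $\Martin(K_5^{[r]})\equiv-3p\pmod{p^2}$ of \cref{lem:Martin(K5r)}, which vanishes mod $9$ precisely when $p=3$. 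If you want to salvage your route, you would need to actually construct the multiplicity-preserving $\Z/p$-symmetry on spanning-tree multisets, which is a substantial open task rather than a routine verification; the recursion-plus-base-case argument is considerably cheaper.
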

In practice, for the purpose of computing $c_2^{(p)}$, one can simplify the Martin recursion and ignore all graphs that have a $\geq p$ fold edge:
\begin{lemma}\label{lem no huge edges}
    If $G$ is $4r$-regular with $p=r+1$ prime, and $G$ has at least 6 vertices and at least one edge with multiplicity $p$ or more, then $\Martin(G)$ is divisible by $p^2$ (or $p^3$ in the case $p=3$) and hence such a graph does not contribute to $c_2$.
\end{lemma}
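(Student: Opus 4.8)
The plan is to reduce to the spanning-tree-partition count of \cref{thm martin as coeff,diag=STP} and then isolate the contribution of the heavy edge. Let $e$ be an edge of multiplicity $\mu\geq p$ joining vertices $a,b$, and set $k=2r$, so that $G$ is $2k$-regular with $p=r+1=\tfrac{k}{2}+1$. We may assume $G$ has no self-loop (else $\Martin(G)=0$) and that $\mu\leq k$: if $\mu>k$ then no partition of $E(G\setminus v)$ into $k$ spanning trees can place the $\mu$ parallel copies of $e$ into distinct trees (a spanning tree contains at most one copy of $e$), so $\Martin(G)=0$ and there is nothing to prove. Hence assume $p\leq\mu\leq k=2p-2$. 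Since $G$ has at least $6$ vertices, we may pick a vertex $v\notin\set{a,b}$, so that $e$ survives in $G\setminus v$ with multiplicity $\mu$.

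Next I would split off the heavy edge at the level of the Kirchhoff polynomial. Deletion--contraction for the $\mu$ parallel copies $e_1,\ldots,e_\mu$ of $e$ gives $\KirchPol_{G\setminus v}=(x_{e_1}+\cdots+x_{e_\mu})\KirchPol_{(G\setminus v)/e}+\KirchPol_{(G\setminus v)\setminus e}$, where neither of the last two polynomials involves $x_{e_1},\ldots,x_{e_\mu}$. Extracting the diagonal coefficient $N_k(G\setminus v)=k!\,\Martin(G)$ of \cref{lem comb interp of diag}, only the term of degree $\mu$ in $x_{e_1}+\cdots+x_{e_\mu}$ survives, with $\takecoeff{x_{e_1}\cdots x_{e_\mu}}(x_{e_1}+\cdots+x_{e_\mu})^\mu=\mu!$, yielding $\Martin(G)=\tfrac{1}{(k-\mu)!}\takecoeff{\prod_{\mathrm{other}}x}\KirchPol_{(G\setminus v)/e}^{\mu}\KirchPol_{(G\setminus v)\setminus e}^{k-\mu}$. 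Interpreting this coefficient combinatorially (a spanning tree of $(G\setminus v)/e$ is a $2$-forest of $G\setminus v$ separating $a$ from $b$), the factorization becomes $\Martin(G)=\mu!\cdot M'$, where $M'$ counts partitions of the non-$e$ edges into an unordered set of $\mu$ such $a,b$-separating $2$-forests together with $k-\mu$ spanning trees of $(G\setminus v)\setminus e$. Because $p\leq\mu\leq 2p-2$, the only multiple of $p$ up to $\mu$ is $p$ itself, so $v_p(\mu!)=1$, which already supplies one factor of $p$; note likewise $v_p((k-\mu)!)=0$ since $k-\mu\leq p-2$.

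It remains to extract a second factor of $p$ (and, for $p=3$, a third) from $M'$. This is the heart of the matter and the main obstacle: one checks that the splitting above is purely formal and, through Wilson's theorem, imposes no congruence on $M'$ on its own, so the extra divisibility must come from genuine structure. The plan is to mirror the Frobenius/cyclic-shift mechanism underlying \cref{lem:Martin-pow-divisible}: keeping $M'=\tfrac{1}{\mu!(k-\mu)!}\takecoeff{\prod x}\KirchPol_{(G\setminus v)/e}^{\mu}\KirchPol_{(G\setminus v)\setminus e}^{k-\mu}$ and using that $\mu\geq p$ supplies $p$ repeated factors of $\KirchPol_{(G\setminus v)/e}$, one exploits that modulo $p$ the $p$th power is Frobenius-linear, $\KirchPol^{p}\equiv\KirchPol(x^{p})$, and that a group $\Z/p$ cyclically permuting $p$ of the tree-slots acts without fixed points on genuine partitions (a fixed configuration would force $p$ coinciding trees inside a partition, which is impossible). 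This is precisely what forces $p\mid M'$, giving $v_p(\Martin(G))=v_p(\mu!)+v_p(M')\geq 2$. For $p=3$ one needs the sharper bound $9\mid M'$; here I would invoke the strengthened clause of \cref{lem:Martin-pow-divisible} (its $9\mid\Martin(H^{[2]})$ statement) applied to the reduced configuration, since a single cyclic action delivers only one extra factor. Finally, the closing clause follows because $\Martin(G^{[p-1]})/(3p)$ is read modulo $p$ in \cref{eq:c2-martin}: when $p\neq 3$ the number $3$ is invertible modulo $p$, so $p^{2}\mid\Martin(G)$ makes the contribution vanish, while for $p=3$ the bound $p^{3}=27\mid\Martin(G)$ is exactly what is needed.
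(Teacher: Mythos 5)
Your setup is correct and cleanly executed: the reduction to $\mu\leq k=2p-2$, the deletion--contraction splitting of $\KirchPol_{G\setminus v}$ over the $\mu$ parallel copies, and the resulting identity $\Martin(G)=\mu!\cdot M'$ with $v_p(\mu!)=1$ all check out. But the proof has a genuine gap exactly where you flag "the heart of the matter": you never actually establish $p\mid M'$. The cyclic group $\Z/p$ permuting $p$ of the tree-slots acts on \emph{ordered} tuples, and there it is indeed free; but that only shows $p$ divides the ordered count $\mu!\,(k-\mu)!\,M'$, which is already guaranteed by $p\mid\mu!$ and says nothing about $M'$ itself. On \emph{unordered} partitions the slot-permutation is the identity, so there is no action to exploit. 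The Frobenius observation $\KirchPol^p\equiv\KirchPol(x^p)\bmod p$ runs into the same wall: it shows the multilinear coefficient of $\KirchPol_{(G\setminus v)/e}^{\mu}\KirchPol_{(G\setminus v)\setminus e}^{k-\mu}$ vanishes mod $p$, i.e.\ $p\mid \mu!(k-\mu)!M'$ again, one factor of $p$ only. For $p=3$ the appeal to the $9\mid\Martin(H^{[2]})$ clause of \cref{lem:Martin-pow-divisible} is not available either, because $M'$ is a mixed count of $2$-forests of $(G\setminus v)/e$ and spanning trees of $(G\setminus v)\setminus e$ --- it is not the Martin invariant of any graph, so that lemma does not apply to it.

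The missing ingredient is where the paper gets its second factor of $p$: not from the heavy edge again, but from \cref{lem:4vert-divp}, which says $p\mid\Martin(H)$ for \emph{every} $4r$-regular graph $H$ on at least $4$ vertices (itself proved by recursing down to $K_4^{[a,b,c]}$, where $4r$-regularity forces some multiplicity $\geq p$ and \cref{lem:pedge-div} applies). Concretely, the paper expands $\Martin(G)=\sum_D (d!/D!)\,\Martin(G_D)$ at an endpoint of the heavy edge via \eqref{eq:Martin-multi-expansion}: \cref{lem:pedge-div} gives $p\mid d!/D!$ (your $\mu!$ factor in different clothing), and \cref{lem:4vert-divp} (resp.\ \cref{lem:Martin(G2)div9} for $p=3$) gives $p\mid\Martin(G_D)$ (resp.\ $9\mid\Martin(G_D)$), since each $G_D$ is $4r$-regular on at least $5$ vertices. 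To rescue your route you would need an independent proof that $p\mid M'$, which in effect requires propagating a Martin-type recursion for the mixed tree/forest count down to small graphs --- at that point you are reconstructing \cref{lem:4vert-divp} rather than bypassing it.
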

\begin{proof}
    \Cref{lem:pedge-div} shows that applying the Martin recurrence around either end of an edge with multiplicity $\geq p$ gives a factor divisible by $p$ multiplying the Martin invariant of each $G_\tau$. Applying the previous lemma to $\Martin(G_\tau)$ gives the result.
\end{proof}
\begin{remark}
    \begin{figure}
        \centering
        \includegraphics{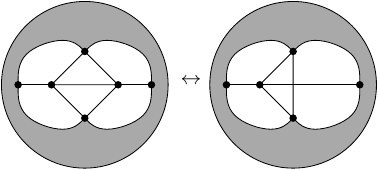}
        \caption{The double triangle operation.}%
        \label{fig:doubletriangle}%
    \end{figure}
    The properties of the Martin invariant and \cref{thm:c2-martin} also explain other $c_2$ identities which are known from \cite{BrownSchnetzYeats:PropertiesC2} and \cite[Corollary~34]{BrownSchnetz:K3phi4}:
\begin{itemize}
    \item If $G$ has a 2-edge cut, a 4-edge cut, or a 3-vertex cut, then $c_2^{(p)}(G\setminus v)\equiv 0 \mod p$.
    \item If $G$ and $G'$ are related as in \cref{fig:doubletriangle}, then $c_2^{(p)}(G)\equiv c_2^{(p)}(G')\mod p$.
\end{itemize}
    The first follows from \cref{prop:edge-cuts} and the factors of $p$ in $\Martin(G^{[p-1]})$ (see \cref{lem:Martin-pow-divisible}) producing $p^2$ in the products \eqref{eq:Martin-edge-product} and \cref{prop:3vertex-cut}.
    The double-triangle relation follows from \cref{lem no huge edges}, since for either vertex incident to the edge shared between two triangles in $G$, after $(p-1)$-duplicating the only transitions that do not lead to self-loops or edges of multiplicity $p$ or larger, are precisely those that produce $G'$.
\end{remark}

We can exploit \cref{lem no huge edges} to simplify the calculation of $c_2^{(p)}$ in recursive families of graphs. A recursive family \cite[\S 5]{ChorneyYeats:c2rec} is in the simplest case a sequence $G_1,G_2,\ldots$ of graphs where $G_{n+1}$ is obtained from $G_{n}$ through a fixed local modification that affects only a finite number of vertices (call these the \emph{active} vertices).
\begin{lemma}\label{martin-families}
For recursive families of $\phi^4$ graphs $G_1,G_2,\ldots$ the Martin invariants $\Martin(G_1^{[r]}),\Martin(G_2^{[r]}),\ldots$ for a fixed integer $r$ fulfil a linear recurrence with constant coefficients.
\end{lemma}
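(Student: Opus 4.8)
The plan is to reduce the statement to a transfer-matrix computation, after first replacing the Martin invariant by a count of spanning-tree partitions. Applying \cref{thm martin as coeff} to the $4r$-regular graph $G_n^{[r]}$ (so that $k=2r$) and combining it with \cref{lem comb interp of diag} gives
\begin{equation*}
    (2r)!\cdot\Martin\big(G_n^{[r]}\big) = N_{2r}\big(G_n^{[r]}\setminus v\big),
\end{equation*}
the number of ordered partitions of the edge set of $H_n\defas G_n^{[r]}\setminus v$ into $2r$ spanning trees. Since $r$ is fixed, the prefactor $(2r)!$ is a nonzero constant, so it is enough to prove that the integer sequence $a_n\defas N_{2r}(H_n)$ satisfies a linear recurrence with constant coefficients. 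The graphs $H_n$ again form a recursive family in the sense of \cite{ChorneyYeats:c2rec}: edge duplication and the deletion of the fixed vertex $v$ are local operations, so $H_{n+1}$ is obtained from $H_n$ by inserting one more copy of a fixed gadget, and because $G_n$ is $4$-regular this gadget meets the rest of the graph across an interface of bounded size, independent of $n$ (duplication only scales this size by $r$).

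Next I would build a transfer matrix for $a_n$. Processing $H_n$ gadget by gadget along the chain, I track an \emph{interface state}: for each of the $2r$ ordered trees, the set partition of the current boundary vertices recording which of them have already been joined inside the processed region, together with the bookkeeping needed to guarantee that each tree stays acyclic and eventually becomes connected and spanning. For fixed $r$ and bounded interface width, the set $V$ of such states is finite, with cardinality depending only on $r$ and the gadget, not on $n$. Inserting one gadget and summing over all assignments of its (duplicated) edges to the $2r$ trees that are compatible with the forest constraints defines a fixed nonnegative integer transfer matrix $T\in\End(\Z^V)$, the same at every step, while the fixed ends of the family (including $v$ and the closing-up conditions forcing each tree to span and to be connected) are encoded in fixed boundary vectors $u,w$, so that
\begin{equation*}
    a_n = u^{\Transpose}\, T^{\,n-n_0}\, w
\end{equation*}
for some fixed offset $n_0$ and all large $n$.

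Finally I would conclude by the Cayley--Hamilton theorem: if $\chi_T(t)=t^{D}-c_{D-1}t^{D-1}-\cdots-c_0$ is the characteristic polynomial of $T$, then $T^{m+D}=\sum_{j=0}^{D-1}c_j\,T^{m+j}$ for all $m\geq0$, and multiplying on the left by $u^{\Transpose}$ and on the right by $w$ yields $a_{n+D}=\sum_{j=0}^{D-1}c_j\,a_{n+j}$, a recurrence with the constant coefficients $c_j$. Equivalently, $\sum_n a_n z^n = u^{\Transpose}(I-zT)^{-1}w$ is a rational function of $z$. Dividing by the constant $(2r)!$ transports the recurrence to $\Martin(G_n^{[r]})$ itself.

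The main obstacle is the careful design of the interface state so that the one-gadget transfer is simultaneously \emph{linear} and \emph{$n$-independent}, yet faithful to the \emph{global} conditions defining a spanning-tree partition. The delicate point is that being a spanning tree is not local: each of the $2r$ trees must, at the end, cover every vertex and be connected, while no partial tree may close up a cycle. The standard remedy — recording, per tree, the connectivity pattern of the boundary vertices and forbidding both cycle-closing merges and the premature sealing-off of a component away from the boundary — keeps $V$ finite for fixed $r$ and makes insertion a fixed matrix; checking that this data is exactly sufficient to recover $a_n$ is the technical heart of the argument. Note the contrast with \cref{cor p recursive}: there $r$ varies and the coefficients are polynomial, whereas here $r$ is fixed and $n$ varies, which is precisely why the transfer matrix is constant and the resulting recurrence has constant coefficients.
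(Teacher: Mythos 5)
Your argument is correct, but it is not the route the paper takes. The paper's proof stays entirely inside the Martin formalism: it applies the transition expansion \eqref{eq:Martin-recursion} at the active vertices of $G_n^{[r]}$, which rewrites $\Martin(G_n^{[r]})$ as an integer linear combination of Martin invariants of graphs having the recursive structure of $G_{n-1}^{[r]}$ but with modified (bounded-multiplicity, hence finitely many) connection patterns among the active vertices; collecting the Martin invariants of these finitely many decorated graphs into a vector gives $a_n=Ta_{n-1}$ for a fixed matrix $T$, from which the constant-coefficient recurrence is immediate. You instead pass through \cref{thm martin as coeff} and \cref{lem comb interp of diag} to the spanning-tree-partition count $N_{2r}(G_n^{[r]}\setminus v)=(2r)!\cdot\Martin(G_n^{[r]})$ and build a connectivity-state transfer matrix of the standard Tutte-polynomial/bounded-width type. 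Both are sound. The paper's version is shorter, does not rely on the (nontrivial) spanning-tree interpretation, and yields the small transfer matrices actually used in practice (rank $3$ for the prisms in \cref{sec:phi3}), because its state space is indexed by connection patterns among the active vertices rather than by $2r$-tuples of boundary partitions; the one point you should make explicit in that style of argument is that vertex degrees stay bounded by $4r$, so the connection patterns really are finite in number. Your version is heavier to set up---the interface states with the no-cycle and no-sealed-component constraints are precisely the technical work you defer---but it is more generic: the same machinery gives constant-coefficient recurrences for any similarly local edge-partition count (for instance the $N_{r,r}$ of \cref{sec c2}), not only for quantities already known to satisfy the Martin recursion.
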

\begin{proof}
    Applying the Martin recurrence to the finite number of vertices in $G_n$ that are involved in moving from $G_{n-1}$ to $G_n$ produces a linear combination of finitely many graphs which have the same recursive structure as $G_{n-1}$ and the same active vertices but with potentially different connections between the active vertices.  The Martin recurrence can likewise be applied to these graphs and so on.  There are only finitely many possibilities for connecting the active vertices so there are only finitely many such graphs to consider. Collecting the Martin invariants of these graphs into a vector $a_n$, we obtain a system of recurrences $a_{n}=Ta_{n-1}$ for some matrix $T$.
\end{proof}
For example, the prisms $K_2\times C_n$ can be constructed using 4 active vertices, by adding one rung at a time. We derive the corresponding recurrence for $\Martin(G^{[2]}_n)$ at the end of \cref{sec:phi3}. For the family of circulants $C_{1,2}^n$ we compute $\Martin(G_n)$ in \eqref{eq:Martin(C12)}.

Through \cref{thm:c2-martin}, the sequence $c_2^{(p)}(G_n\setminus v)$ at a fixed prime fulfils the same linear recurrence as $\Martin(G^{[p-1]})$. \Cref{lem no huge edges} shows that we can drop all graphs with $\geq p$-fold edges from the vector $a_n$, to obtain a shorter recurrence for $c_2$. It could be interesting to compare this approach with the method of \cite{Yeats:c2circulants}, since the latter produces recurrences that are considerably larger than necessary \cite{Yeats:prefixes}. Experiments for circulants $C_{1,3}^n$ and $C_{2,3}^n$ at primes $p=2,3$ suggest that indeed the approach above produces shorter recurrences.

In order to prove \cref{thm:c2-martin}, we relate $c_2$ to diagonal coefficients of some polynomials. One such relation was known previously, and its combinatorial interpretation (\cref{lem c2 as forests and trees}) was key to the results of \cite{Yeats:SpecialCaseCompletion,HuYeats:c2p2}. We will use this interpretation to prove \cref{thm:c2-martin} for all primes, including $p=3$.
For $p\neq 3$, we will also give a simpler proof using the diagonal coefficient of $\PsiPol$.
We will need the following polynomials from \cite{BrownYeats:SpanningForestPolynomials}:
\begin{definition}\label{def sp for}
    Let $G$ be a graph and let $P$ be a partition of a subset of the vertices of $G$. The associated \emph{spanning forest polynomial} is defined to be
    \begin{equation*}
    \Phi_G^P = \sum_{F}\prod_{e\neq F}x_e
    \end{equation*}
    where the sum is over spanning forests $F$ of $G$ which are \emph{compatible} with $P$ in the sense that there is a bijection between the trees of $F$ and the parts of $P$ such that each tree of $F$ contains all vertices that belong to the corresponding part of $P$.
\end{definition}
\begin{lemma}\label{lem c2 as 3 inv coeff}
    Let $G$ be a 4-regular graph with at least 5 vertices and let $v$ and $w$ be two adjacent vertices, both without self-loops. Let $a,b,c$ be the three neighbours of $w$ which are not $v$ and let $x_4, \ldots, x_m$ be the variables associated to the edges of $G\setminus \{v,w\}$. Then
    \begin{equation*}
    c_2^{(p)}(G\setminus v) \equiv - \takecoeff{x_4^{p-1}\cdots x_m^{p-1}}{\left(\Phi^{\{a,b\}, \{c\}}_{G\setminus\{v,w\}}\PsiPol_{G\setminus \{v,w\}}\right)^{p-1}} \mod p
    \end{equation*}
    for all primes $p$.\footnote{
    Such an identity also holds at prime powers $q=p^s$, but only modulo $p$ not $q$ (as detailed in \cite{EsipovaYeats:c2powers}).} Accordingly, the right-hand residue is invariant when permuting $a,b,c$.
\end{lemma}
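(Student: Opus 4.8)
\emph{Plan.} Write $H=G\setminus v$, set $K=G\setminus\{v,w\}$, and let $M=m-3$ be the number of edges of $K$. My first step is to recast the right-hand side as a point count. Both $\Phi^{\{a,b\},\{c\}}_{K}$ and $\PsiPol_{K}$ are linear in each variable with $0/1$ coefficients, and their total degrees add up to exactly $M$: with $n$ the number of vertices of $G$, the graph $K$ has $n-2$ vertices and $2n-7$ edges, so $\PsiPol_K$ has degree $n-4$ while a spanning forest of $K$ with two trees has $n-4$ edges and hence $\Phi^{\{a,b\},\{c\}}_{K}$ has degree $n-3$, and $(n-3)+(n-4)=2n-7=M$. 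Consequently, in $\big(\Phi^{\{a,b\},\{c\}}_{K}\PsiPol_{K}\big)^{p-1}$ the only monomial all of whose exponents are positive multiples of $p-1$ is the fully diagonal one $x_4^{p-1}\cdots x_m^{p-1}$. The character sum $\sum_{t\in\F_p}t^{j}\equiv -1\pmod p$ for $(p-1)\mid j$, $j>0$ (and $\equiv 0$ otherwise) then gives
\begin{equation*}
    \takecoeff{x_4^{p-1}\cdots x_m^{p-1}}\big(\Phi^{\{a,b\},\{c\}}_{K}\PsiPol_{K}\big)^{p-1}
    \equiv \PointCount{\Phi^{\{a,b\},\{c\}}_{K}\PsiPol_{K}}{p}\pmod p,
\end{equation*}
since $M=2n-7$ is odd. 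So it suffices to prove
\begin{equation*}
    c_2^{(p)}(G\setminus v)\equiv -\PointCount{\Phi^{\{a,b\},\{c\}}_{K}\PsiPol_{K}}{p}\pmod p,
\end{equation*}
i.e.\ to identify $c_2$ with the mod-$p$ count of the union of the two hypersurfaces $\{\Phi^{\{a,b\},\{c\}}_{K}=0\}$ and $\{\PsiPol_{K}=0\}$ over $\F_p$.

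To establish the point-count identity I would run the denominator reduction of \cite{BrownSchnetz:K3phi4} on $\PsiPol_H$, exploiting that it is linear in each edge variable. Eliminating the variable $x_1$ of an edge $1$ at $w$, write $\PsiPol_H=x_1 f_1+g_1$ with $f_1=\partial_{x_1}\PsiPol_H$ and $g_1=\PsiPol_H|_{x_1=0}$; summing over $x_1\in\F_p$ and discarding the leading power-of-$p$ terms (which vanish modulo $p^3$ because enough variables remain) gives
\begin{equation*}
    \PointCount{\PsiPol_H}{p}\equiv -\PointCount{f_1}{p}+p\,\PointCount{f_1,g_1}{p}\pmod{p^3},
\end{equation*}
the counts being over the remaining variables and $\PointCount{f_1,g_1}{p}$ counting common zeros. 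Iterating for the three edges $1,2,3$ at the $3$-valent vertex $w$ and tracking the expansion to order $p^3$, the surviving polynomials are Dodgson polynomials of $H$ with index sets inside $\{1,2,3\}$, now depending only on the edges of $K$; dividing by $p^2$ extracts $c_2^{(p)}(H)\bmod p$ as a mod-$p$ count built from these. The second ingredient is the resolution of a $3$-valent vertex in \cite{BrownYeats:SpanningForestPolynomials}: the Dodgson polynomials attached to the edges at $w$ are spanning forest polynomials of $K$ in the sense of \cref{def sp for}, and they reproduce $\Phi^{\{a,b\},\{c\}}_{K}$ (with its $a,b,c$-permutations) together with the full $\PsiPol_{K}$. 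Substituting these identifications and collecting terms should assemble the surviving count into exactly $-\PointCount{\Phi^{\{a,b\},\{c\}}_{K}\PsiPol_{K}}{p}$, as required.

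The hard part will not be the character-sum step but the bookkeeping of the reduction modulo $p^3$: one must verify that the order $p^0$ and $p^1$ contributions cancel so that $c_2$ genuinely appears at order $p^2$, and match each surviving Dodgson polynomial, \emph{with its sign}, to the correct spanning forest polynomial of $K$, including the degenerate configurations (two of $a,b,c$ coinciding, parallel edges at $w$, or $K$ disconnected). This identification is also where the argument must be made uniform in $p$ so as to cover $p=3$, where the shortcuts of \cref{sec p not 3} are unavailable. Finally, the claimed invariance under permuting $a,b,c$ needs no separate argument: the left-hand side $c_2^{(p)}(G\setminus v)$ makes no reference to $w$ or to the labelling of its neighbours, so once the identity is proved for one labelling it holds for every choice of which neighbour to isolate, forcing $\Phi^{\{a,b\},\{c\}}_{K}$, $\Phi^{\{a,c\},\{b\}}_{K}$ and $\Phi^{\{b,c\},\{a\}}_{K}$ to yield the same coefficient.
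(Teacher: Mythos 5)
Your plan follows the same route as the paper: the paper proves this lemma by citing the chain of results from \cite{BrownSchnetz:K3phi4} and \cite{BrownSchnetzYeats:PropertiesC2} as assembled in \cite[\S 2]{EsipovaYeats:c2powers}, and that chain is exactly what you propose to re-derive --- conversion of the diagonal coefficient to a point count, denominator reduction at the $3$-valent vertex $w$ of $G\setminus v$, and identification of the surviving Dodgson polynomials with the spanning forest polynomials $\Phi^{\{a,b\},\{c\}}_{K}$ and $\PsiPol_K$. The parts you actually carry out are correct: the degree count $(n-3)+(n-4)=2n-7=M$ does force the fully diagonal monomial to be the only one surviving the character sum, $M$ odd gives the right sign, and your observation that the $a,b,c$-permutation invariance is automatic (because the left-hand side never mentions the labelling) is precisely the paper's ``Accordingly''.

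The genuine gap is the middle step, which you defer to ``bookkeeping''. The identity $c_2^{(p)}(G\setminus v)\equiv-\PointCount{\Phi^{\{a,b\},\{c\}}_{K}\PsiPol_K}{p}$ is not obtained by three iterations of the naive elimination $\PointCount{x_1f_1+g_1}{p}=(p^{m-1}-\PointCount{f_1}{p})+p\,\PointCount{f_1,g_1}{p}$: iterating that formula produces a tree of cross terms $\PointCount{f_1,g_1}{p}$, $\PointCount{f_1,g_1,\ldots}{p}$, etc., and showing that these conspire modulo $p^3$ to leave exactly the third denominator requires the structural identities special to a $3$-valent vertex --- the expansion of $\PsiPol_{G\setminus v}$ in $x_1,x_2,x_3$ in terms of the five spanning forest polynomials, the quadratic relation $f_0f_{123}=f_1f_2+f_1f_3+f_2f_3$, and the fact that the second denominator is a perfect square --- together with \cite[Lemma~24]{BrownSchnetz:K3phi4} to control the error terms. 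That lemma is also the only place the hypothesis ``at least $5$ vertices'' is used, and your plan never identifies where this hypothesis enters; it is not vacuous (see \cref{rem:dunce-failure} and \cref{fig:duncecap} for a $4$-vertex graph where the corresponding statement fails). Until those identities are stated and the $\bmod\;p^3$ cancellations verified, the proposal is a correct roadmap to the cited literature rather than a proof.
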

\begin{proof}
    This is a consequence of a chain of results from \cite{BrownSchnetz:K3phi4} and \cite{BrownSchnetzYeats:PropertiesC2}.  To see them all laid out in detail to obtain the statement of the lemma, see \cite[\S 2]{EsipovaYeats:c2powers}.\footnote{
    The lower bound of 5 vertices is required to apply \cite[Lemma~24]{BrownSchnetz:K3phi4} to $G\setminus v$. With only 4 vertices, the lemma fails. The graph from \cref{rem:dunce-failure} provides a counter-example.}
\end{proof}
\begin{remark}\label{rem:dodgson-denominator}
    With $G$ as in the lemma, and with $1,2,3$ the edges from $w$ to $a,b,c$ respectively, then the two spanning forest polynomials in the lemma can also be written in terms of Dodgson polynomials, which were introduced in \cite{Brown:PeriodsFeynmanIntegrals}. Specifically
    \begin{align*}
        \Phi^{\{a,b\}, \{c\}}_{G\setminus\{v,w\}} & = \PsiPol_{G\setminus v, 3}^{1,2}\\
        \PsiPol_{G\setminus \{v,w\}} & = \PsiPol_{G\setminus v}^{13,23} .
    \end{align*}
    Proving these two facts is one of the steps in the chain of results proving the lemma, coming from results of \cite{BrownSchnetzYeats:PropertiesC2} (see also \cite[Example~2]{EsipovaYeats:c2powers} or \cite[\S2.2]{HuYeats:c2p2} for the calculation exhibited pedagogically).  We will not need to use Dodgson polynomials in the present paper, so we will not give their definition, but may use the two displayed above simply as an alternate notation for the corresponding spanning forest polynomials.
\end{remark}
\begin{definition}\label{def:N-tree-forest}
    For a graph $H$ with 3 vertices marked $a,b,c$ and an integer $r$ define $N_{r,r}(H)$ to be the number of ordered partitions of the set of edges of $H$ into $2r$ parts, the first $r$ of which are spanning trees of $H$, and the last $r$ of which are spanning forests compatible with the partition $\set{\{a,b\}, \{c\}}$.
\end{definition}
The following combinatorial interpretation of $c_2$ was proven and put to productive use in \cite{Yeats:SpecialCaseCompletion,HuYeats:c2p2,EsipovaYeats:c2powers}. It will be used in our second proof of \cref{thm:c2-martin}.
\begin{lemma}\label{lem c2 as forests and trees}
    Let $G$ be as in \cref{lem c2 as 3 inv coeff}.
    Then for every integer $r$,
    \begin{equation*}
        (r!)^{m-3}\cdot \takecoeff{x_4^r\cdots x_m^r}\left(\Phi^{\{a,b\}, \{c\}}_{G\setminus\{v,w\}}\PsiPol_{G\setminus \{v,w\}}\right)^r
        = N_{r,r}(G^{[r]}\setminus\set{v,w})
        .
    \end{equation*}
    Consequently, $c_2^{(p)}(G\setminus v)\equiv N_{r,r}(G^{[r]}\setminus\set{v,w})\mod p$ for every prime $p=r+1$.
\end{lemma}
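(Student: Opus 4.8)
The plan is to prove the displayed identity by the same ``coefficient-$1$, squarefree monomial'' bookkeeping used in \cref{lem comb interp of diag} and \cref{rem Symanzik version of combi argument}, and then to read off the congruence from \cref{lem c2 as 3 inv coeff}. Write $H=G\setminus\set{v,w}$, with edge variables $x_4,\ldots,x_m$, and recall that both $\PsiPol_H$ and $\Phi^{\set{a,b},\set{c}}_H$ are linear in each variable with every monomial occurring with coefficient $1$: each monomial of $\PsiPol_H$ is the complement of a spanning tree of $H$, and each monomial of $\Phi^{\set{a,b},\set{c}}_H$ is the complement of a spanning forest of $H$ compatible with $\set{\set{a,b},\set{c}}$ in the sense of \cref{def sp for}.

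First I would expand $(\Phi^{\set{a,b},\set{c}}_H\PsiPol_H)^r$ and isolate the diagonal coefficient. Selecting one monomial from each of the $r$ copies of $\PsiPol_H$ picks an ordered tuple $(T_1,\ldots,T_r)$ of spanning trees of $H$, and selecting one from each of the $r$ copies of $\Phi^{\set{a,b},\set{c}}_H$ picks an ordered tuple $(F_1,\ldots,F_r)$ of compatible spanning forests. The exponent of $x_e$ in the product monomial is $\bigl(r-\#\set{i\colon e\in T_i}\bigr)+\bigl(r-\#\set{i\colon e\in F_i}\bigr)$, so extracting $\takecoeff{x_4^r\cdots x_m^r}$ forces $\#\set{i\colon e\in T_i}+\#\set{i\colon e\in F_i}=r$ for every edge $e$. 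Hence the diagonal coefficient counts the tuples $(T_1,\ldots,T_r,F_1,\ldots,F_r)$ of spanning trees and compatible spanning forests of $H$ in which each edge is used exactly $r$ times in total.

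Next I would match this count with $N_{r,r}(H^{[r]})$ of \cref{def:N-tree-forest}, using the duplication correspondence behind \cref{duplication-diagonal} and the fact that $G^{[r]}\setminus\set{v,w}=H^{[r]}$. Since neither a spanning tree nor a spanning forest of $H^{[r]}$ can contain two parallel copies of a single edge (that would create a $2$-cycle), any ordered partition of $E(H^{[r]})$ into $r$ spanning trees and $r$ compatible spanning forests projects copy-by-copy onto exactly such a tuple on $H$; conversely, a tuple on $H$ together with a distribution of the $r$ parallel copies of each edge among the $r$ parts that use it reconstructs the partition. This copy-distribution is precisely the analogue of the factor $\takecoeff{x_{e_1}\cdots x_{e_r}}y_e^{r}=r!$ appearing in \cref{duplication-diagonal}. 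The step I expect to be the main obstacle is handling the forest factors $\Phi^{\set{a,b},\set{c}}_H$ on the same footing as the tree factors $\PsiPol_H$: one must check that the prescribed component structure (a tree spanning $a,b$ and a separate one on $c$) is preserved under projection and reconstruction, and that the reconstructed parts remain distinct and nonempty as required for an ordered partition.

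Finally, the congruence follows by combining the identity above with \cref{lem c2 as 3 inv coeff}, which gives $c_2^{(p)}(G\setminus v)\equiv -\takecoeff{x_4^{p-1}\cdots x_m^{p-1}}(\Phi^{\set{a,b},\set{c}}_H\PsiPol_H)^{p-1}\bmod p$. Setting $r=p-1$ and substituting identifies the right-hand diagonal coefficient with $N_{r,r}(G^{[r]}\setminus\set{v,w})$, the copy factors being reconciled modulo $p$ through Wilson's theorem $(p-1)!\equiv-1$, exactly as in the permanent argument of \cref{sec:Perm2-recurrence}. This yields $c_2^{(p)}(G\setminus v)\equiv -N_{r,r}(G^{[r]}\setminus\set{v,w})\bmod p$.
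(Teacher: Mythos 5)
Your overall strategy is the same as the paper's (the coefficient\nobreakdash-$1$ counting of \cref{lem comb interp of diag} together with the complementation observation of \cref{rem Symanzik version of combi argument}), and your first paragraph is correct: the diagonal coefficient on the left equals the number of tuples $(T_1,\ldots,T_r,F_1,\ldots,F_r)$ of spanning trees and compatible spanning forests of $H=G\setminus\set{v,w}$ in which every edge of $H$ lies in exactly $r$ of the $2r$ parts. The gap is in your second paragraph. The projection from ordered partitions of the edge set of $H^{[r]}$ to such tuples on $H$ is not a bijection: as you yourself note, reconstructing a partition from a tuple requires, for each of the $m-3$ edges of $H$, a bijection between its $r$ parallel copies and the $r$ parts that use it. What your argument actually establishes is therefore
\begin{equation*}
    N_{r,r}\big(G^{[r]}\setminus\set{v,w}\big)
    = (r!)^{m-3}\,\takecoeff{x_4^r\cdots x_m^r}\Big(\Phi^{\{a,b\},\{c\}}_{G\setminus\{v,w\}}\PsiPol_{G\setminus\{v,w\}}\Big)^r,
\end{equation*}
in exact parallel with the explicit prefactor $(r!)^m$ in \cref{duplication-diagonal} --- not the displayed equality, which omits this factor. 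An appeal to Wilson's theorem cannot remove the factor from an exact integer identity, and even at the level of the congruence it contributes $(r!)^{m-3}\equiv(-1)^{m-3}\equiv -1\bmod p$ (since $w$ has four distinct neighbours, $m-3=2n-7$ is odd for $G$ with $n$ vertices), so it flips the sign of the final congruence rather than cancelling. A concrete check: for $G=K_5$ and $r=2$ one has $\Phi^{\{a,b\},\{c\}}\PsiPol=x_5x_6(x_4+x_5+x_6)$, so the left-hand side is $1$, while $N_{2,2}(K_3^{[2]})=(2!)^3=8$, and $1\not\equiv 8\bmod 3$.

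You are in good company: the paper's own one-sentence proof elides exactly this duplication factor, and the identity as displayed needs the prefactor $(r!)^{m-3}$ inserted (equivalently, the concluding congruence should carry the opposite sign, $c_2^{(p)}(G\setminus v)\equiv +N_{r,r}(G^{[r]}\setminus\set{v,w})\bmod p$, which is also what makes the base case $N_{r,r}(K_3^{[r]})=(r!)^3\equiv -1\equiv\Martin(K_5^{[r]})/(3p)$ of \cref{sec base case} come out consistently). To repair your write-up you must either carry the factor $(r!)^{m-3}$ explicitly and track the resulting sign through the Wilson step, or replace the right-hand side by the tuple count on $H$ itself rather than $N_{r,r}$ of the duplicated graph; as it stands, the step equating the two counts fails for every $r\geq 2$.
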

\begin{proof}
    For the first claim, apply the argument from the proof of \cref{duplication-diagonal} to relate the left-hand side to the diagonal coefficient of the polynomials for the duplicated graph $G^{[r]}\setminus\set{v,w}$; the interpretation as $N_{r,r}$ then follows as in \cref{lem comb interp of diag}, using the observation from \cref{rem Symanzik version of combi argument} that taking the complement of each part does not change the count.
    
    The second claim is then a corollary of \cref{lem c2 as 3 inv coeff} and Wilson's theorem $r!\equiv -1\mod p$, noting that the number $m=2n-4$ of edges in $G\setminus v$ is even (here $n$ is the number of vertices in $G$).
\end{proof}

\begin{example}\label{ex:c2(2)}
To compute $c_2^{(2)}(G)$, the Martin recursion has only 3 terms, which are again 4-regular graphs, hence we get a simple recursion as in \eqref{eq:Martin-recursion-4}:
\begin{equation}\label{eq:c2(2)-rec}
    c_2^{(2)}\Bigg(\Graph[0.4]{circstu0}\Bigg)
    \equiv
    c_2^{(2)}\Bigg(\Graph[0.4]{circstus}\Bigg)
    +c_2^{(2)}\Bigg(\Graph[0.4]{circstut}\Bigg)
    +c_2^{(2)}\Bigg(\Graph[0.4]{circstuu}\Bigg)
    \mod 2
    .
\end{equation}
\end{example}

This recursion of $c_2$ specifically for $p=2$ admits a quick proof using \emph{denominator reduction}, see \cite[\S10]{Brown:PeriodsFeynmanIntegrals}. The \emph{denominator} $D^r_G(e_1,\ldots,e_r)$ of a graph arises as the denominator of the Feynman integral after integrating out the variables corresponding to the edges $e_1,\ldots, e_r$. It is a polynomial in the remaining edge variables and can be defined by purely algebraic means. For example, the second denominator is always a square, $D_G^2(1,2)=P^2$, of a certain (Dodgson) polynomial ($P=\Psi_G^{1,2}$). The third denominator is the product $D_G^3(1,2,3)=(\takecoeff{x_3^1}P)(\takecoeff{x_3^0}P)$, i.e.\ the expression $\Psi_G^{13,23}\Psi_{G,3}^{1,2}$ used in \cref{lem c2 as 3 inv coeff} via \cref{rem:dodgson-denominator}.
Higher denominators can be computed as follows: whenever $D^r_G(e_1,\ldots,e_r)=PQ$ with $r\geq 3$ is a product of two polynomials that are both linear in $x_e$, then $D^{r+1}_G(e_1,\ldots,e_r,e)=[P,Q]_e$ is the resultant $[P,Q]_e=P^eQ_e-P_e Q^e$. We use the shorthand notations $P^e=\takecoeff{x_e^1}P$ and $P_e=\takecoeff{x_e^0}P$ for the coefficients of a polynomial $P$ that is linear in $x_e$.

Denominators starting from the third determine $c_2$ by taking the point count:
\begin{equation*}
    c_2^{(q)}(G)\equiv (-1)^r \PointCount{D^r_G(e_1,\ldots,e_r)}{q} \mod q
\end{equation*}
is shown in \cite[Theorem~29]{BrownSchnetz:K3phi4}. The first case $r=3$ is used in the proof of \cref{lem c2 as 3 inv coeff}.

Let $G$ (left-hand side), $G_s$, $G_t$, $G_u$ be the four graphs in the order illustrated in \eqref{eq:c2(2)-rec}.
As described at the beginning of \cite[\S6.1]{BrownSchnetzYeats:PropertiesC2}, the square roots of the second denominators after reducing the two explicitly drawn edges of $G_s$, $G_t$, and $G_u$ are given by 
$(A-B)$, $(B-C)$,  and $(C-A)$ respectively where $A$, $B$, and $C$ are certain explicit spanning forest polynomials.  Furthermore letting $e$ be any edge in the common part of $G, G_s, G_t$, and $G_u$, by \cite[Lemma~42]{BrownSchnetzYeats:PropertiesC2}, the denominator after reducing the 4 illustrated edges of $G$ along with edge $e$ is equal to
\begin{equation*}
D^5_G=[A,B]_e+[B,C]_e+[C,A]_e.
\end{equation*}
Computing the third denominator for $G_s$ we get $(A-B)^e(A-B)_e = A^eA_e + B^eB_e - A^eB_e - A_eB^e = A^eA_e + B^eB_e - [A,B]_e - 2A_eB^e$.  Adding the third denominators for $G_s$, $G_t$, and $G_u$ we get
\begin{align*}
& 2A^eA_e + 2B^eB_e + 2C^eC_e - [A,B]_e - [B,C]_e - [C,A]_e - 2A_eB^e - 2B_eC^e - 2C_eA^e  \\
& \equiv [A,B]_e + [B,C]_e + [C,A]_e \mod 2.
\end{align*}
So the fifth denominator of $G$ equals modulo $2$ the sum of the third denominators of $G_s$, $G_t$, and $G_u$, but since denominators from the third on up compute $c_2$, by taking the diagonal coefficient (point count $\mod p$) we obtain
\begin{equation*}
c_2^{(2)}(G)
\equiv c_2^{(2)}(G_s) + c_2^{(2)}(G_t) + c_2^{(2)}(G_u) \mod 2
\end{equation*}
as desired.  This provides a very efficient recurrence to compute $c_2^{(2)}$ and gives a short alternative proof of \cite{HuYeats:c2p2}. Discovering this relation between the Martin invariant and $c_2$ at $p=2$ was the starting point that ultimately led to the result below for arbitrary primes.

\subsection{Primes \texorpdfstring{$p\neq 3$}{p<>3} from the diagonal}\label{sec p not 3}

\begin{lemma}\label{lem:c2-from-diag}
    Consider a graph $G$ with $n\geq 4$ vertices and $m=2(n-1)$ edges. Let $p=r+1$ be prime. Then the diagonal coefficient of $\PsiPol_G^{2r}$ fulfils
    \begin{equation}
        \takecoeff{ x_1^r\cdots x_{m}^r } \PsiPol_G^{2r}
        \equiv -3p^2 \cdot c_2^{(p)}(G) \mod p^3.
    \end{equation}
\end{lemma}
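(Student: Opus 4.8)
The plan is to trade the diagonal coefficient for a point count. Write $Z = \PointCount{\PsiPol_G}{p}$ and $D = \takecoeff{x_1^r\cdots x_m^r}\PsiPol_G^{2r}$. Since $m = 2(n-1)$ and $n\ge 4$, \cref{lem:q2-divides} applies and the defining congruence \eqref{eq:c2-congruence} reads $Z \equiv p^2 c_2^{(p)}(G)\pmod{p^3}$; multiplying by $-3$ shows that the lemma is \emph{equivalent} to the clean statement
\begin{equation*}
D \equiv -3Z \pmod{p^3}.
\end{equation*}
I will prove this as a fact about the homogeneous, multilinear polynomial $F=\PsiPol_G$ of degree $m/2$ in its $m$ variables, where the only structural input about $G$ is the critical balance $m=2(n-1)$ (so $\deg F=m/2$) and the divisibility supplied by \cref{lem:q2-divides}.

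First I would compute $Z$ by counting affine zeros of $F$ over $\F_p$ using integer representatives $x_i\in\set{0,\ldots,p-1}$, so that $F(x)\in\Z$, together with the elementary indicator identity
\begin{equation*}
\mathbbm{1}[p\mid b] \equiv (1-b^{p-1})^3 \pmod{p^3},
\end{equation*}
valid for every odd prime $p$: if $p\nmid b$ then $1-b^{p-1}$ is divisible by $p$ so its cube vanishes mod $p^3$, while if $p\mid b$ then $v_p(b^{p-1})=(p-1)v_p(b)\ge 2$, which forces each unwanted binomial term to vanish mod $p^3$. (The identity fails at $p=2$, which is why that prime is left to the all-primes argument of the later sections.) Summing over $x\in\set{0,\ldots,p-1}^m$ and expanding gives
\begin{equation*}
Z \equiv \sum_x\bigl(1-3F(x)^{p-1}+3F(x)^{2(p-1)}-F(x)^{3(p-1)}\bigr)\pmod{p^3},
\end{equation*}
where $\sum_x 1 = p^m\equiv 0$ because $m\ge 6$. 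Writing $F^{j(p-1)}=\sum_\alpha c_\alpha x^\alpha$, each remaining sum becomes $\sum_\alpha c_\alpha\prod_i \sigma_{\alpha_i}$ with $\sigma_k=\sum_{a=1}^{p-1}a^k$, which I would analyse through the $p$-adic behaviour of the power sums $\sigma_k$ modulo $p^3$ (equivalently, via Bernoulli numbers).

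The organising principle is that $\sigma_k$ has positive $p$-adic valuation unless $k$ is a positive multiple of $p-1$; call such an exponent \emph{good}. Homogeneity forces every monomial of $F^{j(p-1)}$ to have total degree $j(p-1)m/2$, and multilinearity bounds its exponents by $\alpha_i\le j(p-1)$. A contribution survives mod $p^3$ only if at least $m-2$ of its exponents are good. This immediately kills the $j=1$ sum: a good exponent of $F^{p-1}$ must equal $p-1$, so $m-2$ good exponents would force total degree $\ge (m-2)(p-1) > (p-1)m/2$ once $m\ge 6$, a contradiction; hence $\sum_x F(x)^{p-1}\equiv 0\pmod{p^3}$. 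In the $j=2$ sum the unique \emph{valuation-zero} monomial is the diagonal $\alpha=(p-1,\ldots,p-1)$ — all $m$ exponents good and summing to $(p-1)m$ leaves no other option — contributing $D\,\sigma_{p-1}^{\,m}$. This is precisely how $D$ enters the computation.

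The main obstacle is the second-order bookkeeping. Because $Z$ and $D$ are both divisible by $p^2$, the entire content of $D\equiv -3Z$ sits in the coefficient of $p^2$, so I must track all valuation-one and valuation-two contributions: the off-diagonal ``near-good'' monomials of $F^{2(p-1)}$ (one or two exponents equal to $0$ or to $2(p-1)$) and the whole $j=3$ sum. Evaluating these requires $\sigma_{p-1}$ and $\sigma_{2(p-1)}$ to precision $p^3$ and $p^2$, together with the divisibility constraint that the total degree $\equiv 0\pmod{p-1}$, which rules out a single bad exponent and pins down how the remaining exponents pair. I expect these corrections to reassemble into exactly the multiple that converts the naive diagonal contribution into the required relation $3Z\equiv -D$. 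The factor $3$ is the binomial coefficient $\binom{3}{2}$ from the cubed indicator, and the restriction $p\neq 3$ is genuine: for $p=3$ the target $-3p^2c_2^{(p)}(G)=-27\,c_2^{(3)}(G)\equiv 0\pmod{27}$ retains no information about $c_2^{(3)}$, so the diagonal method is informative only for $p\ge 5$ while $p=2,3$ are settled by the all-primes proof.
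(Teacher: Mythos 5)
Your reformulation $D\equiv -3Z\pmod{p^3}$ (with $Z=\PointCount{\PsiPol_G}{p}$) is a correct restatement of the lemma, the cubed-indicator identity does hold for odd primes, and your analysis of the $j=1$ sum and of the unique valuation-zero monomial of $\PsiPol_G^{2(p-1)}$ is sound. But there is a genuine gap exactly where you flag it. Since both sides vanish modulo $p^2$, the \emph{entire} content of the lemma sits in the coefficient of $p^2$, and that is precisely the part you defer with ``I expect these corrections to reassemble.'' The deferred terms are not routine bookkeeping: they involve off-diagonal coefficients such as $\takecoeff{x_i^0x_j^{2(p-1)}\prod_{l\neq i,j}x_l^{p-1}}\PsiPol_G^{2(p-1)}$, two-bad-exponent coefficients weighted by Bernoulli numbers via $\sigma_k\bmod p^2$, and the whole family of low-valuation coefficients of $\PsiPol_G^{3(p-1)}$ (e.g.\ exponent vectors with half the entries equal to $2(p-1)$). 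None of these is expressible in terms of $D$ or $Z$ using only the structural inputs you permit yourself (homogeneity, multilinearity, $\deg=m/2$, $m\geq 6$, and $p^2\mid Z$), and you offer no mechanism for their cancellation. The authors' own \cref{rem 3 valent needed} is telling here: their proof genuinely uses a $3$-valent vertex and they explicitly leave open whether the statement extends beyond that setting, so a proof that never touches the graph beyond $m=2(n-1)$ would resolve an open question they pose --- strong circumstantial evidence that the plan cannot close by formal power-sum manipulations alone. (You also quietly assume $p^2\mid D$ to write $D\sigma_{p-1}^m\equiv D$, which is part of what is to be proved, though that is fixable.)

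The paper's proof is local rather than global. It picks a $3$-valent vertex $w$ with edges $1,2,3$, uses the spanning-forest identity $f_0f_{123}=f_1f_2+f_1f_3+f_2f_3$ to write $f_0\PsiPol_G=y_1y_2+y_1y_3+y_2y_3$ with $y_i=f_0x_i+f_i$, extracts $\takecoeff{x_1^rx_2^rx_3^r}$ from the trinomial expansion of $(f_0\PsiPol_G)^{2r}$, and shows that modulo $p^3$ only the triples $(i,j,k)\in\set{(0,r,r),(r,0,r),(r,r,0)}$ survive, each contributing $\binom{2r}{r}^2(f_0f_i)^r$ with $\binom{2r}{r}^2\equiv p^2$; the remaining diagonal coefficient of $(f_0f_i)^r$ is $-c_2^{(p)}(G)$ by the already-established \cref{lem c2 as 3 inv coeff}. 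The factor $3$ is the number of surviving triples, not the binomial coefficient $\binom{3}{2}$ you predict. To rescue your route you would at minimum need $\sigma_{p-1}$ and $\sigma_{2(p-1)}$ to precision $p^3$ and $p^2$ together with a combinatorial identity controlling the non-diagonal coefficients of $\PsiPol_G^{2(p-1)}$ and $\PsiPol_G^{3(p-1)}$ --- which is exactly where graph-specific structure would have to re-enter.
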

\begin{remark}\label{rem:dunce-failure}
\begin{figure}
    \centering
    \includegraphics[height=18mm]{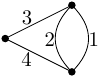}
    \caption{This graph shows the necessity of requiring at least 4 vertices in \cref{lem:c2-from-diag}.}%
    \label{fig:duncecap}%
\end{figure}
    The lemma fails for graphs with less than $4$ vertices:
    The graph $G$ in \cref{fig:duncecap} has $\PointCount{\PsiPol_G}{q}=q^3$ and thus $c_2^{(p)}(G)\equiv 0$, whence the right-hand side is $0\mod p^3$. This point-count of $\PsiPol_G=x_1x_2+(x_1+x_2)(x_3+x_4)$ is obtained using \cite{Stembridge:CountingPoints,Schnetz:Fq}.
    The diagonal coefficient of $\PsiPol_G^{2r}$ on the left-hand side, however, is equal to $\binom{2r}{r}^2\equiv p^2\mod p^3$.
\end{remark}
\begin{proof}
    Pick a 3-valent vertex $w$, let $1,2,3$ be its incident edges, and let $a,b,c$ denote the vertices on the other ends of these edges. Then the spanning forest polynomials denoted
    \begin{equation*}
        f_0 = \PsiPol_{G\setminus w},\quad
        f_1 = \Phi_{G\setminus w}^{\{a\}, \{b,c\}},\quad
        f_2 = \Phi_{G\setminus w}^{\{a,c\},\{b\}},\quad
        f_3 = \Phi_{G\setminus w}^{\{a,b\},\{c\}},\quad
        f_{123} = \Phi_{G\setminus w}^{\{a\},\{b\},\{c\}}
    \end{equation*}
    are subject to $f_0f_{123}=f_1f_2+f_1f_3+f_2f_3$ \cite[Lemma~22]{BrownSchnetz:K3phi4}. Furthermore, we have
    \begin{equation*}
        \PsiPol_G = f_0(x_1x_2+x_1x_3+x_2x_3)+x_1(f_2+f_3)+x_2(f_1+f_3)+x_3(f_1+f_2)+f_{123}.
    \end{equation*}
    This implies $f_0\PsiPol_G= y_1y_2+y_1y_3+y_2y_3$ with $y_i=f_0 x_i+f_i$. Expanding the trinomial,
    \begin{equation*}
        f_0^{2r}\PsiPol_G^{2r}
        = \sum_{i+j+k=2r} \binom{2r}{i,j,k} 
        (f_0 x_1+f_1)^{j+k} (f_0 x_2+f_2)^{i+k} (f_0 x_3+f_3)^{i+j}
    \end{equation*}
    and then extracting the coefficient of $x_1^rx_2^rx_3^r$, we see that if $f_0\neq 0$ then we can cancel $2r$ powers of $f_0$ from each side and get
    \begin{equation*}
        \takecoeff{ x_1^r x_2^r x_3^r }
        \PsiPol_G^{2r}
        = f_0^r \sum_{i+j+k=2r} \binom{2r}{i,j,k} 
        \binom{i+j}{r}\binom{i+k}{r}\binom{j+k}{r}
        f_1^{r-i} f_2^{r-j} f_3^{r-k}.
    \end{equation*}
    This identity remains true when $f_0=0$, because in this case, the left-hand side vanishes because $\Psi_G^{2r}$ has degree $2r$ in the variables $x_1,x_2,x_3$.

    Note that the first binomial coefficient is always divisible by $p$. The second binomial is divisible by $p$ when $k<r$ (so that $r<i+j=2r-k$), and similarly the remaining binomials are divisible by $p$ when $j<r$ and $i<r$, respectively.

    In conclusion, the summand is divisible by $p^3$ unless two summation indices are equal to $r$. Therefore, only the indices $(i,j,k)\in\set{(0,r,r),(r,0,r),(r,r,0)}$ contribute $\mod p^3$:
    \begin{align*}
        \takecoeff{ x_1^r x_2^r x_3^r } \PsiPol_G^{2r}
        &\equiv \binom{2r}{r}^2 \left( (f_0f_1)^r + (f_0f_2)^r + (f_0f_3)^r \right) \mod p^3
        \\
        &\equiv p^2 \left( (f_0f_1)^r + (f_0f_2)^r + (f_0f_3)^r \right) \mod p^3.
    \end{align*}
    Now take the diagonal coefficient $\takecoeff{x_4^r\cdots x_m^r}$ in the remaining variables, on both sides. Since each of the three products $f_0 f_i$ is the polynomial on the right-hand side of \cref{lem c2 as 3 inv coeff} for some permutation of $\set{a,b,c}$, the claim follows.
\end{proof}

This result converts the diagonal coefficient of the more complicated polynomials of \cref{lem c2 as 3 inv coeff} into the diagonal coefficient of $\PsiPol^{2r}$---at least modulo $p^3$. For $p\neq 3$, by using \eqref{eq:Kirchhoff-Symanzik} (or the combinatorial argument of \cref{rem Symanzik version of combi argument}) to move between $\PsiPol$ and $\KirchPol$ and using \cref{duplication-diagonal} to move between $G$ and $G^{[r]}$, we see that \cref{lem:c2-from-diag} and \cref{thm martin as coeff} together give the desired relationship between the Martin invariant and the $c_2$ invariant. This proves \cref{thm:c2-martin} for all primes $p\neq 3$.

Unfortunately, the explicit factor $3$ in \cref{lem:c2-from-diag} does not allow us to draw the same conclusion when $p=3$. In this case we would need the congruence in \cref{lem:c2-from-diag} to hold modulo $p^4$.
Lacking a direct method to establish this congruence, we prove \cref{thm:c2-martin} for $p=3$ using a different approach, detailed in the following subsections.
The essence of this second proof is to avoid the troublesome explicit 3 appearing in \cref{lem:c2-from-diag} by instead recreating the argument used in \cref{thm martin as coeff} directly on the polynomials of \cref{lem c2 as 3 inv coeff} using \cref{lem c2 as forests and trees}.  This second proof applies for all primes, although even there the case $p=3$ requires some special treatment in the base case of the induction.

As a consequence of this second proof, having established \cref{thm:c2-martin} for $p=3$ we can infer indirectly that indeed, for $p=3$ the statement of \cref{lem:c2-from-diag} holds modulo $p^4$, provided that $G$ has at least $8$ edges and $5$ vertices.\footnote{
        For 6 edges, a counter-example is discussed after \cref{lem:Martin(G2)div9}.
}

\begin{remark}\label{rem 3 valent needed}
    The $c_2$ invariant and the diagonal coefficient of $\KirchPol$ are defined for arbitrary graphs (with at least 5 edges and 4 vertices, say). It thus makes sense to ask if the statement of \cref{lem:c2-from-diag} holds more generally. Note that our proof requires the existence of a $3$-valent vertex, which may not exist in more general graphs. Almost nothing is known, however, for $c_2$ invariants for graphs without 3-valent vertices. The only paper making a dent in this direction is \cite{Doryn:4regularc2}.
\end{remark}

\subsection{Structure of the proof for all primes}\label{sec structure}

Our second proof of \Cref{thm:c2-martin} proceeds by showing that the diagonal coefficient $N_{r,r}$ from \cref{lem c2 as forests and trees} is congruent to the Martin invariant. Concretely, we will show that
\begin{equation}\label{eq:Nrr-martin}
    N_{r,r}(G\setminus\set{v,w}) \equiv \frac{\Martin(G)}{3p} \mod p
\end{equation}
for every prime $p=r+1$ and every $4r$-regular graph $G$ with at least 6 vertices, provided $w$ is a vertex with four neighbours $\set{a,b,c,v}$, each connected to $w$ by an $r$-fold multiedge. Applied to the special case where $G$ is the $r$-fold duplication of a 4-regular graph, we obtain \cref{thm:c2-martin} as the consequence of \cref{lem c2 as forests and trees}. This approach works for all primes $p$ and thus fills the gap at $p=3$, which is not covered by the method of \cref{sec p not 3}.

The key step to prove \eqref{eq:Nrr-martin} is to establish the Martin recurrence \eqref{eq:Martin-recursion} for the left-hand side. Like the diagonal coefficient $N_{k}$ of $\PsiPol^{k}$ from \cref{sec diags}---see \eqref{eq:trees-induct}---the diagonal coefficient $N_{r,r}$ also satisfies the Martin recurrence \emph{exactly}: For a vertex $u\notin\set{v,w,a,b,c}$,
\begin{equation}\label{eq:Nrr-recurrence}
    N_{r,r}(G\setminus\set{v,w}) = \sum_{\tau\in\trans'(u)} N_{r,r}(G_{\tau}\setminus\set{v,w})
\end{equation}
is the sum over all transitions at $u$ which do not produce a self-loop at $v$.
The residue modulo $p$ in \eqref{eq:Nrr-martin} is needed only to connect $N_{r,r}$ with $\Martin(G)$ in the base cases (\cref{sec base case}), and with the $c_2$ invariant when $G$ is the $r$-fold duplication of a 4-regular graph.

At its core the idea to prove \eqref{eq:Nrr-recurrence} is the same as for \cref{thm martin as coeff}, namely we interpret a transition as going from sets counted by $A$ \eqref{eq A} to those counted by $B$ (\cref{def B}) and then use the Pr\"ufer based \cref{prop:pruefer-bij}. However, this is more intricate for $N_{r,r}$ than for $N_k$, since we are now counting partitions of the edges into $r$ trees and $r$ forests consisting of two trees each, one containing $\set{a,b}$ and the other containing $\set{c}$. We thus need to keep track of where the marked vertices $\set{a,b,c}$ are. This requires augmenting the constructions of the proof of \cref{thm martin as coeff} by 3 coloured vertices.

As in the proof of \cref{thm martin as coeff}, applying the recurrence \eqref{eq:Martin-recursion} will lead us outside the world of $r$-duplicated graphs and we need to consider arbitrary graphs with the same vertex degrees. To this end, by $H$ we denote a graph with
\begin{itemize}   
    \item two vertices ($a$ and $b$) marked with one colour, say red, and one ($c$) with a second colour, say blue, all three of which have degree at most $3r$, 
    \item all other vertices have degree at most $4r$, and
    \item the number of edges is $r(2n-3)$, where $n\geq 4$ is the total number of vertices of $H$.
\end{itemize}
Then let $G$ be the graph obtained from $H$ by adding a new vertex $w$ with $r$ edges to each of the three coloured vertices of $H$, then adding a new vertex $v$ with $r$ edges to $w$ and edges to each remaining vertex of degree less than $4r$ so as to raise the degree to $4r$, and finally forgetting the colours. The above conditions on $H$ ensure that $G$ is $4r$-regular, $v$ and $w$ have no self-loops, and $H=G\setminus\set{v,w}$. Setting $H_{\tau}=G_{\tau}\setminus\set{v,w}$, we can state \eqref{eq:Nrr-recurrence} as $N_{r,r}(H)=\sum_{\tau} N_{r,r}(H_{\tau})$.

Consequently, the structure of our inductive proof is as follows: we first show the Martin recurrence \eqref{eq:Nrr-recurrence} for the number of partitions of edges of $H$ into $r$ spanning trees and $r$ spanning forests compatible with the colouring in \cref{sec 3 inv recurrence}. Then in \cref{sec base case} we compute these numbers when $H$ has only the three coloured vertices, and compare them with the Martin invariant of the corresponding graph $G$. This completes our second proof of \cref{thm:c2-martin} and in particular proves the theorem in the case $p=3$.

\subsection{Proof of the recurrence for all primes}\label{sec 3 inv recurrence}
Let $H$, $G$, $v$, $w$, and $r$ be as in \cref{sec structure}. In this section, we prove the recurrence \eqref{eq:Nrr-recurrence}.

Recall the definitions of $h$-neighbourhood and $v$-neighbourhood from \cref{sec martin poly}. Throughout this section, let $u$ be an uncoloured vertex of $H$, and  $S$ its $h$-neighbourhood in $H$.

\begin{example}\label{eg 6.3 start}
    We will use as a running example in this section the graph $H=G\setminus\set{v,w}$ illustrated on the left-hand side of \cref{fig 6.3 start}. In this case $G$ is obtained from a doubled 4-regular graph; this is not necessary for the proof, but the application we care about.
\begin{figure}
    \centering
    $H=\Graph{H-63} \qquad G=\Graph{G-63}$
    \caption{The graphs for a running example. The large filled vertices, $a$ and $b$, are the two red vertices and the large empty vertex, $c$, is the blue vertex.}%
    \label{fig 6.3 start}%
\end{figure}
\end{example}

Given a partition of the edges of $H$ into $r$ spanning trees and $r$ spanning forests compatible with the vertex colours, consider what happens when we remove $u$.  
As in the proof of \cref{thm martin as coeff}, for each spanning tree, if $u$ has degree $k$ in the tree then upon removing $u$ the tree is split into $k$ trees and this induces a partition of $S$ into $k$ parts.  For each spanning forest, the vertex $u$ is in one of the trees of the forest and if $u$ has degree $k$ in that tree, then upon removing $u$ this tree is split into $k$ trees and the other tree of the forest is unchanged, leaving a spanning forest of $k+1$ trees of $H\setminus u$.  There are then two possibilities for the resulting induced partition of the $h$-neighbourhood $S$ of $u$: if all the vertices of the $v$-neighbourhood of $u$ were originally in the same tree as $u$, then locally around $u$ we only see one tree and so we get a partition of the $h$-neighbourhood $S$ of $u$ into $k$ parts as before.  However, if some of the $v$-neighbourhood of $u$ was originally in the other tree, then there will additionally be a $(k+1)$th part in the induced partition of the $h$-neighbourhood corresponding to the other tree.  In order to keep track of this we will need to introduce coloured partitions in this case.

We will first consider the case where for all the $r$ spanning forests the whole $v$-neighbourhood of $u$ is in the same tree.  Here the local situation is exactly as for \cref{thm martin as coeff} and so the same argument applies.  We now briefly go through this argument again in order to set notation that will be useful when both trees interact around $u$.

Given a partition $P$ of the $h$-neighbourhood of $u$, say a spanning forest is compatible with both $P$ and the vertex colouring if it has $|P|+1$ trees, one of which involves all vertices of one colour and no vertices of $P$ or of the other colour and where the remaining $|P|$ trees are compatible with $P$.
As observed in the proof of \cref{thm martin as coeff}, given a partition $P$ be a partition of the $h$-neighbourhood of $u$, any spanning forest of $H\setminus u$ compatible with $P$ can be extended to a spanning tree $T$ by choosing a system of distinct representative (SDR, as defined in \cref{sec:pruefer}) for $P$ and then adding the edges containing each of these half edges in $H$ to $T$; likewise for any spanning forest of $H\setminus u$ compatible with $P$ and the vertex colouring.  Every spanning tree or spanning forest which induces the partition $P$ on the $h$-neighbourhood of $u$ is obtained in this way.

We call such an SDR a \emph{$u$-extension} of $P$.

For the other side of the Martin recurrence, given a transition $\tau$ of $G$ which does not create self-loops at $v$, 
write $H_\tau$ for the graph obtained by applying the transition restricted to $H$ and removing the unmatched half-edges (those that were matched to $v$ in $G$).
On each $H_\tau$ we are then counting partitions of the edges into $r$ spanning trees and $r$ spanning forests compatible with the coloured vertices.  By breaking the edges matched by $\tau$, each such tree or forest induces a partition on the $v$-neighbourhood of $u$.  For such a spanning forest, if all of the $v$-neighbourhood of $u$ is in the same tree of the forest, then locally we are only working with trees and so as in \cref{thm martin as coeff} for any partition $P$ of the $h$-neighbourhood of $u$ with at most $\deg(u)-2r+1$ parts, any matching that defines a tree on $P$ in the sense of \cref{def B} serves to extend any spanning forest of $H\setminus u$ compatible with $P$ or compatible with $P$ and the vertex colouring.  Furthermore, every such matching comes from at least one transition $\tau$ with no self loops at $v$ since the constraint on the number of parts guarantees enough half edges left to connect to $v$ without loops, and every spanning tree or spanning forest with all of the $v$-neighbourhood of $u$ is obtained in this way.

We call such a choice of matching an \emph{$m$-extension} of $P$.

Let $S$ be the $h$-neighbourhood of $u$ in $H$ and let $P_1, \ldots, P_{2r}$ be partitions of $S$. For $H'=H$ or $H'=H_\tau$ for some transition at $u$, write \[N_{H'}(P_1, \ldots, P_{2r})\] for the number of partitions of the edges of $H'$ into $r$ spanning trees and $r$ spanning forests compatible with the coloured vertices so that the $i$th spanning tree induces the same partition on the $v$-neighbourhood of $u$ that  $P_i$ does and the $j$th spanning forest induces the same partition on the $v$-neighbourhood of $u$ that $P_{r+j}$ does and only one tree of the spanning forest is involved in the induced partition.  
Then what we need from \cref{thm martin as coeff} is the following:

\begin{proposition}\label{prop recurrence 4p-4 all single tree} 
    Let $P_1, \ldots, P_{2r}$ be partitions of the $h$-neighbourhood of $u$ in $H$. Then
    \begin{equation*}
        N_H(P_1, \ldots, P_{2r}) = \sum_\tau N_{H_\tau}(P_1, \ldots, P_{2r})
    \end{equation*}
    where the sum is over transitions $\tau$ of $u$ in $G$ which do not create self loops at $v$. 
\end{proposition}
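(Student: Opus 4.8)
The plan is to transcribe the proof of \cref{thm martin as coeff} almost verbatim, with $k=2r$ partitions in place of $k$, by factoring each side of the claimed identity through a common ``global'' count and identifying the two ``local'' counts at $u$ with the sets $A$ and $B$ of \cref{sec:pruefer}, whose equinumerosity is furnished by \cref{prop:pruefer-bij}. First I would fix the partitions $P_1,\ldots,P_{2r}$ of the $h$-neighbourhood $S$ of $u$ in $H$, and let $d$ denote the number of edges from $u$ to $v$, so that $\abs{S}=4r-d$ and the Pr\"ufer defect $2(2r)-\abs{S}=d$ matches the interpretation from \cref{sec:pruefer}. I would then introduce $C(P_1,\ldots,P_{2r})$, the number of ways to partition the edge set of $H\setminus u$ into $2r$ spanning forests $(F_1,\ldots,F_{2r})$ such that each $F_i$ induces $P_i$ on $S$, where for $i>r$ the forest $F_{r+j}$ additionally carries the colour structure (one extra tree, disjoint from $S$, compatible with $\set{a,b},\set{c}$). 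This $C$ depends only on $H\setminus u$, so it will be literally the same object on both sides of the recurrence.

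The second step is the factorization $N_H(P_1,\ldots,P_{2r})=\abs{A(P_1,\ldots,P_{2r})}\cdot C(P_1,\ldots,P_{2r})$. Given a configuration counted by $N_H$, deleting the edges at $u$ leaves, for each slot, a forest $F_i$ of $H\setminus u$ inducing $P_i$, together with the half-edges of $S$ used at $u$; exactly as in \cref{thm martin as coeff}, these half-edges assemble into disjoint systems of distinct representatives $X_i$ of the $P_i$ covering $S$, i.e.\ an element of $A$ as defined in \eqref{eq A}. Conversely, reattaching $u$ along any SDR reconnects the components recorded by $P_i$ into a single tree. The point where the single-tree hypothesis enters is precisely here: for every forest slot the whole $v$-neighbourhood of $u$ lies in one tree, so the reattachment reconnects exactly that tree and leaves the other (coloured) tree untouched, reproducing a colour-compatible spanning forest. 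Hence the tree slots and the forest slots recombine in the same colour-blind way, the choices of $(F_i)$ and of the SDRs are independent, and the count factors as a product.

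The third step is the parallel factorization $\sum_{\tau} N_{H_\tau}(P_1,\ldots,P_{2r})=\abs{B(P_1,\ldots,P_{2r})}\cdot C(P_1,\ldots,P_{2r})$, with the \emph{same} $C$. A transition $\tau$ of $u$ in $G$ producing no self-loop at $v$ is a matching $m$ of $S\setminus\set{z_1,\ldots,z_d}$ together with the $d$-tuple $z$ of half-edges matched to $v$; deleting $u$ from a configuration on $H_\tau$ decomposes each slot into a forest $F_i$ of $H\setminus u$ inducing $P_i$ (colour-compatible for $j>r$), plus the matched edges $m_i=T'_i\cap m$, where $m_i$ defines a tree on $P_i$ in the sense of \cref{def B}. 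Summing over $\tau$ and over the distribution $m=m_1\sqcup\cdots\sqcup m_{2r}$ recovers exactly the data $(m,z)\in B(P_1,\ldots,P_{2r})$, while $(F_i)$ ranges over the same $C$; again the single-tree hypothesis makes the forest slots behave identically to the tree slots near $u$. With both factorizations in hand, \cref{prop:pruefer-bij} gives $\abs{A(P_1,\ldots,P_{2r})}=\abs{B(P_1,\ldots,P_{2r})}$, and the proposition follows.

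The main obstacle is not the Pr\"ufer bijection, which is already available, but justifying the two decouplings that make the argument work: that the colour-compatibility constraint is carried \emph{entirely} by the global factor $C$ (possible because $a,b,c\neq u$ sit away from the recombination site), and that under the single-tree hypothesis the local recombination at $u$ is genuinely colour-blind, so the forest slots contribute to $A$ and $B$ on exactly the same footing as the tree slots. Once this separation is verified, the remainder is a direct transcription of the proof of \cref{thm martin as coeff}.
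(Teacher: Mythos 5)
Your proposal is correct and follows essentially the same route as the paper: factor both sides through the common global count $C(P_1,\ldots,P_{2r})$ of forest decompositions of $H\setminus u$, identify the local data at $u$ with the sets $A$ and $B$ of \cref{sec:pruefer}, and invoke \cref{prop:pruefer-bij}; the degenerate case $\sum_i\abs{P_i}\neq\abs{S}$ is handled implicitly since $A$ and $B$ are then both empty by \cref{lem when 0}. Your explicit observation that the single-tree hypothesis makes the forest slots colour-blind at the recombination site is exactly the point the paper relies on as well.
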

\begin{proof}
Consider spanning forests $F_i$ compatible with $P_i$ on $H\setminus u$ for each $1\leq i\leq r$ and spanning forests $F_j$ for each $p\leq i \leq 2r$ so $F_j$ has $|P_j|+1$ trees of which $|P_j|$ of them are compatible with $P_j$ and also contain the vertices of one of the two colours and the last tree contains the vertex of the other colour.  Let $C(P_1, \ldots, P_{2r})$ be the set of possible choices for the $F_i$.

If $\sum_{i=1}^{2r}|P_i| = \deg(u)$ and each partition has at most $\deg(u)-2r+1$ parts, then as in \cref{thm martin as coeff}
\begin{equation*}
    N_H(P_1, \ldots, P_{2r}) = \abs{C(P_1, \ldots, P_{2r})}\cdot \abs{A(P_1, \ldots, P_{2r})}
\end{equation*}
and collecting the sets of $m$-extensions by their $z$s,
\begin{equation*}
    \sum_{\tau} N_{H_\tau}(P_1, \ldots, P_{2r}) = \abs{C(P_1, \ldots, P_{2r})}\cdot \abs{ B(P_1, \ldots, P_{2r})}
\end{equation*}
where the sum is over transitions $\tau$ of $u$ in $G$ which do not create self loops at $v$.  Here we have used that for $\tau$ to have no self loops at $v$ we must have $|z|=\deg(u) - \sum_{i=1}^{r}2(|P_i|-1) = 4r-\deg(u)$ which is the same condition as $\sum_{i=1}^{2r}|P_i| = \deg(u)$.
Applying \Cref{prop:pruefer-bij}
\begin{equation*}
N_H(P_1, \ldots, P_{2r}) = \sum_\tau N_{H_\tau}(P_1, \ldots, P_{2r})
\end{equation*}
as desired.

Finally if $\sum_{i=1}^{2r}|P_i| \neq \deg(u)$ or some partition has more than $\deg(u)-2r+1$ parts then both sides are $0$: the left side because the $u$-extensions are SDRs and each $P_i$ has at least one part and the right side because all compatible transitions would have self loops at $v$.
\end{proof}

\begin{example}\label{eg 6.3 no colours}
    Let us continue \cref{eg 6.3 start} with a case where \cref{prop recurrence 4p-4 all single tree} applies.  Write $S=\{1,2,3,4,5,6\}$ for the $h$-neighbourhood of $u$ where half edges $1$ and $2$ are incident to vertex $a$, half edges $3$ and $4$ are incident to vertex $b$, and half edges $5$ and $6$ are incident to the unnamed fifth vertex of $H$.  One possible partition of the edges into two spanning forest compatible with the colouring and two spanning trees is given in \cref{fig 6.3 no colours forest and trees}.  The corresponding partitions of $S$ are $P_1 = P_2 = \{\{1,2,3,4,5,6\}\}$ and $P_3=P_4 = \{\{1,2,5,6\}, \{3,4\}\}$, which we can also visualize locally around $u$ in the same manner as in the examples of \cref{sec:pruefer}, as shown in \cref{fig 6.3 no colours forest and trees locally}.

    \begin{figure}
        \centering
        \includegraphics{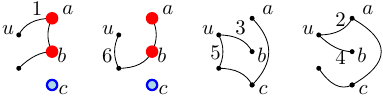}
        \caption{A partition of the edges of $H$ into two spanning forests compatible with the colouring and two spanning trees.}%
        \label{fig 6.3 no colours forest and trees}%
    \end{figure}
    \begin{figure}
        \centering
        \includegraphics[width=0.5\linewidth]{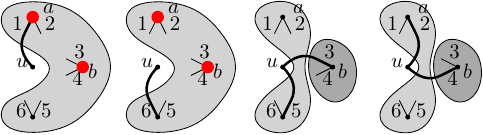}
        \caption{A local view of the partitions around $u$ obtained from \cref{fig 6.3 no colours forest and trees}.}%
        \label{fig 6.3 no colours forest and trees locally}%
    \end{figure}

    These $u$-extensions are given by the function $f\colon 1\mapsto 1, 2\mapsto 4, 3\mapsto 3, 4\mapsto 4, 5\mapsto 3, 6\mapsto 2$.  Then under \cref{prop recurrence 4p-4 all single tree} these $u$-extensions correspond to the matching of \cref{fig 6.3 no colours matching}, where using the notation of the Pr\"ufer bijection we have $y=(1,6)$, $X_3=\{3,5\}$, $X_4=\{2,4\}$.  The transition $\tau$ in this case is the one pairing $\{2,4\}$ and $\{3,5\}$, while $1$ and $6$ each get paired to $v$ in $G$ and so are no longer present in $H_\tau$. The partition of the edges of $H_\tau$ into the corresponding two spanning forests compatible with the colouring and two spanning trees is illustrated in \cref{fig 6.3 no colours after tau}.

    \begin{figure}
        \centering
        \includegraphics{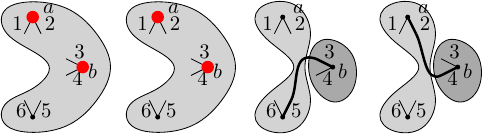}
        \caption{The corresponding matching of the $h$-neighbourhood of $u$.}%
        \label{fig 6.3 no colours matching}%
    \end{figure}
    \begin{figure}
        \centering
        \includegraphics{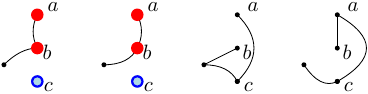}
        \caption{The corresponding partition of the edges of $H_\tau$ into two spanning forests compatible with the colouring and two spanning trees.}%
        \label{fig 6.3 no colours after tau}%
    \end{figure}
\end{example}

Now we extend \cref{prop recurrence 4p-4 all single tree} and the associated set-up to the case where the $v$-neighbourhood of $u$ may involve both trees in one or more of the spanning forests.
For such a spanning forest, the vertices of the $v$-neighbourhood of $u$ which were originally in the same tree as $u$ are now partitioned into $k$ parts, but there may additionally be a part containing vertices in the $v$-neighbourhood of $u$ which are in the other tree of the spanning forest.  Furthermore, if the tree corresponding to the red vertices is the tree which contains $u$, then the two red vertices may be in the same tree after removing $u$ or they may be in two different trees after removing $u$.  Thus, in the case that not all vertices of the $v$-neighbourhood of $u$ are in the same tree of the spanning forest, we need to mark the parts of the partition of the $v$-neighbourhood obtained by removing $u$ according to whether or not the tree associated to that part (after removing $u$) contained a red vertex or a blue vertex.  This will result in one or two red parts, one blue part, and any other parts uncoloured.  The partition of the $h$-neighbourhood is induced from the partition of the $v$-neighbourhood and also inherits the colours.  Call this a \emph{coloured partition}.  

The extensions are now more intricate to describe:
\begin{lemma}\label{lem local forest completion}
Let $P$ be a coloured partition of the $h$-neighbourhood of a vertex $u$ of $H$ (that is $P$ has one or two parts marked as red, one part marked as blue, and the rest uncoloured).  Any spanning forest of $H\setminus u$ compatible with $P$ can be extended to a spanning forest $F$ of $H$ where the red part(s) are in one tree and the blue part in the other tree 
by 
\begin{itemize}
    \item choosing one half edge from each red part of $P$ and each unmarked part of $P$ and then adding the edges containing each of these half edges in $H$ to $F$, or
    \item if there is only one red part in $P$, choosing one half edge from the blue part of $P$ and from each unmarked part of $P$ and then adding the edges containing each of these half edges in $H$ to $F$.
\end{itemize}
Every spanning forest of $H$ which is compatible with the coloured vertices of $H$ and which induces the coloured partition $P$ on the $h$-neighbourhood of $u$ is obtained in this way.
\end{lemma}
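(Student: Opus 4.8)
The plan is to mirror the reconstruction argument from the proof of \cref{thm martin as coeff}, but now tracking the two trees of each spanning forest and hence the colours of the parts. The central observation is that a coloured spanning forest $F$ of $H$ (one tree $T_{\mathrm{red}}\supseteq\set{a,b}$ and one tree $T_{\mathrm{blue}}\ni c$) has $u$ in exactly one of its two trees, and this choice determines which of the two extension rules produces $F$. Throughout, ``compatible with the coloured partition $P$'' for a forest $F'$ of $H\setminus u$ means that the components of $F'$ biject with the parts of $P$ so that red/blue/unmarked components sit over red/blue/unmarked parts.

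First I would prove that each rule yields a valid forest. Starting from such an $F'$, adding $u$ together with one edge into each red and each unmarked component (the first rule) attaches $u$ to several pairwise disjoint trees without creating a cycle, so it merges $u$ and all red and unmarked components into a single tree while leaving the blue component untouched. The result is a spanning forest of $H$ with exactly two trees: the red vertices $a,b$ lie in the red components and thus end up together in the $u$-tree, while $c$ sits in the separate blue tree, so $F$ is compatible with the colouring. Deleting $u$ again returns $F'$, so $F$ induces $P$. The second rule (available only when $P$ has a single red part) is symmetric with the roles of red and blue exchanged; the key point to record is that a single red part forces \emph{both} $a$ and $b$ into the one red component---any other component containing a red vertex would itself have to be marked red---so leaving that component as its own tree is precisely a valid red tree $T_{\mathrm{red}}\supseteq\set{a,b}$.

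Conversely, given any coloured spanning forest $F$ of $H$ inducing $P$, I would restrict to $F'=F\setminus u$, which is automatically a spanning forest of $H\setminus u$ compatible with $P$. If $u$ lies in the red tree of $F$, then deleting $u$ splits that tree into the red and unmarked components (the blue tree survives intact as the blue part), and the edges of $F$ at $u$ are exactly one edge into each such component; hence $F$ is the first-rule extension of $F'$. If instead $u$ lies in the blue tree, the same analysis with colours swapped shows the blue tree splits into the blue part together with the unmarked parts while the red tree stays whole as a single red part, so $F$ arises from the second rule. Since $u$ belongs to a unique tree of $F$, exactly one rule applies to each $F$, and every $F$ is accounted for.

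I expect the main obstacle to be the colour bookkeeping rather than any genuinely new idea: one must verify carefully that the correspondence between components of $F'$ and parts of $P$ respects the colouring in both directions, and in particular that the ``one red part'' hypothesis is exactly what is needed for the second rule to reunite $a$ and $b$ in a single red tree. Once this is set up, both directions are immediate, since attaching $u$ by one edge to each of several disjoint trees is visibly a bijection between edge-choices and ways of merging those trees with $u$, just as in \cref{thm martin as coeff}.
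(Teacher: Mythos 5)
Your proposal is correct and follows essentially the same route as the paper's (very terse) proof: both directions reduce to the observation that attaching $u$ by one edge to each of several disjoint components merges them into a single tree, and the case analysis on which of the two trees of $F$ contains $u$ — forced to be the red one when $P$ has two red parts, free otherwise — is exactly the paper's dichotomy between the two extension rules. Your write-up just spells out the colour bookkeeping that the paper leaves implicit.
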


We again call such a choice of edges a \emph{$u$-extension} of the coloured partition $P$.  

\begin{proof}
In any case, all uncoloured components must join through $u$ as otherwise we would have 3 or more trees in the final forest.
If there are two red parts then they must join through $u$ in order to be connected and so only the first option is possible.  With one red and one blue part we are free to join either one through $u$.  

For coloured and uncoloured parts, the ways to join through $u$ are as in the tree case.
\end{proof}

Now consider the other side of a Martin-type recurrence.  
Given a transition $\tau$ of $u$ in $G$ which does not create self-loops around $v$ and with $H_\tau$ as before.
A spanning forest of $H_\tau$ again results in a partition of the vertices of $H$ and hence the $h$-neighbourhood of $u$ and if both trees of the forest appear in the $h$-neighbourhood of $u$ then we need to mark the parts of the partition of the $h$-neighbourhood of $u$ red or blue if their tree contained a red or blue vertex after splitting all edges of the matching, resulting in a coloured partition of the $h$-neighbourhood of $u$.  

Again we only need this local information to extend forests. 
\begin{lemma}\label{lem local forest matching}
Let $u$ be an uncoloured vertex of $H$ and let $P$ be a coloured partition of the $h$-neighbourhood of $u$ with at most $\deg(u)-2r+2$ parts.  Any spanning forest of $H\setminus u$ compatible with $P$ can be extended to a spanning forest $F$ of some $H_\tau$ where the red part(s) are in one tree and the blue part in the other tree 
by choosing a forest structure $X$ where the vertex set of $X$ is the set of parts of $P$ and the forest is compatible with the colours coming from the parts of $P$, and then for each end of an edge of $X$ choosing a distinct half edge from the corresponding part of $P$.  

Every spanning forest of every $H_\tau$ which is compatible with the coloured vertices of $H$ and which induces the coloured partition $P$ on the $h$-neighbourhood of $u$ is obtained in this way.
\end{lemma}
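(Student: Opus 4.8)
The plan is to establish \cref{lem local forest matching} as the matching-side ($B$-side) counterpart of the extension-side ($A$-side) statement \cref{lem local forest completion}, in exact parallel with how the proof of \cref{thm martin as coeff} pairs systems of distinct representatives against tree-defining matchings, but now dragging the colours along. Throughout, write $S$ for the $h$-neighbourhood of $u$ in $H$, so that $u$ has degree $\deg(u)=\abs{S}$ in $H$ while its remaining $d=4r-\abs{S}$ half-edges run to $v$ in $G$. The goal is to exhibit a bijection between the data $(F_0,X,\sigma)$---a spanning forest $F_0$ of $H\setminus u$ compatible with the coloured partition $P$, a forest structure $X$ whose vertices are the parts of $P$ and which is compatible with their colours, together with a choice $\sigma$ of one distinct half-edge of $S$ at each end of each edge of $X$---and the spanning forests $F$ of the various $H_\tau$ that are compatible with the colours of $H$ and induce $P$ on $S$.

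For the forward direction I would let $m$ be the matching of $S$ whose pairs are the two half-edges selected by $\sigma$ at the two ends of each edge of $X$; since $\sigma$ chooses distinct half-edges, $m$ is a genuine matching (a graph on the parts of $P$ in the sense preceding \cref{def B}), and pairing each of the leftover half-edges of $S$ with the $d$ half-edges running to $v$ completes $m$ to a transition $\tau$ of $u$ in $G$ with no self-loop at $v$. Here the hypothesis of at most $\deg(u)-2r+2$ parts is precisely what guarantees enough leftover half-edges survive to absorb all $d$ connections to $v$; the bound is one larger than the $\deg(u)-2r+1$ of \cref{prop recurrence 4p-4 all single tree} because a two-component forest structure on $\abs{P}$ parts has $\abs{P}-2$ edges rather than $\abs{P}-1$, hence consumes two fewer half-edges and can afford one extra part. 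Then $F=F_0\cup m$ is acyclic because $X$ is a forest, it spans $V(H)\setminus\set{u}=V(H_\tau)$ because $F_0$ already does, and it has exactly two trees because $X$ has exactly two components; colour-compatibility of $X$ forces these to be the red tree (containing $a,b$) and the blue tree (containing $c$), so $F$ has the required type and induces $P$.

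For the converse I would simply run this backwards: given such an $F$ on some $H_\tau$, split $F=F_0\sqcup m'$ into the edges $F_0$ lying in $H\setminus u$ and the edges $m'$ coming from the transition. Because $F$ induces $P$, the forest $F_0$ is compatible with $P$; contracting each part of $P$ turns $m'$ into a graph $X$ on the parts, which is a forest since $F$ is acyclic and which has exactly two colour-carrying components since $F$ has exactly the red and blue trees, while the half-edges used by $m'$ recover $\sigma$. This inverts the forward map exactly, giving the bijection. The one delicate point---and the step I expect to be the main obstacle---is the colour bookkeeping of the two-component structure, namely the case split already visible in \cref{lem local forest completion}: when $P$ has two red parts they are forced into the same component of $X$ and so must be joined through $u$, whereas with one red and one blue part either may be routed through $u$, and in every case each uncoloured part must attach to one of the two components, since an isolated uncoloured part would create a spurious third tree and break colour-compatibility. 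Verifying that these local constraints on $X$ match exactly the global requirement that $F$ be a two-tree forest separating $\set{a,b}$ from $c$ is the crux; once it is in place the lemma follows.
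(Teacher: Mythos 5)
Your proposal is correct and follows essentially the same route as the paper, which proves this lemma in two sentences by appeal to the analogy with the tree ($m$-extension) case and the observation that the two-component forest structure $X$ uses one fewer edge than a tree on the parts, which is exactly why the bound on $\abs{P}$ relaxes from $\deg(u)-2r+1$ to $\deg(u)-2r+2$. Your writeup fleshes out the same bijection (forest of $H\setminus u$ plus colour-compatible forest structure $X$ plus half-edge choices $\leftrightarrow$ compatible two-tree spanning forests of the $H_\tau$) in more detail than the paper does, with only the minor imprecision that when strictly more than $d$ half-edges of $S$ are left over, the excess must be paired among themselves rather than all going to $v$.
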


We again call such a choice of edges an \emph{$m$-extension} of the coloured partition $P$.

\begin{proof}
As for the tree case but so as to obtain a compatible spanning forest.  The condition on the number of parts likewise makes it always possible for $\tau$ to avoid self-loops at $v$ where the spanning forest structure on $X$ uses one fewer edge than the tree structure did.
\end{proof}

Extend the meaning of $N_H(P_1, \ldots, P_{2r})$ where $P_{p}, \ldots, P_{2r}$ may be coloured to be the number of partitions of the edges of $H$ into $r$ spanning trees and $r$ spanning forests compatible with the coloured vertices so that the $i$th spanning tree induces the same partition on the $v$-neighbourhood of $u$ that $P_i$ does and the $j$th spanning forest induces the same partition on the $v$-neighbourhood of $u$ that $P_{r+j}$ does, with the inherited colouring if multiple trees of the forest interact with the $v$-neighbourhood of $u$.

When some of the $P_i$ may be coloured partitions, we bootstrap off \cref{prop recurrence 4p-4 all single tree}.
\begin{proposition}\label{prop recurrence 4p-4 no other restriction}
    Let $P_1, \ldots, P_{2r}$ be partitions of the $h$-neighbourhood of $u$ where for $p\leq i\leq 2r$ the partition $P_i$ may be a coloured partition.  Then
    \begin{equation*}
        N_H(P_1, \ldots, P_{2r}) = \sum_\tau N_{H_\tau}(P_1, \ldots, P_{2r})
    \end{equation*}
    where the sum is over transitions $\tau$ of $u$ in $G$ which do not create self loops at $v$.
\end{proposition}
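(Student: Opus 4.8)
The plan is to mirror the proof of \cref{prop recurrence 4p-4 all single tree}, factoring both sides through the common ``away from $u$'' data and then reducing the remaining local identity to a coloured refinement of the Pr\"ufer bijection \cref{prop:pruefer-bij}. Fix the sequence $P_1,\ldots,P_{2r}$, where $P_i$ for $i\le r$ (and every uncoloured $P_i$) is an ordinary partition of the $h$-neighbourhood $S$ of $u$, while each coloured $P_i$ carries its forced red/blue marking. As in the single-tree case, a spanning forest of $H\setminus u$ realizing the prescribed partition on the $v$-neighbourhood of $u$ is the same datum on both sides of the recurrence, so I would collect these into the set $C(P_1,\ldots,P_{2r})$ exactly as before. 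Each tree or forest of $H$ (resp.\ of some $H_\tau$) inducing the $P_i$ is obtained from an element of $C$ by independently attaching the edges at $u$, i.e.\ by a $u$-extension (\cref{lem local forest completion}) on the $H$ side and an $m$-extension (\cref{lem local forest matching}) on the $H_\tau$ side. This yields $N_H(P_1,\ldots,P_{2r})=\abs{C}\cdot\abs{A'}$ and $\sum_\tau N_{H_\tau}(P_1,\ldots,P_{2r})=\abs{C}\cdot\abs{B'}$, where $A'$ and $B'$ are the coloured analogues of the sets $A,B$ from \eqref{eq A} and \cref{def B}, so the proposition collapses to the claim $\abs{A'}=\abs{B'}$.

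The two coloured sets differ from $A,B$ only in the forest-partitions. For the tree-partitions and the uncoloured forest-partitions nothing changes: $f^{-1}(i)$ is a full system of distinct representatives and $m_i$ defines a tree on $P_i$, just as in \cref{prop:pruefer-bij}. For a coloured $P_i$, \cref{lem local forest completion} says the $A'$-fibre is a system of representatives of \emph{all but one} part of $P_i$, the skipped part being the blue part when there are two red parts, and either the unique red part or the blue part when there is a single red part; while \cref{lem local forest matching} says the corresponding $m_i$ is a two-component, colour-compatible forest on the parts of $P_i$ (all red parts in one tree, the blue part in the other, the uncoloured parts free), with a distinct half-edge chosen at each edge-end. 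A bookkeeping check—tracking that a coloured $P_i$ contributes $\abs{P_i}-1$ to the covered half-edges and $\abs{P_i}-2$ matching edges, one less than in the uncoloured case—shows that the leftover vector $z$ again has length $d=4r-\deg(u)$, that the degenerate size constraints force both sides to $0$ as in \cref{lem when 0}, and that the bound $\abs{P_i}\le\deg(u)-2r+2$ is exactly what makes every such $m$-extension arise from a transition with no self-loop at $v$.

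To prove $\abs{A'}=\abs{B'}$ I would extend the Pr\"ufer construction of \cref{prop:pruefer-bij}. The device is to reinstate, for each coloured $P_i$, a single canonical ``virtual edge'' joining the red component to the blue component, turning the two-component forest $m_i$ into a genuine tree on all parts of $P_i$; running the ordinary Pr\"ufer algorithm on this completed tree then produces residual sequences that concatenate, across all $2r$ partitions, into the global word filling the trivial-partition fibres and the vector $z$, precisely as in the uncoloured bijection. The one point of care is that the virtual edge is not a real matching edge, so exactly one of its two endpoints is denied a representative; declaring that endpoint to be the skipped part recovers the skip-one-system of \cref{lem local forest completion}, and the freedom in where to place the virtual edge reproduces the red/blue choice in the single-red-part case. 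Since all other steps are the literal inverses already verified in \cref{prop:pruefer-bij}, this gives a bijection $A'\leftrightarrow B'$, hence $\abs{A'}=\abs{B'}$, completing the recurrence.

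The main obstacle is exactly this coloured bookkeeping. Unlike the single-tree case, the bijection is \emph{not} colour-preserving: small examples already show that an uncoloured part may sit on the red side of a $u$-extension but migrate to the blue tree in the matching it corresponds to, so one cannot reduce to \cref{prop recurrence 4p-4 all single tree} simply by conditioning on which parts lie on the blue side. The work therefore goes into making the virtual-edge device canonical and genuinely invertible, verifying that the two-red-part case (forced skip) and the one-red-part case (free skip) both match the allowed forest structures, and confirming that the disjoint-cover condition and the no-self-loop-at-$v$ constraint transfer cleanly through the extended Pr\"ufer map.
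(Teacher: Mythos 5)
Your setup matches the paper's: factor both sides through the common set $C(P_1,\ldots,P_{2r})$ of forest choices away from $u$, so that the proposition reduces to a purely local identity $\abs{A'}=\abs{B'}$ between coloured analogues of the sets in \eqref{eq A} and \cref{def B}. The divergence, and the gap, is in how that identity is proved. Your ``virtual edge'' extension of the Pr\"ufer bijection is not established, and as sketched it breaks at several identifiable points. First, a component of the two-component forest $m_i$ is a set of parts, so a ``canonical virtual edge joining the red component to the blue component'' must still choose one part in each component as its endpoints, and the completed tree (hence its Pr\"ufer code) depends on that choice. Second, the claim that exactly one endpoint of the virtual edge is denied a representative is false in general: if the virtual edge is the last edge surviving in the algorithm, both of its endpoints would receive representatives from it in the uncoloured version, so here both are denied and the output has size $\abs{P_i}-2$ rather than $\abs{P_i}-1$. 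Third, when $P_i$ has two red parts, \cref{lem local forest completion} forces the skipped part to be the blue one, whereas in your scheme the denied part is whichever endpoint of the virtual edge happens to be leaf-removed first, which the algorithm does not control. Finally, invertibility is unaddressed: the residual sequence now has length $\abs{P_i}-3$, and together with the identity of the skipped part this does not obviously determine a tree on $\abs{P_i}$ parts containing a prescribed virtual edge. Your own observation that the correspondence cannot be colour-preserving is exactly the sign that a direct coloured bijection needs genuinely new work, not a decoration of \cref{prop:pruefer-bij}.

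The paper avoids all of this by never running the Pr\"ufer argument in the coloured setting. It reduces to \cref{prop recurrence 4p-4 all single tree} by merging parts: for a coloured $P_i$ with one red and one blue part, the $u$-extensions and $m$-extensions coincide with those of the uncoloured partition obtained by merging the two coloured parts and forgetting colours; for two red parts $q,s$ and blue part $p$ it proves the signed identity $2\abs{A(\ldots,P_i,\ldots)}=\abs{A(\ldots,P_i^{p\cup q},\ldots)}+\abs{A(\ldots,P_i^{p\cup s},\ldots)}-\abs{A(\ldots,P_i^{q\cup s},\ldots)}$ together with the identical identity for each $B_\tau$, so both sides expand into the same signed sum of uncoloured counts, to which the already-proven uncoloured proposition applies term by term. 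If you want to keep your route you must construct and verify a coloured forest Pr\"ufer code from scratch; otherwise the merge-and-inclusion-exclusion reduction is the missing idea.
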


\begin{proof}
    \Cref{prop recurrence 4p-4 all single tree} gives the result when all of the partitions are uncoloured and so we now reduce to that case.

    With $P_i$ as in the statement, let $A(P_1, \ldots, P_{2r})$, generalizing \eqref{eq A}, denote the set of $u$-extensions for each $P_i$ (in the coloured sense where appropriate) which partition the edges of $H$ around $u$.  Likewise, let $B(P_1, \ldots, P_{2r})$, generalizing \cref{def B}, denote the set of $m$-extensions for each $P_i$ (in the coloured sense where appropriate) for which the union of the matchings gives the matching of $\tau$ on $S$.

    We will show that 
    \begin{equation}\label{eq A and B in coloured case}
    |A(P_1, \ldots, P_{2r})| = |B(P_1, \ldots, P_{2r})|.
    \end{equation}
    Multiplying by the number of possible choices for the forests in $H\setminus u$ which give $P_i$ will then give the result.
    
    Suppose $P_i$ is a coloured partition with one red part and one blue part. For any $u$-extension of this coloured partition we take either an edge from the red part or an edge from the blue part and that choice is independent of the choices for the other parts.  Therefore the $u$-extensions of the coloured partition $P_i$ are the same as the $u$-extensions of the uncoloured partition resulting from merging the red and blue parts of $P_i$ and then forgetting the colours.  Similarly for any $m$-extension of $P_i$, we obtain a spanning forest on $P_i$ with two trees, one containing the vertex associated to the red part and one containing the vertex associated to the blue part.  Merging the red and blue parts then the same edge and end information gives an $m$-extension of the uncoloured partition obtained from merging the red and blue parts and forgetting the colour and every such tree becomes a forest and a valid $m$-extension upon splitting this part to reobtain $P_i$.  If $P_i$ had no more than $\deg(u)-2r+2$ parts  before merging then it has no more than $\deg(u)-2r+1$ parts after, and vice versa.  
    
    Therefore, for the purposes of \eqref{eq A and B in coloured case}, whenever there is a coloured partition with one red and one blue part, merge those two parts and proceed with an uncoloured partition instead.
    
    It remains to consider partitions with two red parts and one blue part. Suppose $P_i$ is such a partition and let $p$ be the blue part and $q$ and $s$ the two red parts.  Similarly to the above, every $u$-extension of $P_i$ is also a $u$-extension after merging $p$ and $q$ and forgetting the colours, and is also a $u$-extension after merging $p$ and $s$ and forgetting the colours.  However, merging $p$ and $q$ also allows $u$-extensions which use an edge from $p$ and an edge from $s$, and merging $p$ and $s$ 
    also allows $u$-extensions which use an edge from $p$ and an edge from $q$.  These are exactly the $u$-extensions obtained from merging $q$ and $s$ and forgetting the colours.  Therefore, indicating the merged parts as superscripts, for example $P_i^{p\cup q}=(P_i\setminus\set{p,q})\cup\set{p\cup q}$, we see that
    \begin{multline*}
       2 |A(P_1, \ldots, P_i, \ldots, P_{2r})| \\
       = |A(P_1, \ldots, P_{i}^{p\cup q}, \ldots, P_{2r})| + |A(P_1, \ldots, P_{i}^{p\cup s}, \ldots, P_{2r})|
       - |A(P_1, \ldots, P_{i}^{q\cup s}, \ldots, P_{2r})|
    \end{multline*}
    where the counts on the right-hand side involve one coloured partition fewer than on the left-hand side.  
    
    The situation is similar for the $m$-extensions.  Every $m$-extension of $P_i$ is also an $m$-extension after merging $p$ and $q$ and forgetting the colours, and is also an $m$-extension after merging $p$ and $s$ and forgetting the colours.  However, merging $p$ and $q$ also allows $m$-extensions where $s$ is in the tree with $p$ after re-splitting $p$ and $q$, and merging $p$ and $s$ 
    also allows $m$-extensions where $s$ is in the tree with $q$ after re-splitting $p$ and $s$.  These are exactly the $m$-extensions obtained from merging $q$ and $s$ and forgetting the colours.  The bounds on the numbers of parts correspond as before. Therefore we have
    \begin{multline*}
       2 |B(P_1, \ldots, P_i, \ldots, P_{2r})| \\
       =|B(P_1, \ldots, P_{i}^{p\cup q}, \ldots, P_{2r})| + |B(P_1, \ldots, P_{i}^{p\cup s}, \ldots, P_{2r})|- |B(P_1, \ldots, P_{i}^{q\cup s}, \ldots, P_{2r})|
    \end{multline*}
    which lines up term by term with what was obtained for the $u$-extensions.  
    
     Similarly expanding for each remaining coloured partition, we can write $A(P_1, \ldots, P_{2r})$ and $B(P_1, \ldots, P_{2r})$ as identical signed sums involving only uncoloured partitions of $S$.  Therefore the expansions agree term by term by \cref{prop:pruefer-bij}, proving \eqref{eq A and B in coloured case} and hence proving the proposition.  
\end{proof}
Summing both sides of \cref{prop recurrence 4p-4 no other restriction} over all choices of the partitions $P_i$, we obtain the Martin recursion $N_{r,r}(H)=\sum_\tau N_{r,r}(H_{\tau})$ as claimed in \cref{eq:Nrr-recurrence}.

\begin{example}\label{eg 6.3 with colours}
    Let us continue \cref{eg 6.3 no colours} with a case where \cref{prop recurrence 4p-4 no other restriction} applies.  One possible partition of the edges into two spanning forest compatible with the colouring and two spanning trees for which \cref{prop recurrence 4p-4 no other restriction} is needed is given in \cref{fig 6.3 with colours forest and trees}.  The corresponding partitions of $S$ are $P_1 = \{\{1,2,3,4\}_{\text{red}},\{5,6\}_{\text{blue}}\}$, $P_2 = \{\{1,2\}_{\text{red}},\{3,4\}_{\text{red}},\{5,6\}_{\text{blue}}\}$, $P_3=\{\{1,2\},\{3,4,5,6\}\}$, and $P_4 = \{\{1,2,3,4,5,6\}\}$, with the colours indicated by the subscripts. We can visualize this situation locally around $u$ as shown in \cref{fig 6.3 with colours forest and trees locally}.

    \begin{figure}
        \centering
        \includegraphics{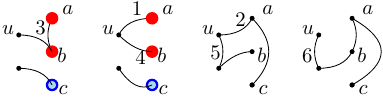}
        \caption{Another partition of the edges of $H$ into two spanning forests compatible with the colouring and two spanning trees.}%
        \label{fig 6.3 with colours forest and trees}%
    \end{figure}
    \begin{figure}
        \centering
        \includegraphics[width=0.5\linewidth]{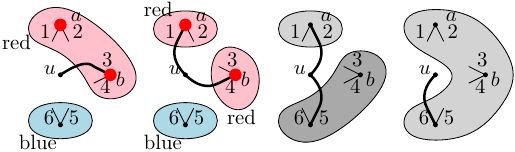}
        \caption{A local view of the partitions around $u$ obtained from \cref{fig 6.3 with colours forest and trees}.}%
        \label{fig 6.3 with colours forest and trees locally}%
    \end{figure}

    Now we need to convert to uncoloured partitions in order to reduce to cases where \cref{prop recurrence 4p-4 all single tree} applies.  Consider first $P_1$. As described in the proof of \cref{prop recurrence 4p-4 no other restriction}, in this case we can simply merge the red and blue parts obtaining in this case the partition with only one part.  $P_2$ is more complicated.  Label the parts as follows $P_2 = \{p,q,s\}$ with $s=\{1,2\}_{\text{red}}$, $q=\{3,4\}_{\text{red}}$, $p=\{5,6\}_{\text{blue}}$.  Considering both the uncoloured partitions with $p\cup q$ and $p\cup s$ we see that the spanning trees and forests we started with are consistent with both, as illustrated in \cref{fig 6.3 p q and p s}. Note that every $u$-extension of the coloured partition of \cref{fig 6.3 with colours forest and trees locally} also appears as a $u$-extension for each of the partitions of \cref{fig 6.3 p q and p s}.  However, some $u$-extensions of each of the partitions of \cref{fig 6.3 p q and p s} do not give $u$-extensions of \cref{fig 6.3 with colours forest and trees locally}.  For example taking edges $1$ and $6$ satisfies the requirements for $P_2$ with $p\cup q$ as in the upper set of partitions of \cref{fig 6.3 p q and p s} but not in \cref{fig 6.3 with colours forest and trees locally}.  However, $1$ and $6$ does work upon taking $q\cup s$.  This is an example of the general fact that $q\cup s$ gives exactly the terms that need subtracting, as described in the proof of \cref{prop recurrence 4p-4 no other restriction}.

    \begin{figure}
        \centering
        \includegraphics[width=0.5\linewidth]{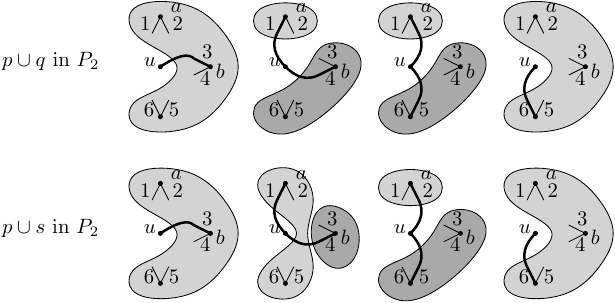}
        \caption{The partitions around $u$ after merging as described in \cref{prop recurrence 4p-4 no other restriction}.}
        \label{fig 6.3 p q and p s}
    \end{figure}
    
    \begin{figure}
        \centering
        \includegraphics[width=0.5\linewidth]{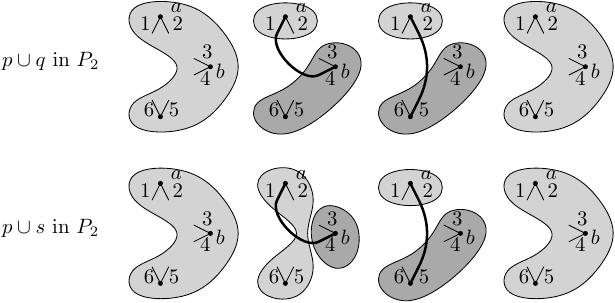}
        \caption{The corresponding matchings.}
        \label{fig 6.3 p q and p s matchings}
    \end{figure}   
    
    Under \cref{prop recurrence 4p-4 all single tree}, the two SDRs of \cref{fig 6.3 p q and p s} correspond to the two matchings of \cref{fig 6.3 p q and p s matchings}.  The transition $\tau$ in both cases is the one pairing $\{1,4\}$ and $\{2,5\}$ while $3$ and $6$ each get paired to $v$ in $G$ and so are no longer present in $H_\tau$. The argument to reduce coloured partitions to partitions by merging parts and subtracting for overcounting is the same as it was in $H$.  In this case we obtain the matching of the coloured partitions illustrated in \cref{fig 6.3 with colours matching} and hence the partition of the edges of $H_\tau$ into two spanning forests compatible with the colouring and two spanning trees as illustrated in \cref{fig 6.3 with colours after tau}.

    \begin{figure}
        \centering
        \includegraphics{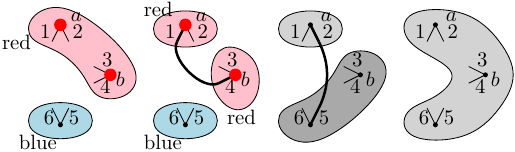}
        \caption{The corresponding matching of the $h$-neighbourhood of $u$.}%
        \label{fig 6.3 with colours matching}%
    \end{figure}
    
    \begin{figure}
        \centering
        \includegraphics{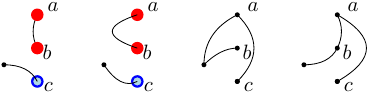}
        \caption{The corresponding partition of the edges of $H_\tau$ into two spanning forests compatible with the colouring and two spanning trees.}%
        \label{fig 6.3 with colours after tau}%
    \end{figure}
\end{example}

\subsection{Base cases}\label{sec base case}

Recall that $p=r+1$ is prime.
Given a graph $H$ with three marked vertices called $\set{a,b,c}$, with $(2|V(H)|-3)r$ edges, and with no vertex of degree more than $4r$, recall from \cref{sec structure} that $H$ determines a $4r$-regular graph $G$ by adding two vertices, labelled $\set{v,w}$, adding $r$ edges between $w$ and each of $\set{a,b,c,v}$, and finally adding $3r$ edges between $v$ and the original vertices of $H$ in the unique way such that the resulting graph $G$ is $4r$-regular. With this construction, $H=G\setminus\set{v,w}$.

Recall that $N_{r,r}(H)$ from \cref{def:N-tree-forest} counts the ordered partitions of the edges of $H$ into $r$ spanning trees and $r$ spanning forests compatible with the partition $\{a,b\}, \{c\}$.

For the base case of our inductive proof, we need to show that
\begin{equation}\label{eq:c2-basecase}
    N_{r,r}(H) \equiv \frac{\Martin(G)}{3p} \mod p
\end{equation}
for all marked graphs $H$ as above on a fixed small number of vertices. The argument is easier for $G$ and $H$ smaller, and so for $p\neq 3$ we will prove \eqref{eq:c2-basecase} for $H$ with 3 vertices (only the marked vertices) and hence $G$ with 5 vertices.  For $p=3$ we will need to move to $H$ with 4 vertices and hence $G$ with $6$ which remains sufficient for \cref{thm:c2-martin}.

Suppose, then, that $H$ is a marked graph with only 3 vertices---namely, the marked vertices---and $3r$ edges, and $G$ as above. 

Consider first the left-hand side.
\begin{lemma}\label{lem:TFP-3}
    For any graph $H=G\setminus\set{v,w}$ with $3$ vertices and $3r$ edges and $p=r+1$ prime and $G$ as explained 
    above, we have
    \begin{equation}
        N_{r,r}(H) \equiv \begin{cases}
            -1 \mod p, & \text{if $G\cong K_5^{[r]}$, and} \\
            \phantom{-}0 \mod p, & \text{otherwise.} \\
        \end{cases}
    \end{equation}
\end{lemma}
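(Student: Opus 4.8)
The plan is to reduce the statement to an explicit evaluation of $N_{r,r}(H)$ in terms of the edge multiplicities of $H$, and then to extract the residue modulo $p$ using Wilson's theorem. Write $e_{12},e_{13},e_{23}$ for the numbers of edges of $H$ joining the corresponding pairs of the three vertices $\set{1,2,3}$, so that $e_{12}+e_{13}+e_{23}+(\text{self-loops})=3r$. First I would record the elementary translation of the hypothesis: since the edges of $G$ within $\set{1,2,3}$ are exactly the edges of $H$, and $v,w$ are joined to $1,2,3$ by the fixed construction, one checks that $G\cong K_5^{[r]}$ holds if and only if $H$ has no self-loop and $e_{12}=e_{13}=e_{23}=r$, i.e.\ $H\cong K_3^{[r]}$. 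Indeed $v$ contributes $3r-\deg_H(i)$ edges to vertex $i$, and $G\cong K_5^{[r]}$ forces each of these to equal $r$, hence $\deg_H(i)=2r$ for all $i$, which on three vertices forces $e_{12}=e_{13}=e_{23}=r$. If $H$ has a self-loop then that edge lies in no spanning tree or forest, so no admissible partition of all $3r$ edges exists and $N_{r,r}(H)=0$; as $G$ then also has a self-loop and is not $K_5^{[r]}$, this is consistent with the claim. Hence I may assume $H$ has no self-loops.

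Next I would compute $N_{r,r}(H)$ by the coefficient-extraction argument used for \cref{lem comb interp of diag,lem c2 as forests and trees}, in its edges-in (Kirchhoff) form: $N_{r,r}(H)$ equals the coefficient of $x_1\cdots x_{3r}$ (each variable to the first power) in the product of $r$ spanning-tree polynomials and $r$ compatible-forest polynomials. On three vertices a spanning tree is a pair of edges of two distinct pair-types, so writing $X_{ij}$ for the sum of the variables of the type-$ij$ edges, $\KirchPol_H=X_{12}X_{13}+X_{12}X_{23}+X_{13}X_{23}$; and a forest compatible with $\set{\{1,2\},\{3\}}$ is a single $12$-edge, with generating polynomial $X_{12}$. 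Therefore
\begin{equation*}
    N_{r,r}(H)=\takecoeff{x_1\cdots x_{3r}}\Big(\big(X_{12}X_{13}+X_{12}X_{23}+X_{13}X_{23}\big)^{r}\,X_{12}^{\,r}\Big).
\end{equation*}
Expanding the trinomial with multinomial coefficient $\binom{r}{a,b,c}$, where $a,b,c$ count the tree-types $X_{12}X_{13}$, $X_{12}X_{23}$, $X_{13}X_{23}$, produces $X_{12}^{a+b+r}X_{13}^{a+c}X_{23}^{b+c}$, and $\takecoeff{\prod_{e\,\text{of type}\,ij}x_e}X_{ij}^{\,k}=e_{ij}!$ exactly when $k=e_{ij}$ (and $0$ otherwise). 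The diagonal constraint thus pins $(a,b,c)$ uniquely to $a=r-e_{23}$, $b=r-e_{13}$, $c=2r-e_{12}$, giving
\begin{equation*}
    N_{r,r}(H)=\binom{r}{a,b,c}\,e_{12}!\,e_{13}!\,e_{23}!
\end{equation*}
whenever $a,b,c\geq0$ and $N_{r,r}(H)=0$ otherwise; nonnegativity is equivalent to $e_{13},e_{23}\le r$ and $r\le e_{12}\le 2r$.

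Finally I would read off the residue modulo $p=r+1$. By Wilson's theorem $r!\equiv-1$, every factorial of an argument $\le r<p$ is a unit mod $p$, while $e_{12}!$ acquires a factor of $p$ exactly when $e_{12}>r$ (here $e_{12}\le 2r<2p$, so at most one such factor). When $N_{r,r}(H)\ne0$ we have $r\le e_{12}\le 2r$ and $e_{13},e_{23}\le r$; if $e_{12}>r$ the factor $e_{12}!$ forces $N_{r,r}(H)\equiv0$, and when $N_{r,r}(H)=0$ the congruence is trivially $0$. The only remaining possibility is $e_{12}=r$, which together with $e_{12}+e_{13}+e_{23}=3r$ and $e_{13},e_{23}\le r$ forces $e_{13}=e_{23}=r$, i.e.\ $H\cong K_3^{[r]}$ and $G\cong K_5^{[r]}$; then $(a,b,c)=(0,0,r)$ and $N_{r,r}(H)=(r!)^3\equiv(-1)^3=-1\pmod p$, so $-N_{r,r}(H)\equiv1$. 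This yields the stated dichotomy. The only real work is the bookkeeping in the diagonal-coefficient evaluation (identifying the unique $(a,b,c)$ and the factorials $e_{ij}!$); the modular step is immediate from Wilson's theorem, and the one conceptual point to get right is that, among nonzero contributions, $e_{12}=r$ occurs precisely in the $K_5^{[r]}$ case.
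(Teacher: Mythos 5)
Your proof is correct and follows essentially the same route as the paper's: both arrive at the closed formula $N_{r,r}(H)=\frac{r!\,e_{12}!\,e_{13}!\,e_{23}!}{a!\,b!\,c!}$ with $(a,b,c)=(r-e_{23},\,r-e_{13},\,2r-e_{12})$ and then apply Wilson's theorem, the only difference being that you package the enumeration as a diagonal-coefficient extraction from $\KirchPol_H^r X_{12}^r$ while the paper counts the same partitions directly. The extra observations you include (the equivalence $G\cong K_5^{[r]}\iff H\cong K_3^{[r]}$ and the precise nonnegativity ranges for $a,b,c$) are accurate and harmless additions.
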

\begin{proof}
Any self-loop in $H$ cannot belong to any tree or forest, so the presence of self-loops forces $N_{r,r}(H)=0$. Thus suppose that $H$ has no self-loops. Let $m_{ij}$ denote the number of edges between vertices $i$ and $j$. The 2-forests of $H$ that separate $\set{a,b}$ from $\set{c}$ consist of a single edge between $a$ and $b$. Hence there are precisely $\binom{m_{ab}}{r}$ ways to assign $r$ of these edges to 2-forests. The remaining edges are to be grouped into $\alpha$ spanning trees of the form $\set{ab,ac}$, $\beta$ spanning trees $\set{ab,bc}$, and $\gamma$ spanning trees $\set{ac,bc}$. The constraints $m_{ab}-r=\alpha+\beta$, $m_{ac}=\alpha+\gamma$, and $m_{bc}=\beta+\gamma$ have the unique solution $\alpha=r-m_{bc}$, $\beta=r-m_{ac}$, $\gamma=2r-m_{ab}$ where we note $\alpha+\beta+\gamma=r$. After choosing $\alpha$ out of the $m_{ab}-r$ edges $ab$ and $\alpha$ out of the $m_{ac}$ edges $ac$, there are $\alpha!$ ways to pair them up into spanning trees of the first type, etc. Finally, accounting for the $r!$ ways to order the spanning trees and 2-forests, we find
\begin{equation}
    N_{r,r}(H)
    = r! \binom{m_{ab}}{r} r! \alpha! \beta! \gamma! \binom{m_{ab}-r}{\alpha}\binom{m_{ac}}{\alpha}\binom{m_{bc}}{\beta}
    = \frac{r!\cdot m_{ab}!\cdot m_{ac}!\cdot m_{bc}!}{\alpha! \beta! \gamma!}
    .
\end{equation}
We need $m_{ab}\geq r$ for $N_{r,r}(H)$ to be non-zero. Then $\gamma\leq r$ and we always have $\alpha,\beta\leq r$, hence the denominator is not divisible by $p$. So the numerator, and hence $N_r(H)$, become divisible by $p$ if any $m_{ij}>r$. For this not to happen, we need each $m_{ij}\leq r$. But then we must have $m_{ab}=m_{ac}=m_{bc}=r$, due to the constraint $m_{ab}+m_{ac}+m_{bc}=3r$. Then all edges in $G$ are $r$-fold, and we get $N_{r,r}(H)=(r!)^3 \equiv -1 \mod p$ by Wilson's theorem.
\end{proof}

We now turn to the right-hand side of \eqref{eq:c2-basecase}.

Consider the effect of multiedges on the expansion \eqref{eq:Martin-recursion} of the Martin invariant at a vertex $u$ without self-loops. Let $w_1,\ldots,w_s$ denote the neighbours of $u$, and say that there are $d_i$ edges $uw_i$ between $u$ and $w_i$. Then $G_{\tau}$ depends only on the numbers $D_{ij}$ of pairs $\set{uw_i,uw_j}$ in $\tau$. Hence different transitions with the same numbers $D_{ij}$ will repeat the same contributions $\Martin(G_{\tau})$.
\begin{definition}\label{def:trans-matrix}
    Let $D$ be a symmetric matrix $D$ of non-negative integers with diagonal $D_{ii}=0$ and row sums $\sum_j D_{ij}=d_i$ equal to the multiplicities of the edges between $u$ and its neighbours $w_i$. We write $G_D=G_{\tau}$ for any transition $\tau\in\trans(u)$ with precisely $D_{ij}$ matches $\set{uw_i,uw_j}$ between any pair of neighbours. We also introduce the notation
    \begin{equation*}
        d!=d_1!\cdots d_s!
        \quad\text{and}\quad
        D!=\prod_{1\leq i<j\leq s} (D_{ij}!).
    \end{equation*}
\end{definition}
We only consider zero diagonals, because $D_{ii}>0$ would produce self-loops in $G_D$, which thus do not contribute to $\Martin(G)$. The row sum constraint arises because a transition has to pair up all half-edges at $u$.
\begin{lemma}
    Let $G$ be a regular graph without self-loops and $u$ any vertex. Let $d=(d_1,\ldots,d_s)$ denote the edge multiplicities at $u$. Then
    \begin{equation}\label{eq:Martin-multi-expansion}%
        \Martin(G) = \sum_{D} \frac{d!}{D!} \Martin(G_D)
    \end{equation}
    where the sum is over all matrices $D$ as in \cref{def:trans-matrix}. The coefficients are integers,
    \begin{equation}\label{eq:multi-expansion-coeff}%
        \frac{d!}{D!}
        = D! \prod_{i=1}^s \binom{d_i}{D_{i1},\ldots,D_{is}}.
    \end{equation}
\end{lemma}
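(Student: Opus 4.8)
The plan is to refine the basic Martin recursion \eqref{eq:Martin-recursion} at the vertex $v$ by collecting together all transitions $\tau\in\trans(v)$ that yield the same graph $G_{\tau}$. Since $v$ has no self-loops, its $2k=\sum_i d_i$ half-edges split into groups $H_1,\ldots,H_m$, where $H_i$ consists of the $d_i$ half-edges belonging to the edges $vw_i$. A transition is a perfect matching of these half-edges, and by \cref{def:trans-matrix} the graph $G_{\tau}=G_D$ depends only on the symmetric matrix $D$ recording, for each pair $i\neq j$, the number $D_{ij}$ of matched pairs with one half-edge in $H_i$ and the other in $H_j$, together with the number $D_{ii}$ of pairs internal to $H_i$. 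The first step is therefore to rewrite the right-hand side of \eqref{eq:Martin-recursion} as a sum over admissible types $D$, each $\Martin(G_D)$ weighted by the number of transitions of that type.

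Next I would dispose of the diagonal. A transition with $D_{ii}>0$ pairs two half-edges of $H_i$, creating a self-loop at $w_i$ in $G_{\tau}$; hence $\Martin(G_{\tau})=0$ by part~1 of \cref{def:Martin-intro}, and such transitions drop out of the sum. This is precisely why \cref{def:trans-matrix} restricts to matrices with zero diagonal and row sums $\sum_j D_{ij}=d_i$, the latter being the condition that the matching is perfect.

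The core step is to count the transitions of a fixed admissible type $D$, which I would carry out in two stages. For each group $i$ independently, one first chooses which half-edges of $H_i$ are destined for each other group, i.e.\ an ordered partition of $H_i$ into blocks of sizes $D_{i1},\ldots,D_{im}$, contributing $\binom{d_i}{D_{i1},\ldots,D_{im}}$ choices. Then, for each unordered pair $i<j$, one matches the $D_{ij}$ half-edges of $H_i$ designated for $j$ with the $D_{ij}=D_{ji}$ half-edges of $H_j$ designated for $i$ via an arbitrary bijection, giving $D_{ij}!$ further choices. The total count is therefore
\begin{equation*}
    \Bigg(\prod_{i=1}^m \binom{d_i}{D_{i1},\ldots,D_{im}}\Bigg)\cdot\prod_{1\le i<j\le m} D_{ij}!.
\end{equation*}
Using $D_{ii}=0$ and the symmetry $D_{ij}=D_{ji}$, the product of multinomials simplifies to $d!/\!\prod_{i\ne j}D_{ij}! = d!/(D!)^2$, so the count collapses to $\big(d!/(D!)^2\big)\cdot D! = d!/D!$, matching the asserted coefficient. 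The same identity read backwards exhibits the factorized form $d!/D!=D!\prod_i\binom{d_i}{D_{i1},\ldots,D_{im}}$ of \eqref{eq:multi-expansion-coeff}, displaying $d!/D!$ as a product of integers and thereby settling integrality at the same time.

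I expect the only genuine obstacle to be the bookkeeping in this counting step: keeping the symmetric off-diagonal factors $D_{ij}!$ straight and verifying that the denominator arising from the multinomials is $(D!)^2$ rather than $D!$, so that exactly one factor of $D!$ survives after multiplying by the $\prod_{i<j}D_{ij}!$ coming from the bijections. Everything else is a direct regrouping of the already-established recursion \eqref{eq:Martin-recursion}.
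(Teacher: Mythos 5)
Your proposal is correct and follows essentially the same route as the paper: group the transitions in the vertex expansion \eqref{eq:Martin-recursion} by their type $D$, discard types with $D_{ii}>0$ since they create self-loops and hence contribute $\Martin(G_\tau)=0$, and count the transitions of a fixed type by first choosing an ordered partition of each group of $d_i$ parallel edges into blocks of sizes $D_{i1},\ldots,D_{im}$ (the multinomial factors) and then a bijection between the two blocks of each pair $i<j$ (the factors $D_{ij}!$). Your bookkeeping, including the cancellation $\bigl(d!/(D!)^2\bigr)\cdot D!=d!/D!$ and the resulting integrality of the coefficients, matches the paper's argument exactly.
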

\begin{proof}
    With the notation introduced above, this is just a rewriting of the vertex expansion \eqref{eq:Martin-recursion}, including only transitions that do not produce self-loops in $G_{\tau}$. The coefficient of $\Martin(G_D)$ is the number of transitions with the number of pairings prescribed by $D$. For every neighbour $w_i$, a transition partitions the $d_i$ edges $uw_i$ into a part of size $D_{i1}$ (which is paired with an edge $uw_1$), a part of size $D_{i2}$ (to be paired with $uw_2$), and so on. These partitions are counted by the multinomial coefficients in \eqref{eq:multi-expansion-coeff}. The remaining factors $D_{ij}!$ in $D!$ count the ways to pair the thus chosen $D_{ij}$ copies of $uw_i$ with the $D_{ji}=D_{ij}$ chosen copies of $uw_j$ into the new edges $\set{uw_i,uw_j}$ in $G_D$.
\end{proof}
\begin{example}
    Let $K_4^{[\alpha,\beta,\gamma]}$ denote the graph on 4 vertices $\set{1,2,3,4}$ with $m_{12}=m_{34}=\alpha$, $m_{13}=m_{24}=\beta$, and $m_{14}=m_{23}=\gamma$, where $m_{ij}$ denotes the number of edges between vertices $i$ and $j$. This is a regular graph of degree $\alpha+\beta+\gamma$. Suppose that $\alpha+\beta+\gamma=2k$ is even. Then there is at most a single summand in \eqref{eq:Martin-multi-expansion}, because a matrix $D$ for the transitions at vertex $4$ is uniquely determined by the row sums $(d_1,d_2,d_3)=(\gamma,\beta,\alpha)$; it must be $D_{12}=k-\alpha$, $D_{13}=k-\beta$, $D_{23}=k-\gamma$. Therefore
    \begin{equation}\label{eq:Martin(K4abc)}
        \Martin\left(K_4^{[\alpha,\beta,\gamma]}\right)
        =\begin{cases}
            \frac{\alpha!\beta!\gamma!}{(k-\alpha)!(k-\beta)!(k-\gamma)!} & \text{if $\alpha,\beta,\gamma\leq k$ and} \\
            0 & \text{otherwise.} \\
        \end{cases}
    \end{equation}
\end{example}
\begin{lemma}\label{lem:pedge-div}
    If $p$ is a prime and $u$ a vertex with a neighbour $w_i$ with edge multiplicity $d_i\geq p$, then for every summand in \eqref{eq:Martin-multi-expansion}, the coefficient $d!/D!$ is divisible by $p$.
\end{lemma}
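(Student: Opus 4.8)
The plan is to prove divisibility by reinterpreting the coefficient group-theoretically rather than by estimating $p$-adic valuations directly. Recall from the discussion preceding \eqref{eq:Martin-multi-expansion} that $d!/D!$ is exactly the number of transitions $\tau\in\trans(v)$ that realize the matrix $D$, i.e.\ that pair up the half-edges at $v$ so that precisely $D_{j\ell}$ pairs join a $vw_j$ half-edge to a $vw_\ell$ half-edge. Write $T_D$ for this set, so $\abs{T_D}=d!/D!$. The idea is to exhibit a free action of the cyclic group $C_p$ on $T_D$; freeness forces every orbit to have size $p$, and hence $p\mid\abs{T_D}$, which is the claim. This avoids any explicit manipulation of the factorials in \eqref{eq:multi-expansion-coeff}.

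To build the action, I would use the hypothesis $d_i\geq p$ to select $p$ distinct half-edges $h_1,\ldots,h_p$ at $v$, all belonging to the multiedge toward $w_i$, and let $\sigma=(h_1\,h_2\,\cdots\,h_p)$ be the cyclic permutation of these $p$ half-edges that fixes every other half-edge at $v$. Then $\sigma$ acts on transitions by relabelling, sending a matching $\tau$ to $\sigma\cdot\tau=\set{\set{\sigma(h),\sigma(h')}\colon\set{h,h'}\in\tau}$. The key point to check, which is routine, is that this action preserves $T_D$: because $\sigma$ only permutes half-edges incident to the \emph{same} neighbour $w_i$, the number of pairs between $w_j$ and $w_\ell$ is unchanged for every $j,\ell$, so $\sigma\cdot\tau$ realizes the same matrix $D$. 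Thus $\sigma$ generates a $C_p$-action on $T_D$ (it has order $p$ since $p$ is prime).

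The main obstacle, and the only step requiring care, is proving the action is \emph{free}. Here the zero-diagonal condition $D_{ii}=0$ does all the work. Suppose $\sigma\cdot\tau=\tau$ and let $q$ be the partner of $h_1$ in $\tau$. Since $\tau$ realizes $D$ with $D_{ii}=0$, no two of the half-edges toward $w_i$ are paired with each other; in particular $q$ is a half-edge toward some $w_j$ with $j\neq i$, so $q\notin\set{h_1,\ldots,h_p}$ and therefore $\sigma(q)=q$. But $\sigma\cdot\tau=\tau$ applied to the pair $\set{h_1,q}$ shows that $\set{h_2,q}=\set{\sigma(h_1),\sigma(q)}\in\tau$, so $q$ would be the partner of both $h_1$ and $h_2$, contradicting $h_1\neq h_2$. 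Hence $\sigma$ fixes no element of $T_D$. I would remark that this single argument handles all primes uniformly, including $p=2$: the naive worry that $\sigma$ might fix a transition by pairing two of the $h_\ell$ together is precisely what $D_{ii}=0$ forbids. With freeness established, every $C_p$-orbit in $T_D$ has size $p$, so $p$ divides $\abs{T_D}=d!/D!$, completing the proof.
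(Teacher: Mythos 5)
Your proof is correct, but it takes a genuinely different route from the paper's. The paper argues directly on the closed formula \eqref{eq:multi-expansion-coeff} with a two-case valuation argument: if some entry $D_{ij}\geq p$ then the explicit factor $D!$ on the right of \eqref{eq:multi-expansion-coeff} supplies the divisibility, and otherwise every $D_{ij}<p$, so $D!$ is coprime to $p$ while $d_i!$ (hence $d!/D!$) is divisible by $p$. You instead use the combinatorial meaning of $d!/D!$ as $\abs{T_D}$, the number of transitions at $v$ realizing $D$, and exhibit a fixed-point-free action of $C_p$ on $T_D$ by cyclically permuting $p$ half-edges of the multiedge $vw_i$; since orbit sizes divide $p$ and no orbit is a singleton, $p$ divides $\abs{T_D}$. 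Your freeness argument is sound, and you correctly identify that the zero-diagonal condition $D_{ii}=0$ is what rules out a fixed transition (the partner of $h_1$ lies outside $\set{h_1,\dots,h_p}$, so it would have to be matched to both $h_1$ and $h_2$). The trade-off: the paper's proof is shorter and purely arithmetic, while yours is case-free, makes the role of $D_{ii}=0$ transparent, and works uniformly for all primes without inspecting the entries of $D$; on the other hand it leans on the identification $d!/D!=\abs{T_D}$ established in the preceding lemma, which the paper's proof does not need.
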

\begin{proof}
    If $D$ has an entry $D_{ij}\geq p$, the divisibility follows from the $D!$ factor on the right-hand side of \eqref{eq:multi-expansion-coeff}. So assume that all entries of $D$ are less than $p$. Then $D!$ is not divisible by $p$, but $d_i!$, and thus $d!/D!$, is.
\end{proof}
\begin{corollary}\label{lem:4vert-divp}
    For every $4r$-regular graph $G$ on at least 4 vertices, with $p=r+1$ prime, $\Martin(G)$ is divisible by $p$.
\end{corollary}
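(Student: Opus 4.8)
The plan is to prove the statement by induction on the number $n\geq 4$ of vertices of $G$, using the multiedge expansion \eqref{eq:Martin-multi-expansion} of the Martin invariant at a single vertex as the only engine. Since $\Martin(G)=0$ whenever $G$ has a self-loop, I would assume throughout that $G$ is free of self-loops, so that \eqref{eq:Martin-multi-expansion} is available at every vertex.

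For the base case $n=4$, I would pick any vertex $v$: its three potential neighbours carry edge multiplicities $d_1,d_2,d_3\geq 0$ summing to the degree $4r$, so by the pigeonhole principle the largest of them is at least $\ceil{4r/3}=r+\ceil{r/3}\geq r+1=p$. Thus $v$ has a neighbour of multiplicity $\geq p$, and \cref{lem:pedge-div} applies to show that every coefficient $d!/D!$ in \eqref{eq:Martin-multi-expansion} is divisible by $p$. Because each $G_D$ has three vertices, $\Martin(G_D)\in\set{0,1}$ is an integer by \cref{def:Martin-intro}, and hence $\Martin(G)=\sum_D \tfrac{d!}{D!}\Martin(G_D)$ is divisible by $p$.

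For the inductive step $n\geq 5$, I would expand $\Martin(G)$ at an arbitrary vertex $v$ via \eqref{eq:Martin-multi-expansion}. Each $G_D$ is again $4r$-regular and free of self-loops (the diagonal of $D$ vanishes) and has $n-1\geq 4$ vertices, so the inductive hypothesis gives $p\mid\Martin(G_D)$ for every $D$. As the coefficients $d!/D!$ are integers, $p\mid\Martin(G)$ follows immediately.

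The main obstacle, and the reason an induction is needed at all, is that for $n\geq 5$ a $4r$-regular graph may have all edge multiplicities below $p$, so \cref{lem:pedge-div} cannot be invoked directly; one must strip away vertices until only the three-neighbour situation of the base case remains. The whole argument hinges on the elementary bound $\ceil{4r/3}\geq p$, which forces a high-multiplicity edge—and therefore a factor of $p$—to appear precisely at $n=4$. As a cross-check one could instead evaluate the base case directly from the closed formula \eqref{eq:Martin(K4abc)} for $\Martin(K_4^{[a,b,c]})$ together with a short $p$-adic valuation count, using $a+b+c=4r$ and $a,b,c\leq 2r$, but routing through \cref{lem:pedge-div} avoids that computation.
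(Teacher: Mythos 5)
Your proof is correct and follows essentially the same route as the paper: induction via the vertex expansion \eqref{eq:Martin-multi-expansion} reduces to the 4-vertex base case $K_4^{[a,b,c]}$ with $a+b+c=4r$, where the pigeonhole bound forces an edge of multiplicity $\geq p$ and \cref{lem:pedge-div} supplies the factor of $p$. The only cosmetic difference is that the paper phrases the base case directly in terms of the multiplicities $a,b,c$ exceeding $r$ rather than via $\ceil{4r/3}$, which is the same observation.
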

\begin{proof}
    If $G$ has self-loops, then the claim is trivial by $\Martin(G)=0$. For graphs with more than 4 vertices, the claim follows inductively by vertex expansion \eqref{eq:Martin-recursion} to smaller graphs. Hence we only need to consider $G$ with $4$ vertices and no self-loops, that is, $G\cong K_4^{[\alpha,\beta,\gamma]}$ for some integers $\alpha,\beta,\gamma$ with $\alpha+\beta+\gamma=4r$. Since at least one of $\set{\alpha,\beta,\gamma}$ must exceed $r=p-1$, \cref{lem:pedge-div} implies the claim.
\end{proof}
\begin{corollary}\label{lem:Martin(5vert)p2}
    Let $p=r+1$ be prime. Then for every $4r$-regular graph $G$ with 5 vertices, which is not isomorphic to $K_5^{[r]}$, we have that $\Martin(G)$ is divisible by $p^2$.
\end{corollary}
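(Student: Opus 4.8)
The plan is to derive the claim directly from the two divisibility facts already established, \cref{lem:pedge-div} and \cref{lem:4vert-divp}, by expanding the Martin invariant at a carefully chosen vertex. First I would dispose of the trivial case: if $G$ has a self-loop then $\Martin(G)=0$, which is divisible by $p^2$, so I may assume $G$ has no self-loops. Then $G$ is determined by its edge multiplicities $n_{ij}$ for $1\le i<j\le 5$, subject to the regularity constraints $\sum_{j\ne i} n_{ij}=4r$ at each vertex $i$.

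The key observation is that $G\not\cong K_5^{[r]}$ forces the existence of an edge of multiplicity at least $p$. Indeed, four nonnegative integers summing to $4r$, each at most $r$, must all equal $r$; so if every $n_{ij}\le r$ then every multiplicity equals $r$ and $G\cong K_5^{[r]}$. Contrapositively, since $G\not\cong K_5^{[r]}$, some multiplicity satisfies $n_{ij}>r$, i.e.\ $n_{ij}\ge r+1=p$. I would fix such an edge and expand $\Martin(G)$ at the endpoint $i$ via \eqref{eq:Martin-multi-expansion}.

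Now both prior lemmas apply to this single expansion. Vertex $i$ has the neighbour $j$ with edge multiplicity $n_{ij}\ge p$, so \cref{lem:pedge-div} shows that every coefficient $\tfrac{d!}{D!}$ in the expansion is divisible by $p$. On the other hand, each graph $G_D$ arising from a transition at $i$ is a $4r$-regular graph on the four remaining vertices (each such vertex loses $d_i$ edges to $i$ and regains exactly $d_i$ new incidences), so \cref{lem:4vert-divp} shows that $\Martin(G_D)$ is divisible by $p$. Consequently every summand $\tfrac{d!}{D!}\Martin(G_D)$ is divisible by $p^2$, and hence so is $\Martin(G)$.

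I do not expect a genuine obstacle here: the entire content is the pigeonhole observation that a graph not isomorphic to $K_5^{[r]}$ must carry a multiedge of multiplicity $\ge p$, after which the two $p$-divisibilities—one from the expansion coefficients and one from the four-vertex pieces—land on the same summands and combine to give $p^2$. The only points requiring a moment's care are verifying that each $G_D$ really is a $4r$-regular graph on four vertices (so that \cref{lem:4vert-divp} is applicable, noting that its $4$-vertex instances are exactly the $K_4^{[a,b,c]}$ with $a+b+c=4r$, which always have a multiplicity exceeding $r$), and handling the self-loop case separately so that the multiplicity-grouped expansion \eqref{eq:Martin-multi-expansion} is valid.
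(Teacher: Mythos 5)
Your proposal is correct and follows essentially the same route as the paper: locate an edge of multiplicity at least $p$ (which must exist when $G\not\cong K_5^{[r]}$, by the pigeonhole argument on the four multiplicities at a vertex summing to $4r$), expand there via \eqref{eq:Martin-multi-expansion}, and combine the $p$-divisibility of the coefficients $d!/D!$ from \cref{lem:pedge-div} with the $p$-divisibility of $\Martin(G_D)$ from \cref{lem:4vert-divp}. Your explicit handling of the self-loop case is a small extra care the paper leaves implicit, but the argument is the same.
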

\begin{proof}
    If $G$ has any edge of multiplicity greater than $r$, expansion at a vertex with such an edge writes $\Martin(G)$ as a linear combination of $(d!/D!) \Martin(G_{D})$. The factor $d!/D!$ is divisible by $p$ due to \cref{lem:pedge-div}, and $\Martin(G_{D})$ is divisible by $p$ due to \cref{lem:4vert-divp}. So we conclude that $\Martin(G)$ is divisible by $p^2$ whenever $G$ has an edge with multiplicity $\geq p$. So, assume that all edges have multiplicity $\leq r$. To reach degree $4r$ at every vertex, we must then have $G\cong K_5^{[r]}$.
\end{proof}
\begin{lemma}\label{lem:Martin(K5r)}
    The Martin invariant of the $r$-fold power of the complete graph $K_5$ is
\begin{equation}\label{eq:Martin(K5r)}
    \Martin\left(K_5^{[r]}\right)
    =(r!)^4 \sum_{\alpha+\beta+\gamma=r} \frac{(r+\alpha)!(r+\beta)!(r+\gamma)!}{(\alpha!\beta!\gamma!)^2(r-\alpha)!(r-\beta)!(r-\gamma)!}
\end{equation}
and congruent to $\Martin(K_5^{[r]})\equiv -3p\mod p^2$ whenever $p=r+1$ is prime.
\end{lemma}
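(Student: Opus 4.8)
The plan is to prove the explicit formula by expanding $\Martin(K_5^{[r]})$ at a single vertex via the multiedge expansion \eqref{eq:Martin-multi-expansion}, and then to extract the residue modulo $p^2$ by a $p$-adic valuation argument that discards all but three terms of the resulting sum.

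First, for the formula, I would fix a vertex $v$ of $K_5^{[r]}$. Its four neighbours each carry an $r$-fold edge, so $d=(r,r,r,r)$ and $d!=(r!)^4$. A transition matrix $D$ in the sense of \cref{def:trans-matrix} is a symmetric $4\times 4$ matrix with zero diagonal and all row sums equal to $r$. Taking pairwise differences of the four row-sum equations forces $D_{12}=D_{34}$, $D_{13}=D_{24}$, and $D_{14}=D_{23}$; writing these common values as $c,b,a$ respectively, the single remaining constraint is $a+b+c=r$. The graph $G_D$ keeps the $r$ original edges between each pair of the four surviving vertices and adds $D_{ij}$ new ones, so $G_D\cong K_4^{[r+c,\,r+b,\,r+a]}$, which is $4r$-regular. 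Then $D!=(a!b!c!)^2$, the coefficient \eqref{eq:multi-expansion-coeff} equals $(r!)^4/(a!b!c!)^2$, and \eqref{eq:Martin(K4abc)} with $k=2r$ gives $\Martin(G_D)=\tfrac{(r+a)!(r+b)!(r+c)!}{(r-a)!(r-b)!(r-c)!}$ (all arguments nonnegative, since $a,b,c\le r$). Substituting into \eqref{eq:Martin-multi-expansion} and summing over $a+b+c=r$ yields exactly \eqref{eq:Martin(K5r)}.

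For the congruence, set $r=p-1$ and compute the $p$-adic valuation of each (integer) summand. Since $0\le a,b,c\le r<p$, every factorial in the denominator has valuation $0$, whereas $(r+x)!=(p-1+x)!$ has valuation $1$ when $x\ge 1$ and $0$ when $x=0$. Hence the valuation of a summand equals the number of nonzero entries among $a,b,c$. Because $a+b+c=p-1\ge 1$, at least one entry is nonzero, and any summand with two or three nonzero entries is divisible by $p^2$. Thus modulo $p^2$ only the three corner terms $(a,b,c)\in\set{(r,0,0),(0,r,0),(0,0,r)}$ survive; each evaluates to $\binom{2r}{r}$, so
\[
\Martin\left(K_5^{[r]}\right)\equiv 3\,(r!)^4\binom{2r}{r}=3\,(r!)^2(2r)! \pmod{p^2}.
\]

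It then remains to evaluate $3(r!)^2(2r)!\bmod p^2$. Factoring $(2p-2)!=p\cdot(p-1)!\cdot\prod_{j=1}^{p-2}(p+j)$ isolates the single factor of $p$, and reducing the cofactor modulo $p$ via Wilson's theorem (so $(p-1)!\equiv -1$ and hence $(p-2)!\equiv 1$) gives $(2p-2)!\equiv -p\pmod{p^2}$. Combined with $((p-1)!)^2\equiv 1\pmod p$, this yields $3(r!)^2(2r)!\equiv -3p\pmod{p^2}$, as claimed. I expect the main obstacle to be the valuation bookkeeping in the third paragraph: one must pin down exactly which summands are genuinely $p^2$-divisible, so that no cross term between the unit $(r!)^4$ and a valuation-$1$ factor is overlooked, and so that precisely the three corner terms, and nothing else, are retained.
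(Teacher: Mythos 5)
Your proposal is correct and follows essentially the same route as the paper: expand at a vertex via the multiedge expansion \eqref{eq:Martin-multi-expansion}, insert \eqref{eq:Martin(K4abc)} to get the closed formula, observe that modulo $p^2$ only the three corner terms $(r,0,0),(0,r,0),(0,0,r)$ survive, and finish with Wilson's theorem. Your explicit derivation of the constraints $D_{12}=D_{34}$, $D_{13}=D_{24}$, $D_{14}=D_{23}$ is a welcome detail that the paper leaves implicit, but the argument is otherwise the same.
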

\begin{proof}
The expansion \eqref{eq:Martin-multi-expansion} at any vertex produces
\begin{equation*}
    \Martin\left(K_5^{[r]}\right)
    = \sum_{\alpha+\beta+\gamma=r} \frac{r!r!r!r!}{(\alpha!\beta!\gamma!)^2} \Martin\left(K_4^{[r+\alpha,r+\beta,r+\gamma]}\right).
\end{equation*}
Inserting \eqref{eq:Martin(K4abc)} gives \eqref{eq:Martin(K5r)}. Now consider $p=r+1$ prime. The denominator of the summand in \eqref{eq:Martin(K5r)} is not divisible by $p$. The numerator is divisible by $p^2$ unless at most one of $\alpha,\beta,\gamma$ is non-zero. We conclude that modulo $p^2$, only the three summands with $(\alpha,\beta,\gamma)\in\set{(0,0,r),(0,r,0),(r,0,0)}$ contribute. Therefore,
\begin{equation*}
    \Martin\left(K_5^{[r]}\right)
    \equiv 3(r!)^2(2r)!
    = 3(r!)^3 p(p+1)\cdots(2p-2)
    \equiv -3p\cdot (r!)^4
    \mod p^2
\end{equation*}
and the claim follows from Wilson's theorem $r!\equiv -1\mod p$.
\end{proof}

Note how the three explicit summands contributing in the last step of the proof are what leads to the explicit factor of $3$, causing extra trouble for $p=3$.

\begin{corollary}
    For every $4r$-regular graph with 5 vertices and $r+1$ prime, we have $\Martin(G)\equiv 3p N_{r,r}(H) \mod p^2$.
\end{corollary}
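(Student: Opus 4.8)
The plan is to read this congruence off directly from the three preceding lemmas by a dichotomy on whether or not $G\cong K_5^{[r]}$. Recall that $G$ is reconstructed from the marked graph $H=G\setminus\set{v,w}$, which in the base case has only the three marked vertices $\set{1,2,3}$ and $3r$ edges, so that \cref{lem:TFP-3} applies and pins down $N_{r,r}(H)$ modulo $p$. No new construction is needed; the whole proof is a comparison of residues.

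First I would dispose of the generic case $G\not\cong K_5^{[r]}$. Then \cref{lem:Martin(5vert)p2} gives $p^2\mid\Martin(G)$, so the left-hand side is $\equiv 0\mod p^2$. Meanwhile \cref{lem:TFP-3} gives $N_{r,r}(H)\equiv 0\mod p$, and since the right-hand side carries an explicit factor $p$, this forces $3p\,N_{r,r}(H)\equiv 0\mod p^2$ as well. Thus both sides vanish modulo $p^2$ and the congruence holds in this case.

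The remaining case $G\cong K_5^{[r]}$ is where the actual content sits, and here I would simply feed in the two explicit evaluations. On one side, \cref{lem:Martin(K5r)} gives $\Martin(K_5^{[r]})\equiv -3p\mod p^2$. On the other side, the closed form $N_{r,r}(H)=(r!)^3$ obtained in the proof of \cref{lem:TFP-3}, together with Wilson's theorem $r!\equiv -1\mod p$, gives $N_{r,r}(H)\equiv -1\mod p$; because of the explicit factor $p$, only this residue modulo $p$ is relevant, so both $\Martin(G)$ and $3p\,N_{r,r}(H)$ reduce to $-3p$ modulo $p^2$. Matching the two residues closes this case, and the two cases together establish the congruence.

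I do not expect a genuine obstacle, since the corollary is a bookkeeping consequence of \cref{lem:Martin(5vert)p2}, \cref{lem:TFP-3}, and \cref{lem:Martin(K5r)}; the only delicate point is the coefficient matching in the $K_5^{[r]}$ case. This is exactly where the explicit factor $3$ enters—it traces back to the three surviving summands $(a,b,c)\in\set{(r,0,0),(0,r,0),(0,0,r)}$ in the proof of \cref{lem:Martin(K5r)}—and it also explains why this $5$-vertex base case is only effective for $p\neq 3$: when $p=3$ one has $3p\equiv 0\mod p^2$, so both sides collapse to $0$ and carry no information, which is why the case $p=3$ must instead be anchored on the $6$-vertex base case treated separately.
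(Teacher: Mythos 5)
Your dichotomy and the three lemmas you invoke are exactly what the paper's own (one-line) proof cites, and your treatment of the generic case $G\not\cong K_5^{[r]}$ is fine: \cref{lem:Martin(5vert)p2} kills the left side modulo $p^2$ and \cref{lem:TFP-3} kills the right side. The problem is the final matching step in the case $G\cong K_5^{[r]}$. You correctly record $\Martin(K_5^{[r]})\equiv -3p \bmod p^2$ and $N_{r,r}(H)=(r!)^3\equiv -1\bmod p$, but you then compare $\Martin(G)$ with $+3p\,N_{r,r}(H)$ rather than with the stated right-hand side $-3p\,N_{r,r}(H)$. These are genuinely different: $-3p\,N_{r,r}(H)\equiv -3p\cdot(-1)=+3p$, while $\Martin(G)\equiv -3p$, and the discrepancy $6p$ is not $\equiv 0\bmod p^2$ once $p\geq 5$. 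Concretely, for $p=5$ one computes $\Martin(K_5^{[4]})\equiv 10\bmod 25$ from \eqref{eq:Martin(K5r)}, whereas $-15\cdot(4!)^3\equiv 15\bmod 25$. So the sentence ``matching the two residues closes this case'' does not close it; what your computation actually establishes is $\Martin(G)\equiv +3p\,N_{r,r}(H)\bmod p^2$.

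In your defence, this is not purely your slip: the corollary as printed is incompatible with the very lemmas it rests on, since \cref{lem:TFP-3} and \cref{lem:Martin(K5r)} together force the $+3p$ version, and your arithmetic has (inadvertently) exposed that. The sign tension has to be resolved somewhere upstream --- for instance, \cref{lem c2 as forests and trees} identifies a coefficient of $(\Phi\PsiPol)^r$ in the \emph{undelated} variables with a count on the \emph{duplicated} graph $G^{[r]}\setminus\set{v,w}$, which by the same mechanism as \cref{duplication-diagonal} differs by a factor $(r!)^{\#\text{edges}}\equiv (-1)^{2n-7}=-1\bmod p$, and that missing sign propagates into \eqref{eq:Nrr-martin} and this corollary. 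A correct write-up must either fix the sign in the statement being proved or track this normalization factor explicitly; it cannot simply assert that $-3p$ and $+3p$ agree modulo $p^2$.
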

\begin{proof}
    For every graph other than $K_5^{[r]}$ the claim follows from \cref{lem:Martin(5vert)p2} and \cref{lem:TFP-3}. For $G=K_5^{[r]}$, see \cref{lem:Martin(K5r)} and \cref{lem:TFP-3}.
\end{proof}

This establishes the base case \eqref{eq:c2-basecase} for all primes $p\neq 3$. Now consider the special case $p=3$. Then \eqref{eq:c2-basecase} is equivalent to
\begin{equation}\label{eq:c2-base-p=3}
    \Martin(G)\equiv 9 N_{2,2}(H) \mod 27,
\end{equation}
that is, we have to compute the Martin invariants $\mod p^3$. We first note that the expression $\Martin(G)/9\mod 3$ on the right-hand side of \eqref{eq:c2-basecase} makes sense.
\begin{lemma}\label{lem:Martin(G2)div9}
    The Martin invariant of every 8-regular graph with at least 5 vertices is divisible by $9$.
\end{lemma}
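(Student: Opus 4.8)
The plan is to prove the statement by induction on the number of vertices $n\geq 5$, after the trivial but crucial bookkeeping observation that an $8$-regular graph is precisely a $4r$-regular graph with $r=2$, so that $p=r+1=3$ is prime and $p^2=9$. All the substantive work has already been done in \cref{lem:4vert-divp,lem:Martin(5vert)p2,lem:Martin(K5r)}; what remains is to assemble these at the specific prime $p=3$ and to reduce larger graphs to the five-vertex base case by the Martin recursion.

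For the base case $n=5$ I would split into two subcases. If $G\not\cong K_5^{[2]}$, then \cref{lem:Martin(5vert)p2} (specialized to $r=2$) immediately gives that $\Martin(G)$ is divisible by $p^2=9$. The only remaining five-vertex graph is $K_5^{[2]}$ itself, and here \cref{lem:Martin(K5r)} gives $\Martin(K_5^{[2]})\equiv -3p\mod p^2$. The key point is that the explicit factor $3$ appearing there coincides with $p$ exactly when $p=3$, so $-3p=-9\equiv 0\mod 9$, and $\Martin(K_5^{[2]})$ is divisible by $9$ as well. In other words, the single graph $K_5^{[r]}$ that evades divisibility by $p^2$ for general $p$ is rescued precisely at $p=3$; this arithmetic coincidence is the entire content of the lemma, and it is the same phenomenon flagged after the proof of \cref{lem:Martin(K5r)}.

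For the inductive step $n\geq 6$ I would argue as follows. If $G$ has a self-loop then $\Martin(G)=0$ and the claim is trivial, so assume $G$ has no self-loops and pick any vertex $v$. Expanding by the Martin recursion \eqref{eq:Martin-recursion} (equivalently its multiedge form \eqref{eq:Martin-multi-expansion}) expresses $\Martin(G)$ as a non-negative integer combination of the values $\Martin(G_{\tau})$, where each $G_{\tau}$ is again $8$-regular and has $n-1\geq 5$ vertices. By the induction hypothesis each $\Martin(G_{\tau})$ is divisible by $9$, hence so is $\Martin(G)$.

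I do not expect a genuine obstacle: there is no delicate estimate or involution to construct here, since the analytic difficulty lives entirely in the earlier lemmas. The only non-formal ingredient is the observation that the troublesome factor $3$ equals $p$ at $p=3$, which upgrades the congruence $\Martin(K_5^{[2]})\equiv -3p\mod p^2$ from mere divisibility by $p$ to divisibility by $p^2$; everything else is the standard reduction by vertex expansion.
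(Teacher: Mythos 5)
Your proof is correct and follows the paper's argument exactly: reduce to five vertices by the Martin recursion (handling self-loops trivially), then invoke \cref{lem:Martin(5vert)p2} for graphs other than $K_5^{[2]}$ and \cref{lem:Martin(K5r)} for $K_5^{[2]}$, where $-3p\equiv 0\bmod p^2$ precisely because $p=3$. The paper's proof is a compressed version of the same reasoning.
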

\begin{proof}
    By vertex expansion \eqref{eq:Martin-recursion}, the claim reduces to the case of graphs $G$ with $5$ vertices. If $G=K_5^{[2]}$, then $\Martin(G)\equiv -3p\equiv 0 \mod p^2$ for $p=3$. For all other graphs, see \cref{lem:Martin(5vert)p2}.
\end{proof}
We thus have $\Martin(G)\equiv 9j\mod 27$ for some integer $j$, but for graphs with $5$ vertices, $j$ is not necessarily congruent to $N_{2,2}(H)$. For example,
the totally decomposable graph
\begin{equation*}
    \Martin\left(\Graph[0.55]{K5plus2}^{[2]}\right) = (4!)^2 \equiv 9 \mod 27
\end{equation*}
has $j\equiv 1 \mod 3$; but we saw in \cref{lem:TFP-3} that $N_{2,2}(H)\equiv 0 \mod 3$.
Therefore, for prime $p=3$ and $5$ vertices, the $c_2$ invariant cannot be read off the Martin invariant modulo $p^3$. We need to base the induction one step up, at 6 vertices.

So consider an $8$-regular graph $G$ with 6 vertices. Recall that $G$ has 5 special vertices labelled $\set{v,w,a,b,c}$ with $H=G\setminus\set{v,w}$, such that $w$ is connected only to $\set{a,b,c,v}$, via a double edge to each. It follows that the sixth vertex, call it $u$, is not connected to $w$ and has at most 4 neighbours $\set{a,b,c,v}$. If any of these connections to $u$ has multiplicity 3 or higher, then \cref{lem:pedge-div} shows that $N_{2,2}(H)\equiv 0 \mod 3$, since $N_{2,2}(H)$ fulfils the Martin recursion at $u$. By \cref{lem:pedge-div} and \cref{lem:Martin(G2)div9}, we also find $\Martin(G)\equiv 0 \mod 27$, hence we have established \eqref{eq:c2-base-p=3} in this case.

It remains to consider graphs $G$ where $u$ is connected to each of $\set{a,b,c,v}$ via a double edge. So we are only left to arrange 8 edges between $\set{a,b,c,v}$ to obtain an 8-regular graph $G$. Let $m_{ij}$ denote these multiplicities. The 8-regularity implies that $m_{ab}=m_{cv}$, $m_{ac}=m_{bv}$, and $m_{av}=m_{bc}$. As before, any edge of multiplicity $m_{ij}\geq 3$ will force $\Martin(G)\equiv 0 \mod 27$. Similarly, we will have $N_{2,2}(H)\equiv 0 \mod 3$, because the expansion at $u$ can only add further edges between $\set{a,b,c}$; so if $m_{aj}\geq 3$ already, it will stay strictly above 2 even in $G_D\setminus\set{v,w}$, and therefore $N_{2,2}(H)\equiv 0 \mod 3$ by \cref{lem:TFP-3}.

\begin{figure}
    \centering
    $G=\Graph[0.5]{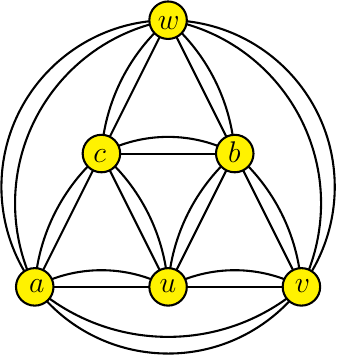}$
    \qquad
    $G'=\Graph[0.5]{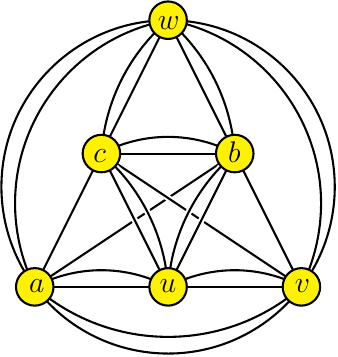}$
    \caption{The doubled octahedron $G=(C^6_{1,2})^{[2]}$ and the graph $G'$.}%
    \label{fig:octas}%
\end{figure}
The only remaining cases then are those where all $m_{ij}\leq 2$. Up to permuting the vertex labels, there are only two isomorphism classes of such graphs: $(m_{ab},m_{ac},m_{bc})=(0,2,2)$ and $(m_{ab},m_{ac},m_{bc})=(1,1,2)$. The corresponding graphs $G$ and $G'$ are shown in \cref{fig:octas}.
\begin{table}
    \centering
    \begin{tabular}{rcccccc}
    \toprule
    $(D_{ab},D_{ac},D_{bc})$ & $(2,0,0)$ & $(0,2,0)$ & $(0,0,2)$ & $(1,1,0)$ & $(1,0,1)$ & $(0,1,1)$ \\
    \midrule
    $d!/D!$ & $4$ & $4$ & $4$ & $16$ & $16$ & $16$ \\
    $G_D$  & $K_5^{[2]}$ & $\Graph[0.35]{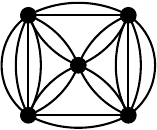}$ & $\Graph[0.35]{octaexp3}$ & $\Graph[0.3]{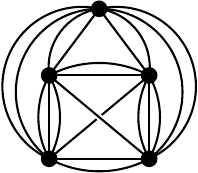}$ & $\Graph[0.3]{octaexp1}$ &  $\Graph[0.35]{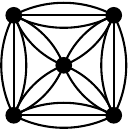}$ \\
    $G'_D$ & $\Graph[0.3]{octaexp1}$ & $\Graph[0.3]{octaexp1}$ & $\Graph[0.3]{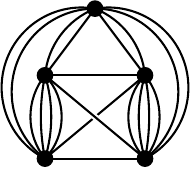}$ & $K_5^{[2]}$ & $\Graph[0.3]{octaexp1}$ & $\Graph[0.3]{octaexp1}$ \\
    \bottomrule
    \end{tabular}%
    \caption{Contributions to the vertex expansion at $u$ of the graphs from \cref{fig:octas}.}%
    \label{tab:octa-expansions}%
\end{table}
\begin{lemma}\label{octa-martins}
    The Martin invariants of the two graphs in \cref{fig:octas} are
    \begin{equation*}
    \Martin(G) = 84096 \equiv -9 \mod 27
    \quad\text{and}\quad
    \Martin(G') = 97920 \equiv -9 \mod 27.
\end{equation*}
\end{lemma}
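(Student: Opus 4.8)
The plan is to evaluate both Martin invariants by a single application of the vertex expansion \eqref{eq:Martin-multi-expansion} at the vertex $u$, the unique vertex of $G$ (and of $G'$) not adjacent to $w$. Since $u$ is joined by a double edge to each of $\set{1,2,3,v}$, the transition matrices $D$ of \cref{def:trans-matrix} have row sums $(2,2,2,2)$ and zero diagonal, so they are parametrized by the off-diagonal triple $(D_{12},D_{13},D_{23})$ subject to $D_{12}+D_{13}+D_{23}=2$ (the entries $D_{1v},D_{2v},D_{3v}$ are then forced and automatically non-negative). This produces the six transitions, the coefficients $d!/D!$, and the five-vertex graphs $G_D$ and $G'_D$ recorded in \cref{tab:octa-expansions}. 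Hence $\Martin(G)$ and $\Martin(G')$ are explicit integer combinations of the Martin invariants of the five-vertex graphs appearing in that table.

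It then remains to compute those five-vertex invariants. For $K_5^{[2]}$ I would invoke \cref{lem:Martin(K5r)} with $r=2$, giving $\Martin(K_5^{[2]})=2016$. Each of the other graphs in the table has the same shape: a core $K_4^{[a,b,c]}$ on $\set{1,2,3,v}$ (with $a+b+c=6$) together with the vertex $w$ joined by a double edge to all four core vertices. I would evaluate each by expanding once more at $w$, which is again a double-edge star, so the same six transition matrices and coefficients reappear, now reducing to four-vertex $8$-regular graphs. These four-vertex graphs are generally \emph{not} of the antipodally symmetric $K_4^{[a,b,c]}$ form, but their Martin invariants follow by the same mechanism that proves \eqref{eq:Martin(K4abc)}: a final expansion at one vertex has a unique self-loop-free transition, whose value is $m_1!\,m_2!\,m_3!/(D_{12}!\,D_{13}!\,D_{23}!)$ when the forced off-diagonal entries $D_{ij}$ are non-negative (the resulting three-vertex graph then being the balanced triangle $K_3^{[4]}$), and $0$ otherwise. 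Carrying this out yields $1584$, $1296$, $576$, and $576$ for the four non-complete five-vertex graphs.

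Substituting these values together with the coefficients of \cref{tab:octa-expansions} and collecting repeated isomorphism types gives
\begin{align*}
    \Martin(G) &= 4\cdot 2016 + 8\cdot 576 + 32\cdot 1584 + 16\cdot 1296 = 84096,\\
    \Martin(G') &= 40\cdot 1584 + 4\cdot 576 + 16\cdot 2016 = 97920,
\end{align*}
and a final reduction modulo $27$ gives $\Martin(G)\equiv\Martin(G')\equiv -9$, as claimed.

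The argument has no conceptual obstacle: it is a finite, fully determined computation driven by \eqref{eq:Martin-multi-expansion}. The real work lies in the bookkeeping. The two points demanding care are the correct identification up to isomorphism of the five-vertex graphs $G_D$ and $G'_D$ arising from each transition, so that equal types are collected exactly as in the table, and the evaluation of the four-vertex graphs lacking the symmetry of $K_4^{[a,b,c]}$, where \eqref{eq:Martin(K4abc)} must be replaced by the asymmetric multinomial formula above. A convenient internal check at each stage is that every five-vertex graph other than $K_5^{[2]}$ must have Martin invariant divisible by $9$ by \cref{lem:Martin(5vert)p2}, which all four intermediate values $1584,1296,576,576$ indeed satisfy.
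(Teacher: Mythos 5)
Your proposal is correct and follows the paper's proof in its main step: both expand at $u$ via \eqref{eq:Martin-multi-expansion} to obtain exactly the six transition matrices, coefficients and five-vertex graphs of \cref{tab:octa-expansions}, and your final arithmetic reproduces $84096$ and $97920$ with the same intermediate values $2016, 1584, 1296, 576$. You differ only in how the four non-complete five-vertex graphs are evaluated: the paper dispatches two of them immediately as totally decomposable (each contains a $4$-fold edge, giving $(4!)^2=576$ by \cref{prop:tot-decomp}) and expands the other two at a core vertex so that \eqref{eq:Martin(K4abc)} applies directly, whereas you expand all four uniformly at $w$ and descend to four-vertex graphs. That works and yields the same numbers, but one remark in your plan is mistaken: the four-vertex graphs you reach are \emph{always} of the form $K_4^{[a,b,c]}$. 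A transition preserves regularity, and for a regular multigraph on four vertices without self-loops the degree constraints force opposite edges to have equal multiplicity---this is precisely the computation in the proof of \cref{prop:twist}---so \eqref{eq:Martin(K4abc)} applies verbatim and your ``asymmetric multinomial formula'' is not a genuinely separate case (it reduces to the same expression, which is why your asserted values still come out right). Beyond that, the only thing your write-up leaves implicit is the actual identification of the $G_D$, $G'_D$ up to isomorphism and the expansion bookkeeping for the four intermediate graphs, which you correctly flag as the real work; the paper is similarly terse there.
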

\begin{proof}
Apply the expansion \eqref{eq:Martin-multi-expansion} to the vertex $u$. This produces the terms in \cref{tab:octa-expansions}. Two of the arising 5-vertex graphs are totally decomposable due to 4-fold edges, so that
\begin{equation*}
    \Martin\left(\Graph[0.35]{octaexp3}\right)
    =\Martin\left(\Graph[0.3]{octaexp4}\right)
    = (4!)^2.
\end{equation*}
We evaluate the remaining two 5-vertex graphs with the expansion \eqref{eq:Martin-multi-expansion} for the bottom left vertex. Together with \eqref{eq:Martin(K4abc)}, this results in
\begin{align*}
    \Martin\left(\Graph[0.35]{octaexp2}\right)
    &= \frac{2!3!3!}{1!1!2!}\Martin\left(K_4^{[2,3,3]}\right)
    = 1296 
    \quad\text{and}\\
    \Martin\left(\Graph[0.3]{octaexp1}\right)
    &= \frac{2!2!3!}{1}\Martin\left(K_4^{[2,3,3]}\right)
    + \frac{2!2!3!}{2}\Martin\left(K_4^{[2,2,4]}\right)
    + \frac{2!2!3!}{2}\Martin\left(K_4^{[2,3,3]}\right)
    =1584.
\end{align*}
Inserting these values and $\Martin(K_5^{[2]})=2016$ from \eqref{eq:Martin(K5r)} into \cref{tab:octa-expansions}, we obtain $\Martin(G)=84096$ and $\Martin(G')=97920$.
\end{proof}
It remains to show that $N_{2,2}(H)\equiv -1 \mod 3$ for $H=G\setminus\set{v,w}$ and $H'=G'\setminus\set{v,w}$. By \cref{lem:TFP-3}, we can compute $N_{2,2}(H)\mod 3$ by counting the number of transitions at $u$ that produce $G_D\cong K_5^{[2]}$ and then multiplying by $-1$. According to \cref{tab:octa-expansions}, in each case there is only one matrix $D$ with this property, thus $N_{2,2}\equiv -d!/D! \mod 3$ gives
\begin{equation*}
    N_{2,2}(H) \equiv -4 \equiv -1 \mod 3
    \qquad\text{and}\qquad
    N_{2,2}(H') \equiv -16 \equiv -1 \mod 3.
\end{equation*}
This finishes the proof that \eqref{eq:c2-base-p=3} holds for all $H=G\setminus\set{v,w}$ with 4 vertices, that is, for all graphs $G$ with 6 vertices.

Consequently, this completes the proof of \cref{thm:c2-martin} for all primes, especially $p=3$.

\section{Computer calculations}\label{sec:data}
We implemented the recursion \eqref{eq:Martin-multi-expansion} in {\MapleTM}, using {\nauty}~\cite{McKayPiperno:II} to identify isomorphic graphs.{\MapleNote} We use a cache table indexed by the canonical label of a graph, to avoid the recomputation of Martin invariants of isomorphic graphs.
To test our implementation, we confirmed that the results for $\Martin(G)$ and $\Martin(G^{[2]})$ for all 4-regular graphs with $\leq 13$ vertices, and $\Martin(G^{[2]})$ for all 3-regular graphs with $\leq 18$ vertices, agree with the values obtained by an independent, simple but much slower {\FORM} \cite{Vermaseren:NewFORM} program. We also checked in many cases that our results for $\Martin(G^{[r]})$ are compatible with known permanents \cite{Crump:ExtendedPermanent} and $c_2$-invariants \cite{HuSchnetzShawYeats:Further,BrownSchnetz:ModularForms} via \eqref{eq:Martin-Perm2} and \eqref{eq:c2-martin}.

Using this code, we computed the first few entries of the Martin sequences of all 3- and 4-regular graphs with small numbers of vertices. These results are provided in text files \Filename{Martin3.txt} and \Filename{Martin4.txt}, respectively, that form part of this paper. Below we give a summary of those computational results and highlight several observations.

\subsection{4-regular graphs}\label{sec:phi4}
Due to \cref{prop:edge-cuts} and the product identity \eqref{eq:martin vertex product}, we only consider cyclically 6-connected 4-regular graphs without a 3-vertex cut. The isomorphism classes of such graphs have been enumerated in \cite{Schnetz:Census} up to $n\leq 13$ vertices. They are labelled $P_{\ell,1}$, $P_{\ell,2}$, \ldots where $\ell=n-2\leq 11$ denotes the loop number of any decompletion. Explicit definitions of these graphs are given in a file provided with \cite{PanzerSchnetz:Phi4Coaction}.

\begin{table}\centering
\begin{tabular}{rccccccccc}
	\toprule
	loop order $\ell$ & 3 & 4 & 5 & 6 & 7 & 8 & 9 & 10 & 11 \\
	\midrule
	$\abs{\set{\text{graphs $G$}}}$ & 
	1 & 1 & 1 & 4 & 11 & 41 & 190 & 1182 & 8687 \\
	$\abs{\set{\Martin(G)}}$ &
	1 & 1 & 1 & 4 & 9 & 25 & 100 & 409 & 1622 \\
	$|\{\Martin(G^{[2]})\}|$ &
	1 & 1 & 1 & 4 & 9 & 29 & 129 & 776 & 6030 \\
 	$\abs{\set{\text{Hepp bounds $\Hepp(G\setminus v)$}}}$ &
	1 & 1 & 1 & 4 & 9 & 29 & 129 & 776 & 6030 \\
	\bottomrule
\end{tabular}
\caption{The number of graphs, Martin invariants, and Hepp bounds up to $11$ loops.}%
\label{tab:martin-stats}%
\end{table}

\Cref{tab:martin-stats} summarizes our findings for the first two entries $\Martin(G)$ and $\Martin(G^{[2]})$ of the Martin sequences. Explicit values are given for up to $\ell\leq 8$ loops in \cref{tab:first-martins}. We note:
\begin{itemize}
    \item Up to $7$ loops, the Martin invariant $\Martin(G)$ is a perfect period invariant: By this we mean that $\Martin(G_1)=\Martin(G_2)$ if and only if $\Period(G_1\setminus v)=\Period(G_2\setminus v)$. This follows from \cref{tab:first-martins} because at 7 loops, all periods are known \cite{PanzerSchnetz:Phi4Coaction} and the twist $P_{7,4}\leftrightarrow P_{7,7}$ and the duality $P_{7,5}\leftrightarrow P_{7,10}$ provide the only two relations.

    \item Starting at $8$ loops, there are pairs of graphs with the same Martin invariant, but different period. At 8 loops, there are 4 such pairs (highlighted in \cref{tab:first-martins}).

    \item If we consider instead the Martin invariants of the doubled graphs, then we find that for all graphs with $\ell\leq 11$ loops, $\Martin(G_1^{[2]})=\Martin(G_2^{[2]})$ holds if and only if $\Hepp(G_1\setminus v)=\Hepp(G_2\setminus v)$. Since the Hepp bound is expected to be a perfect period invariant for 4-regular graphs \cite[Conjecture~1.2]{Panzer:HeppBound}, we thus expect that $\Martin(G^{[2]})$ is a perfect period invariant up to 11 loops.

    \item For higher powers, we computed: $\Martin(G^{[3]})$ up to $\ell\leq 11$, $\Martin(G^{[4]})$ up to $\ell\leq 10$, $\Martin(G^{[5]})$ and $\Martin(G^{[6]})$ up to $\ell\leq 9$, $\Martin(G^{[7]})$ and $\Martin(G^{[8]})$ up to $\ell\leq 8$, and $\Martin(G^{[9]})$ and $\Martin(G^{[10]})$ up to $\ell \leq 7$. For all these higher powers we find, as for $\Martin(G^{[2]})$, that they coincide precisely when the Hepp bounds (and hence conjecturally the periods) coincide.
    All this data constitutes our evidence for \cref{con:Martin-perfect}.
\end{itemize}

\begin{figure}
    \centering
    \begin{tabular}{ccccccc}
    $\Graph[0.46]{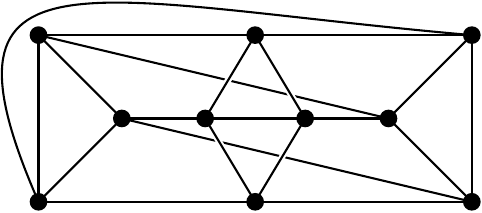}$ & $\leftrightarrow$ & $\Graph[0.57]{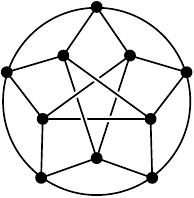}$
    & \quad\quad &
    $\Graph[0.43]{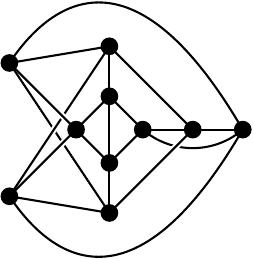}$ & $\leftrightarrow$ & $\Graph[0.48]{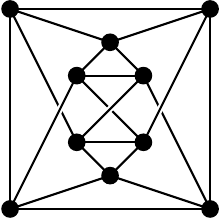}$
    \\
    $P_{8,30}$ & & $P_{8,36}$ & & $P_{8,31}$ & & $P_{8,35}$
    \\
    \end{tabular}
    \caption{Two unexplained pairs of graphs with apparently equal Martin sequences.}%
    \label{fig:830-836}%
\end{figure}
The most interesting instances of \cref{con:Martin-perfect} are the graphs in \cref{fig:830-836}. These graphs come in two pairs that share the same Hepp bound \cite{Panzer:HeppBound}, hence we expect that
\begin{equation}\label{eq:830-36per}
    \Period(P_{8,30}\setminus v)=\Period(P_{8,36}\setminus v)
    \quad\text{and}\quad
    \Period(P_{8,31}\setminus v)=\Period(P_{8,35}\setminus v),
\end{equation}
which does not follow from any of the known period identities. Correspondingly, the entire Martin sequences should agree, that is, we conjecture that
\begin{equation}\label{eq:830-36mar}
    \Martin(P_{8,30}^{[r]})=\Martin(P_{8,36}^{[r]})
    \quad\text{and}\quad
    \Martin(P_{8,31}^{[r]})=\Martin(P_{8,35}^{[r]})
\end{equation}
hold for all positive integers $r$. Our calculations confirm these identities explicitly in the range $1\leq r\leq 8$. Modulo primes $p=2r+1$, \eqref{eq:830-36mar} was checked much farther, up to $r\leq 50$ (via \cref{thm:Martin-Perm2} from the permanents in \cite[Appendix~A]{Crump:ExtendedPermanent}), and modulo the smaller primes $p=r+1$, \eqref{eq:830-36mar} was checked up to $r=100$ (via \cref{thm:c2-martin} from the $c_2$ invariants calculated in \cite{BrownSchnetz:ModularForms}).
We have thus accumulated overwhelming evidence for \eqref{eq:830-36mar}. In our view, this strengthens considerably the credibility of the conjecture \eqref{eq:830-36per}.

We also calculated examples that show that several of our results are best possible:
\begin{itemize}
    \item 
Feynman period identities do not extend from the Martin invariant to the full Martin polynomial. For example, $P_{7,4}$ and $P_{7,7}$ are related by a twist, and also by planar duality (\cref{fig:duality}), but their Martin polynomials
\begin{align*}
    \MartinPol(P_{7,4},x) &= x(\phantom{3}7x^3+210x^2+1054x+1320)
    \quad\text{and}
    \\
    \MartinPol(P_{7,7},x) &= x(13x^3+200x^2+1030x+1320)
    \quad\text{differ.}
\end{align*}

\item The divisibility of $\Martin(G^{[r]})$ by $(r!)^{2(n-3)}$ for $r$-fold duplications of 4-regular graphs (\cref{martin-power-divisibility}) does not persist for other $4r$-regular graphs. For example, the complete graph $K_9$ is an 8-regular graph on $n=9$ vertices, but
\begin{equation}
    \Martin(K_9) = 2^9\cdot 3^5\cdot 5^2 \cdot 7\cdot 17\cdot 167
\end{equation}
is not divisible by $2^{12}$.

\item
The Martin invariant is determined by the cycle matroid, but it is not determined by the Tutte polynomial $\Tutte{G}(x,y)$. For example, $P_{11,8403}$ and $P_{11,8404}$ have the same Tutte polynomial, but $\Martin(P_{11,8403})=12438$ and $\Martin(P_{11,8404})=12442$ differ.
\end{itemize}

Finally, we comment on the quantitative relation between the Martin invariant and the period. Using the known periods from \cite{PanzerSchnetz:Phi4Coaction}, we find a negative approximate correlation (\cref{fig:phi4-martin-period}): graphs with smaller $\Martin(G)$ tend to have larger periods $\Period(G\setminus v)$.

We find in particular that, at each loop order $\ell\leq 11$, the Martin invariant is minimized by a unique graph: the circulant $P_{\ell,1}=C^{\ell+2}_{1,2}$. Its decompletion is called zigzag graph, whose period is known and expected to be the largest \cite{BrownSchnetz:ZigZag}. While the period maximization is a conjecture, the Martin minimization was shown in \cite[Theorem~6.2]{BouchetGhier:BetaIso4}:
\begin{proposition}\label{thm:zigzag-min}
    If $G$ is a cyclically 6-connected 4-regular graph with $n\geq 5$ vertices, then $\Martin(G)\geq \Martin(C^n_{1,2})$. Furthermore, equality holds only if $G\cong C^n_{1,2}$, and the value is
\begin{equation}\label{eq:Martin(C12)}
    \Martin(C^n_{1,2})
    = \frac{(3n-2)\cdot 2^{n-3}-2\cdot (-1)^n}{9}.
\end{equation}
\end{proposition}
The expansion \eqref{eq:Martin-recursion-4} gives the recurrence $\Martin(C^n_{1,2})= \Martin(C^{n-1}_{1,2})+2\Martin(C^{n-2}_{1,2}) + 2^{n-4}$ where $2^{n-4}$ comes from a totally decomposable graph, leading to the closed form \eqref{eq:Martin(C12)}. That this is the minimum among cyclically 6-connected graphs is related to the following properties.
\begin{itemize}
    \item If $G$ has a 3-vertex cut, then $\Martin(G)=\Martin(G_1)\Martin(G_2)\geq \Martin(C^{k}_{1,2}) \Martin(C^{r}_{1,2})>\Martin(C^n_{1,2})$ cannot be minimal. Here $k+r=n+3$ and $k,r\geq 5$.
    \item If $G$ has no 3-vertex cut, then at each vertex, there is at least one transition such that $G_{\tau}$ is cyclically 6-connected \cite[Lemma~3.2]{FleischnerGenestJackson:CC4}.
    \item If at each vertex there is only one transition such that $G_{\tau}$ is cyclically 6-connected, then $G\cong C^n_{1,2}$ by \cite[Theorem~6.1]{BouchetGhier:BetaIso4}.
\end{itemize}

At the other extreme, our data shows that at each loop order $\ell\leq 11$, the Martin invariant is also maximized by a unique graph, see \cref{tab:Martin4-max}. We did not identify a general pattern, but the non-trivial automorphism groups and the prevalence of circulants among these graphs suggest some underlying structure. The same graphs were recently noticed to maximize the number of connected sets \cite{CambieGoedgebeurJokken:MaxConReg}.
\begin{table}
    \centering
    \begin{tabular}{rrlrr}
    \toprule
        loops & $\max\Martin(G)$ & $G$ & HoG & $|\Aut(G)|$ \\
    \midrule
         3 & 6 & $P_{3,1}=K_5$ & \HoG{462} & 120 \\
         4 & 14 & $P_{4,1}=C^6_{1,2}$ & \HoG{226} & 48 \\ 
         5 & 36 & $P_{3,1}^2$ & \HoG{912} & 48 \\
		6  & 108 & $P_{6,4}=K_{4,4}=C_{1,3}^8$ & \HoG{570} & 1152 \\
		7  & 256 & $P_{7,8}$ & \HoG{50432} & 16 \\
		8  & 728 & $P_{8,40}=C^{10}_{1,4}=C^{10}_{2,3}$ & \HoG{45705} & 320 \\
		9  & 1894 & $P_{9,190}=C^{11}_{1,3}=C^{11}_{1,4}=C^{11}_{2,3}$ & \HoG{50433} & 22 \\
		10 & 5300 & $P_{10,1182}=C^{12}_{2,3}$ & \HoG{33319} & 48 \\
		11 & 14376 & $P_{11,8687}=C^{13}_{1,5}=C^{13}_{2,3}$ & \HoG{21065} & 52 \\
    \bottomrule
    \end{tabular}
    \caption{The unique primitive $\phi^4$ graphs that maximize $\Martin(G)$ at each loop order $\leq 11$. Complete (bipartite) and circulant graphs are highlighted, and the column HoG links to their entries in the House of Graphs \cite{HoG}.}%
    \label{tab:Martin4-max}%
\end{table}
\begin{table}
    \centering
    \begin{tabular}{rrr}
    \toprule
        Graph $G$ & $\Martin(G)$ & $\Martin(G^{[2]})/4^{\ell-1}$ \\
    \midrule
        $P_{3,1}$ &   6 &    126\\
    \midrule
        $P_{4,1}$ &  14 &   1314\\
    \midrule
        $P_{5,1}$ &  34 &  14706\\
    \midrule
        $P_{6,1}$ &  78 & 147546\\
        $P_{6,2}$ &  86 & 180594\\
        $P_{6,3}$ &  92 & 212436\\
        $P_{6,4}$ & 108 & 296676\\
    \midrule
        $P_{7,1}$ &   178 &  1453914\\
        $P_{7,2}$ &  202 &   1891314\\
        $P_{7,3}$ &  210 &  2083770\\
        $P_{7,4},P_{7,7}$ & 220 & 2313900\\
        $P_{7,6}$ &  226 & 2454426\\
        $P_{7,5},P_{7,10}$ & 228 & 2577204\\
        $P_{7,9}$ &  240 & 2929680\\
        $P_{7,11}$ & 246 & 3116286\\
        $P_{7,8}$ &  256 & 3358656\\
    \bottomrule
    \end{tabular}
    \begin{tabular}{rrr}
    \toprule
        Graph $G$ & $\Martin(G)$ & $\Martin(G^{[2]})/4^{\ell-1}$ \\
    \midrule
    $P_{8,1}$ & 398 & 13881906\\ \rowcolor{Goldenrod}
$P_{8,2}$ & 470 & 19288170\\ \rowcolor{Goldenrod}
$P_{8,3}$ & 470 & 19560330\\
$P_{8,4}$ & 494 & 21875634\\
$P_{8,6},P_{8,9}$ & 510 & 23224770\\
$P_{8,5}$ & 516 & 24331644\\
$P_{8,7},P_{8,8}$ & 518 & 24330906\\
$P_{8,11},P_{8,15}$ & 524 & 25080084\\
$P_{8,14}$ & 534 & 26486154\\
$P_{8,13},P_{8,21}$ & 542 & 26900226\\ \rowcolor{Goldenrod}
$P_{8,10},P_{8,22}$ & 548 & 27340956\\ \rowcolor{Goldenrod}
$P_{8,12}$ & 548 & 28399356\\
$P_{8,18},P_{8,25}$ & 564 & 30075084\\
$P_{8,20}$ & 566 & 30153834\\
$P_{8,19},P_{8,27}$ & 572 & 31573476\\
$P_{8,17},P_{8,23}$ & 582 & 32301306\\ \rowcolor{Goldenrod}
$P_{8,16}$ & 584 & 31092984\\ \rowcolor{Goldenrod}
$P_{8,29}$ & 584 & 33515064\\
$P_{8,30},P_{8,36}$ & 602 & 36055206\\
$P_{8,26},P_{8,28}$ & 608 & 36570816\\
$P_{8,33}$ & 618 & 38238966\\
$P_{8,32},P_{8,34}$ & 620 & 38026260\\
$P_{8,31},P_{8,35}$ & 624 & 38998224\\
$P_{8,37}$ & 638 & 41602626\\ \rowcolor{Goldenrod}
$P_{8,24}$ & 656 & 42769584\\ \rowcolor{Goldenrod}
$P_{8,38}$ & 656 & 44586864\\
$P_{8,39}$ & 660 & 45058860\\
$P_{8,41}$ & 684 & 50848884\\
$P_{8,40}$ & 728 & 54288936\\
    \bottomrule
    \end{tabular}
    \caption{The first two Martin invariants of $\phi^4$ primitives $P_{\ell,i}$ with $n=\ell+2\leq 13$ vertices. All identities (two graphs in the same row) are explained by twists or dualities, except for the two unexplained identities $P_{8,30}\leftrightarrow P_{8,36}$ and $P_{8,31}\leftrightarrow P_{8,35}$.}%
    \label{tab:first-martins}%
\end{table}

\subsection{3-regular graphs}
\label{sec:phi3}

A 3-regular graph has an even number $n$ of vertices, and we denote by $\ell=n/2-1$ the loop number of its decompletions. The cyclically 4-connected 3-regular graphs with $\ell\leq 9$ (that is, up to $n\leq 20$ vertices) have been enumerated in \cite{BorinskySchnetz:RecursivePhi3}, where also many of their periods were computed. We use the notation $P_{\ell,k}$ from that paper; the definition of these graphs as edge lists can be found in the file \Filename{PeriodsPhi3.txt} from \cite{BorinskySchnetz:RecursivePhi3}.

\begin{table}\centering
\begin{tabular}{rccccccccc}
	\toprule
	loop order $\ell$ & 1 & 2 & 3 & 4 & 5 & 6 & 7 & 8 & 9\\
	\midrule
	$\abs{\set{\text{graphs $G$}}}$ & 
	1 & 1 & 2 & 5 & 18 & 84 & 607 & 6100 & 78824\\
	$|\{\Martin(G^{[2]})\}|$ &
	1 & 1 & 2 & 5 & 17 & 72 & 441 & 4015 & 47074 \\
	$|\{\Martin(G^{[4]})\}|$ &
	1 & 1 & 2 & 5 & 17 & 73 & 472 & 4534 & 58432 \\
	$\abs{\set{\text{periods $\Period(G\setminus v)$}}}$ &
	1 & 1 & 2 & 5 & 17 & 73 & ? & ? & ? \\
	$\abs{\set{\text{Hepp bounds $\Hepp(G\setminus v)$}}}$ &
	1 & 1 & 2 & 5 & 16 & 72 & 470 & 4522 & 58409 \\
	\bottomrule
\end{tabular}
\caption{The number of Martin invariants, periods and Hepp bounds among cyclically $4$-connected 3-regular graphs with $2\ell+2$ vertices.}%
\label{tab:martin-stats-phi3}%
\end{table}
We computed $\Martin(G^{[2]})$ and $\Martin(G^{[4]})$ for all graphs $P_{\ell,k}$ from \cite{BorinskySchnetz:RecursivePhi3} with $\ell\leq 9$, and $\Martin(G^{[6]})$ for $\ell\leq 8$. A summary of our findings is given in \cref{tab:martin-stats-phi3}. We observe in particular:
\begin{itemize}
    \item Up to $\ell\leq 5$, $\Martin(G^{[2]})$ is a perfect period invariant: $\Period(G\setminus v)$ is the same for two graphs if and only if $\Martin(G^{[2]})$ is the same. This is not true for the Hepp bound: $P_{5,5}$ and $P_{5,9}$ have different periods but equal Hepp bound \cite[\S 1.6]{BorinskySchnetz:RecursivePhi3}.
    \item At $\ell=6$ loops, there is a single pair of graphs $\set{P_{6,59},P_{6,68}}$ with the property that their periods differ \cite{BorinskySchnetz:RecursivePhi3} while $\Martin(P_{6,59}^{[2]})=\Martin(P_{6,68}^{[2]})=2^{29}\cdot 5^2$ agree. However, the next term $\Martin(G^{[4]})$ in the Martin sequence takes different values on this pair, and we find that indeed $\Martin(G^{[4]})$ is a perfect period invariant at $\ell=6$.
    \item We conjecture that $\Martin(G^{[4]})$ remains a perfect period invariant also for $7\leq \ell \leq 9$. In this range, however, not all periods are known, and the Hepp bounds are only expected to give a lower bound on the number of periods.\footnote{We do not know that the Hepp bounds give a lower bound on the number of periods in this context, but we do expect it.  Specifically, we expect that $\Period(G_1)=\Period(G_2)$ implies $\Hepp(G_1)=\Hepp(G_2)$.}
\end{itemize}
Because the Martin invariant is defined only for graphs with even degree, we define the Martin sequence of a 3-regular graph as
\begin{equation*}
    \Martin(G^{\bullet})=\left( \Martin(G^{[2]}), \Martin(G^{[4]}), \Martin(G^{[6]}),\ldots\right).
\end{equation*}
All available data is compatible with the following variant of \cref{con:Martin-perfect}, which gives a purely combinatorial characterization of 3-regular graphs with equal periods:
\begin{conjecture}\label{con:Martin-perfect-phi3}
    Two cyclically 4-connected 3-regular graphs $G_1$ and $G_2$ with the same number of vertices have equal period $\Period(G_1\setminus v_1)=\Period(G_2\setminus v_2)$ if and only if they have equal Martin sequences $\Martin(G_1^{\bullet})=\Martin(G_2^{\bullet})$.
\end{conjecture}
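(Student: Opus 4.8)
\medskip
\noindent\textbf{Proof proposal.}
This is stated as a conjecture, so rather than a complete argument I describe a program and isolate the genuine obstruction; the identical program applies to the $4$-regular \cref{con:Martin-perfect}. Split the claimed equivalence into its two implications: $(\Rightarrow)$ equal periods force equal Martin sequences, and $(\Leftarrow)$ equal Martin sequences force equal periods. These directions are of very different character and I treat them separately.

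For $(\Rightarrow)$, the plan is to show that the Martin sequence is constant along every known source of a period coincidence. Among cyclically $4$-connected $3$-regular graphs, each presently known relation $\Period(G_1\setminus v_1)=\Period(G_2\setminus v_2)$ arises from completion invariance, the product at a $3$-vertex cut, the twist, the Fourier split, or planar/$\phi^6$-duality. Every one of these already preserves the \emph{entire} Martin sequence: product and twist follow from \cref{prop:3vertex-cut,prop:twist} applied to the even-degree duplications $G^{[2r]}$, the Fourier split from the remark in \cref{sec fourier split}, and duality from the $3$-/$6$-regular correspondence of \cref{sec duality}. Hence whenever $G_1$ and $G_2$ are joined by a chain of such operations, their Martin sequences agree and $(\Rightarrow)$ holds. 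The missing ingredient is a classification theorem asserting that these operations \emph{generate all} period coincidences for $\phi^3$; no such theorem is known, so this direction is currently conditional on the completeness of the known period relations.

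For $(\Leftarrow)$, the plan is to recover the period as a canonical limit of the sequence, so that $\Period(G\setminus v)=F\big(\Martin(G^{\bullet})\big)$ for an explicit functional $F$; equal sequences would then force equal periods, unconditionally. By \cref{martin-power-from-diagonal} together with its half-integer specialization used in \cref{sec duality}, the Martin sequence encodes the diagonal coefficients of powers of the Kirchhoff polynomial $\KirchPol_{G\setminus v}$ of a decompletion, and by \cref{cor p recursive} the associated diagonal of $1/(1-\KirchPol_{G\setminus v})$ is $D$-finite, so the sequence is $P$-recursive. Following the Apéry-limit philosophy announced in the introduction, one attaches to the annihilating $D$-finite operator an Apéry-type limit built from ratios and combinations of consecutive diagonal coefficients, and proves that this limit reproduces the Feynman integral \eqref{eq:period}.

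The main obstacle is exactly this second step. Establishing $F$ requires controlling the singularity structure of $\Diag\big(1/(1-\KirchPol_{G\setminus v})\big)$ at its dominant singularity and matching the local exponents there to the transcendental value of the period integral---a period/transcendence input linking the combinatorially defined Martin recurrence to the analytic period that is at present out of reach. This is the deep reason the statement remains a conjecture: beyond the conditional status of $(\Rightarrow)$, there is no rigorous route from the Martin recurrence to the period value. A realistic intermediate target is to prove $(\Leftarrow)$ under the extra hypothesis that both graphs already lie in a common orbit of the known identities, thereby reducing the full conjecture to the single statement that the Martin sequence separates the identity-inequivalent graphs.
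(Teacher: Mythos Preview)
The paper does not prove this statement; it is presented explicitly as a conjecture, and the only support offered is computational: the authors computed $\Martin(G^{[2]})$, $\Martin(G^{[4]})$ (and partially $\Martin(G^{[6]})$) for all cyclically $4$-connected $3$-regular graphs through $\ell\leq 9$ loops and verified consistency with the known periods from \cite{BorinskySchnetz:RecursivePhi3} and with Hepp bounds. There is no proof to compare against.

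Your proposal correctly recognizes this and offers a program rather than a proof. The program is sensible and in fact tracks the paper's own hints: the Ap\'ery-limit route you sketch for $(\Leftarrow)$ is exactly what the introduction alludes to (``this can be achieved with Ap\'ery limits, as will be discussed in forthcoming work''), and your $(\Rightarrow)$ argument---that the Martin sequence is preserved by all known period identities---is precisely what the paper establishes piecemeal via \cref{prop:3vertex-cut,prop:twist}, \cref{sec duality}, and \cref{sec fourier split}. Your identification of the two obstructions (no classification of period coincidences; no rigorous Ap\'ery-limit extraction of $\Period$ from the diagonal) is accurate.

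One minor point: your final sentence is slightly tangled. If two graphs lie in a common orbit of the known identities, then both implications already follow from the identity-preservation results, so there is nothing left to prove ``under that extra hypothesis.'' What you presumably mean is the contrapositive formulation: show that the Martin sequence \emph{distinguishes} graphs not related by any known identity, which would establish $(\Leftarrow)$ conditionally on the completeness of the identity list. That is indeed a reasonable intermediate target, but it is still a statement about separating invariants rather than a reduction of $(\Leftarrow)$ per se.
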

\begin{lemma}
    For a cyclically 4-edge connected, 3-regular graph with $2\ell+2$ vertices, the Martin invariant $\Martin(G^{[2r]})$ is divisible by $r!\cdot[(2r)!]^{3\ell-2}$.
\end{lemma}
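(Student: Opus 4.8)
The plan is to collapse $\Martin(G^{[2r]})$ onto a single diagonal coefficient of a power of $\KirchPol$ of the decompletion $H=G\setminus v$, and then to read off the claimed divisibility by a prime-by-prime valuation estimate. This detour is necessary because the parameters here do not fit \cref{diag-multinomial-multiple} or \cref{martin-power-divisibility} directly: the duplication factor is $2r$ while the relevant power is $3r$, so the ``$k$'' that those results want to be an integer would equal $3/2$.

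First I would fix a vertex $v$ and set $H=G\setminus v$. Since $G$ is $3$-regular on $2\ell+2$ vertices, $H$ has $2\ell+1$ vertices and $3\ell$ edges, so every spanning tree of $H$ has exactly $2\ell$ edges. As $G^{[2r]}$ is $6r$-regular, \cref{thm martin as coeff} with $k=3r$ gives $N_{3r}(G^{[2r]}\setminus v)=(3r)!\,\Martin(G^{[2r]})$, and $G^{[2r]}\setminus v=H^{[2r]}$. Applying the duplication identity \cref{duplication-diagonal} in its rational-$k$ form (duplication factor $2r$, power $3r$, whose product $3r$ is a positive integer) yields
\begin{equation*}
N_{3r}(H^{[2r]})=((2r)!)^{3\ell}\,C,\qquad C=\takecoeff{x_1^{2r}\cdots x_{3\ell}^{2r}}\KirchPol_H^{3r}\in\Z,
\end{equation*}
so that $\Martin(G^{[2r]})=\tfrac{((2r)!)^{3\ell}}{(3r)!}\,C$. (If $H$ is disconnected then $\KirchPol_H=0$ and $C=0$, so the statement is vacuous; otherwise $C$ is the positive integer counting the relevant tree lists, by \cref{lem comb interp of diag}.) Dividing out, the claim becomes exactly the assertion that $X=\tfrac{((2r)!)^2}{(3r)!\,r!}\,C$ is an integer.

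Next I would expand $C$ as in the proof of \cref{diag-multinomial-multiple}: writing each contributing list of spanning trees of $H$ as a multiset $t_1^{\mu_1}\cdots t_p^{\mu_p}$ with $\mu_1+\cdots+\mu_p=3r$ and each edge of $H$ covered exactly $2r$ times (with multiplicity), one gets $C=\sum \tfrac{(3r)!}{M}$ with $M=\mu_1!\cdots\mu_p!$, summed over all such configurations. Hence $X=\sum \tfrac{((2r)!)^2}{r!\,M}$, and it suffices to show that each summand is an integer.

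The hard part will be this termwise integrality, which I would settle by comparing $p$-adic valuations. Two inputs drive it. From the covering constraints (each of the $3\ell$ edges lies in exactly $2r$ of the trees, while each tree uses $2\ell$ edges) the product over edges of the integer multinomial coefficients $\binom{2r}{\ldots}$ telescopes to $((2r)!)^{3\ell}/M^{2\ell}\in\Z$, forcing $v_p(M)\le \tfrac32\,v_p((2r)!)$ for every prime $p$. On the other hand $\binom{2r}{r}\in\Z$ gives $v_p(r!)\le\tfrac12\,v_p((2r)!)$. Combining, $v_p(M)\le \tfrac32\,v_p((2r)!)\le 2\,v_p((2r)!)-v_p(r!)$, which is precisely the condition that $\tfrac{((2r)!)^2}{r!\,M}$ carries no prime in its denominator. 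Thus each summand, and hence $X$, is an integer, giving $r!\,((2r)!)^{3\ell-2}\mid \Martin(G^{[2r]})$. The sole genuine obstacle is the $3/2$-ratio between duplication and power (forced by the odd degree $3$), which blocks a direct appeal to \cref{martin-power-divisibility} and is exactly what the valuation comparison above circumvents; every other step is a transcription of arguments already established for \cref{thm martin as coeff,duplication-diagonal,diag-multinomial-multiple}.
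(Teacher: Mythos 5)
Your proof is correct and follows essentially the same route the paper intends: its own proof is just the one-line remark that one should combine \cref{thm martin as coeff} and \cref{duplication-diagonal} with an adaptation of \cref{diag-multinomial-multiple} to half-integer $k=3/2$, and your $p$-adic valuation argument (the telescoping product of edge-multinomials giving $v_p(M)\le\tfrac32 v_p((2r)!)$, combined with $v_p(r!)\le\tfrac12 v_p((2r)!)$ from $\binom{2r}{r}\in\Z$) is precisely that adaptation, carried out explicitly and correctly, including the source of the extra factor $r!$ in the divisor.
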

\begin{proof}
    This follows similarly to \cref{martin-power-divisibility}, by adapting \cref{diag-multinomial-multiple} to the case of half-integer $k=3/2$.
\end{proof}

Turning to the quantitative relation between periods and Martin invariants, we find the same kind of behaviour as for 4-regular graphs; a plot of the known periods from \cite{BorinskySchnetz:RecursivePhi3} versus $\Martin(G^{[2]})$ shows an approximate power law correlation, similar to \cref{fig:phi4-martin-period}.

We also find that at any fixed number of vertices $8\leq 2\ell+2\leq 20$, there is a unique graph that minimizes $\Martin(G^{[2]})$: the prism $Y_{\ell+1}=K_2\times C_{\ell+1}$ over a base  polygon $C_{\ell+1}$ with $\ell+1$ sides (see \cref{fig:prisms}). We expect that this persists for higher $\ell$:
\begin{figure}
    \centering
    \begin{tabular}{ccccccc}
    \includegraphics[height=14mm]{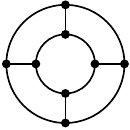} &
    \includegraphics[height=14mm]{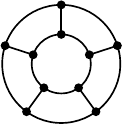} &
    \includegraphics[height=14mm]{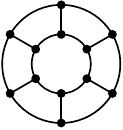} &
    \includegraphics[height=14mm]{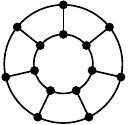} &
    \includegraphics[height=14mm]{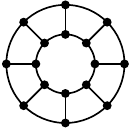} &
    \includegraphics[height=14mm]{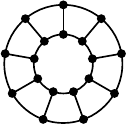} &
    \includegraphics[height=14mm]{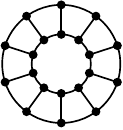}
    \\
    $P_{3,1}$ & $P_{4,2}$ & $P_{5,1}$ & $P_{6,8}$ & $P_{7,1}$ & $P_{8,79}$ & $P_{9,1}$
    \\
    \end{tabular}
    \caption{Prism graphs $Y_{\ell+1}=K_2\times C_{\ell+1}$ and their labels in \cite{BorinskySchnetz:RecursivePhi3}.}%
    \label{fig:prisms}%
\end{figure}
\begin{conjecture}\label{conj:prism-min}
    If $G$ is a cyclically 4-connected 3-regular graph with $2\ell+2\geq 8$ vertices, then $\Martin(G^{[2]})\geq \Martin(Y_{\ell+1}^{[2]})$ and equality holds only if $G\cong Y_{\ell+1}$.
\end{conjecture}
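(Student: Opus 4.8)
The plan is to prove the bound by induction on the loop number $\ell$, in analogy with the $4$-regular minimization established above for the circulant $C^n_{1,2}$ (\cite{BouchetGhier:BetaIso4}). Because $\Martin$ is only defined in even degree, everything is phrased for the $6$-regular double $G^{[2]}$; the translation is clean since a vertex bipartition of $G^{[2]}$ coincides with one of $G$ with all cut sizes doubled, so $G$ being cyclically $4$-connected is equivalent to $G^{[2]}$ having no nontrivial edge cut of size below $8$, while $3$-vertex cuts of $G$ correspond to $3$-vertex cuts of $G^{[2]}$. First I would pin down the benchmark value: the prisms $Y_{\ell+1}=K_2\times C_{\ell+1}$ form a recursive family (add one rung at a time), so \cref{martin-families} yields a linear recurrence with constant coefficients for $a_\ell \defas \Martin(Y_{\ell+1}^{[2]})$, and hence a closed form and exact growth rate. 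This is the quantity every competitor must exceed.

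The induction then separates graphs with and without a nontrivial $3$-vertex cut. If $G$ has one, \cref{prop:3vertex-cut} applied to $G^{[2]}$ factors $\Martin(G^{[2]})=\Martin(G_1^{[2]})\Martin(G_2^{[2]})$ into two strictly smaller cyclically $4$-connected $3$-regular graphs (one must check that the edges added across the cut come in matched pairs, so that each factor is again a double), and the induction hypothesis together with the prism recurrence then shows the product strictly exceeds $a_\ell$. The substantive case is therefore when $G$ has no $3$-vertex cut, where I would reproduce the three structural inputs that power the $4$-regular proof: monotonicity of $\Martin$ under the Martin expansion, so that each connectivity-preserving transition contributes at least a prism value; a guarantee (the $3$-regular analogue of \cite[Lemma~3.2]{FleischnerGenestJackson:CC4}) that at least one transition preserves the relevant connectivity; and a classification (the analogue of \cite[Theorem~6.1]{BouchetGhier:BetaIso4}) that the graphs admitting the \emph{fewest} connectivity-preserving transitions are exactly the prisms. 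These would combine via \eqref{eq:Martin-recursion} to give $\Martin(G^{[2]})\geq a_\ell$ with equality only for $Y_{\ell+1}$.

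The hard part is twofold. First, unlike the $4$-regular setting, the Martin recursion applied to $G^{[2]}$ leaves the class of doubled graphs: a single transition at a degree-$6$ vertex produces a $6$-regular graph that is generally not of the form $H^{[2]}$. Thus the induction cannot be run naively over doubled $3$-regular graphs; one must instead establish the lower bound for a larger, transition-closed class of $6$-regular graphs in which the doubled prism remains extremal, while controlling the $5!!=15$ transitions at each degree-$6$ vertex and their effect on cyclic connectivity of the underlying cubic structure. Proving the $3$-regular analogues of \cite[Lemma~3.2]{FleischnerGenestJackson:CC4} and \cite[Theorem~6.1]{BouchetGhier:BetaIso4} within such a class is the principal obstacle, since the transition theory of cubic graphs via their doubles is far less developed than the classical $4$-regular transition calculus.

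As a hedge, I would develop in parallel a purely combinatorial attack based on \cref{thm:Martin=ST-partitions}, which expresses $\Martin(G^{[2]})$ as the number of partitions of the edges of $G^{[2]}\setminus v$ into three spanning trees. The aim would be an explicit, connectivity-driven injection from the tree partitions of $Y_{\ell+1}^{[2]}\setminus v$ into those of $G^{[2]}\setminus v$, with its failure to be surjective encoding the strict inequality and singling out the prism as the unique extremal graph. Making such an injection simultaneously tight enough to yield uniqueness and robust across all $G$ is the most delicate ingredient, and is where I expect the real difficulty to lie.
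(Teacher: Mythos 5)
This statement is a \emph{conjecture} in the paper, not a theorem: the paper offers no proof, only computational verification for all cyclically 4-connected cubic graphs with $2\ell+2\leq 20$ vertices and a transfer-matrix derivation of the closed form \eqref{eq:Martin(prism)} for the prism value. So there is no argument in the paper to compare yours against, and the question is whether your proposal actually closes the gap. It does not: it is a research plan whose decisive steps are precisely the ones you defer. In the main branch (no 3-vertex cut) you need 6-regular analogues of \cite[Lemma~3.2]{FleischnerGenestJackson:CC4} and \cite[Theorem~6.1]{BouchetGhier:BetaIso4}, valid on a transition-closed class of 6-regular graphs containing the doubled prisms; these are not in the literature, and establishing them is essentially the whole content of the conjecture. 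Moreover, even granted ``at least one connectivity-preserving transition exists'' and a classification of the extremal graphs, the quantitative step is missing: in the 4-regular case the minimality of $C^n_{1,2}$ rests on the specific recurrence $\Martin(C^n_{1,2})=\Martin(C^{n-1}_{1,2})+2\Martin(C^{n-2}_{1,2})+2^{n-4}$ matching a single-vertex expansion term by term, whereas for the doubled prisms the recurrence comes from a rank-3 transfer matrix obtained by expanding \emph{two} vertices of a rung, and it is not clear how a single-vertex expansion of an arbitrary competitor $G^{[2]}$ is to be compared against it.

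There is also a concrete flaw in the 3-vertex-cut branch. Applying \cref{prop:3vertex-cut} to $G^{[2]}$ produces factors $G_1,G_2$ by adding $n_{ij}$ edges between the cut vertices, with $n_{ij}$ given by \eqref{eq:3cut-edgecounts}. Even when every $d_i$ is even (as it is for a doubling-respecting edge bipartition), the $n_{ij}$ need not be even: e.g.\ $d_1=d_2=d_3=2$ gives $n_{12}=n_{13}=n_{23}=1$. So the factors are 6-regular graphs that are in general \emph{not} of the form $H^{[2]}$ for a cubic $H$, and your induction hypothesis (stated only for doubles of cyclically 4-connected cubic graphs) cannot be invoked on them. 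This again forces you into the larger transition-closed class of 6-regular graphs where no extremality statement is available. Your spanning-tree-partition injection via \cref{thm:Martin=ST-partitions} is an interesting alternative, but as you acknowledge it is only an aim, with no candidate construction. In short: the outline is sensible and correctly identifies where the difficulty lies, but nothing beyond the (already proved) evaluation of $\Martin(Y_{\ell+1}^{[2]})$ is actually established, and the statement remains open.
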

We can give a closed formula for these minimal Martin invariants: For all $\ell\geq 2$,
\begin{equation}\label{eq:Martin(prism)}
    \Martin( Y_{\ell+1}^{[2]})=4^{2\ell-1}[3^{\ell-2}(4\ell-1)-1].
\end{equation}
Our proof uses the approach of \cref{martin-families}, namely we obtain a transfer matrix of rank 3 to derive a recurrence relation in the following manner:  Beginning with one rung of prism visualized as a cyclic ladder, apply the Martin recurrence to the two vertices of the rung.  This results in the three terms illustrated in \cref{fig:baseblock}.
\begin{figure}
    \centering
    \includegraphics{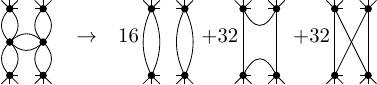}
    \caption{The result of applying the Martin recurrence to two vertices of a doubled rung.  The remainder of the graph connects at top and bottom where indicated and is identical in all the terms.}%
    \label{fig:baseblock}%
\end{figure}
For each of the graphs on the right-hand side, consider how the next rung below interacts with the part drawn.  For the first term on the right-hand side we have simply returned to the left-hand side, but with the entire graph having one rung fewer.  For the second and third terms on the right-hand side, we have different graphs to consider but in both cases we can, as before, reduce the two vertices where the next rung connected.  The result in both cases will include some graphs we've already seen on the right-hand side and some new ones.  Ultimately, we need four more equations, as illustrated in \cref{fig:otherblocks}.
\begin{figure}
    \centering
    \includegraphics{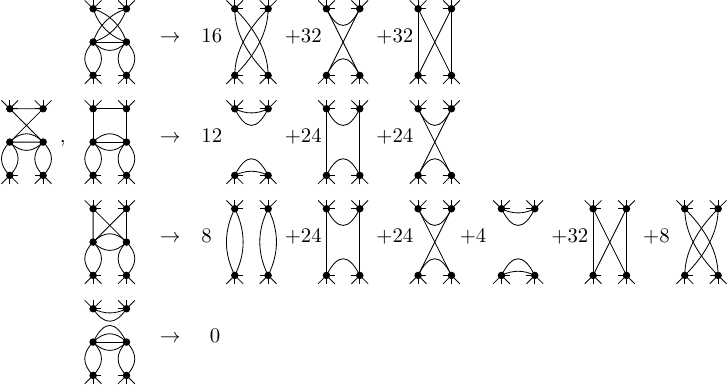}
    \caption{The result of applying the Martin recurrence to the other graphs needed to get a system of recurrences.}%
    \label{fig:otherblocks}%
\end{figure}
Letting $a_n$ represent the Martin invariant of the original ladder with $n$ rungs, and $b_n$, $c_n$, $d_n$, and $e_n$ likewise for the graphs of \cref{fig:otherblocks}, the calculations of the figures can be rephrased as
\begin{align*}
    a_n & = 16a_{n-1} + 32c_{n-1} + 32d_{n-1} \\
    b_n & = 16b_{n-1} + 32c_{n-1} + 32d_{n-1} \\
    c_n & = 12e_{n-1} + 48c_{n-1} \\
    d_n & = 8a_{n-1} + 48c_{n-1} + 4e_{n-1} + 32d_{n-1} + 8b_{n-1} \\
    e_n & = 0
\end{align*}
Observe that $d_n = \frac{1}{2}a_n + \frac{1}{2}b_n + \frac{1}{3}c_n$ (this is most apparent after reducing just one of the rung edges, but can also be verified from the equations above).  Using this fact along with $e_n=0$ we can simplify to a system of three recurrences
\begin{equation*}
\begin{pmatrix} a_n \\ b_n \\ c_n \end{pmatrix}
=
\begin{pmatrix}
32 & 16 & \frac{128}{3} \\
16 & 32 & \frac{128}{3} \\
0 & 0 & 48
\end{pmatrix}
\begin{pmatrix} a_{n-1} \\ b_{n-1} \\ c_{n-1} \end{pmatrix}
.
\end{equation*}
Solving the system gives the claimed formula.

\begin{remark}
    As for $\phi^4$, our data shows that also that at each loop order $\ell\leq 9$, there is a unique $\phi^3$ primitive graph that \emph{maximizes} the Martin invariant $\Martin(G^{[2]})$. Those graphs are collected in \cref{tab:Martin3-max}. Again, the same graphs were recently noticed to maximize the number of connected sets \cite{CambieGoedgebeurJokken:MaxConReg}.
\end{remark}

\begin{table}\centering
    \begin{tabular}{rrlrr}
    \toprule
        loops & $\max\Martin(G^{[2]})$ & $G$ & HoG & $|\Aut(G)|$ \\
\midrule
1 & $1\cdot 2^{3\phantom{1}}$ 
    & $P_{1,1}$ & \HoG{74} & 24 \\
2 & $4\cdot 2^{7\phantom{1}}$ 
    & $P_{2,1}$ & \HoG{84} & 72 \\
3 & $17\cdot 2^{11}$ 
    & $P_{3,2}$ & \HoG{640} & 16 \\
4 & $92\cdot 2^{15}$ 
    & $P_{4,5}$ & \HoG{660} & 120 \\
5 & $460\cdot 2^{19}$ 
    & $P_{5,7}$ & \HoG{27409} & 16 \\
6 & $2760\cdot 2^{23}$ 
    & $P_{6,7}$ & \HoG{1154} & 336 \\
7 & $14820\cdot 2^{27}$ 
    & $P_{7,21}$ & \HoG{1229} & 96 \\
8 & $88536\cdot 2^{31}$ 
    & $P_{8,3165}$ & \HoG{50424} & 12 \\
9 & $530093\cdot 2^{35}$ 
    & $P_{9,54494}$ & \HoG{50425} & 8 \\
\bottomrule
\end{tabular}
\caption{The unique graphs that maximize $\Martin(G^{[2]})$ among $\phi^3$ primitives at a fixed loop order. The column HoG links to their entries in the House of Graphs \cite{HoG}.}%
\label{tab:Martin3-max}%
\end{table}

\bibliography{refs}

\end{document}